\theoremstyle{plain}                    
\newtheorem{teo}{Theorem}[subsection]      
\newtheorem{theoremalpha}{Theorem}
\newtheorem{prop}[teo]{Proposition}
\newtheorem{cor}[teo]{Corollary}       
\newtheorem{lem}[teo]{Lemma}            
\theoremstyle{definition}               
\newtheorem{notations}{}[subsection]   
\newtheorem{defin}[teo]{Definition}
\theoremstyle{remark}
\newtheorem{rmk}[teo]{Remark}
\newenvironment{dimo}
         {\textit{Proof of Theorem}}
     {\hspace*{\fill}\hspace*{\fill}\mbox{$\Box$}}
\theoremstyle{plain}                    
\newtheorem{teoa}{Theorem}[section]      
\newtheorem{propa}[teoa]{Proposition}
\newtheorem{cora}[teoa]{Corollary}       
\newtheorem{lema}[teoa]{Lemma}            
\theoremstyle{definition}               
\newtheorem{defina}[teoa]{Definition}
\theoremstyle{remark}
\newtheorem{rmka}[teoa]{Remark}
\numberwithin{equation}{subsection}
\newcommand{\oo}{\mathcal{O}}
\newcommand{\mt}{\mathcal}
\newcommand{\cod}{\text{cod}}
\newcommand{\jc}{\mathcal J ac_{d,g}^o}
\newcommand{\vc}{\mathcal V ec_{r,d,g}^o}
\newcommand{\jr}{\mathcal J_{d,g}^o}
\newcommand{\vr}{\mathcal V_{r,d,g}^o}
\newcommand{\lra}{\longrightarrow}
\newcommand{\Vc}{\mathcal V ec_{r,d,g}}
\newcommand{\Jc}{\mathcal J ac_{d,g}}
\newcommand{\Vr}{\mathcal V_{r,d,g}}
\newcommand{\Jr}{\mathcal J_{d,g}}
\newcommand{\Pic}{\text{Pic}}
\newcommand{\CVc}{\overline{\mathcal Vec}_{r,d,g}}
\newcommand{\CJc}{\overline{\mathcal Jac}_{d,g}}
\newcommand{\CVr}{\overline{\mathcal V}_{r,d,g}}
\newcommand{\CU}{\overline{U}_{r,d,g}}
\newcommand{\Aut}{\text{Aut}}
\newcommand{\Spec}{\text{Spec}}
\newcommand{\Spf}{\text{Spf}}
\newcommand{\Mg}{\mathcal M_g}
\newcommand{\CMg}{\overline{\mathcal{M}}_g}
\newcommand{\ch}{\text{ch}}
\newcommand{\Td}{\text{Td}}
\newcommand{\TF}{\mt TF_{r,d,g}^{ss}}
\newcommand{\Vl}{\mathcal Vec_{r,=\mathcal L,C}}
\newcommand{\CVtc}{\overline{\mathcal Vec}_{r,d,2}}
\newcommand{\CVtr}{\overline{\mathcal V}_{r,d,2}}
\newcommand{\Vtc}{\mathcal Vec_{r,d,2}}
\begin{document}

\title{The Picard group of the universal moduli space\\ of vector bundles on stable curves.}
\author{Roberto Fringuelli}
\maketitle

\begin{abstract}We construct the moduli stack of properly balanced vector bundles on semistable curves and we determine explicitly its Picard group. As a consequence, we obtain an explicit description of the Picard groups of the universal moduli stack of vector bundles on smooth curves and of the Schmitt's compactification over the stack of stable curves. We prove some results about the gerbe structure of the universal moduli stack over its rigidification by the natural action of the multiplicative group. In particular, we give necessary and sufficient conditions for the existence of Poincar\'e bundles over the universal curve of an open substack of the rigidification, generalizing a result of Mestrano-Ramanan.
\end{abstract}

\tableofcontents

\section*{Introduction.}

Let $\Vc^{(s)s}$ be the moduli stack of (semi)stable vector bundles of rank $r$ and degree $d$ on smooth curves of genus $g$ over an algebraically closed field $k$ of characteristic $0$. It turns out that the forgetful map $\Vc^{ss}\to\Mg$ is universally closed, i.e. it satisfies the existence part of the valuative criterion of properness. Unfortunately, if we enlarge the moduli problem, adding slope-semistable (with respect to the canonical polarization) vector bundles on stable curves, the morphism to the moduli stack $\CMg$ of stable curves is not universally closed anymore. There are two natural ways to make it universally closed. The first one is adding slope-semistable torsion free sheaves and this was done by Pandharipande in \cite{P96}. The disadvantage is that such stack, as Faltings has shown in \cite{Fa96}, is not regular if the rank is greater than one. The second approach, which is better for our purposes, is to consider vector bundles on semistable curves: see \cite{Gie84}, \cite{K}, \cite{NS} in the case of a fixed irreducible curve with one node, \cite{Cap94}, \cite{Mel09} in the rank one case over the entire moduli stack $\CMg$ or \cite{Sc}, \cite{T98} in the higher rank case over $\CMg$. The advantages are that such stacks are regular and the boundary has normal-crossing singularities. Unfortunately, for rank greater than one, we do not have an easy description of the objects at the boundary. We will overcome the problem by constructing a non quasi-compact smooth stack $\CVc$, parametrizing properly balanced vector bundles on semistable curves (see \S\ref{pbvb} for a precise definition). When $r=1$, it coincides with the Caporaso's compactification $\CJc$ of the universal Jacobian stack, constructed by Melo in \cite{Mel09}, based upon the results of Caporaso in \cite{Cap94}. Moreover, it contains some interesting open substacks, like:
\begin{itemize}
\item[-] The moduli stack $\Vc$ of (not necessarily semistable) vector bundles over smooth curves.
\item[-] The moduli stack $\CVc^{P(s)s}$ of vector bundles such that their push-forwards in the stable model of the curve are slope-(semi)stable torsion free sheaves.
\item[-] The moduli stack $\CVc^{H(s)s}$ of H-(semi)stable vector bundles constructed by Schmitt in \cite{Sc}.
\item[-] The moduli stack of Hilbert-semistable vector bundles (see \cite{T98}).
\end{itemize}

The main result of this paper is computing and giving explicit generators for the Picard groups of the moduli stacks $\Vc$ and $\CVc$ for rank greater than one, generalizing the results in rank one obtained by Melo-Viviani in \cite{MV}, based upon a result of Kouvidakis (see \cite{Kou91}). As a consequence, we will see that there exist natural isomorphisms of Picard groups between $\Vc^s$, $\Vc^{ss}$ and $\Vc$, between $\CVc^{Hss}$, $\CVc^{Pss}$ and $\CVc$ and between $\CVc^{Ps}$ and $\CVc^{Hs}$.\\ 

The motivation for this work comes from the study of the modular compactifications of the moduli stack $\Vc^{ss}$ and the moduli space $U_{r,d,g}$ of semistable vector bundles on smooth curves from the point of view of the log-minimal model program (LMMP). One would like to mimic the so called Hassett-Keel program for the moduli space $\overline{M}_g$ of stable curves, which aims at giving a modular interpretation to every step of the LMMP for $\overline{M}_g$. In other words, the goal is to construct compactifications of the universal moduli space of semistable vector bundles over each step of the minimal model program for $\overline{M}_g$. In the rank one case, the conjectural first two steps of the LMMP for the Caporaso's compactification $\overline{J}_{d,g}$ have been described by Bini-Felici-Melo-Viviani in \cite{BFMV}. From the stacky point of view, the first step (resp. the second step) is constructed as the compactified Jacobian over the Schubert's moduli stack $\CMg^{ps}$ of pseudo-stable curves (resp. over the Hyeon-Morrison's moduli stack $\CMg^{wp}$ of weakly-pseudo-stable curves). In higher rank, the conjectural first step of the LMMP for the Pandharipande's compactification $\widetilde{U}_{r,d,g}$ has been described by Grimes in \cite{Gr14}: using the torsion free approach, he constructs a compactification $\widetilde{U}^{ps}_{r,d,g}$ of the moduli space of slope-semistable vector bundles over $\overline{M}^{ps}_g$. 
In order to construct birational compact models for the Pandharipande compactification of $U_{r,d,g}$, it is useful to have an explicit description of its rational Picard group which naturally embeds into the rational Picard group of the moduli stack $\TF$ of slope-semistable torsion free sheaves over stable curves. Indeed our first idea was to study directly the Picard group of $\TF$ . For technical difficulties due to the fact that this stack is not smooth, we have preferred to study first $\CVc$, whose Picard group contains $\Pic(\TF)$, and we plan to give a description of $\TF$ in a subsequent paper.\vspace{0.2cm}

In Section \ref{cvc}, we introduce and study our main object: \emph{the universal moduli stack $\CVc$ of properly balanced vector bundles of rank $r$ and degree $d$ on semistable curves of arithmetic genus $g$}. We will show that it is an irreducible smooth Artin stack of dimension $(r^2+3)(g-1)$. The stacks of the above list are contained in $\CVc$ in the following way
\begin{equation}\label{chainopen}
\begin{array}{lllll}
\CVc^{Ps}\subset&\CVc^{Hs}\subset&\CVc^{Hss}\subset&\CVc^{Pss}\subset&\CVc\\
\cup & &\cup & &\cup\\
\Vc^s&\subset&\Vc^{ss}&\subset&\Vc.
\end{array}
\end{equation}
The stack $\CVc$ is endowed with a morphism $\overline{\phi}_{r,d}$ to the stack $\CMg$ which forgets the vector bundle and sends a curve to its stable model. Moreover, it has a structure of $\mathbb G_m$-stack, since the group $\mathbb G_m$ naturally injects into the automorphism group of every object as multiplication by scalars on the vector bundle. Therefore, $\CVc$ becomes a $\mathbb G_m$-gerbe over the $\mathbb G_m$-rigidification $\CVr:=\CVc\fatslash \mathbb G_m$. Let $\nu_{r,d}:\CVc\to\CVr$ be the rigidification morphism. Analogously, the open substacks in (\ref{chainopen}) are $\mathbb G_m$-gerbes over their rigidifications
\begin{equation}\label{chainopenrig}
\begin{array}{lllll}
\CVr^{Ps}\subset&\CVr^{Hs}\subset&\CVr^{Hss}\subset&\CVr^{Pss}\subset&\CVr\\
\cup & &\cup & &\cup\\
\Vr^s&\subset&\Vr^{ss}&\subset&\Vr.
\end{array}
\end{equation}
The inclusions (\ref{chainopen}) and (\ref{chainopenrig} give us the following commutative diagram of Picard groups:
\begin{equation}\label{compic}
\xymatrix @C=0.01em @R=0.7em{
& \Pic\left(\CVc\right)\ar@{-}[d]\ar@{->>}[rr] & & \Pic\left(\CVc^{Pss}\right)\ar@{-}[d]\ar@{->>}[rr] & & \Pic\left(\CVc^{Hs}\right)\ar@{->>}[dd]\\
\Pic\left(\CVr\right)\ar@{^(->}[ur]\ar@{->>}[rr]\ar@{->>}[dddd] &\ar@{->>}[ddd] & \Pic\left(\CVr^{Pss}\right)\ar@{^(->}[ur]\ar@{->>}[rr]\ar@{->>}[dd] &\ar@{->>}[d] & \Pic\left(\CVr^{Hs}\right)\ar@{^(->}[ur]\ar@{->>}[dd]\\
& & & \Pic\left(\CVc^{Hss}\right) \ar@{-}[d]\ar@{-}[r]&\ar@{->>}[r] & \Pic\left(\CVc^{Ps}\right)\ar@{->>}[dd]\\
 & & \Pic\left(\CVr^{Hss}\right)\ar@{^(->}[ur]\ar@{->>}[rr]\ar@{->>}[dd] & \ar@{->>}[d]& \Pic\left(\CVr^{Ps}\right)\ar@{^(->}[ur]\ar@{->>}[dd]\\
& \Pic\left(\Vc\right)\ar@{-}[r] &\ar@{->>}[r] & \Pic\left(\Vc^{ss}\right)\ar@{-}[r] &\ar@{->>}[r] & \Pic\left(\Vc^{s}\right)\\
\Pic\left(\Vr\right)\ar@{^(->}[ur]\ar@{->>}[rr] & & \Pic\left(\Vr^{ss}\right)\ar@{^(->}[ur]\ar@{->>}[rr] & & \Pic\left(\Vr^{s}\right)\ar@{^(->}[ur]
}
\end{equation}
where the diagonal maps are the inclusions induced by the rigidification morphisms, while the vertical and horizontal ones are the restriction morphisms, which are surjective because we are working with smooth stacks. We will prove that the Picard groups of diagram (\ref{compic}) are generated by the boundary line bundles and the tautological line bundles, which are defined in Section \ref{linechow}.\\
In the same section, we also describe the irreducible components of the boundary divisor $\widetilde\delta:=\CVc\backslash\Vc$. The boundary is the pull-back via the morphism $\overline{\phi}_{r,d}:\CVc\to\CMg$ of the boundary of $\CMg$. It is known that  $\CMg\backslash \Mg=\bigcup_{i=0}^{\lfloor g/2\rfloor}\delta_i$,  where $\delta_0$ is an irreducible divisor whose generic point is an irreducible curve with just one node and, for $i\neq 0$, $\delta_i$ is an irreducible divisor whose generic point is a stable curve with two irreducible smooth components of genus $i$ and $g-i$ meeting in one point. In Proposition \ref{boundary}, we will prove that $\widetilde\delta_i:=\overline{\phi}^*_{r,d}\left(\delta_i\right)$ is irreducible if $i=0$ and, otherwise, decomposes as $\bigcup_{j\in J_i} \widetilde\delta_i^j$, where $J_i$ is a set of integers depending on $i$ and $\widetilde\delta_i^j$ are irreducible divisors. Such $\widetilde\delta_i^j$ will be called \emph{boundary divisors}. For special values of $i$ and $j$, the corresponding boundary divisor will be called \emph{extremal boundary divisor}. The boundary divisors which are not extremal will be called \emph{non-extremal boundary divisors} (for a precise description see \S\ref{boundiv}). By smoothness of $\CVc$, the divisors $\{\widetilde\delta_i^j\}$ give us line bundles. We will call them \emph{boundary line bundles} and we will denote them with $\{\oo(\widetilde\delta_i^j)\}$. We will say that $\oo(\widetilde\delta_i^j)$ is a \emph{(non)-extremal boundary line bundle} if $\widetilde\delta_i^j$ is a (non)-extremal boundary divisor. 
The irreducible components of the boundary of $\CVr$ are the divisors $\nu_{r,d}(\widetilde\delta_i^j)$. The associated line bundles are called boundary line bundles of $\CVr$. We will denote by the the same symbols used for $\CVc$ the boundary divisors and the associated boundary line bundles on $\CVr$. \\
In \S\ref{tautbun} we define the \emph{tautological line bundles}. They are defined as determinant of cohomology and as Deligne pairing (see \S\ref{dcdp} for the definition and basic properties) of particular line bundles along the universal curve $\overline{\pi}:\overline{Vec}_{r,d,g,1}\to\CVc$. More precisely, they are
$$
\begin{array}{lcl}
K_{1,0,0} &:=&\langle \omega_{\overline{\pi}},\omega_{\overline{\pi}}\rangle,\\
K_{0,1,0}&:=&\langle \omega_{\overline{\pi}},\det\mt E\rangle,\\
K_{-1,2,0}&:=&\langle \det\mt E,\det\mt E\rangle,\\
\Lambda(m,n,l) & :=& d_{\overline{\pi}}(\omega_{\overline{\pi}}^m\otimes (\det \mt E)^n\otimes \mathcal E^l),
\end{array}
$$
where $\omega_{\overline{\pi}}$ is the relative dualizing sheaf for $\overline\pi$ and $\mt E$ is the universal vector bundle on $\overline{Vec}_{r,d,g,1}$. Following the same strategy as Melo-Viviani in \cite{MV}, based upon the work of Mumford in \cite{Mum83}, we apply Grothendieck-Riemann-Roch theorem to the morphism $\pi:\overline{Vec}_{r,d,g,1}\to\CVc$ in order to compute the relations among the tautological line bundles in the rational Picard group. In particular, in Theorem \ref{relations} we prove that all tautological line bundles can be expressed in the (rational) Picard group of $\CVc$ in terms of $\Lambda(1,0,0)$, $\Lambda(0,1,0)$, $\Lambda(1,1,0)$, $\Lambda(0,0,1)$ and the boundary line bundles.\vspace{0.2cm}

Finally, we can now state the main results of this paper. In Section \ref{robba}, we prove that all Picard groups on the diagram (\ref{compic}) are free and generated by the tautological line bundles and the boundary line bundles. More precisely, we have the following.

\begin{theoremalpha}\label{pic}Assume $g\geq 3$ and $r\geq 2$.
\begin{enumerate}[(i)]
\item The Picard groups of $\Vc$, $\Vc^{ss}$, $\Vc^s$ are freely generated by $\Lambda(1,0,0)$, $\Lambda(1,1,0)$, $\Lambda(0,1,0)$ and $\Lambda(0,0,1)$.
\item The Picard groups of $\CVc$, $\CVc^{Pss}$, $\CVc^{Hss}$ are freely generated by $\Lambda(1,0,0)$, $\Lambda(1,1,0)$, $\Lambda(0,1,0)$, $\Lambda(0,0,1)$ and the boundary line bundles.
\item The Picard groups of $\CVc^{Ps}$, $\CVc^{Hs}$ are freely generated by $\Lambda(1,0,0)$, $\Lambda(1,1,0)$, $\Lambda(0,1,0)$, $\Lambda(0,0,1)$ and the non-extremal boundary line bundles.
\end{enumerate}
\end{theoremalpha}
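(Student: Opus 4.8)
The plan is to prove the three statements of Theorem \ref{pic} in increasing order of difficulty, using the open immersions of diagram (\ref{compic}) to transport information, with the smooth-curve case (i) serving as the base of the whole argument. Throughout, the two structural tools are the excision sequence relating the Picard group of a smooth stack to those of its codimension-one closed substacks, and the relations of Theorem \ref{relations}, which already express every tautological class in terms of $\Lambda(1,0,0)$, $\Lambda(1,1,0)$, $\Lambda(0,1,0)$, $\Lambda(0,0,1)$ modulo boundary classes. Thus the content of the theorem splits into two tasks: \emph{generation} by the listed classes, and \emph{freeness} (absence of relations).

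First I would establish (i). The forgetful morphism $\Vc\to\Mg$ is smooth with irreducible fibres, the fibre over $[C]$ being the stack $\mathcal{B}un_{r,d}(C)$ of rank-$r$ degree-$d$ bundles on $C$. Building on the known value $\Pic(\Mg)=\mathbb Z\langle\lambda\rangle$ for $g\geq 3$, I would compute the relative Picard group: although the Néron--Severi group of a single fibre is large, the monodromy of the universal curve over $\Mg$ kills all fibrewise classes except the tautological determinant-of-cohomology and Deligne-pairing classes, which is the higher-rank analogue of the Kouvidakis computation used by Melo--Viviani in rank one. The outcome is that $\Pic(\Vc)$ is free of rank four, generated by tautological classes, which Theorem \ref{relations} then identifies with the four displayed $\Lambda$'s. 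The equalities $\Pic(\Vc)=\Pic(\Vc^{ss})=\Pic(\Vc^{s})$ follow by checking that the non-semistable and strictly semistable loci have codimension at least two for $g\geq 3$, $r\geq 2$, so that the restriction maps are isomorphisms.

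Next, for (ii), I would run the excision sequence
\[
\bigoplus_{i,j}\mathbb Z\cdot\oo(\widetilde\delta_i^j)\lra\Pic(\CVc)\lra\Pic(\Vc)\lra 0,
\]
whose right exactness is automatic since $\CVc$ is smooth and $\CVc\setminus\Vc=\bigcup_{i,j}\widetilde\delta_i^j$ by Proposition \ref{boundary}. Combined with (i) this yields generation of $\Pic(\CVc)$ by the four $\Lambda$'s and the boundary line bundles. The stacks $\CVc^{Pss}$ and $\CVc^{Hss}$ are open in $\CVc$ with complement of codimension at least two, so their Picard groups coincide with $\Pic(\CVc)$, giving the first line of (ii). For (iii), the key observation is that passing from the semistable to the stable locus deletes exactly the extremal boundary divisors (the remaining complement being of codimension at least two), so the same excision argument, applied now over $\CVc^{Pss}$ and $\CVc^{Hss}$, removes precisely the extremal boundary line bundles from the generating set.

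The main obstacle is freeness: proving that the proposed generators satisfy no relation, equivalently that the excision sequences are also left-exact and that the four tautological classes are independent of one another and of the boundary. I would settle this by the test-curve method: constructing enough complete one-parameter families in $\CVc$ (families of vector bundles on families of semistable curves, including families degenerating to each boundary stratum) and computing the degrees of $\Lambda(1,0,0)$, $\Lambda(1,1,0)$, $\Lambda(0,1,0)$, $\Lambda(0,0,1)$ and of each $\oo(\widetilde\delta_i^j)$ along them via Grothendieck--Riemann--Roch and the Deligne-pairing formalism of \S\ref{dcdp}. Exhibiting a system of test families whose intersection matrix against the candidate generators is non-degenerate will simultaneously prove linear independence of the tautological classes and injectivity of the boundary maps, thereby identifying each Picard group in diagram (\ref{compic}) with the free abelian group on the stated generators.
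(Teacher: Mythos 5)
Your overall architecture (base case on the smooth locus, excision for the compactified stacks, test curves for independence of the boundary classes) matches the paper for parts (ii) and (iii), but there are two places where you assert precisely the steps that carry the mathematical content. First, your proof of (i) rests on the claim that the monodromy of the universal curve over $\Mg$ kills all fibrewise classes of $\Vc\to\Mg$ except the tautological ones. This is not a routine verification: the fibre of $\Vc\to\Mg$ over $[C]$ has an enormous Picard group (it surjects onto $\Pic$ of the Jacobian of $C$ via the determinant), and establishing that only the tautological classes survive globally is exactly the hard Kouvidakis-type computation, which is only known in rank one. The paper avoids redoing it in higher rank by fibering instead over the universal Jacobian via the determinant map $det:\Vc\to\Jc$, whose fibres are the fixed-determinant stacks $\mathcal Vec_{=\mathcal L,C}$ with Picard group $\mathbb Z$ (Hoffmann's theorem, Theorem \ref{fibers}); it then proves exactness of $0\to\Pic(\Jc)\to\Pic(\Vc^{ss})\to\Pic(\mathcal Vec^{ss}_{=\mathcal L,C})\to 0$ by a seesaw principle for stacks admitting a proper good moduli space (Appendix \ref{App}), together with the observation that triviality of a line bundle on one geometric fibre of $det$ propagates to all fibres because $\chi(\Lambda_h^n)$ is locally constant. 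Freeness of the four tautological generators then comes for free from this extension of free groups, rather than from an intersection-matrix computation against test curves (which in any case would only control the torsion-free quotient, not rule out torsion).

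Second, your treatment of $\CVc^{Hss}$, $\CVc^{Ps}$ and $\CVc^{Hs}$ takes for granted which boundary divisors survive in each open substack. To run excision on $\CVc^{Hss}$ and keep all boundary classes as independent generators, you must show that the generic point of \emph{every} boundary divisor, including the extremal ones, is H-semistable; and for (iii) you must show that the generic point of each non-extremal divisor is H-stable while the generic points of the extremal divisors are \emph{strictly} H-semistable, so that passing to the stable locus deletes exactly the extremal classes and nothing else of codimension one. The non-extremal case reduces to P-stability via Teixidor's criterion (Proposition \ref{T}), but the extremal case is a genuine GIT computation with one-parameter subgroups in the Hilbert-point setting (Lemma \ref{sHs} and Proposition \ref{hextr} in the paper), since H-semistability of a singular pair $(C,\mt E)$ cannot be read off from slope conditions alone. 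Your proposal does not identify this as an issue, and without it neither the generation statement for $\Pic(\CVc^{Hss})$ nor the precise description of the generators of $\Pic(\CVc^{Ps})$ and $\Pic(\CVc^{Hs})$ is established.
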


Let $v_{r,d,g}$ and $n_{r,d}$ be the numbers defined in the Notations \ref{notations} below. Let $\alpha$ and $\beta$ be (not necessarily unique) integers such that $\alpha(d+1-g)+\beta(d+g-1)=-\frac{1}{n_{r,d}}\cdot\frac{v_{1,d,g}}{v_{r,d,g}}(d+r(1-g))$. Set
$$\Xi:=\Lambda(0,1,0)^{\frac{d+g-1}{v_{1,d,g}}}\otimes\Lambda(1,1,0)^{-\frac{d-g+1}{v_{1,d,g}}},\quad
\Theta:=\Lambda(0,0,1)^{\frac{r}{n_{r,d}}\cdot \frac{v_{1,d,g}}{v_{r,d,g}}}\otimes \Lambda(0,1,0)^{\alpha}\otimes \Lambda (1,1,0)^{\beta}.$$

In the same section, we will show 
\begin{theoremalpha}\label{picred}Assume $g\geq 3$ and $r\geq2$.
\begin{enumerate}[(i)]
\item The Picard groups of $\Vr$, $\Vr^{ss}$, $\Vr^{s}$ are freely generated by $\Lambda(1,0,0)$, $\Xi$ and $\Theta$.
\item The Picard groups of $\CVr$, $\CVr^{Pss}$, $\CVr^{Hss}$ are freely generated by $\Lambda(1,0,0)$, $\Xi$, $\Theta$ and the boundary line bundles.
\item The Picard groups of $\CVr^{Ps}$ and $\CVr^{Hs}$ are freely generated by $\Lambda(1,0,0)$, $\Xi$, $\Theta$ and the non-extremal boundary line bundles.
\end{enumerate}
\end{theoremalpha}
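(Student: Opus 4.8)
The plan is to derive Theorem~\ref{picred} from Theorem~\ref{pic} by exploiting the $\mathbb G_m$-gerbe structure of the rigidification morphism $\nu_{r,d}\colon\CVc\to\CVr$. The general principle I would invoke is that for a $\mathbb G_m$-gerbe $\nu\colon\mathcal X\to\mathcal Y$ the pull-back $\nu^*\colon\Pic(\mathcal Y)\to\Pic(\mathcal X)$ is injective and identifies $\Pic(\mathcal Y)$ with the subgroup of those line bundles on $\mathcal X$ that have $\mathbb G_m$-weight zero, i.e.\ on which the automorphisms in the inertia act trivially. Since this gerbe structure restricts to each of the open substacks appearing in~(\ref{chainopenrig}), the problem reduces, for every Picard group computed in Theorem~\ref{pic}, to determining the weight homomorphism $w\colon\Pic(\CVc)\to\mathbb Z$ and to describing a basis of its kernel.

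I would begin by computing $w$ on the generators. The group $\mathbb G_m$ acts by scalar multiplication on the universal bundle $\mathcal E$, hence with weight $1$ on $\mathcal E$, with weight $r$ on $\det\mathcal E$, and with weight $0$ on the relative dualizing sheaf $\omega_{\overline{\pi}}$. If $\mathbb G_m$ acts with uniform weight $\epsilon$ on a sheaf $F$ along $\overline{\pi}$, then it acts with weight $\epsilon\cdot\chi(F)$ on the determinant of cohomology $d_{\overline{\pi}}(F)$, where $\chi(F)$ is the relative Euler characteristic. Applying this to $\Lambda(m,n,l)=d_{\overline{\pi}}(\omega_{\overline{\pi}}^m\otimes(\det\mathcal E)^n\otimes\mathcal E^l)$ and computing the Euler characteristics by Riemann--Roch yields
\begin{align*}
w(\Lambda(1,0,0))&=0, & w(\Lambda(0,1,0))&=r(d+1-g),\\
w(\Lambda(1,1,0))&=r(d+g-1), & w(\Lambda(0,0,1))&=d+r(1-g),
\end{align*}
while the boundary divisors $\widetilde\delta_i^j$ are $\mathbb G_m$-invariant and their classes are pulled back from $\CVr$, so the boundary line bundles have weight zero. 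A direct substitution then shows that the proposed generators lie in $\ker w$: indeed $w(\Xi)=0$ because the two exponents of $\Xi$ make the contributions of $\Lambda(0,1,0)$ and $\Lambda(1,1,0)$ cancel, and $w(\Theta)=0$ is exactly the content of the defining relation $\alpha(d+1-g)+\beta(d+g-1)=-\frac{1}{n_{r,d}}\cdot\frac{v_{1,d,g}}{v_{r,d,g}}(d+r(1-g))$ imposed on $\alpha$ and $\beta$. Hence $\Lambda(1,0,0)$, $\Xi$, $\Theta$ and the boundary line bundles all descend to $\CVr$.

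The remaining and most delicate step is to prove that these descended classes \emph{freely generate} $\ker w$; this is the arithmetic heart of the argument and the main obstacle. By Theorem~\ref{pic} the group $\Pic(\CVc)$ is free on the tautological and boundary bundles, so $w$ is a map of free abelian groups and $\ker w$ is free of rank one less than $\Pic(\CVc)$. The boundary line bundles, being of weight zero, split off as free summands, and $\Lambda(1,0,0)$ is a weight-zero basis vector, so the issue reduces to showing that $\Xi$ and $\Theta$ form a basis of the weight-zero sublattice of $\langle\Lambda(0,1,0),\Lambda(1,1,0),\Lambda(0,0,1)\rangle$. Using that $v_{1,d,g}=\gcd(d+1-g,d+g-1)$, the element $\Xi$ is precisely the primitive generator of the weight-zero vectors supported on $\Lambda(0,1,0)$ and $\Lambda(1,1,0)$; and using the defining arithmetic of $v_{r,d,g},n_{r,d}$ in Notations~\ref{notations} one checks that the $\Lambda(0,0,1)$-exponent $\tfrac{r}{n_{r,d}}\tfrac{v_{1,d,g}}{v_{r,d,g}}$ of $\Theta$ equals the least positive integer $z$ for which some weight-zero combination has $\Lambda(0,0,1)$-exponent $z$, namely $z=\tfrac{r\,v_{1,d,g}}{\gcd(r\,v_{1,d,g},\,d+r(1-g))}$, so that $\Theta$ is primitive in that direction. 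Together these two facts give $\ker(w)=\mathbb Z\,\Lambda(1,0,0)\oplus\mathbb Z\,\Xi\oplus\mathbb Z\,\Theta\oplus\langle\text{boundary}\rangle$, which also explains why $\alpha,\beta$ need not be unique, since different admissible choices differ by a multiple of $\Xi$. Finally, the identical computation applies to $\CVr^{Pss}$ and $\CVr^{Hss}$ in part~(ii), and, after discarding the extremal boundary line bundles as prescribed by Theorem~\ref{pic}(iii), to $\CVr^{Ps}$ and $\CVr^{Hs}$ in part~(iii); part~(i) is the boundary-free special case.
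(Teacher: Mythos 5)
Your proposal is correct and follows essentially the same route as the paper: \S\ref{comparing} uses the Leray spectral sequence for $\mathbb G_m$ along $\nu_{r,d}$ to get the exact sequence (\ref{lss}) identifying $\Pic(\CVr)$ with the kernel of the weight (restriction) map, computes the same weights of $\Lambda(1,0,0)$, $\Lambda(0,1,0)$, $\Lambda(1,1,0)$, $\Lambda(0,0,1)$, and deduces via Corollary \ref{eccocequasi} that $\Lambda(1,0,0)$, $\Xi$, $\Theta$ (plus the weight-zero boundary line bundles, by Corollaries \ref{boundrig} and \ref{boundrigH}) freely generate that kernel. Your lattice-theoretic verification that $\Xi$ and $\Theta$ are primitive in the kernel matches the paper's observation that the image of the weight map is $n_{r,d}\cdot v_{r,d,g}\,\mathbb Z$.
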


The above theorems are a collection of the major results of this work. For the proof of Theorem \ref{pic}, resp. \ref{picred}, see page \pageref{proofpic}, resp. \pageref{proofpicred}. If we remove the word ''freely'', they hold also in the genus two case for some of the open substacks in the assertions. This will be shown in appendix \ref{genus2}, together with an explicit description of the relations among the generators.\vspace{0.2cm}

We sketch the strategy of the proofs of the Theorems \ref{pic} and \ref{picred}. First, in \S\ref{indipendece}, we will prove that the boundary line bundles are linearly independent. Since the stack $\CVc$ is smooth and contains quasi-compact open substacks which are "large enough" and admit a presentation as quotient stacks, we have a natural exact sequence of groups
\begin{equation}\label{quasiex}
\bigoplus_{i=0,\ldots,\lfloor g/2\rfloor}\oplus_{j\in J_i}\langle\oo(\widetilde\delta_i^j)\rangle\longrightarrow \Pic(\CVc)\rightarrow \Pic(\Vc)\longrightarrow 0.
\end{equation}
In Theorem \ref{indbou}, we show that this sequence is also left exact. The strategy that we will use is the same as the one of Arbarello-Cornalba for $\CMg$ in \cite{AC87} and the generalization for $\CJc$ done by Melo-Viviani in \cite{MV}. More precisely, we will construct morphisms $B\to\CVc$ from irreducible smooth projective curves $B$ and we show that the intersection matrix between these test curves and the boundary line bundles on $\CVc$ is non-degenerate.\\
Furthermore, since the homomorphism of Picard groups induced by the rigidification morphism $\nu_{r,d}:\CVc\to\CVr$ is injective and it sends the boundary line bundles  of $\CVr$ to the boundary line bundles of $\CVc$, we see that also the boundary line bundles in the rigidification $\CVr$ are linearly independent (see Corollary \ref{boundrig}). In other words we have an exact sequence:
\begin{equation}\label{exrig}
0\longrightarrow\bigoplus_{i=0,\ldots,\lfloor g/2\rfloor}\oplus_{j\in J_i}\langle\oo(\widetilde\delta_i^j)\rangle\longrightarrow \Pic(\CVr)\longrightarrow \Pic(\Vr)\longrightarrow 0.
\end{equation}
We will show that the sequence (\ref{quasiex}), (resp. (\ref{exrig})), remains exact if we replace the middle term with the Picard group of $\CVc^{Pss}$ (resp. $\CVr^{Pss}$) or $\CVc^{Hss}$ (resp. $\CVr^{Hss}$). This reduces the proof of Theorem \ref{pic}(ii) (resp. Theorem \ref{picred}(ii)) to proving the Theorem \ref{pic}(i) (resp. Theorem \ref{picred}(i)). While for the stacks $\CVc^{Ps}$ and $\CVc^{Hs}$ (resp. $\CVr^{Ps}$ and $\CVr^{Hs}$) the sequence (\ref{quasiex}) (resp. (\ref{exrig})) is exact, if we remove the extremal boundary line bundles. This reduces the proof of Theorem \ref{pic}(iii)  (resp. of Theorem \ref{picred}(iii)) to proving the Theorem \ref{pic}(i) (resp. the Theorem \ref{picred}(i)).\vspace{0.2cm}

The stack $\Vc$ admits a natural map $det$ to the universal Jacobian stack $\Jc$, which sends a vector bundle to its determinant line bundle. The morphism is smooth and the fiber over a polarized curve $(C,\mt L)$ is the irreducible moduli stack $\Vl$ of pairs $(\mt E,\varphi)$, where $\mt E$ is a vector bundle on $C$ and $\varphi$ is an isomorphism between $\det\mt E$ and $\mt L$ (for more details see \S\ref{fibre}). Hoffmann in \cite{H} showed that the pull-back to $\Vl$ of the tautological line bundle $\Lambda(0,0,1)$ on $\CVc$ freely generates $\Pic\left(\Vl\right)$ (see Theorem \ref{fibers}). 
Moreover, as Melo-Viviani have shown in \cite{MV}, the tautological line bundles $\Lambda(1,0,0)$, $\Lambda(1,1,0)$, $\Lambda(0,1,0)$ freely generate the Picard group of $\Jc$ (see Theorem \ref{picjac}). Since the Picard groups of $\Vc$, $\Vc^{ss}$, $\Vc^{s}$ are isomorphic (see Lemma \ref{redsemistable}), Theorem \ref{pic}(i) (and so Theorem \ref{pic}) is equivalent to prove that we have an exact sequence of groups
\begin{equation}\label{theoAi}
0\lra \Pic(\Jc)\lra \Pic(\Vc^{ss})\lra \Pic(\mathcal Vec^{ss}_{=\mathcal L,C})\lra 0,
\end{equation}
where the first map is the pull-back via the determinant morphism and the second one is the restriction along a fixed geometric fiber. We will prove this in \S\ref{liscio}. If we were working with schemes, this would follow from the so-called seesaw principle: if we have a proper flat morphism of varieties with integral geometric fibers then a line bundle on the source is the pull-back of a line bundle on the target if and only if it is trivial along any geometric fiber. We generalize this principle to stacks admitting a proper good moduli space (see Appendix \ref{App}) and we will use this fact to prove the exactness of (\ref{theoAi}).\\
In \S\ref{comparing}, we use the Leray spectral sequence for the lisse-\'etale sheaf $\mathbb{G}_m$ with respect to the rigidification morphism $\nu_{r,d}:\Vc\lra\Vr$, in order to conclude the proof of Theorem \ref{picred}. Moreover, we will obtain, as a consequence, some interesting results about the properties of $\CVr$ (see Proposition \ref{poincare}). In particular we will show that the rigidified universal curve $\mt V_{r,d,g,1}\to\Vr$ admits a universal vector bundle over an open substack of $\Vr$ if and only if the integers $d+r(1-g)$, $r(d+1-g)$ and $r(2g-2)$ are coprime, generalizing the result of Mestrano-Ramanan (\cite[Corollary 2.9]{MR85}) in the rank one case.\vspace{0.2cm}

The paper is organized in the following way. In Section \ref{cvc}, we define and study the moduli stack $\CVc$ of properly balanced vector bundles on semistable curves. In \S\ref{pbvb}, we give the definition of a properly balanced vector bundle on a semistable curve and we study the properties. In \S\ref{univ} we prove that the moduli stack $\CVc$ is algebraic. In \S\ref{prop}, we list some properties of our stacks and we introduce the rigidified moduli stack $\CVr$. We will use the deformation theory of vector bundles on nodal curves to study the local structure of $\CVc$ (see \S\ref{locstr}).In \S\ref{Sc}, we focus on the existence of good moduli spaces for an open substack of $\CVc$, following the Schmitt's construction. In Section \ref{linechow}, we recall some basic facts about the Picard group of a stack. In \S\ref{boh}, we explain the relations between the Picard group and the Chow group of divisors of stacks. We illustrate how to construct line bundles on moduli stacks using the determinant of cohomology and the Deligne pairing (see \S\ref{dcdp}). Then we recall the computation of the Picard group of the stack $\CMg$, resp. $\Jc$, resp. $\Vl$ (see \S\ref{cmg}, resp. \S\ref{jc}, resp. \S\ref{fibre}). In \S\ref{boundiv} we describe the boundary divisors of $\CVc$, while in \S\ref{tautbun} we define the tautological line bundles and we study the relations among them. Finally, in Section \ref{robba}, as explained before, we prove Theorems \ref{pic} and \ref{picred}. The genus two case will be treated separately in the Appendix \ref{genus2}. In Appendix \ref{App}, we recall the definition of a good moduli space for a stack and we develop, following the strategy adopted by Brochard in \cite[Appendix]{Br2}, a base change cohomology theory for stacks admitting a proper good moduli space.\\

\textbf{Acknowledgements:} The author would like to thank his advisor Filippo Viviani, for introducing the author to the problem, for his several suggests and comments without which this work would not have been possible. I thank the anonymous referees for carefully reading the paper and the several comments and corrections.

\subsection*{Notations.}
\begin{notations}\label{notations}
Let $r,g,d$ integers such that $g\geq 2$, $r\geq 1$. Denote by $g$ the arithmetic genus of the curves, $d$ the degree of the vector bundles and $r$ their rank. Given two integers $s$, $t$ we denote by $(s,t)$ the greatest common divisor of $s$ and $t$. We will set
$$
n_{r,d}:=(r,d),\; v_{r,d,g}:=\left(\frac{d}{n_{r,d}}+\frac{r}{n_{r,d}}(1-g),\,d+1-g,\,2g-2\right),\; k_{r,d,g}:=\frac{2g-2}{\left(2g-2,d+r(1-g)\right)}.
$$
Given a rational number $q$, we denote by $\lfloor q\rfloor$ the greatest integer such that $\lfloor q\rfloor\leq q$ and by $\lceil q\rceil$ the lowest integer such that $q\leq\lceil q\rceil$.
\end{notations}
\begin{notations}
We will work with the category $Sch/k$ of schemes over an algebraically closed field $k$ of characteristic $0$. When we say commutative, resp. cartesian, diagram of stacks we will intend in the $2$-categorical sense. We will implicitly assume that all the sheaves are sheaves for the site lisse-\'etale, or equivalently for the site lisse-lisse champ\^etre (see \cite[Appendix A.1]{Br1}).

The choice of the characteristic is due to the fact that the explicit computation of the integral Picard group of $\CMg$ is known to be true only in characteristic $0$ (if $g\geq 3$). Also the computation of $\Jc$ in \cite{MV} is unknown in positive characteristic, because it is based upon a result of Kouvidakis in \cite{Kou91}, which is proved over the complex numbers. If these two results could be extended to arbitrary characteristics, then also our results would automatically extend.
\end{notations}
\section{The universal moduli space $\CVc$.}\label{cvc}
Here we introduce the moduli stack of properly balanced vector bundles on semistable curves. Before giving the definition, we need to define and study the objects which are going to be parametrized.

\begin{defin} A \emph{family of curves over a scheme $S$} is a proper and flat morphism $C\to S$ of finite presentation and of relative dimension $\leq 1$. A \emph{stable} (resp. \emph{semistable}) \emph{curve $C$ over $k$} is a family of curves over $\Spec k$ such that it is connected, 1-dimensional, it has at-worst nodal singularities and any rational smooth component intersects the rest of the curve in at least $3$ (resp. $2$) points.  A \emph{family of (semi)stable curves over a scheme $S$} is a family of curves whose geometric fibers are (semi)stable curves. A \emph{vector bundle on a family of (semistable) curves $C\rightarrow S$} is a coherent $S$-flat sheaf on $C$, which is a vector bundle on any geometric fiber.
\end{defin}

To any family $C\rightarrow S$ of semistable curves, we can associate a new family $C^{st}\rightarrow S$ of stable curves and an $S$-morphism $\pi:C\rightarrow C^{st}$, which, for any geometric fiber over $S$, is the stabilization morphism, i.e. it contracts the destabilizing chains. We can construct this taking the $S$-morphism $\pi: C\rightarrow \mathbb{P}(\omega_{C/S}^{\otimes 3})$ associated to the relative dualizing sheaf of $C\rightarrow S$ and calling $C^{st}$ the image of $C$ through $\pi$.
\begin{defin}
Let $C$ be a semistable curve over $k$ and $Z$ be a non-trivial subcurve. We set $Z^c:=\overline{C\backslash Z}$ and $k_Z:=|Z\cap Z^c|$. Let $\mathcal E$ be a vector bundle over $C$. If $C_1,\ldots, C_n$ are the irreducible components of $C$, we call \emph{multidegree} of $\mathcal E$ the $n$-tuple $(\deg\mt E_{C_1},\ldots,\deg\mt E_{C_n})$ and \emph{total degree} of $\mt E$ the integer $d:=\sum \deg\mt E_{C_i}$.
\end{defin}
With abuse of notation, we will write $\omega_Z:=\deg(\omega_C|_{Z})=2g_Z-2+k_Z$, where $\omega_C$ is the dualizing sheaf and $g_Z:=1-\chi(\oo_Z)$. If $\mt E$ is a vector bundle over a family of semistable curves $C\to S$, we will set $\mathcal E(n):=\mathcal E\otimes\omega^n_{C/S}$. By the projection formula we have
$$
R^i\pi_*\mathcal E(n):=R^i\pi_*(\mathcal E\otimes\omega^n_{C/S})\cong R^i\pi_*(\mathcal E)\otimes\omega^n_{C^{st}/S},
$$
where $\pi$ is the stabilization morphism.

\subsection{Properly balanced vector bundles.}\label{pbvb}

We recall some definitions and results from \cite{K}, \cite{Sc} and \cite{NS}.
\begin{defin} A \emph{chain of rational curves} (or \emph{rational chain}) $R$ is a connected projective nodal curve over $k$ whose associated graph is a path and whose irreducible components are smooth and rational. The $\emph{length}$ of $R$ is the number of irreducible components.
\end{defin}

Let $R_1,\ldots,R_m$ be the irreducible components of a chain of rational curves $R$, labelled in the following way: $R_i\cap R_j\neq \emptyset$ if and only if $|i-j|\leq 1$. For $1\leq i\leq m-1$ let $x_i:=R_i\cap R_{i+1}$ be the nodal points and $x_0\in R_1$, $x_m\in R_m$ closed points different from $x_1$ and $x_{m-1}$. Let $\mathcal E$ be a vector bundle on $R$ of rank $r$. By \cite[Proposition 3.1]{T91b}, any vector bundle $\mt E$ over a chain of rational curves $R$ decomposes in the following way
\begin{equation}\label{decomp}
\mt E\cong \bigoplus_{j=1}^r\mt L_j,\text{ where } \mt L_j \text{ is a line bundle for any }j=1,\ldots,r.
\end{equation}
Using these notations we can give the following definitions.

\begin{defin} Let $\mt E$ be a vector bundle of rank $r$ on a rational chain $R$ of length $m$. If there exists a decomposition as in (\ref{decomp}) such that
\begin{itemize}
\item $\mt E$ is \emph{positive} if $\deg \mt L_{j|R_i}\geq 0$ for any $j\in\{1,\ldots,r\}$ and $i\in\{1,\ldots,m\}$,
\item $\mt E$ is \emph{strictly positive} if $\mt E$ is positive and for any $i\in\{1,\ldots,m\}$ there exists $j\in\{1,\ldots,r\}$ such that $\deg \mt L_{j|R_i}> 0$,
\item $\mt E$ is \emph{stricly standard} if $\mt E$ is strictly positive and $\deg \mt L_{j|R_i}\leq 1$ for any $j\in\{1,\ldots,r\}$ and $i\in\{1,\ldots,m\}$.
\end{itemize}
\end{defin}

\begin{defin}Let $R$ be a chain of rational curves over $k$ and $R_1,\ldots,R_m$ its irreducible components. A strictly standard vector bundle $\mt E$ of rank $r$ over $R$ is called \emph{admissible}, if one of the following equivalent conditions (see \cite[Lemma 2]{NS} or \cite[Lemma 3.3]{K}) holds:
\begin{itemize}
\item $h^0(R,\mathcal E(-x_0))=\sum \deg\mathcal E_{R_i}=\deg \mt E$,
\item $H^0(R,\mathcal E(-x_0-x_m))=0$,
\item $\mt E=\bigoplus_{j=1}^r\mt L_j$, where $\mt L_j$ is a line bundle of total degree $0$ or $1$ for $j=1,\ldots,r$.
\end{itemize}
\end{defin}

\begin{defin} Let $C$ be a semistable curve over $k$. A \emph{destabilizing chain} of $C$ is a subcurve of $C$, which is a rational chain intersecting the rest of the curve in exactly two points. The subcurve of all destabilizing chains will be called \emph{exceptional curve} and will be denoted with $C_{\text{exc}}$ and we set $\widetilde C:=C_{\text{exc}}^c$. A connected component of $C_{\text{exc}}$ will be called \emph{maximal destabilizing chain}.
\end{defin}

\begin{defin}Let $C$ be a semistable curve and $\mathcal E$ be a vector bundle of rank $r$ over $C$. $\mathcal E$ is \emph{(strictly) positive}, resp. \emph{strictly standard}, resp. \emph{admissible vector bundle} if the restriction to any destabilizing chain is (strictly) positive, resp. strictly standard, resp. admissible.
Let $C\rightarrow S$ be a family of semistable curves with a vector bundle $\mathcal E$ of rank $r$. $\mathcal E$ is called \emph{(strictly) positive}, resp. \emph{strictly standard}, resp. \emph{admissible vector bundle} if it is (strictly) positive, resp. strictly standard, resp. admissible for any geometric fiber. 
\end{defin}

\begin{rmk}\label{length} Let $(C,\mt E)$ be a semistable curve with a vector bundle. We remark that if $\mt E$ satisfies one of the conditions above along one chain $R$, then it satisfies it for any subchain $R'\subset R$. In particular, it is enough to check the condition over the maximal destabilizing chains.

These properties are related by a sequence of implications: $\mt E$ is admissible $\Rightarrow$ $\mt E$ is strictly standard $\Rightarrow$ $\mt E$ is strictly positive $\Rightarrow$ $\mt E$ is positive.
 
Furthermore, if $\mt E$ is admissible of rank $r$, then any destabilizing chain must be of length $\leq r$. Indeed, since $\mt E$ is strictly positive, its restriction to each destabilizing chain of length $1$ must be of positive degree. In particular, if $R$ is any destabilizing chain, then
$$
\text{length}R\leq \deg\mt E_R=\sum^r_{i=1}\deg\mt L_j,\text{ where }\mt E=\bigoplus_{j=1}^r\mt L_j.
$$
By definition of admissibility, we have $0\leq \deg\mt L_j\leq 1$. So $\text{length}R\leq r$.
\end{rmk}

The role of positivity is summarized in the next two propositions.

\begin{prop}\label{stabil}\cite[Prop 1.3.1(ii)]{Sc} Let $\pi:C'\rightarrow C$ be a morphism between semistable curves which contracts only some destabilizing chains. Let $\mathcal E$ be a vector bundle on $C'$ positive on the contracted chains. Then $R^i\pi_*(\mathcal E)=0$ for $i>0$. In particular $H^j(C',\mathcal E)=H^j(C,\pi_*\mathcal E)$ for all $j$.
\end{prop}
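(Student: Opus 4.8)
Let $\pi:C'\rightarrow C$ be a morphism between semistable curves which contracts only some chains of rational curves. Let $\mathcal E$ be a vector bundle on $C'$ positive on the contracted chains. Then $R^i\pi_*(\mathcal E)=0$ for $i>0$, and consequently $H^j(C',\mathcal E)=H^j(C,\pi_*\mathcal E)$ for all $j$.

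The plan is to reduce everything to a purely local computation on a single contracted rational chain, using the fact that $\pi$ is an isomorphism away from the exceptional locus. First I would observe that the question of whether $R^i\pi_*\mathcal E=0$ is local on the target $C$: since $R^i\pi_*\mathcal E$ is a coherent sheaf on $C$ whose formation commutes with passing to a formal or étale neighborhood of a point $p\in C$, it suffices to check the vanishing on the fibers $\pi^{-1}(p)$. Over a point $p$ where $\pi$ is an isomorphism the fiber is a single reduced point and there is nothing to prove; the only interesting points are the finitely many images $p$ of the contracted rational chains $R=\pi^{-1}(p)$. By the theorem on formal functions, $(R^i\pi_*\mathcal E)^{\wedge}_p$ is computed by the inverse limit of $H^i(R_n,\mathcal E|_{R_n})$ over the thickenings $R_n$ of the fiber, so the key case is showing that $H^i(R,\mathcal E|_R)=0$ for $i>0$, where $R$ is a chain of rational curves and $\mathcal E|_R$ is positive in the sense of \S\ref{pbvb}.

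The heart of the argument is therefore the statement that a positive vector bundle on a rational chain $R$ has vanishing higher cohomology. Here I would invoke the decomposition $\mathcal E|_R\cong\bigoplus_{j=1}^r\mathcal L_j$ into line bundles from \cite[Proposition 3.1]{T91b}, which reduces the claim to the case of a line bundle $\mathcal L$ on $R$ with $\deg\mathcal L|_{R_i}\geq 0$ for every component $R_i$. For such a positive line bundle on a chain of $\mathbb P^1$'s one shows $H^1(R,\mathcal L)=0$ directly: proceed by induction on the length $k$ of the chain, using the normalization/Mayer--Vietoris sequence that splits off the last component $R_k$, namely
\begin{equation*}
0\longrightarrow \mathcal L\longrightarrow \mathcal L|_{R'}\oplus \mathcal L|_{R_k}\longrightarrow \mathcal L|_{x_{k-1}}\longrightarrow 0,
\end{equation*}
where $R'=R_1\cup\cdots\cup R_{k-1}$ and $x_{k-1}=R'\cap R_k$. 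Since $\mathcal L|_{R_k}=\oo_{\mathbb P^1}(a)$ with $a\geq 0$ has $H^1=0$ and is globally generated (so the restriction map to the node is surjective), the long exact sequence gives $H^1(R,\mathcal L)\cong H^1(R',\mathcal L|_{R'})$, which vanishes by induction; the base case $k=1$ is just $H^1(\mathbb P^1,\oo(a))=0$ for $a\geq 0$. Summing over $j$ yields $H^i(R,\mathcal E|_R)=0$ for $i>0$, as desired, and by formal functions $R^i\pi_*\mathcal E=0$ for $i>0$.

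The final sentence, $H^j(C',\mathcal E)=H^j(C,\pi_*\mathcal E)$, then follows formally from the Leray spectral sequence $H^p(C,R^q\pi_*\mathcal E)\Rightarrow H^{p+q}(C',\mathcal E)$: once the higher direct images vanish, the spectral sequence degenerates and gives the stated isomorphism. The main obstacle I anticipate is the positive line bundle computation on the chain, and more precisely being careful that positivity is exactly the hypothesis needed to make every restriction map to the nodes surjective in the induction; the decomposition result of \cite{T91b} and the hypothesis ``positive on the contracted chains'' are precisely what guarantee each summand restricts to a nonnegative-degree, hence globally generated, line bundle on each $\mathbb P^1$. A secondary technical point is justifying the reduction via formal functions in the relative (family) setting, but since $\pi$ is proper this is standard and the fiberwise cohomology vanishing is all that is really required.
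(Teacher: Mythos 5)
The paper does not actually prove this proposition: it is quoted verbatim from Schmitt \cite[Prop.\ 1.3.1(ii)]{Sc} and used as a black box, so there is no internal proof to compare against. Your argument is a correct self-contained proof, and its core — decomposing $\mathcal E|_R$ into line bundles via \cite[Proposition 3.1]{T91b} and running the induction on the length of the chain with the partial-normalization sequence, where positivity gives both $H^1(\mathbb P^1,\mathcal O(a))=0$ and global generation (hence surjectivity onto the fibre at the node) — is exactly the computation the cited sources rely on.

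The one place your write-up is thin is the reduction to the reduced fibre. Since $\pi$ is not flat, cohomology and base change is unavailable, and the theorem on formal functions requires the vanishing of $H^1$ on \emph{all} infinitesimal neighbourhoods $Y_n=V(\mathfrak m_p^n\mathcal O_{C'})$, not just on $R$. This does work here, but for a reason worth stating: because the contracted chain $R$ is a union of irreducible components of $C'$, the ideal $I=\mathfrak m_p\mathcal O_{C'}$ vanishes identically on the interior of $R$, so each graded piece $I^n/I^{n+1}$ is a skyscraper supported at the two attaching nodes; tensoring with $\mathcal E$ and taking the long exact sequence then propagates $H^1(R,\mathcal E|_R)=0$ to every $Y_n$. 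Alternatively you can bypass formal functions entirely: push forward the sequence $0\to\mathcal E\to\mathcal E|_{\widetilde C}\oplus\mathcal E|_R\to\mathcal E|_D\to 0$ (with $\widetilde C=R^c$ and $D=\widetilde C\cap R$, as in the paper's \S\ref{pbvb}) along $\pi$; since $\pi|_{\widetilde C}$ is finite and $\pi_*(\mathcal E|_{\widetilde C})\to\pi_*(\mathcal E|_D)$ is surjective on stalks, one gets $R^1\pi_*\mathcal E=0$ directly from $H^1(R,\mathcal E|_R)=0$. Either patch makes your proof complete; as written, the claim that ``the fiberwise cohomology vanishing is all that is really required'' is asserted rather than justified.
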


\begin{prop}\label{NSlemma4} Let $C\rightarrow S$ be a family of semistable curves, $S$ locally noetherian scheme and consider the stabilization morphism
$$
\xymatrix{
C\ar[rd]\ar[rr]^{\pi} & & C^{st}\ar[ld]\\
& S &}
$$
Suppose that $\mathcal E$ is a positive vector bundle on $C\rightarrow S$ and for any point $s\in S$ consider the induced morphism $\pi_{s}:C_s\rightarrow C^{st}_s$. Then
$
\pi_*(\mathcal E)_{C^{st}_s}=\pi_{s*}(\mathcal E_{C_s}).
$
Moreover, $\pi_*\mathcal E$ is $S$-flat.
\end{prop}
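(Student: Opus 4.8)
The plan is to deduce both assertions from the fibrewise vanishing provided by Proposition \ref{stabil}, by means of derived base change, exploiting that the two structure morphisms $p:C\to S$ and $q:C^{st}\to S$ are flat (being families of semistable, resp. stable, curves). Since the statement is local on $S$, I may assume $S=\Spec A$ with $A$ noetherian. The crucial input is that, for every point $s\in S$, the induced morphism $\pi_s:C_s\to C^{st}_s$ is the stabilization of the fibre, it contracts exactly the rational chains of $C_s$, and $\mathcal E_{C_s}$ is positive on them (by the definition of a positive vector bundle on a family). Hence Proposition \ref{stabil}, applied over the geometric point above $s$ and then descended along the faithfully flat extension $\overline{k(s)}/k(s)$, gives $R^i\pi_{s*}(\mathcal E_{C_s})=0$ for all $i>0$; in other words $R\pi_{s*}(\mathcal E_{C_s})=\pi_{s*}(\mathcal E_{C_s})$ is concentrated in degree $0$.

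Next I would set up the base change. Writing $\iota_s:C^{st}_s\hookrightarrow C^{st}$ and $\iota_s':C_s\hookrightarrow C$ for the inclusions of the fibres over $s$, the square with vertical maps $\pi,\pi_s$ is cartesian, because $\pi^{-1}(C^{st}_s)=(q\pi)^{-1}(s)=p^{-1}(s)=C_s$. I claim this square is Tor-independent over $\mathcal O_{C^{st}}$. Indeed, using that $q$ is flat one has $\mathcal O_{C^{st}_s}=\mathcal O_{C^{st}}\otimes^{\mathbb L}_{A}k(s)$, so that
\[
\mathcal O_C\otimes^{\mathbb L}_{\mathcal O_{C^{st}}}\mathcal O_{C^{st}_s}\;=\;\mathcal O_C\otimes^{\mathbb L}_{A}k(s)\;=\;\mathcal O_{C_s},
\]
the last complex being concentrated in degree $0$ because $p$ is flat. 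Tor-independence then yields the derived base change isomorphism $L\iota_s^*\,R\pi_*\mathcal E\cong R\pi_{s*}\,L\iota_s'^*\mathcal E$. Since $\mathcal E$ is $S$-flat we have $L\iota_s'^*\mathcal E=\mathcal E_{C_s}$, and by the previous paragraph $R\pi_{s*}\mathcal E_{C_s}=\pi_{s*}\mathcal E_{C_s}$; therefore $L\iota_s^*\,R\pi_*\mathcal E$ is concentrated in degree $0$ for every $s$.

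From this I would first extract the global vanishing $R^1\pi_*\mathcal E=0$. The fibres of $\pi$ are either points or rational chains, hence of dimension $\le 1$, so $R^i\pi_*\mathcal E=0$ for $i\ge 2$ automatically and $\mathcal G:=R\pi_*\mathcal E$ has cohomology only in degrees $0,1$. For the top cohomology the left-derived pullback satisfies $\mathcal H^1(L\iota_s^*\mathcal G)=\iota_s^*\,R^1\pi_*\mathcal E$; as the left-hand side vanishes we get $R^1\pi_*\mathcal E\otimes_{\mathcal O_{C^{st}}}\mathcal O_{C^{st}_s}=0$ for all $s$, and Nakayama's lemma forces $R^1\pi_*\mathcal E=0$. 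Consequently $R\pi_*\mathcal E=\pi_*\mathcal E$ sits in degree $0$, and the degree-$0$ identity coming from derived base change reads $L\iota_s^*(\pi_*\mathcal E)\cong\pi_{s*}\mathcal E_{C_s}$ with no higher Tor. The vanishing of $\mathrm{Tor}^A_{>0}(\pi_*\mathcal E,k(s))$ is precisely the local criterion of flatness, giving that $\pi_*\mathcal E$ is $S$-flat, while the degree-$0$ part is the sought base change identity $(\pi_*\mathcal E)_{C^{st}_s}=\pi_{s*}(\mathcal E_{C_s})$.

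The main obstacle is the careful verification that derived base change applies, i.e. that the cartesian square is Tor-independent; this is where the flatness of both families $p$ and $q$ is genuinely used, and it is what replaces the unavailable hypothesis that $\mathcal E$ be flat over $C^{st}$ (which fails, since $\pi$ is not flat). Once this is in place, the remaining steps — the dimension bound on the fibres of $\pi$, the Nakayama argument on the top cohomology sheaf, and the translation of the absence of higher Tor into flatness via the local criterion — are routine.
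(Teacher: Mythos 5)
Your argument is correct, and it takes a genuinely different route from the paper, which offers no proof of its own here but simply defers to \cite[Lemma 4]{NS} and \cite[Remark 1.3.6]{Sc}. You instead give a self-contained derived-category argument, and the key move is the right one: the classical cohomology-and-base-change package cannot be applied directly to $\pi:C\to C^{st}$ because $\mathcal E$ is flat over $S$ but not over $C^{st}$ (as $\pi$ is not flat), and your Tor-independence computation $\mathcal O_C\otimes^{\mathbb L}_{\mathcal O_{C^{st}}}\mathcal O_{C^{st}_s}=\mathcal O_C\otimes^{\mathbb L}_{A}k(s)=\mathcal O_{C_s}$, which uses the flatness of both structure morphisms, is exactly what substitutes for that missing hypothesis and legitimizes the base-change isomorphism $L\iota_s^*R\pi_*\mathcal E\cong R\pi_{s*}\mathcal E_{C_s}$. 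The fibrewise input is the same as in the cited sources, namely the vanishing of Proposition \ref{stabil} (and your remark that one must pass to the geometric fibre and descend along $\overline{k(s)}/k(s)$ is a necessary precaution, since that proposition is stated over the algebraically closed field $k$). The remaining steps — the amplitude bound $R^i\pi_*\mathcal E=0$ for $i\geq 2$ from the one-dimensionality of the fibres of $\pi$, the identification $\mathcal H^1(L\iota_s^*\mathcal G)=\iota_s^*R^1\pi_*\mathcal E$ followed by Nakayama, and the translation of $\mathrm{Tor}_1^{\mathcal O_{S,s}}\bigl((\pi_*\mathcal E)_x,k(s)\bigr)=0$ into $S$-flatness via the local criterion — are all correctly executed. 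What your approach buys is a uniform proof of the base-change identity and the flatness assertion in one stroke, independent of the external references; the price is the derived formalism, which the sources the paper relies on manage to avoid by more hands-on arguments.
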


\begin{proof}It follows from \cite[Lemma 4]{NS} and \cite[Remark 1.3.6]{Sc}.
\end{proof}

The next result gives us a useful criterion to check if a vector bundle is strictly positive or not.



\begin{prop}\label{ampleness} Let $\mathcal E$ be a vector bundle over a semistable curve $C$. $\mathcal E$ is strictly positive if and only if there exists $n$ big enough such that the vector bundle $\mt E(n)$ is generated by global sections and the induced morphism in the Grassmannian $C\to Gr(H^0(C,\mathcal E(n)),r)$ is a closed embedding.
\end{prop}

\begin{proof}The implication $\Rightarrow$ is given by \cite[Proposition 1.3.3 and Remark 1.3.4]{Sc}. The other one comes from \cite[Remark 1]{NS}.	
\end{proof}

\begin{rmk} Let $\mathcal F$ be a torsion free sheaf over a nodal curve $C$. By \cite[Huitieme Partie, Proposition 2 and 3]{Se82}, the stalk of $\mt F$ over a nodal point $x$ is of the form
\begin{itemize}
\item $\oo_{C,x}^{r_0}\oplus\oo_{C_1,x}^{r_1}\oplus\oo_{C_2,x}^{r_2}$, if $x$ is a meeting point of two irreducible curves $C_1$ and $C_2$.
\item $\oo_{C,x}^{r-a}\oplus m_{C,x}^{a}$, if $x$ is a nodal point belonging to a unique irreducible component.
\end{itemize}
If $\mathcal F$ has uniform rank $r$ (i.e. it has rank $r$ on any irreducible component of $C$), we can always write the stalk at $x$ in the form $\oo_{C,x}^{r-a}\oplus m_{C,x}^{a}$ for some $a$. In this case we will say that $\mathcal F$ is \emph{of type $a$ at $x$}.
\end{rmk}

Now we are going to describe the properties of an admissible vector bundle. The following proposition (and its proof) is a generalization of \cite[Proposition 5]{NS}.

\begin{prop}\label{prop5NS}Let $\mathcal E$ be a vector bundle of rank $r$ over a semistable curve $C$, and $\pi:C\rightarrow C^{st}$ the stabilization morphism, then:
\begin{enumerate}[(i)]
\item $\mathcal E$ is admissible if and only if $\mt E$ is strictly positive and $\pi_*\mt E$ is torsion free.
\item Let $R$ be a maximal destabilizing chain and $x:=\pi(R)$. If $\mathcal E$ is admissible then $\pi_*\mathcal E$ is of type $\deg\mathcal E_R$ at $x$.
\end{enumerate}
\end{prop}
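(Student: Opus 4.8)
The plan is to reduce everything to a local computation at each node of $C^{st}$ produced by contracting a maximal rational chain, and to read off both torsion-freeness and the type from the restriction of $\mathcal E$ to that chain. Since $\pi$ is an isomorphism away from the exceptional curve $C_{exc}$, the sheaf $\pi_*\mathcal E$ is already locally free of rank $r$ there, so it suffices to work, for each maximal chain $R$ with attaching points $x_0,x_k\in\widetilde C$, in a neighbourhood of $x:=\pi(R)$, which is a node of $C^{st}$ whose two branches are the images of $\widetilde C$ near $x_0$ and near $x_k$. Throughout I would use that $\mathcal E$ is positive on $R$, so that Proposition \ref{stabil} gives $R^i\pi_*\mathcal E=0$ for $i>0$ and Proposition \ref{NSlemma4} controls the formation of $\pi_*\mathcal E$; this lets me describe the stalk $(\pi_*\mathcal E)_x$ in terms of $\mathcal E|_R$ together with the gluing data at $x_0,x_k$, encoded by the evaluation map $\rho\colon H^0(R,\mathcal E|_R)\lra \mathcal E_{x_0}\oplus\mathcal E_{x_k}$, which respects the decomposition $\mathcal E|_R\cong\bigoplus_{j=1}^r L_j$ into line bundles (\cite[Proposition 3.1]{T91b}).

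For part (i) the key identification I would prove is that the torsion subsheaf of $(\pi_*\mathcal E)_x$ is isomorphic to $\ker\rho=H^0\big(R,\mathcal E|_R(-x_0-x_k)\big)$: a torsion element is precisely a section of $\mathcal E$ near $R$ supported on the contracted chain, i.e. one vanishing at $x_0,x_k$ and extended by zero along $\widetilde C$. Hence $\pi_*\mathcal E$ is torsion free if and only if $H^0(R,\mathcal E|_R(-x_0-x_k))=0$ for every maximal chain $R$. A Riemann--Roch computation on each summand shows that, for a positive line bundle $L_j$ on $R$, one has $H^0(R,L_j(-x_0-x_k))=0$ if and only if $L_j$ has total degree $0$ or $1$. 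Thus, for strictly positive $\mathcal E$, vanishing of $H^0(R,\mathcal E|_R(-x_0-x_k))$ is equivalent to every summand having total degree $\le 1$, i.e. to $\mathcal E$ being strictly standard with each summand of total degree $0$ or $1$, which is exactly admissibility. Together with the fact that admissibility entails strict positivity (Remark \ref{lenght}), this yields both implications of (i).

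For part (ii), assuming $\mathcal E$ admissible I would compute $(\pi_*\mathcal E)_x$ through $\mathcal E|_R\cong\oo_R^{\,r-a}\oplus\bigoplus_{i=1}^a L_i$, where $a=\deg\mathcal E_R$ is the number of summands of total degree $1$ (the remaining $r-a$ positive summands, having total degree $0$, are trivial). The dichotomy is read off from $\rho$: for a trivial summand, $H^0(R,\oo_R)=k$ maps isomorphically onto the diagonal of the two fibres at $x_0,x_k$, so the gluing forces the branch-values to agree and the contribution is $\oo_{C^{st},x}$ (type $0$); for a degree-one admissible summand one has $h^0(R,L_i)=2$ and $\ker(\rho|_{L_i})=0$, so $\rho|_{L_i}$ is an isomorphism onto $L_{i,x_0}\oplus L_{i,x_k}$, the two branches are unconstrained, and the contribution is the normalization $\oo_{\widetilde C,x_0}\oplus\oo_{\widetilde C,x_k}\cong m_{C^{st},x}$ (type $1$). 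Summing gives $(\pi_*\mathcal E)_x\cong\oo_{C^{st},x}^{\,r-a}\oplus m_{C^{st},x}^{\,a}$, i.e. $\pi_*\mathcal E$ is of type $a=\deg\mathcal E_R$ at $x$.

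The main obstacle is the rigorous justification of the local stalk computation: that $(\pi_*\mathcal E)_x$ is controlled by the first-order data $\rho$ alone, and that its type splits additively along $\mathcal E|_R=\bigoplus L_j$ even though the decomposition a priori exists only on $R$ and need not extend to a neighbourhood. I expect to handle this by passing to the formal (or henselian) neighbourhood of $R$, where $\pi_*\mathcal E$ is determined by $\mathcal E|_{\widehat R}$, and by using $R^1\pi_*\mathcal E=0$ from Proposition \ref{stabil} to ensure the higher infinitesimal gluings are surjective, so that the whole stalk is reconstructed from $\rho$; the elementary isomorphism $m_{C^{st},x}\cong\oo_{\widetilde C,x_0}\oplus\oo_{\widetilde C,x_k}$ of the maximal ideal of a node with its normalization is then the last ingredient.
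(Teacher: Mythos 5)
Your proof is correct, and for part (i) it is essentially the paper's argument: the paper identifies the torsion of $\pi_*\mt E$ at $x$ as the stalk of $\pi_*(\mt I_D\mt E_{C_{exc}})$, namely $H^0(R,\mt E_R(-x_0-x_k))=\ker\rho$, exactly as you do, and then observes (as you do via the per-summand Riemann--Roch check) that for a strictly positive bundle this vanishes precisely when every summand $L_j$ has total degree $0$ or $1$. Where you diverge is part (ii). The paper argues by d\'evissage: from the sequence $0\lra\pi_*\mt E\lra\pi_*(\mt E_{\widetilde C})\lra R^1\pi_*(\mt I_D\mt E_{C_{exc}})\lra 0$ it settles the case $\deg\mt E_R=r$ (where $H^1(R,\mt E_R(-x_0-x_k))=0$ forces $\pi_*\mt E\cong\pi_*(\mt E_{\widetilde C})$, of type $r$), and reduces the general case by extending the trivial subbundle $\oo_R^s\subset\mt E_R$ to a trivial direct factor of $\mt E$ over a whole neighbourhood $\pi^{-1}(U)$ of $R$, using the sequence $0\lra\mt E\lra\mt E_{R^c}\oplus\mt E_R\lra\mt E_{\{x_0\}}\oplus\mt E_{\{x_k\}}\lra 0$. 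You instead compute the stalk directly as the fibre product $H^0(R,\mt E_R)\times_{\mt E_{x_0}\oplus\mt E_{x_k}}\left((\mt E_{\widetilde C})_{x_0}\oplus(\mt E_{\widetilde C})_{x_k}\right)$ and split it summand by summand. Both work; note that the ``main obstacle'' you flag is resolved more cheaply than by formal neighbourhoods: since $\pi^{-1}(U)=R\cup(\widetilde C\cap\pi^{-1}(U))$ with reduced intersection $\{x_0,x_k\}$, the Mayer--Vietoris sequence just quoted shows at once that $(\pi_*\mt E)_x$ \emph{is} that fibre product, and the failure of the decomposition $\mt E_R=\bigoplus L_j$ to extend off $R$ is harmless because the type is read off from $\dim_k\left(\pi_*\mt E\otimes k(x)\right)=\dim\operatorname{im}\rho=h^0(R,\mt E_R)=\deg\mt E_R+r$, which only involves the fibres at $x_0,x_k$ and the subspace $\operatorname{im}\rho$. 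Your route has the advantage of being self-contained and of exhibiting the local generators explicitly; the paper's reduction avoids any choice of adapted frames at the cost of the extension step for the trivial factor.
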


\begin{proof}
Part (i). Let $\widetilde{C}$ be the subcurve of $C$ complementary to the exceptional one. Consider the exact sequence: $$0\lra\mt I_{\widetilde C}\mathcal E\lra \mathcal E\lra \mathcal E_{\widetilde C}\lra 0.$$
Observe that $\mt I_{\widetilde C}\mathcal E$ is the sheaf of sections $\mt E$ vanishing at $\widetilde C$. So, the restriction to $C_{\text{exc}}$ induces an injective morphism of sheaves
$$
\mt I_{\widetilde C}\mathcal E\hookrightarrow \mt E_{C_{\text{exc}}},
$$
which factors through $\mt I_{D}\mathcal E_{C_{\text{exc}}}$, the sheaf of sections of $\mt E_{C_{\text{exc}}}$ vanishing at $D:=C_{\text{exc}}\cap \widetilde C$ with its reduced scheme structure. Since, $\chi(\mt I_{\widetilde C}\mathcal E)=\chi(\mt I_{D}\mathcal E_{C_{\text{exc}}})$, then $\mt I_{\widetilde C}\mathcal E=\mt I_{D}\mathcal E_{C_{\text{exc}}}$. We have:
$$0\lra \pi_*(\mt I_D\mathcal E_{C_{\text{exc}}}) \lra \pi_*\mathcal E\lra \pi_*(\mathcal E_{\widetilde C}).$$
Now $\pi_*(\mathcal E_{\widetilde C})$ is a torsion-free sheaf and $\pi_*(\mt I_D\mathcal E_{C_{\text{exc}}})$ is a torsion sheaf, because its support is $\pi(D)$. So, $\pi_*\mathcal E$ is torsion free if and only if $\pi_*(\mt I_D\mathcal E_{C_{\text{exc}}})=0$. Let $R$ be a maximal destabilizing chain, which intersects the rest of the curve in $p$ and $q$ and $x:=\pi(R)$. By definition the stalk of the sheaf $\pi_*(\mt I_D\mathcal E_{C_{\text{exc}}})$ at $x$ is the $k$-vector space $H^0(R,\mt I_{p,q}\mathcal E_R)=H^0(R,\mathcal E_R(-p-q))$. So, we have that $\pi_*\mathcal E$ is torsion free if and only if for any destabilizing chain $R$ if a global section $s$ of $\mathcal E_R$ vanishes on $R\cap R^c$ then $s\equiv 0$. In particular, if $\mathcal E$ is admissible then $\pi_*\mathcal E$ is torsion free and $\mathcal E$ is strictly positive.

Conversely, assume that $\pi_*\mathcal E$ is torsion free and $\mathcal E$ is strictly positive. If $\mt E$ were not strictly standard then there would exist a destabilizing chain $R=\mathbb P^1$ of length one such that $\mt E_R$ contains $\oo_{\mathbb P^1}(2)$ as direct summand. In particular, if $R'$ is the maximal destabilizing chain containing $R=\mathbb P^1$, then $H^0(R',\mt I_{p,q}\mathcal E_{R'})\neq 0$. So  $\pi_*\mt E$ cannot be torsion free at $\pi(R')$, giving a contradiction. In other words, if $\pi_*\mathcal E$ is torsion free and $\mathcal E$ is strictly positive then $\mt E$ is strictly standard. By the above considerations, the assertion follows.\\
Part (ii). Let $R$ be a maximal destabilizing chain.  By hypothesis and part (i), $\pi_*\mt E$ is torsion free and we have an exact sequence:
$$
0\lra\pi_*\mt E\lra \pi_*(\mt E_{\widetilde C})\lra R^1\pi_*(\mt I_D\mt E_{C_{\text{exc}}})\lra 0.
$$
The sequence is right exact by Proposition \ref{stabil}. Using the notation of part (i), we have that the stalk of the sheaf $R^1\pi_*(\mt I_D\mt E_{C_{\text{exc}}})$ at $x$ is the $k$-vector space $H^1(R,\mt E_{R}(-p-q))$.
If $\deg(\mt E_R)=r$, then $\chi(R,\mt E_R(-p-q))=0$. Since $\mt E$ is admissible, we have $H^0(R,\mt E_R(-p-q))=H^1(R,\mt E_R(-p-q))=0$. Thus, $\pi_*\mt E$ is isomorphic to $\pi_*(\mt E_{\widetilde C})$ locally at $x$. The assertion follows by the fact that $ \pi_*(\mt E_{\widetilde C})$ is a torsion free sheaf of type $\deg(\mt E_R)=r$ at $x$. Suppose that $\deg(\mt E_R)=r-s<r$. Then we must have that $\mt E_R=\oo_R^s\oplus\mt F$. Using the sequence
$$0\lra \mathcal E\lra \mathcal E_{R^c}\oplus \mathcal E_R\lra \mt E_{\{p\}}\oplus \mt E_{\{q\}}\lra 0$$
we can find a neighbourhood $U$ of $x$ in $C^{st}$ such that $\mt E_{\pi^{-1}(U)}=\oo_{\pi^{-1}(U)}^s\oplus\mt E'$ reducing to the case $\deg(\mt E_R)=r$.
\end{proof}

The proposition above has some consequences, which will be useful later. The following result is a generalization of \cite[Remark 4]{NS}.

\begin{cor}\label{NSrmk}Let $C$, resp. $C'$, be a semistable curve with an admissible vector bundle $\mt E$, resp. $\mt E'$. Let $\pi:C\rightarrow C^{st}$, resp. $\pi: C'\to C'^{st}$, be the stabilization morphism of $C$, resp. $C'$.

Suppose there exists an isomorphism of curves $\psi:C^{st}\cong C'^{st}$ and an isomorphism $\pi_*\mt E\cong \psi^*\pi'_*\mt E'$ of sheaves  on $C^{st}$. Then there exists an isomorphism of vector bundles $\mt E_{C^c_{\text{exc}}}\cong \widetilde\psi^*\mt E'_{C'^c_{\text{exc}}}$ on $ C^c_{\text{exc}}$, where $\widetilde \psi: C^c_{\text{exc}}\cong C'^c_{\text{exc}}$ is the isomorphism of curves induced by $\psi$.
\end{cor}

\begin{proof}Let $R$ be a subcurve composed only by maximal destabilizing chains. We set $\widetilde C:=R^c$, $D$ the reduced subscheme $R\cap \widetilde C$, $D^{st}$ the reduced subscheme $\pi(D)\subset C^{st}$. Consider the following exact sequence
$$
0\lra \mt I_{R}\mt E\lra \mt E\lra \mt E_R\lra 0.
$$
With the same argument used in the proof of Proposition \ref{prop5NS}, we can identify $\mt I_R\mt E$ with $\mt I_{D}\mt E_{\widetilde C}$. Applying the left exact functor $\pi_*$, we have
$$
0\lra \pi_*\left(\mt I_{D}\mt E_{\widetilde C}\right)\lra \pi_*(\mt E)\lra \pi_*(\mt E_R)\lra 0.
$$
The sequence is right exact because $\mt E$ is positive. Moreover, $\pi_*(\mt E_R)$ is supported at $D^{st}$ and annihilated by $\mt I_{D^{st}}$. By Proposition \ref{prop5NS}(ii), the morphism $\pi_*(\mt E)\lra \pi_*(\mt E_R)$ induces an isomorphism of vector spaces at the restriction to $D^{st}$. This means that $\pi_*\left(\mt I_D\mt E_{\widetilde C}\right)=\mt I_{D^{st}}\left(\pi_*\mt E\right)$.

For the rest of the proof, $R$, resp. $R'$, will be the union of all destabilizing chains $C_{exc}$, resp. $C'_{\text{exc}}$. Suppose that $(C,\mt E)$ and $(C',\mt E')$, such that there exist $\psi:C^{st}\rightarrow C'^{st}$ and $\phi:\pi_*\mt E\cong \psi^*\pi'_*\mt E'$. By the above observations, we have
$$
\pi_*(\mt I_D\mt E_{\widetilde C})\cong \psi^*\pi'_*(\mt I_{D'}\mt E'_{\widetilde C'}).
$$
We remark that $\widetilde C$ and $\widetilde C'$ are isomorphic and $\psi$ induces an isomorphism $\widetilde\psi$ between them, such that
$$
\psi^*\pi'_*(\mt I_{D'}\mt E'_{\widetilde C'}).\cong \pi_*(\mt I_D\widetilde\psi^*\mt E'_{\widetilde C'}).
$$
Adapting the proof of \cite[Remark 4(ii)]{NS} to our more general case, we can show that, 
$$
\pi_*(\mt I_D\mt E_{\widetilde C})\cong \pi_*(\mt I_D\widetilde\psi^*\mt E'_{\widetilde C'})\Longleftrightarrow \mt I_D\mt E_{\widetilde C}\cong \mt I_D\widetilde\psi^*\mt E'_{\widetilde C'}.
$$
Twisting by $\mt I_D^{-1}$, we have the assertion.
\end{proof}

\begin{defin}Let $\mathcal E$ be a vector bundle of rank $r$ and degree $d$ on a semistable curve $C$. $\mathcal E$ is \emph{balanced} if for any subcurve $Z\subset C$ it satisfies the \emph{basic inequality}:
$$
\left|\deg \mt E_Z-d\frac{\omega_Z}{\omega_C}\right|\leq r\frac{k_Z}{2}.
$$
$\mathcal E$ is \emph{properly balanced} if it is balanced and admissible. If $C\rightarrow S$ is a family of semistable curves and $\mt E$ is a vector bundle of rank $r$ and degree $d$ for this family, we will call it \emph{(properly) balanced} if it is (properly) balanced on any geometric fiber.
\end{defin}
There are several equivalent definitions of balanced vector bundle. We list some which will be useful later. 
\begin{lem}\label{balcon} Let $\mt E$ be a vector bundle of rank $r$ and degree $d$ over a semistable curve $C$. The following conditions are equivalent:
\begin{enumerate}[(i)]
\item $\mt E$ is balanced,
\item for any subcurve $Z\subset C$, we have
\begin{equation}\label{second}\omega_C\cdot \chi(\mt F_Z)\leq \omega_Z \cdot \chi(\mt E),\end{equation}
where $\mt F_Z$ is the subsheaf of $\mt E_Z$ of sections vanishing on $Z\cap Z^c$. 
\item for any subcurve $Z\subset C$, we have \begin{equation}\label{third}\chi(\mathcal G_Z)\geq 0,\end{equation}
where $\mt G$ is the vector bundle $\left(\det\mathcal E\right)^{\otimes 2-2g}\otimes\omega_{C}^{\otimes d+r(g-1)}\oplus\oo_C^{\oplus r(2g-2)-1}$.\end{enumerate}
Moreover, it is enough to check the above (equivalent) conditions for those subcurves $Z\subset C$ such that $Z$ and $Z^c$ are connected.
\end{lem}

\begin{proof}A direct computation shows that $\mt E$ satisfies the basic inequality for a subcurve $Z$ if and only if
$$
\deg \mt E_Z-d\frac{\omega_Z}{\omega_C}\leq r\frac{k_Z}{2}\text{ and } \deg \mt E_{Z^c}-d\frac{\omega_{Z^c}}{\omega_C}\leq r\frac{k_Z}{2}.
$$

$(i)\Leftrightarrow (ii)$. Let $Z\subset C$ be a (not necessarily connected) subcurve. We have the following exact sequence
$$
0\to\mt F_Z\to \mt E_Z\to \mt E_D\to 0,
$$
where $D:=Z\cap Z^c$, with its reduced scheme structure. In particular, the inequality (\ref{second}) becomes
$$
\omega_C\left(\deg \mt E_Z-\frac{r}{2}(k_Z+\omega_Z)\right)=\omega_C\left(\deg\mt E_Z+r(1-g_Z)-r\cdot k_Z\right)\leq \omega_Z\left(d+r(1-g)\right)=\omega_Z(d-\frac{r}{2}\omega_C).
$$
The first equality comes from $\omega_Z=2g_Z-2+k_Z$. If we divide both sides by the positive integer $\omega_C$, we obtain $$
\deg \mt E_Z-d\frac{\omega_Z}{\omega_C}\leq r\frac{k_Z}{2}.$$
So, $\mt E$ satisfies the basic inequality at $Z$ if and only if $\mt F_Z$ and $\mt F_{Z^c}$ satisfy the inequality (\ref{second}), and we have the proof.

$(i)\Leftrightarrow (iii)$. Let $Z\subset C$ be a (not necessarily connected) subcurve. By direct computation:
\begin{align}
\nonumber\chi(\mt G_Z)&=\chi\left((\left(\det\mathcal E_Z\right)^{\otimes 2-2g}\otimes\omega_{Z}^{\otimes d+r(g-1)}\right)+\left( r(2g-2)-1\right)\cdot\chi\left(\oo_Z\right)=\\
\nonumber&=-\omega_C\cdot \deg\mt E_Z+\omega_Z\left(d+r\frac{\omega_C}{2}\right)+\chi(\oo_Z)+(r\cdot \omega_C-1)\chi(\oo_Z)=\\
\nonumber&=-\omega_C\cdot \deg\mt E_Z+\omega_Z\cdot d + r\cdot \omega_C \frac{k_Z}{2}=\omega_C\left(-\deg\mt E_Z+d\frac{\omega_Z}{\omega_C} + r\frac{k_Z}{2}\right).
\end{align}
Since $\omega_C$ is a positive integer, we have that $\mt E$ satisfies the basic inequality for $Z$ if and only if $\chi(\mt G_Z)$ and $\chi(\mt G_{Z^c})$ are $\geq 0$. And we obtain the proof.

It remains to show the last assertion. Assume the $\mt E$ satisfies the inequality (\ref{third}) for any connected subcurve such that the complementary one is still connected. We first observe the followings:
\begin{enumerate}[(a)]
\item $\chi(\mt G_Z)=\sum_{i=1}^{m}\chi(\mt G_{Z_i})$ and $k_Z=\sum_{i=1}^mk_{Z_i}$, for any subcurve $Z\subset C$ with  $Z_1,\ldots, Z_m$ as connected components, 
\item $0=\chi(\mt G_C)=\chi(\mt G_Z)+\chi(\mt G_{Z^c})-r\cdot\omega_C\cdot k_Z$ for any subcurve $Z\subset C$,
\item if $Z$ is connected and $(Z^c)_1,\ldots (Z^c)_m$ are the connected components of $Z^c$, we have that $((Z^c)_i)^c=\cup_{j\neq i}((Z^c)_j)^c\cup Z$ is connected,
\item if $Z$ and $Z^c$ are connected, then 
$$
0\leq \chi(\mt G_{Z^c})=\omega_C\left(-\deg\mt E_{Z^c}+d\frac{\omega_{Z^c}}{\omega_C} + r\frac{k_Z}{2}\right)\leq r\cdot\omega_C\cdot k_Z.
$$
\end{enumerate}
Let $Z\subset C$ be any subcurve. By (a), we can assume that $Z$ is connected. Then we have
$$
\chi(\mt G_Z)\stackrel{(b)}{=}-\chi(\mt G_{Z^c})+r\cdot\omega_C\cdot k_Z\stackrel{(a)}{=}-\sum_{i=1}^m\chi(\mt G_{(Z^c)_i})+r\cdot\omega_C\cdot k_{Z_i}\stackrel{(c)+(d)}{\geq} 0.
$$
This concludes the proof.
\end{proof}

\begin{lem}\label{opencond}Let $(p:C\rightarrow S, \mathcal E)$ be a vector bundle of rank $r$ and degree $d$ over a family of curves. Suppose that $S$ is locally noetherian. The locus where $C$ is a semistable curve and $\mt E$ is strictly positive, resp. admissible, resp. properly balanced, is open in $S$.
\end{lem}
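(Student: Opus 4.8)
The assertion is local on $S$, so I may assume $S$ noetherian and $p$ proper and flat. Since the three conditions on $\mathcal E$ are imposed only on semistable fibres, I first check that the semistable locus is open and then work over it. Having at worst nodal geometric fibres is open (worse singularities form a closed locus). At a point $s_0$ with $C_{s_0}$ semistable one has, for $g\geq 2$, $R^1p_*\omega_{C/S}^{\otimes 3}=0$ on the fibre and hence in a neighbourhood; there $p_*\omega_{C/S}^{\otimes 3}$ is locally free and commutes with base change, so fibrewise global generation of $\omega_{C/S}^{\otimes 3}$ is open, and on nodal fibres it is equivalent to semistability (a rational component $R$ with $k_R$ attaching points has $\omega^{\otimes 3}|_R=\mathcal O(3(k_R-2))$, which is globally generated iff $k_R\geq 2$). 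Thus a neighbourhood of $s_0$ is semistable, and after shrinking I may assume every fibre is semistable and that the stabilization $\pi\colon C\to C^{st}$ is a family of stable curves.

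\textbf{Strict positivity.} Here the plan is to use Corollary \ref{ampleness}. Fix $s_0$ with $\mathcal E_{s_0}$ strictly positive and choose, by that corollary, $n$ large enough that $\mathcal E(n)_{s_0}$ is globally generated, induces a closed embedding $C_{s_0}\hookrightarrow Gr(H^0(C_{s_0},\mathcal E(n)_{s_0}),r)$, and also $R^1p_*\mathcal E(n)=0$ near $s_0$ (so $p_*\mathcal E(n)$ is locally free with base change). Global generation on fibres (surjectivity of $p^*p_*\mathcal E(n)\to\mathcal E(n)$) is open, and the locus over which the resulting $S$-morphism $C\to Gr_S$ restricts to a closed embedding on fibres is open because $p$ is proper. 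On the intersection neighbourhood the hypotheses of the converse direction of Corollary \ref{ampleness} hold fibrewise, so $\mathcal E_s$ is strictly positive there, and the strictly positive locus is open.

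\textbf{Admissibility.} By Proposition \ref{prop5NS}(i), over the semistable locus $\mathcal E_s$ is admissible if and only if it is strictly positive and $\pi_{s*}\mathcal E_s$ is torsion free. The first locus is open by the previous paragraph; over it $\mathcal E$ is positive (Remark \ref{lenght}), so Proposition \ref{NSlemma4} gives that $\pi_*\mathcal E$ is $S$-flat with $(\pi_*\mathcal E)_s=\pi_{s*}\mathcal E_s$. Therefore $\pi_*\mathcal E$ is a flat family of coherent sheaves on $C^{st}\to S$, and fibrewise torsion-freeness is an open condition for such families (openness of the locus where the relative torsion subsheaf vanishes). The admissible locus, being the intersection of these two open sets, is open.

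\textbf{Proper balancing and the main difficulty.} As properly balanced means balanced and admissible, and admissibility is open, it remains to treat the balanced locus. The basic inequality depends only on the multidegree of $\mathcal E$, and on each stratum where the dual graph of the fibre and this multidegree are constant it is a fixed finite set of inequalities; using the noetherian stratification of $S$ by topological type, the balanced locus is therefore constructible, and openness reduces to stability under generization, that is, to the fact that balancedness is preserved when a node is smoothed. This combinatorial step, controlling how $\deg\mathcal E_Z$, $\omega_Z$ and $k_Z$ transform under partial smoothing (equivalently, the persistence of the non-negativity of $\chi(\mathcal G_Z)$ of Remark \ref{balcon}(v) across such specializations), is the main obstacle; I expect to carry it out exactly as in the rank-one analysis of Caporaso \cite{Cap94} and Melo--Viviani \cite{MV}, the factor $r$ on the right-hand side of the basic inequality being irrelevant to the argument. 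The remaining three conditions, by contrast, follow directly from Corollary \ref{ampleness}, Proposition \ref{prop5NS}, Proposition \ref{NSlemma4} and the standard openness of global generation and of torsion-freeness.
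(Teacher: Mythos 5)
Your treatment of the semistable, strictly positive and admissible loci is correct and follows essentially the same route as the paper: openness of semistability of the fibres, Corollary \ref{ampleness} combined with openness of the closed-embedding condition under a proper morphism for strict positivity, and Proposition \ref{prop5NS}(i) together with Proposition \ref{NSlemma4} and openness of fibrewise torsion-freeness of the $S$-flat sheaf $\pi_*\mathcal E$ for admissibility.

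The properly balanced case, however, contains a genuine gap: you reduce openness to constructibility plus stability of the basic inequality under generization, and then explicitly defer the generization step (``I expect to carry it out exactly as in the rank-one analysis''). That step is precisely the content of the assertion for the balanced condition, so as written the proof is incomplete. Nor is it a formality: given a subcurve $Z$ of the generic fibre with $Z$ and $Z^c$ connected, you must produce from it a subcurve of the special fibre with the same value of $\chi(\mathcal G_Z)$, and the flat limit of $Z$ over a DVR need not be reduced nor have connected complement; one then has to check that passing to the reduced limit can only increase $\chi$ and that balancedness at the special point controls \emph{all} subcurves (Remark \ref{balcon}(i)--(ii)), not only those with connected complement. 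The paper sidesteps this analysis entirely by a different device: for each degree $k$ and genus $i$ it forms the relative Hilbert scheme $H_{k,i}$ of subcurves of the fibres of $C\to S''$, observes that $h\mapsto \chi(\mathcal G^{k,i}_h)$ is locally constant on $H_{k,i}$ by flatness of the universal subcurve, and uses properness of $H_{k,i}\to S''$ to conclude that the image of the components on which this Euler characteristic is negative is closed; the complement is the required open set. Either supply the limit-subcurve argument in full, or switch to the Hilbert-scheme argument, which packages all subcurves of all fibres into a proper family and reduces the question to the closedness of the image of a closed set.
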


\begin{proof}We can suppose that $S$ is noetherian and connected. Suppose that there exists a point $s\in S$ such that the geometric fiber is a properly balanced vector bundle over a semistable curve. It is known that the locus of semistable curves is open on $S$ (see \cite[\href{https://stacks.math.columbia.edu/tag/0E6X}{Tag 0E6X}]{stacks-project}). So we can suppose that $C\rightarrow S$ is a family of semistables curves of genus $g$. Up to twisting by a suitable power of $\omega_{C/S}$ we can assume, by Proposition \ref{ampleness}, that the rational $S$-morphism
$$
i:C\dashrightarrow Gr(p_*\mathcal E,r)
$$
is a closed embedding over $s$. By \cite[Lemma 3.13]{K}, there exists an open neighborhood $S'$ of $s$ such that $i$ is a closed embedding. Equivalently $\mathcal E_{S'}$ is strictly positive by Proposition \ref{ampleness}. We denote as usual with $\pi:C\rightarrow C^{st}$ the stabilization morphism. By Proposition \ref{NSlemma4}, the sheaf $\pi_*(\mt E_{S'})$ is flat over $S'$ and the push-forward commutes with the restriction on the fibers. In particular, it is torsion free at the fiber $s$, and so there exists an open subset $S''$ of $S'$ where $\pi_*(\mt E_{S''})$ is torsion-free over any fiber (see \cite[Proposition 2.3.1]{HL}). By Proposition \ref{prop5NS}, $\mt E_{S''}$ is admissible. Putting everything together, we obtain an open neighbourhood $S''$ of $s$ such that over any fiber we have an admissible vector bundle over a semistable curve. Let $0\leq k\leq d$, $0\leq i\leq g$ be integers. Consider the relative Hilbert scheme
$$
Hilb_{C/S''}^{\oo_{C}(1),P(m)=km+1-i},
$$
where $\oo_{C}(1)$ is the line bundle induced by the embedding $i$. We call $H_{k,i}$ the closure of the locus of semistable curves in $Hilb_{C/S''}^{\oo_{C}(1),P(m)=km+1-i}$ and let $Z_{k,i}\hookrightarrow C\times_{S''}H_{k,i}$ be the universal curve. Consider the vector bundle $\mathcal G$ over $C\rightarrow S''$ as in Lemma \ref{balcon}(iii). Let $\mathcal G^{k,i}$ be its pull-back on $Z_{k,i}$. The function
$$\chi: h\mapsto \chi(\mathcal G^{k,i}_h)$$
is locally constant on $H_{k,i}$. Now $\pi: H_{k,i}\rightarrow S''$ is projective. So the projection on $S''$ of the connected components of
$$
\bigsqcup_{0\leq k\leq d\atop 0\leq i\leq g}\\ H_{k,i}
$$
such that $\chi$ is negative is a closed subscheme. Its complement in $S$ is open and, by Lemma \ref{balcon}(iii), it contains $s$ and defines a family of properly balanced vector bundles over semistable curves.
\end{proof}

\subsection{The moduli stack of properly balanced vector bundles $\CVc$.}\label{univ}

Now we will introduce our main object of study: \emph{the universal moduli stack $\CVc$ of properly balanced vector bundles of rank $r$ and degree $d$ on semistable curves of arithmetic genus $g$}. Roughly speaking, we want a space such that its points are in bijection with the pairs $(C,\mt E)$, where $C$ is a semistable curve on $k$ and $\mt E$ is a properly balanced vector bundle on $C$. This subsection is devoted to the construction of this space as an Artin stack.

\begin{defin}Let $r$, $d$, $g$ integers such that $r\geq 1$, $g\geq 2$. Let $\CVc$ be the category fibered in groupoids over $Sch/k$ whose objects over a scheme $S$ are the families of semistable curves of genus $g$ with a properly balanced vector bundle of total degree $d$ and rank $r$. 

A morphism between two objects $(C\to S, \mt E)$, $(C'\to S',\mt E')$ is the datum of a cartesian diagram
$$
\xymatrix{
C\ar[r]^{\varphi}\ar[d]& C'\ar[d]\\
S\ar[r]&S'
}
$$
and an isomorphism $\varphi^*\mt E'\cong \mt E$ of vector bundles over $C$.
\end{defin}

The aim of this subsection is proving the following

\begin{teo}\label{teo1}$\CVc$ is an irreducible smooth Artin stack of dimension $(r^2+3)(g-1)$. Furthermore, it admits an open cover $\{\overline{\mathcal U}_n\}_{n\in\mathbb Z}$ such that $\overline{\mathcal U}_n$ is a quotient stack of a smooth noetherian scheme by a suitable general linear group.
\end{teo}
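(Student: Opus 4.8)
The plan is to construct $\CVc$ as a quotient stack of an open subscheme of a relative Hilbert scheme by a general linear group, in the classical style used for moduli of sheaves (Gieseker, Simpson) and, in the stacky setting, for $\CJc$ by Melo--Viviani. The key enabling fact is Corollary \ref{ampleness}: a strictly positive vector bundle $\mt E$ on a semistable curve $C$ becomes, after twisting by a large power $\omega_C^{\otimes n}$, globally generated and induces a closed embedding $C\hookrightarrow Gr(H^0(C,\mt E(n)),r)$. Since every properly balanced bundle is in particular strictly positive, this rigidifies the pair $(C,\mt E)$ up to the choice of a basis of the space of sections, which is exactly the ambiguity absorbed by the group action.

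\textbf{The key steps.} First I would fix a large integer $n$ and let $P(m)$ be the Hilbert polynomial of $\mt E(n)$ (depending only on $r$, $d$, $g$ and $n$ via Riemann--Roch), and set $N+1:=P(n)=\chi(\mt E(n))=d+rn(2g-2)+r(1-g)$, noting that $h^1=0$ by Proposition \ref{stabil}. Consider the relative Hilbert scheme $\mathcal H$ of curves in $Gr:=Gr(k^{N+1},r)$ with the appropriate Hilbert polynomial, together with the universal quotient bundle on $Gr$ restricted to the universal curve; this carries a tautological family of curves with a rank-$r$ bundle $\mt E$ and an identification $k^{N+1}\xrightarrow{\sim}H^0(\mt E(n))$. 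Let $\overline{\mathcal H}_n\subset\mathcal H$ be the locally closed subscheme where the family is a semistable curve, the bundle is properly balanced of the right degree, the twist $\mt E(n)$ is the one embedding the curve, and the map $k^{N+1}\to H^0(\mt E(n))$ is an isomorphism. By the openness lemma just proved (the last Lemma of \S\ref{pbvb}), being semistable-with-properly-balanced-bundle is an open condition; the remaining conditions (isomorphism on global sections, $h^1=0$, nondegeneracy of the embedding) are standard open conditions. The group $GL_{N+1}$ acts on $\overline{\mathcal H}_n$ by changing the basis of $k^{N+1}$, and I would define $\overline{\mathcal U}_n:=[\overline{\mathcal H}_n/GL_{N+1}]$. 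One checks that $\CVc\cong\varinjlim_n\overline{\mathcal U}_n$, the open substacks $\overline{\mathcal U}_n$ forming an increasing cover indexed by $n$ (a pair lies in $\overline{\mathcal U}_n$ once $n$ is large enough for the embedding criterion of Corollary \ref{ampleness} to apply), which gives the quotient presentation and algebraicity.

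\textbf{Smoothness, dimension, and irreducibility.} Smoothness I would obtain from deformation theory: the deformations of a pair $(C,\mt E)$ with $C$ nodal and $\mt E$ a vector bundle are governed by a complex whose obstruction space vanishes because $C$ is a curve (so $H^2$ of the relevant tangent complex vanishes) and the relevant $\mathrm{Ext}^2$ groups are zero for sheaves on a curve; this is the content referenced in \S\ref{locstr}. The dimension count combines $\dim\CMg=3g-3$ (deformations of the curve), the dimension $r^2(g-1)+1$ of the stack of rank-$r$ degree-$d$ bundles on a fixed smooth curve, and the extra $\mathbb G_m$ from the automorphisms, giving $(3g-3)+(r^2(g-1)+1)-1+(\text{boundary corrections})=(r^2+3)(g-1)$; I would verify this numerically at a generic point $(C,\mt E)$ with $C$ smooth, where the computation is transparent, and invoke smoothness to propagate it. For irreducibility, the forgetful map $\overline\phi_{r,d}\colon\CVc\to\CMg$ has irreducible target, and I would show the generic fiber is irreducible: over a smooth curve the fiber is the stack $\Vec_{r,d}$ of vector bundles, which is known to be irreducible. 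Since $\CVc$ is smooth (hence its irreducible components are disjoint) and the open dense locus $\Vc$ over $\Mg$ is irreducible with dense image, irreducibility of the whole stack follows.

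\textbf{The main obstacle.} The hardest part will be verifying that $\overline{\mathcal H}_n$ is genuinely a nice (smooth, or at least locally closed and of the expected dimension) subscheme cut out by the right conditions, and in particular checking that the conditions assembled from Corollary \ref{ampleness} and the openness Lemma patch correctly across different values of $n$ so that the $\{\overline{\mathcal U}_n\}$ really form an open cover of a single stack with the claimed transition behavior. The subtlety is that the twist $\mt E\mapsto\mt E(n)$ changes the degree and the Hilbert polynomial, so one must track carefully how the quotient presentations for different $n$ glue; this is a bookkeeping-heavy but standard argument, and the genuinely new input is only that properly balanced is an open condition, which the preceding Lemma already supplies. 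Boundedness of the family of properly balanced bundles (needed for each $\overline{\mathcal H}_n$ to be of finite type) is the other technical point, to be supplied by the boundedness results promised in \S\ref{Sc}.
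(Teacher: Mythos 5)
Your construction is essentially the one the paper carries out: the loci $\overline{\mathcal U}_n$ cut out by cohomological and embedding conditions on $\mathcal E(n)$, presented as quotients of an open subscheme of a Hilbert scheme of curves in a Grassmannian by $GL$, with smoothness from deformation theory of pairs on nodal curves and irreducibility reduced to that of $\Vc$ via the fibration over $\Mg$. Two places, however, are asserted where an argument is actually required.

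First, your irreducibility step leans on the phrase ``the open dense locus $\Vc$'', but density of $\Vc$ in $\CVc$ is precisely the nontrivial point: since the components of the smooth stack $\CVc$ are disjoint, one must rule out a component consisting entirely of pairs with singular underlying curve, i.e.\ one must show every properly balanced pair $(C,\mathcal E)$ can be smoothed. The paper does this (Lemma \ref{111} and Proposition \ref{finteo1}) by proving that $\mathrm{Def}_h\to\mathrm{Def}_{(C,\mathcal E)}$ is formally smooth and composing with the formal smoothness of $\mathrm{Def}_C\to\prod_{x}\mathrm{Def}_{\mathrm{Spec}\,\hat{\mathcal O}_{C,x}}$, which smooths the nodes. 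The ingredients are implicit in your smoothness paragraph, but the smoothing statement has to be extracted and used explicitly; without it the ``irreducible base, irreducible fibers'' argument only gives irreducibility of $\Vc$, not of $\CVc$.

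Second, exhibiting an open cover by quotient stacks proves algebraicity only after one knows $\CVc$ is a stack (effectivity of descent) with representable, quasi-compact, separated diagonal; the paper spends Propositions \ref{descenteffective} and \ref{diagonalrep} on exactly this, using relative ampleness of $\det\mathcal E(n)$ to descend the family and Hilbert/Isom schemes to represent the diagonal, and your proposal is silent on it. Two minor points: the cover need not be claimed to be increasing in $n$ (nothing requires nesting, and it is not obviously true since twisting by $\omega_C$ is trivial on exceptional components), and the dimension count needs no ``boundary corrections'' --- smoothness plus the computation at a point of $\Vc$ already propagates the value $(r^2+3)(g-1)$ everywhere.
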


\begin{rmk}\label{capo} In the case $r=1$, $\overline{\mt Vec}_{1,d,g}$ is quasi compact and it corresponds to the compactification of the universal Jacobian over $\CMg$ constructed by Caporaso \cite{Cap94} and later generalized by Melo \cite{Mel09}. Following the notation of \cite{MV}, we will set $\CJc:=\overline{\mt Vec}_{1,d,g}$.
\end{rmk}

The proof consists in several steps, following the strategies adopted by Kausz \cite{K} and Wang \cite{W}. First, we observe that $\CVc$ is clearly a stack for the Zariski topology. We now prove that it is a stack also for the fpqc topology (defined in \cite[Section 2.3.2]{FAG}) with representable diagonal. 

\begin{prop}\label{diagonalrep} $\CVc$ is a stack for the fpqc topology and its diagonal is representable, quasi-compact and separated.
\end{prop}

\begin{proof}We need to show the followings
\begin{enumerate}[(i)]
\item let $S'\rightarrow S$ be an fpqc morphism of schemes, set $S'\times_S S'$ and $\pi_i$ the natural projections. Let $(C'\rightarrow S',\mathcal E')\in\CVc(S')$. Then every descent data
$$
\varphi:\pi_1^*(C'\rightarrow S',\mt E')\cong\pi_2^*(C'\rightarrow S',\mt E')
$$
is effective.
\item Let $S$ be an affine scheme. Let $(C\rightarrow S,\mt E),\;(C'\rightarrow S,\mt E')\in\CVc(S)$. The contravariant functor
$$
(T\rightarrow S)\mapsto \text{Isom}_T((C_T,\mt E_{T}),(C'_T,\mt E'_{T}))
$$
is representable by a quasi-compact separated $S$-scheme.
\end{enumerate}
We first prove (i). By standard limit argument, we can assume that $S$ and $S'$ are noetherian. Up to twisting by a power of the dualizing sheaf, we can assume that the restriction of $\mt E$ to each irreducible component of any geometric fibre has positive degree if the component is not contained in some destabilizing chain. The positivity on the destabilizing components is guaranteed by the admissibility of $\mt E$. In particular, we can suppose that $\det\mt E'$ is relatively ample on $S'$, in particular $\varphi$ induces a descent data for the polarized family of curves $(C'\rightarrow S',\det\mt E')$ and this is effective by \cite[Theorem. 4.38]{FAG}. So there exists a family of curves $C\rightarrow S$ such that its pull-back via $S'\rightarrow S$ is $C'\rightarrow S'$. In particular, $C'\rightarrow C$ is an fpqc cover and $\varphi$ induces a descent data for $\mt E'$ on $C'\rightarrow C$, which is effective by \cite[Theorem. 4.23]{FAG}.

It remains to prove (ii). Using again the standard limit argument, we can restrict to the category of locally noetherian schemes. Suppose that $S$ is an affine connected noetherian scheme. Consider the contravariant functor
$$
\left(T\rightarrow S\right)\mapsto \text{Isom}\left(C_T,C'_T\right).
$$
This functor is represented by a scheme $B$ (see \cite[pp. 47-48]{ACG11}). More precisely: let $Hilb_{C\times_S C'/S}$ be the Hilbert scheme which parametrizes closed subschemes of $C\times_S C'$ flat over $S$. $B$ is the open subscheme of $Hilb_{C\times_S C'/S}$ with the property that a morphism $f:T\rightarrow Hilb_{C\times_S C'/S}$ factorizes through $B$ if and only if the projections $\pi: Z_T\rightarrow C_T$ and $\pi': Z_T\rightarrow C'_T$ are isomorphisms, where $Z_T$ is the closed subscheme of $C\times_S C'$ represented by $f$. Consider the universal pair
$$
\left(Z_B,\varphi:=\pi'\circ \pi^{-1}: C_B\cong Z_B\cong C'_B\right).
$$
Now we prove that $B$ is quasi-projective. By construction it is enough to show that $B$ is contained in $Hilb_{C\times_S C'/S}^{\mt L,P(m)}$, which parametrizes closed subschemes of $C\times_S C'/S$ with Hilbert polynomial $P(m)$ respect to the relatively ample line bundle $\mt L$ on $C\times_S C'/S$. Let $\mt L$ (resp. $\mt L'$) be a relatively very ample line bundle on $C/S$ (resp. $C'/S$). We can take $\mt L=(\det\mt E)^n$ and $\mt L'=(\det\mt E')^n$ for $n$ big enough. Then the sheaf $\mt L\boxtimes_S\mt L'$ is relatively very ample on $C\times_S C'/S$. Using the projection $\pi$ we can identify $Z_B$ and $C_B$. The Hilbert polynomial of $Z_B$ with respect to the polarization $\mt L\boxtimes_S\mt L'$ is
$$
P(m)=\chi((\mt L\boxtimes_S\mt L')^m)=\chi(\mt L^m\otimes \varphi^*\mt L'^m)=\deg(\mt L^m)+\deg(\mt L'^m)+1-g.
$$
It is clearly independent from the choice of the point in $B$ and from $Z_B$, proving the quasi-projectivity. In particular, $B$ is quasi-compact and separated over $S$. The proposition follows from the fact that the contravariant functor
$$
\left(T\rightarrow B\right)\mapsto\text{Isom}_{C_T}\left(\mt E_T,\varphi^*\mt E'_T\right)
$$
is representable by a quasi-compact separated scheme over $B$ (see the proof of \cite[Theorem 4.6.2.1]{LMB}).

\end{proof}

We now introduce a useful open cover of the stack $\CVc$. We will prove that any open subset of this cover has a presentation as quotient stack of a scheme by a suitable general linear group. In particular, $\CVc$ admits a smooth surjective representable morphism from a locally noetherian scheme. Putting together this fact with Proposition \ref{diagonalrep}, we get that $\CVc$ is an Artin stack locally of finite type.

\begin{prop}For any scheme $S$ and any $n\in \mathbb Z$, consider the subgroupoid $\overline{\mathcal U}_n(S)$ in $\CVc (S)$ of pairs $(p:C\rightarrow S,\mathcal E)$ such that
\begin{enumerate}[(i)]
\item $R^ip_*\mathcal E(n)=0$ for any $i>0$,
\item $\mathcal E(n)$ is relatively generated by global sections, i.e. the canonical morphism $p^*p_*\mathcal E(n)\rightarrow\mathcal E(n)$ is surjective, and the induced morphism $C\rightarrow Gr(p_*\mathcal E(n),r)$ is a closed embedding.
\end{enumerate}
Then the sheaf $p_*\mathcal E(n)$ is flat on $S$ and $\mathcal E(n)$ is cohomologically flat over $S$. In particular, for any base change $T\to S$, the pull-back $(p_T:C_T\to T,\mt E_T)$ is an object of $\overline{\mt U}_n(T)$, i.e. $\overline{\mathcal U}_n$  is a fibered full subcategory.
\end{prop}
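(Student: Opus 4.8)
The plan is to deduce everything from the standard theory of cohomology and base change applied to the proper flat morphism $p\colon C\to S$ and the $S$-flat coherent sheaf $\mathcal E(n)$. First I would reduce to the case where $S$ is a noetherian affine scheme: by the same limit formalism used in Lemma \ref{rednoet} (i.e.\ writing $S=\varprojlim S_\alpha$ with $S_\alpha$ noetherian and descending the data along \cite{EGAIV}), both the defining conditions (1), (2) and the desired conclusions are compatible with this presentation, so it suffices to treat each noetherian $S_\alpha$. On such a base, since $p$ has relative dimension $1$, there is a bounded complex $K^\bullet=[K^0\xrightarrow{d^0}K^1]$ of finite free $\mathcal O_S$-modules, concentrated in degrees $0$ and $1$, computing the cohomology of $\mathcal E(n)$ universally: for every $g\colon T\to S$ one has $R^i(p_T)_*\mathcal E(n)_T=\mathcal H^i(K^\bullet\otimes_{\mathcal O_S}\mathcal O_T)$. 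Here I use that $\omega_{C/S}$, hence $\mathcal E(n)=\mathcal E\otimes\omega_{C/S}^{\otimes n}$, commutes with base change, so that $\mathcal E(n)_T=\mathcal E_T(n)$.

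The flatness and cohomological flatness then fall out of a splitting argument. Condition (1) says $R^1p_*\mathcal E(n)=\operatorname{coker}(d^0)=0$, i.e.\ $d^0$ is surjective; since $K^1$ is free the surjection splits, giving $K^0\cong\ker(d^0)\oplus K^1$. Consequently $p_*\mathcal E(n)=\ker(d^0)$ is a direct summand of a finite free module, hence locally free, so it is flat over $S$; and since the splitting is preserved by $-\otimes_{\mathcal O_S}\mathcal O_T$, one gets $\ker(d^0\otimes\mathcal O_T)=\ker(d^0)\otimes\mathcal O_T$ and $\operatorname{coker}(d^0\otimes\mathcal O_T)=0$. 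The first equality is exactly cohomological flatness in degree $0$ (the formation of $p_*\mathcal E(n)$ commutes with arbitrary base change), while the second shows that $R^1$ stays zero after any base change.

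Finally, I would check that conditions (1) and (2) are stable under pullback, which is what makes $\overline{\mathcal U}_n$ a fibered full subcategory. For (1): $R^1(p_T)_*\mathcal E_T(n)=0$ by the previous paragraph, and $R^i=0$ for $i\ge 2$ by relative dimension $1$. For (2): cohomological flatness gives $g^*p_*\mathcal E(n)\cong(p_T)_*\mathcal E_T(n)$, so the evaluation map $p^*p_*\mathcal E(n)\to\mathcal E(n)$ pulls back to $p_T^*(p_T)_*\mathcal E_T(n)\to\mathcal E_T(n)$; surjectivity is preserved because pullback of sheaves is right exact, and the classifying morphism base-changes to $C_T\to Gr((p_T)_*\mathcal E_T(n),r)$, which remains a closed embedding since closed embeddings are stable under base change. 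Fullness is then immediate, the morphisms of $\overline{\mathcal U}_n$ being by definition the cartesian diagrams inherited from $\CVc$.

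The main obstacle is really the bookkeeping around the base-change isomorphism: the whole argument hinges on the universal two-term Grothendieck complex and on the splitting forced by the vanishing of $R^1$, so the points that need care are establishing that these persist over a non-noetherian base (handled by the limit reduction) and that $\mathcal E(n)$ genuinely commutes with base change. Once cohomological flatness in degree $0$ is in hand, the stability of (1) and (2), and hence the fibered-subcategory statement, is purely formal.
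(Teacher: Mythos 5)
Your argument is correct and follows essentially the same route as the paper: the paper simply cites \cite[Proposition 4.1.3]{W} for the flatness of $p_*\mathcal E(n)$, the cohomological flatness, and the base-change stability of condition (1) and of relative generation by global sections, and then, exactly as you do, concludes that the classifying morphism to the Grassmannian remains a closed embedding after base change. Your unwinding of that citation via the two-term Grothendieck complex and the splitting forced by $R^1p_*\mathcal E(n)=0$ is precisely the content of the cited result, so no further comparison is needed.
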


\begin{proof}We set $\mt F:=\mt E(n)$. By \cite[Proposition 4.1.3]{W}, we know that $p_*\mathcal F$ is flat on $S$ and $\mathcal F$ is cohomologically flat over $S$. Consider the following cartesian diagram
$$\xymatrix{
C_T \ar[d]^{p_T}\ar[r] & C\ar[d]^p\\
T\ar[r]^{\phi} & S
}$$
By \emph{loc. cit.}, we have that $R^ip_{T*}(\mathcal F_T)=0$ for any $i>0$ and that $\mathcal F_T$ is relatively generated by global sections. It remains to prove that the induced $T$-morphism $C_T\rightarrow Gr(p_{T*}\mathcal F_T,r)$ is a closed embedding. By cohomological flatness and the base change property of the Grassmannian, this is nothing but the pull-back of the closed embedding $C\to Gr(p_*\mt F,r)$ along $T\to S$.
\end{proof}

\begin{lem} \label{subcat}The subcategories $\left\{ \overline{\mt U}_n\right\}_{n\in\mathbb Z}$ form an open cover of $\CVc$.
\end{lem}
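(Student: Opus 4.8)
The plan is to establish two facts and observe that together they are exactly the assertion: first, that each $\overline{\mathcal U}_n$ is an open substack of $\CVc$, and second, that every geometric point of $\CVc$ lies in $\overline{\mathcal U}_n$ for $n\gg 0$. Since each $\overline{\mathcal U}_n$ has already been shown to be a full subcategory stable under isomorphism, openness plus joint surjectivity on geometric points is precisely the statement that $\{\overline{\mathcal U}_n\}_{n\in\mathbb Z}$ is an open cover.

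\textbf{Openness.} Fix $n$ and let $(p:C\to T,\mathcal E)\in\CVc(T)$ be an arbitrary object, i.e. a morphism $T\to\CVc$; I would show that the locus $T_n\subseteq T$ of points $t$ at which $(C_t,\mathcal E_t)$ satisfies conditions (1) and (2) is open, which is exactly the condition for $\overline{\mathcal U}_n\hookrightarrow\CVc$ to be an open immersion after base change. Openness is Zariski-local on $T$, so I may take $T$ affine and then, by Lemma \ref{rednoet}, assume the pair is pulled back from a noetherian affine scheme; since the three defining conditions are fibrewise, it suffices to treat the noetherian case. I would then handle the conditions in order. As the fibres are curves, $R^ip_*\mathcal E(n)=0$ for $i>0$ is equivalent to $H^1(C_t,\mathcal E(n)_t)=0$, and the locus where this holds is open by the semicontinuity theorem; on it $p_*\mathcal E(n)$ is locally free with $R^1p_*\mathcal E(n)=0$, both commuting with base change. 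Over this open set the cokernel $\mathcal Q$ of $p^*p_*\mathcal E(n)\to\mathcal E(n)$ is coherent on $C$, its support is closed, and $p(\mathrm{Supp}\,\mathcal Q)$ is closed since $p$ is proper; the complement is the open locus of relative global generation. Finally, over this smaller open set the induced $T$-morphism $C\to Gr(p_*\mathcal E(n),r)$ is defined, and the locus where it is a closed embedding on the fibres is open by \cite[Lemma 3.13]{K}. The intersection of the three is $T_n$, which is therefore open.

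\textbf{Covering.} As the $\overline{\mathcal U}_n$ are open substacks, it suffices to show that each geometric point $(C,\mathcal E)\in\CVc(k)$ lies in some $\overline{\mathcal U}_n$, i.e. to produce a single $n$ for which (1) and (2) both hold. Here $\mathcal E$ is properly balanced, hence strictly positive by Remark \ref{lenght}. For (2), Corollary \ref{ampleness} gives that $\mathcal E(n)$ is globally generated and $C\hookrightarrow Gr(H^0(C,\mathcal E(n)),r)$ is a closed embedding for all $n$ big enough. For (1), I would pass to the stable model: writing $\pi:C\to C^{st}$ for the stabilization morphism, the sheaf $\mathcal E(n)$ is still positive on the contracted chains, because $\omega_C$ has degree $0$ on the exceptional components and so twisting by $\omega_C^{n}$ leaves the degrees there unchanged. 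Hence Proposition \ref{stabil} applies, and combined with the projection formula it gives $H^1(C,\mathcal E(n))=H^1(C^{st},\pi_*\mathcal E\otimes\omega_{C^{st}}^{n})$. Since $C^{st}$ is stable of genus $g\ge 2$, the sheaf $\omega_{C^{st}}$ is ample, so this group vanishes for $n\gg 0$ by Serre vanishing. Taking $n$ large enough that both hold places $(C,\mathcal E)$ in $\overline{\mathcal U}_n$.

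\textbf{Main obstacle.} The two delicate points are the openness of the closed-embedding condition, for which I rely on \cite[Lemma 3.13]{K}, and the vanishing $H^1(C,\mathcal E(n))=0$. The latter is the crux: because $\omega_C$ is only nef and not ample on a semistable curve, having degree $0$ on the exceptional components, Serre vanishing cannot be invoked directly on $C$. The essential device is to transport the computation along $\pi$ to the stable model $C^{st}$, where $\omega_{C^{st}}$ is ample, using the positivity of $\mathcal E$ on the contracted chains to guarantee via Proposition \ref{stabil} that no higher direct images obstruct the comparison.
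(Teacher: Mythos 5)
Your proof is correct and follows essentially the same route as the paper: openness is obtained by reducing to the noetherian case via Lemma \ref{rednoet}, treating conditions (1) and (2) by semicontinuity and cohomological flatness (where the paper instead cites \cite[Lemma 4.1.5, Proposition 4.1.3]{W}) and the closed-embedding condition by \cite[Lemma 3.13]{K}, while the covering statement is proved fibrewise by transporting $H^1$ to the stable model via Proposition \ref{stabil} and the projection formula and invoking ampleness of $\omega_{C^{st}}$, exactly as the paper does through Proposition \ref{NSlemma4} and Corollary \ref{ampleness}. The only cosmetic difference is that you make the Serre-vanishing step and the degree-zero behaviour of $\omega_C$ on exceptional chains explicit, which the paper leaves implicit.
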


\begin{proof}Let $S$ be a scheme, $(p:C\rightarrow S,\mathcal E)$ an object of $\CVc(S)$ and $n$ an integer. We must prove that there exists an open $U_n\subset S$ with the universal property that $T\rightarrow S$ factorizes through $U_n$ if and only if $\mathcal E_T$ is an object of $\overline{\mathcal U}_n(T)$.\\
We can assume $S$ affine and noetherian. Let $\mathcal F:=\mathcal E(n)$ and $U_n$ the subset of points of $S$ such that:
\begin{enumerate}[(i)]
\item $H^i(C_s,\mathcal F_s)=0$ for $i>0$,
\item $H^0(C_s,\mathcal F_s)\otimes\oo_{C_s}\rightarrow \mathcal F_s$ is surjective,
\item the induced morphism in the Grassmannian $C_s\rightarrow Gr(H^0(C_s,\mathcal F_s),r)$ is a closed embedding.
\end{enumerate}
We must prove that $U_n$ is open and satisfies the universal property. As in the proof of \cite[Lemma 4.1.5]{W}, consider the open subscheme $W_n\subset S$ satisfying the first two conditions above. By definition it contains $U_n$ and it satisfies the universal property that any morphism $T\rightarrow S$ factorizes through $W_n$ if and only if $R^ip_{T*}\mathcal F_T=0$ for any $i>0$ and $\mathcal F_T$ is relatively generated by global sections. By \cite[Proposition 4.1.3]{W}, $\mt F_{W_n}$ is cohomologically flat over $W_n$. This implies that the fiber over a point $s$ of the morphism
$$
C_{V_n}\rightarrow Gr(p_{W_n*}\mathcal F_{W_n},r)
$$
is exactly $C_s\rightarrow Gr(H^0(C_s,\mathcal F_s),r)$. Since the property of being  a closed embedding for a morphism of proper $W_n$-schemes is an open condition (see \cite[Lemma 3.13]{K}), it follows that $U_n$ is an open subscheme and $\mathcal F_{U_n}\in\overline{\mathcal U}_n(U_n)$.\\
Viceversa, suppose now that $\phi:T\rightarrow S$ is such that $\mathcal F_T\in\overline{\mathcal U}_n(T)$. The morphism factors through $V_n$ and for any $t\in T$
$$
C_t\rightarrow Gr(H^0(C_t,\mathcal F_t),r)
$$
is a closed embedding. Since the morphism $\phi$ restricted to a point $t\in T$ onto is image $\phi(t)$ is fpqc, by descent the morphism $C_{\phi(t)}\rightarrow Gr(H^0(C_{\phi(t)},\mathcal F_{\phi(t)}),r)$ is a closed embedding, or in other words $\phi(t)\in U_n$.\\
It remains to prove that $\{\mathcal U_n\}$ is a covering. It is sufficient to prove that for any point $s$, there exists $n$ such that $\mathcal E_s(n)$ satisfies the conditions (i), (ii) and (iii). By Proposition \ref{NSlemma4}, $H^i(C_s,\mt F_s)=H^i(C^{st}_s,\pi_*(\mt E)\otimes\omega^n_{C_s^{st}})$, where $\pi:C_s\to C_s^{st}$ is the stabilization morphism. Since $\omega_{C_s^{st}}$ is ample, there exists $n$ big enough such that (i) is satisfied, and by Proposition \ref{ampleness} the same holds for (ii) and (iii).
\end{proof}

\begin{rmk}\label{wang}
As in \cite[Remark 4.1.7]{W} for a scheme $S$ and a pair $(p:C\rightarrow S,\mathcal E)\in\overline{\mathcal U_n}(S)$, the direct image $p_*(\mathcal E(n))$ is locally free of rank $d+r(2n-1)(g-1)$. By cohomological flatness, locally on $S$ the morphism in the Grassmannian becomes $
C\hookrightarrow Gr(V_n,r)\times S
$, where $V_n$ is a $k$-vector space of dimension $P(n):=d+r(2n-1)(g-1)$.
\end{rmk}

We are now going to obtain a presentation of $\overline{\mt U}_n$ as a quotient stack. As in the above remark, let $V_n$ be a $k$-vector space of dimension $P(n):=d+r(2n-1)(g-1)$. Consider the Hilbert scheme of closed subschemes on the Grassmannian $Gr(V_n,r)$
$$
Hilb_n:=Hilb_{Gr(V_n,r)/\Spec k}^{\oo_{Gr(V_n,r)}(1), Q(m)},
$$
with Hilbert polynomial $Q(m)=m(d+nr(2g-2))+1-g$ relative to the Plucker line bundle $\oo_{Gr(V_n,r)}(1)$. Let $\mathscr{C}_{(n)}\hookrightarrow Gr(V_n,r)\times Hilb_n$ be the universal curve. The Grassmannian is equipped with a universal quotient $V_n\otimes \oo_{Gr(V_n,r)}\to \mt E$, where $\mt E$ is the universal vector bundle. If we pull-back this morphism on the product $Gr(V_n,r)\times Hilb_n$ and we restrict to the universal curve, we obtain a surjective morphism of vector bundles $q:V_n\otimes\oo_{\mathscr {C}_{(n)}}\rightarrow \mathscr E_{(n)}$. We will call $\mathscr E_{(n)}$ (resp. $q:V_n\otimes\oo_{\mathscr {C}_{(n)}}\rightarrow \mathscr E_{(n)}$) the \emph{universal vector bundle (resp. universal quotient) on $\mathscr {C}_{(n)}$}. \label{Hn}Let $H_n$ be subset of $Hilb_n$ consisting of points $h$ such that:
\begin{enumerate}[(i)]
\item $\mathscr {C}_{(n)h}$ is semistable,
\item $\mathscr {E}_{(n)h}$ is properly balanced,
\item $H^i(\mathscr {C}_{(n)h},\mathscr {E}_{(n)h})=0$ for $i>0$,
\item $H^0(q_h)$ is surjective, (or equivalently an isomorphism, since $\dim V_n=H^0(\mathscr {C}_{(n)h},\mathscr {E}_{(n)h})$).
\end{enumerate}
This subset is an open subscheme of $Hilb_n$. Indeed, the conditions (i) and (ii) are open by Lemma \ref{opencond} and (iii) is open by \cite[Lemma 4.1.5]{W} (see also proof of Lemma \ref{subcat}). Consider the morphism $f_*q:V_n\otimes\oo_{Hilb_n}\to f_*\mathscr {E}_{(n)}$, where $f:\mathscr C_{(n)}\to Hilb_n$ is the universal curve. Then $H_n$ is the intersection of the open subset where $(i)$, $(ii)$ and $(iii)$ hold and the complement of the support of the cokernel of $ f_*q$, for more details see proof of \cite[Lemma 4.1.10]{W}.\vspace{0.2cm}

We denote by the same symbols the restriction over $H_n$ of the universal pair. In particular, the twisted pair $$\left(\mathscr C_{(n)}\to H_n, \mathscr E_{(n)}(-n)\stackrel{def}{=}\mathscr E_{(n)}\otimes \omega^{-n}_{\mathscr C_{(n)}}\right)$$ defines an object in $\CVc(H_n)$. By definition of $H_n$, it must factors through $\overline{\mt U}_n$. In other words, the restriction of the universal curve and of the universal vector bundle on $H_n$ defines a morphism of stacks
$$
\begin{array}{llll}
\Theta: &H_n&\rightarrow &\overline{\mathcal U}_n\\
& h&\mapsto & \left(\mathscr {C}_{(n)h}, \mathscr {E}_{(n)h}(-n)\right).
\end{array}
$$
Moreover, the Hilbert scheme $Hilb_n$ is equipped with a natural action of $GL(V_n)$: if $A\in GL(V_n)$ we denote with the same symbol the induced automorphism $A:Gr(V_n,r)\cong Gr(V_n,r)$ of the Grassmannian. The action of a matrix $A\in GL(V_n)$ is given by $A.[C\hookrightarrow Gr(V_n,r)]:=[C\hookrightarrow Gr(V_n,r)\xrightarrow{A^{-1}}Gr(V_n,r)]$. The open $H_n$ is stable with respect to this action.

\begin{prop}The morphism of stacks
$$
\Theta: H_n\rightarrow \overline{\mathcal U}_n
$$
is a $GL(V_n)$-bundle (in the sense of \cite[2.1.4]{W}).
\end{prop}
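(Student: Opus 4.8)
The plan is to show that $\Theta$ is a principal $GL(V_n)$-bundle in the sense of \cite[2.1.4]{W}, which amounts to two things: that $\Theta$ is $GL(V_n)$-invariant, and that for every scheme $T$ and every object $(p_T:C_T\to T,\mathcal E_T)\in\overline{\mathcal U}_n(T)$ the fibre product $T\times_{\overline{\mathcal U}_n}H_n$, formed along the map $T\to\overline{\mathcal U}_n$ classifying the object and along $\Theta$, is a $GL(V_n)$-torsor over $T$ for the fppf topology. Once this is established, the quotient presentation $\overline{\mathcal U}_n\cong[H_n/GL(V_n)]$ follows formally.

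First I would record the invariance. The action of $GL(V_n)$ on $Hilb_n$ is induced by its linear action on $Gr(V_n,r)$ and stabilises $H_n$, as noted just above the statement. For $g\in GL(V_n)$ and $h\in H_n$, the automorphism of $Gr(V_n,r)$ given by $g$ carries the embedded curve $\mathscr C_{(n)h}$ together with the restricted universal bundle $\mathscr E_{(n)h}$ isomorphically onto $\mathscr C_{(n)\,g\cdot h}$ with $\mathscr E_{(n)\,g\cdot h}$. Hence $\Theta(g\cdot h)\cong\Theta(h)$, functorially in families, so $\Theta$ is $GL(V_n)$-invariant.

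The heart of the argument is the identification of the fibre product. Fix $(p_T:C_T\to T,\mathcal E_T)\in\overline{\mathcal U}_n(T)$ and set $\mathcal F_T:=\mathcal E_T(n)$; by the previous proposition $p_{T*}\mathcal F_T$ is locally free of rank $P(n)=\dim V_n$ and its formation commutes with base change. I claim there is a natural isomorphism $T\times_{\overline{\mathcal U}_n}H_n\cong\underline{\mathrm{Isom}}_T(V_n\otimes\mathcal O_T,\,p_{T*}\mathcal F_T)$, the frame bundle of $p_{T*}\mathcal F_T$. Indeed, a $T'$-point of the fibre product is a point $h'\in H_n(T')$ together with an isomorphism $\Theta(h')\cong(C_{T'},\mathcal E_{T'})$ in $\overline{\mathcal U}_n(T')$. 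By the universal property of the Hilbert scheme and of the Grassmannian, $h'$ is the datum of a closed embedding into $Gr(V_n,r)\times T'$, equivalently of a surjection $V_n\otimes\mathcal O_{C'_{T'}}\twoheadrightarrow\mathcal E'_{T'}(n)$; the chosen isomorphism transports this to a surjection $q':V_n\otimes\mathcal O_{C_{T'}}\twoheadrightarrow\mathcal F_{T'}$. Applying $p_{T'*}$ and using condition $(4)$ in the definition of $H_n$ (that $H^0(q)$ be an isomorphism) together with cohomological flatness, the datum $q'$ is the same as an isomorphism $V_n\otimes\mathcal O_{T'}\xrightarrow{\sim}p_{T'*}\mathcal F_{T'}=\phi^*p_{T*}\mathcal F_T$, where $\phi:T'\to T$. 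Conversely, such an isomorphism composed with the evaluation $p_{T'}^*p_{T'*}\mathcal F_{T'}\to\mathcal F_{T'}$, which is surjective because $\mathcal F_{T'}$ is relatively globally generated, recovers $q'$ and hence the point of $H_n$. These two constructions are mutually inverse and compatible with base change in $T'$, proving the claim.

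Finally, the frame bundle $\underline{\mathrm{Isom}}_T(V_n\otimes\mathcal O_T,p_{T*}\mathcal F_T)$ of the rank-$P(n)$ locally free sheaf $p_{T*}\mathcal F_T$ is a Zariski-locally trivial, hence fppf, $GL(V_n)$-torsor over $T$; combined with the invariance this shows that $\Theta$ is a $GL(V_n)$-bundle and that $\overline{\mathcal U}_n\cong[H_n/GL(V_n)]$. I expect the main obstacle to be the middle step: one must verify that the equivalence between ``a point of $H_n$ lying over the given object'' and ``a trivialisation of $p_{T*}\mathcal F_T$'' is an isomorphism of $T$-functors, not merely a bijection on points. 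This is precisely where cohomological flatness is indispensable, since it guarantees both that $p_{T*}\mathcal F_T$ is locally free of the expected rank and that its formation, along with the identification $H^0(q')\cong p_{T'*}\mathcal F_{T'}$, commutes with arbitrary base change $T'\to T$, making the correspondence functorial.
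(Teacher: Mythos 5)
Your proposal is correct and follows essentially the same route as the paper: establish $GL(V_n)$-invariance, then identify the fibre product $T\times_{\overline{\mathcal U}_n}H_n$ with the frame bundle $\mathrm{Isom}(V_n\otimes\mathcal O_T,\,p_{T*}\mathcal E_T(n))$ by passing back and forth between the universal quotient on $H_n$ and a trivialisation of the push-forward, with condition (4) in the definition of $H_n$ and cohomological flatness guaranteeing that the push-forward of the quotient is an isomorphism and that the correspondence is functorial. The paper carries out the surjectivity check by reducing to a noetherian base and applying Nakayama on fibres, which is the same verification you compress into one sentence.
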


\begin{proof}We set $GL:=GL(V_n)$. First we prove that $\Theta$ is $GL$-invariant, i.e.
\begin{enumerate}[(a)]
\item the diagram
$$
\xymatrix{
H_n\times GL\ar[r]^m\ar[d]^{pr_{1}} & H_n\ar[d]^{\Theta}\\
H_n\ar[r]^{\Theta} & \overline{\mathcal U}_n
}
$$
where $pr_1$ is the projection on $H_n$ and $m$ is the multiplication map is commutative. Equivalently, there exists a natural transformation $\rho:pr_1^*\Theta\rightarrow m^*\Theta$.
\item $\rho$ satifies an associativity condition (see \cite[2.1.4]{W}).
\end{enumerate}
As we will see below, the diagram in (a) is cartesian. So, there exists a canonical natural transformation $\rho$ and it can be shown that it satisfies the associativity condition. We will fix a pair $(p:C\rightarrow S,\mathcal E)\in\overline{\mathcal U}_n(S)$ and let $f:S\rightarrow\overline{\mathcal U}_n$ be the associated morphism. We prove that the morphism $f^*\Theta$ is a principal $GL$-bundle. More precisely, we will prove that there exists a $GL$-equivariant isomorphism over $S$
$$H_n\times_{\overline{\mathcal U}_n}S\cong Isom( V_n\otimes\oo_S,p_{*}\mathcal E(n)).$$
For any $S$-scheme $T$, a $T$-valued point of $H_n\times_{\overline{\mathcal U}_n}S$ corresponds to the following data:
\begin{enumerate}[(1)]
\item a morphism $T\rightarrow H_n$,
\item a $T$-isomorphism of schemes $\psi:C_T\cong \mathscr {C}_{(n)T}$,
\item an isomorphism of vector bundles $\psi^*\mathscr {E}_{(n)T}\cong\mathcal E_T (n)$.
\end{enumerate}
Consider the pull-back of the universal quotient of $H_n$ through $T\rightarrow H_n$
$$
q_T:V_n\otimes\oo_{\mathscr {C}_{(n)T}}\rightarrow \mathscr {E}_{(n)T}.
$$
If we pull-back by $\psi$ and compose with the isomorphism of (3), we obtain a surjective morphism
$$
V_n\otimes\oo_{C_T}\rightarrow \mathcal E_T(n).
$$
We claim that the push-forward $V_n\otimes \oo_T\rightarrow p_{T*}(\mathcal E_T(n))\cong p_*(\mathcal E(n))_T$ is an isomorphism, or in other words it defines a $T$-valued point of $Isom( V_n\otimes \oo_S,p_{*}\mathcal E(n))$. As explained in Remark \ref{wang}, the sheaf $p_{T*}(\mathcal E (n)_T)$ is a vector bundle of rank $P(n)$, so it is enough to prove the surjectivity. We can suppose that $T$ is noetherian and by Nakayama's lemma it suffices to prove the surjectivity on the fibers. On a fiber the morphism is
$$
V_n\otimes\oo_{C_t}\rightarrow H^0(\mathscr {E}_{(n)t})\cong H^0(\mathcal E_t(n))
$$
which is an isomorphism by the definition of $H_n$.\\
Conversely, let $T$ be a scheme and $V_n\otimes\oo_T\rightarrow p_*(\mathcal E(n))_T$ a $T$-isomorphism of vector bundles.
By hypothesis, $\mathcal E_T (n)$ is relatively generated by global sections and the induced morphism in the Grassmannian is a closed embedding. Putting everything together, we obtain a surjective map
$$
V_n\otimes\oo_{C_T}\cong p_T^*p_{T*}\mathcal E_T(n)\rightarrow \mathcal E_T(n)
$$
and a closed embedding $C_T\hookrightarrow Gr(V_n,r)\times T$ which defines a morphism $T\rightarrow H_n$. By the representability of the Hilbert functor and of the Grassmannian, there exists a unique isomorphism of schemes $\psi:C_T\cong\mathscr {C}_{(n)T}$ and a unique isomorphism of vector bundles $\psi^*\mathscr {E}_{(n)T}\cong\mathcal E_T(n)$. Then we have obtained a $T$-valued point of $H_n\times_{\overline{\mathcal U}_n} S$. The two constructions above are inverses of each other, concluding the proof.
\end{proof}

\begin{prop}\label{qts}The map $\Theta: H_n\rightarrow \overline{\mathcal U}_n$ gives an isomorphism of stacks
$$
\overline{\mathcal U}_n\cong [H_n/GL(V_n)]
$$
\end{prop}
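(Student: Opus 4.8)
The plan is to show that $\Theta$ exhibits $\overline{\mathcal U}_n$ as the stack quotient $[H_n/GL(V_n)]$ by invoking the general principle that a $GL(V_n)$-bundle in the sense of \cite[2.1.4]{W} over an algebraic stack produces precisely such a quotient presentation. The previous proposition already establishes that $\Theta\colon H_n\to\overline{\mathcal U}_n$ is a $GL(V_n)$-bundle, so most of the substantive geometric work is done; what remains is to package this into the claimed isomorphism of stacks.

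First I would recall the universal property of the quotient stack: for a scheme $T$, the groupoid $[H_n/GL(V_n)](T)$ consists of $GL(V_n)$-torsors $P\to T$ together with a $GL(V_n)$-equivariant morphism $P\to H_n$. The natural functor $[H_n/GL(V_n)]\to\overline{\mathcal U}_n$ is induced by $\Theta$ together with its $GL(V_n)$-invariance, i.e. the natural transformation $\rho\colon pr_1^*\Theta\to m^*\Theta$ constructed in the previous proposition, which descends $\Theta$ along the quotient. Concretely, given a torsor $P\to T$ and an equivariant map $P\to H_n$, composing with $\Theta$ and using equivariance yields a well-defined object of $\overline{\mathcal U}_n(T)$.

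The heart of the argument is to produce a quasi-inverse, and here I would reuse the explicit computation from the proof that $\Theta$ is a $GL(V_n)$-bundle. Given $(p\colon C\to S,\mathcal E)\in\overline{\mathcal U}_n(S)$, the isomorphism
$$
H_n\times_{\overline{\mathcal U}_n}S\cong Isom(V_n\otimes\oo_S,\,p_{S*}\mathcal E(n))
$$
established there shows that the fiber product $H_n\times_{\overline{\mathcal U}_n}S$ is exactly the frame bundle of the rank-$P(n)$ vector bundle $p_{S*}\mathcal E(n)$, which is a $GL(V_n)$-torsor over $S$ equipped with its tautological equivariant map to $H_n$. This assignment $(C\to S,\mathcal E)\mapsto Isom(V_n\otimes\oo_S,p_{S*}\mathcal E(n))$, together with the equivariant morphism to $H_n$, defines the functor $\overline{\mathcal U}_n\to[H_n/GL(V_n)]$.

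Finally I would check that these two functors are mutually quasi-inverse. In one direction, starting from $(C\to S,\mathcal E)$, forming the torsor $Isom(V_n\otimes\oo_S,p_{S*}\mathcal E(n))$ and pushing back through $\Theta$ recovers $(C\to S,\mathcal E)$ up to canonical isomorphism, precisely because the two constructions in the previous proof were shown to be inverse to each other. In the other direction, starting from a torsor with equivariant map to $H_n$, the composite recovers the same data by the universal property of the frame bundle and faithfully flat descent. The main obstacle I anticipate is purely bookkeeping: one must verify the $2$-commutativity and the compatibility of the natural transformations (the associativity/cocycle condition for $\rho$) carefully enough that the equivalence is genuine and not merely an isomorphism on objects, but since $\rho$ is the identity here this compatibility is essentially immediate. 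No delicate estimates or boundedness arguments enter; the statement is a formal consequence of the torsor structure, so the proof is short.
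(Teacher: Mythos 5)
Your proposal is correct and follows essentially the same route as the paper: the paper's proof is a one-line citation of \cite[Lemma 2.1.1]{W}, which is exactly the general statement (a $GL$-bundle in the sense of \cite[2.1.4]{W} over a stack yields a quotient presentation) that you unpack, using the identification $H_n\times_{\overline{\mathcal U}_n}S\cong Isom(V_n\otimes\oo_S,p_{S*}\mathcal E(n))$ from the preceding proposition to build the quasi-inverse. You simply make explicit the content of the cited lemma in this particular case.
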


\begin{proof}This follows from \cite[Lemma 2.1.1.]{W}.
\end{proof}

\begin{rmk}An interesting consequence of this presentation is that the diagonal of $\CVc$ is affine (see \cite[Theorem 2.0.2]{W}).
\end{rmk}

This will conclude the proof of Theorem \ref{teo1}, except for the irreducibility and smoothness of $\CVc$, which will be proved in Propositions \ref{finteo1} and \ref{finteo2}, respectively.
\subsection{Properties and the rigidified moduli stack $\CVr$.}\label{prop}

The stack $\CVc$ admits a \emph{universal curve} $\overline{\pi}: \overline{\mt Vec}_{r,d,g,1}\rightarrow \CVc$, i.e. a stack $\overline{\mt Vec}_{r,d,g,1}$ and a representable morphism $\overline{\pi}$ with the property that for any morphism from a scheme $S$ to $\CVc$ associated to a pair $(C\rightarrow S,\mt E)$, there exists a morphism $C\rightarrow \overline{\mt Vec}_{r,d,g,1}$ such that the diagram
$$
\xymatrix{
	C\ar[r]\ar[d] &\overline{\mt Vec}_{r,d,g,1}\ar[d]^{\overline{\pi}}\\
	S\ar[r] & \CVc}
$$
is cartesian. Furthermore, the universal curve admits a \emph{universal vector bundle}, i.e. for any morphism from a scheme $S$ to $\CVc$ associated to a pair $(C\rightarrow S,\mt E)$, we associate the vector bundle $\mt E$ on $C$. This allows us to define a coherent sheaf for the site lisse-\'etale on $\overline{\mt Vec}_{r,d,g,1}$ flat over $\CVc$.
The stabilization morphism induces a morphism of stacks
$$\overline{\phi}_{r,d}:\CVc\lra \CMg,$$
which forgets the vector bundle and sends the curve to its stabilization. We will denote by $\Vc$ (resp. $\mt U_n$) the open substack of $\CVc$ (resp. $\overline{\mt U}_n)$ of pairs $(C,\mt E)$, where $C$ is a smooth curve. In the next sections we will often need the restriction of $\overline{\phi}_{r,d}$ to the open locus of smooth curves $$\phi_{r,d}:\Vc\lra \Mg.$$

The group $\mathbb G_m$ is contained in a natural way in the automorphism group of any object of $\CVc$, as multiplication by scalars on the vector bundle. There exists a procedure for removing these automorphisms, called \emph{$\mathbb{G}_m$-rigidification} (see \cite[Section 5]{ACV}). We obtain an irreducible smooth Artin stack $\CVr:=\CVc\fatslash \mathbb{G}_m$ of dimension $(r^2+3)(g-1)+1$, with a surjective smooth morphism $\nu_{r,d}:\CVc\rightarrow\CVr$. Furthermore, the morphism $\nu_{r,d}$ makes $\CVc$ into a gerbe banded by $\mathbb G_m$ (a $\mathbb G_m$-gerbe in short) over $\CVr$ (see \cite{Gi} for the theory of gerbes).
The forgetful morphism $\overline{\phi}_{r,d}:\CVc\lra \CMg$ factors through the forgetful morphism $\overline{\phi}_{r,d}:\CVr\rightarrow\CMg$. Over the locus of smooth curves we have the following diagram
$$
\xymatrix{
	\Vc\ar[d]^{det}\ar[rr]^{\nu_{r,d}} & & \Vr\ar[d]^{\widetilde{det}}\\
	\Jc\ar[rd]\ar[rr]^{^{\nu_{1,d}}} & & \Jr\ar[ld]\\
	& \Mg
}
$$
where $det$ (resp. $\widetilde{det}$) is the determinant morphism, which send an object $(C\rightarrow S,\mt E)\in\Vc(S)$ (resp. $\in\Vr(S)$) to $(C\rightarrow S,\det\mt E)\in\Jc(S)$ (resp. $\in\Jr(S)$). Observe that the obvious extension on $\CVc$ of the determinant morphism does not map to the compactified universal Jacobian $\CJc$, because in general, if $\mt E$ is a balanced vector bundle over $C$, then $\det\mt E$ does not satisfy the basic inequality for $\CJc$.

\subsection{Local structure.}\label{locstr}
In this section, we will describe the local structure of $\CVc$ in terms of deformation theory of the pairs $(C,\mt E)$, where $C$ is a semistable curve and $\mt E$ is a properly balanced vector bundle. Therefore we are going to review the necessary facts. First of all, the deformation functor $\text{Def}_C$ of a semistable curve $C$ is smooth (see \cite[Proposition 2.2.10(i), Proposition 2.4.8]{Ser06}) and it admits a miniversal deformation ring (see \cite[Theorem 2.4.1]{Ser06}), i.e. there exists a formally smooth morphism of functors of local Artin $k$-algebras
\begin{equation}\label{formsmooth}
\text{Spf }k\llbracket x_1,\ldots,x_{N}\rrbracket\rightarrow \text{Def}_C, \text{ where } N:=\text{ext}^1(\Omega_C,\oo_C)
\end{equation}
inducing an isomorphism between the tangent spaces. Moreover, if $C$ is stable its deformation functor admits a universal deformation ring (see \cite[Corollary 2.6.4]{Ser06}), i.e. the morphism of functors above is an isomorphism. Let $x$ be a singular point of $C$ and $\hat{\oo}_{C,x}$ the completed local ring of $C$ at $x$. The deformation functor $\text{Def}_{\text{Spec}\hat{\oo}_{C,x}}$ admits a miniversal deformation ring $k\llbracket t\rrbracket$ (see \cite[pag. 81]{DM69}). Let $\Sigma$ be the set of singular points of $C$. The morphism of Artin functors
\begin{equation}\label{}
loc:\text{Def}_C\rightarrow \prod_{x\in\Sigma}\text{Def}_{\text{Spec}\hat{\oo}_{C,x}}
\end{equation}
is formally smooth (see \cite[Proposition 1.5]{DM69}). For a vector bundle $\mt E$ over $C$, we will denote by $\text{Def}_{(C,\mt E)}$ the deformation functor of the pair (for a more precise definition see \cite[Def. 3.1]{CMKV}). As in \cite[Def. 3.4]{CMKV}, the automorphism group $\text{Aut}(C,\mt E)$ (resp. $\text{Aut}(C)$) acts on $\text{Def}_{(C,\mt E)}$ (resp. $\text{Def}_C$). Using the same argument of \cite[Lemma 5.2]{CMKV}, we can see that the multiplication by scalars on $\mt E$ acts trivially on $\text{Def}_{(C,\mt E)}$. By \cite[Theorem 8.5.3]{FAG}, the forgetful morphism
\begin{equation}\label{cec}
\text{Def}_{(C,\mt E)}\rightarrow \text{Def}_C
\end{equation}
is formally smooth and the tangent space of $\text{Def}_{(C,\mt E)}$ has dimension $\text{ext}^1(\Omega_C,\oo_C)+\text{ext}^1(\mt E,\mt E)$.
Let $h:=[C\hookrightarrow Gr(V_n, r)]$ be a $k$-point of $H_n$. Let $\hat{\oo}_{H_n,h}$ be the completed local ring of $H_n$ at $h$. Clearly, the ring $\hat{\oo}_{H_n,h}$ is a universal deformation ring for the deformation functor $\text{Def}_{h}$ of the closed embedding $h$. Moreover

\begin{lem}\label{111}The natural morphism $\text{Def}_h\rightarrow \text{Def}_{(C,\mt E)}$ is formally smooth.
\end{lem}

\begin{proof}For any $k$-algebra $R$, we will set $Gr(V_n,r)_R:=Gr(V_n,r)\times_k \text{Spec}R$. We have to prove that given
	\begin{enumerate}
		\item a surjection $B\rightarrow A$ of Artin local $k$-algebras,
		\item a deformation $h_A:=[C_A\hookrightarrow Gr(V_n,r)_A]$  of $h$ over $A$,
		\item a deformation $(C_B,\mt E_B)$ of $(C,\mt E)$ over $B$, which is a lifting of $(C_A,\mt E_A)$,
	\end{enumerate}
	then there exists an extension $h_B$ over $B$ of $h_A$ which maps on $(C_B,\mt E_B)$. Since by hypothesis $H^1(C,\mt E(n))=0$, we can show that the restriction map $res:H^0(C_B,\mt E_B(n))\rightarrow H^0(C_A,\mt E_A(n))$ is surjective. Now $h_A$ only depends on the vector bundle $\mt E_A$ and on the choice of a basis for $H^0(C_A,\mt E_A(n))$. We can lift the basis, using the map $res$, to a basis $\mt B$ of $H^0(C_B,\mt E_B(n))$. The basis $\mt B$ induces a morphism
	$
	C_B\rightarrow Gr(V_n,r)_B
	$
	which is a lifting for $h_A$.
\end{proof}

The next two propositions conclude the proof of Theorem \ref{teo1}.
\begin{prop}\label{finteo1}The stack $\CVc$ is irreducible.
\end{prop}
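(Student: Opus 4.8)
The plan is to use that $\CVc$ is smooth, so that its irreducible components coincide with its connected components; it then suffices to exhibit an irreducible open substack which is dense. I would take this to be $\Vc$, the open locus of pairs $(C,\mt E)$ with $C$ smooth. On a smooth curve every bundle is trivially admissible and the basic inequality is vacuous (the only subcurve is $C$ itself), so $\Vc$ is exactly the stack of all vector bundles of rank $r$ and degree $d$ on smooth curves of genus $g$. The goal is thereby reduced to proving (a) $\Vc$ is irreducible and (b) $\Vc$ is dense in $\CVc$; granting these, $\CVc=\overline{\Vc}$ is irreducible.

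For (a) I would factor through the determinant morphism $det\colon\Vc\to\Jc$ introduced in \S\ref{prop}, which is smooth and has fibers the irreducible stacks $\mathcal Vec_{=\mathcal L,C}$. The base $\Jc$ is itself irreducible, being smooth over the irreducible stack $\Mg$ with irreducible fibers (the degree-$d$ Picard stacks $\Pic^d(C)$). The key elementary fact is that an open morphism with irreducible base and irreducible fibers has irreducible source: if $U,V$ are nonempty opens of the source, their images are nonempty opens of the irreducible base and hence meet in some point $s$, and the two induced nonempty opens of the irreducible fiber over $s$ must meet, so $U\cap V\neq\emptyset$. Applying this twice, first to $\Jc\to\Mg$ and then to $det$, gives that $\Vc$ is irreducible.

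For (b) I would argue by deformation theory using the smooth atlas $H_n\to\overline{\mt U}_n$ of Proposition \ref{qts}, whose images cover $\CVc$. Fix a $k$-point $h=[C\hookrightarrow Gr(V_n,r)]$ corresponding to $(C,\mt E)$; its completed local ring prorepresents $\text{Def}_h$. By Lemma \ref{111} the map $\text{Def}_h\to\text{Def}_{(C,\mt E)}$ is formally smooth, and $\text{Def}_{(C,\mt E)}\to\text{Def}_C$ is formally smooth as recalled in \S\ref{locstr}. Since $\text{Def}_C$ is smooth and the localization map $loc\colon\text{Def}_C\to\prod_{x\in\Sigma}\text{Def}_{\Spec\hat\oo_{C,x}}$ is formally smooth with each factor $\Spf k\llbracket t\rrbracket$ smoothing its node away from $t=0$, the smooth-curve locus is the complement of a divisor and hence dense in $\text{Def}_C$. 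Pulling this back through the two formally smooth maps shows that deformations with smooth underlying curve are dense in $\text{Def}_h$, i.e. in $\widehat{\oo}_{H_n,h}$; because semistability and the properly balanced condition are open (\S\ref{pbvb}), these nearby deformations genuinely define points of $H_n$ lying in the smooth-curve locus. Thus every point of $H_n$ is a limit of points mapping into $\Vc$, so $\Vc$ is dense in $\CVc$.

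The main obstacle is step (b): one must check that the formal-local density of smooth curves in $\text{Def}_C$ really propagates to honest topological density of $\Vc$ inside $\CVc$, and that the smoothing deformations remain properly balanced throughout. Both points are controlled by the openness of the semistable and properly balanced loci together with the formal smoothness of the deformation maps collected in \S\ref{locstr}, so the argument reduces to assembling this deformation-theoretic input rather than to any new computation. With (a) and (b) in place, $\CVc=\overline{\Vc}$ is irreducible, completing the proof of Theorem \ref{teo1}.
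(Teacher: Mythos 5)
Your proposal is correct and follows essentially the same route as the paper: density of $\Vc$ in $\CVc$ is obtained from the formal smoothness of $\text{Def}_h\to\text{Def}_{(C,\mt E)}\to\text{Def}_C\to\prod_{x\in\Sigma}\text{Def}_{\Spec\hat{\oo}_{C,x}}$ (Lemma \ref{111} plus the results recalled in \S\ref{locstr}), and the irreducibility of $\Vc$ from an open morphism onto an irreducible base with irreducible fibers. The only cosmetic difference is that the paper applies this fibration principle once, directly to $\phi_{r,d}:\Vc\to\Mg$ using the irreducibility of the fibers from \cite[Corollary A.5]{Ho10}, whereas you factor through $det:\Vc\to\Jc$ and apply it twice.
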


\begin{proof}Since the morphism $loc:\text{Def}_C\rightarrow \prod_{x\in\Sigma}\text{Def}_{\text{Spec}\hat{\oo}_{C,x}}$ is formally smooth, Lemma \ref{111} implies that the morphism $Def_h\rightarrow \prod_{x\in\Sigma}\text{Def}_{\text{Spec}\hat{\oo}_{C,x}}$ is formally smooth. In particular, any semistable curve with a properly balanced vector bundle can be deformed to a smooth curve with a vector bundle. In other words, the open substack $\Vc$ is dense in $\CVc$; hence $\CVc$ is irreducible if and only if $\Vc$ is irreducible. Since the morphism of Artin functors $\text{Def}_{(C,\mt E)}\to\text{Def}_C$ is formally smooth for any nodal curve, the morphism $\Vc\to\Mg$ is smooth, in particular is open. So the proposition follows from the fact that $\Mg$ is irreducible and the fibre of $\Vc\to\Mg$ is integral (by \cite[Corollary A.5]{Ho10}).
\end{proof}

\begin{prop}\label{finteo2}The scheme $H_n$ and the stack $\CVc$ are smooth of dimension respectively $P(n)^2+(r^2+3)(g-1)$ and $(r^2+3)(g-1)$.
\end{prop}
\begin{proof}As observed in the previous proof, the map $\Vc\to\Mg$ is smooth. Moreover, its geometric fibre have dimension $r^2(g-1)$ (see \cite[Corollary A.5]{Ho10} and discussion below). In particular, 
\begin{align}
\dim\CVc&= r^2(g-1)+\dim\Mg=(r^2+3)(g-1),\nonumber\\
\dim H_n&= \dim GL(V_n)+\dim \CVc=P(n)^2+(r^2+3)(g-1)\nonumber.
\end{align}
We set $Gr:=Gr(V_n,r)$. Arguing as in \cite[Proposition 3.1.3.]{Sc}, we see that for any $k$-point $h:=[C\hookrightarrow Gr]\in H_n$ the co-normal sheaf $\mathcal I_C/\mathcal I^2_C$ is locally free and we have an exact sequence:
	$$
	0\lra\mathcal I_C/\mathcal I^2_C\lra\Omega^1_{Gr}|_C\lra\Omega^1_{C}\lra 0.
	$$
	Applying the functor $\text{Hom}_{\oo_C}(-,\oo_C)$, we obtain the following exact sequence of vector spaces
	$$
	0\longrightarrow \text{Hom}_{\oo_C}(\Omega^1_C,\oo_C)\longrightarrow H^0(C,T_{Gr}|_C)\longrightarrow \text{Hom}_{\oo_C}(\mathcal I_C/\mathcal I^2_C,\oo_C)\longrightarrow \text{Ext}^1_{\oo_C}(\Omega^1_C,\oo_C)\longrightarrow 0
	$$
	Now $\text{Hom}_{\oo_C}(\mathcal I_C/\mathcal I^2_C,\oo_C)$ is the tangent space of $H_n$ at $h$. We can prove that its dimension is $P(n)^2+(r^2+3)(g-1)$ by using the sequence above as in the proof of \emph{loc. cit.} This implies that $H_n$ is smooth of dimension $P(n)^2+(r^2+3)(g-1)$. The assertion for the stack $\CVc$ follows immediately from Proposition \ref{qts}.
\end{proof}

We are now going to construct a miniversal deformation ring for $\text{Def}_{(C,\mt E)}$ by taking a slice of $H_n$.

\begin{lem}\label{locstrc} Let $h:=[C\hookrightarrow Gr(V_n,r)]$ a $k$-point of $H_n$ and let $\mt E$ be the restriction to $C$  of the universal vector bundle. Assume that $\text{Aut}(C,\mt E)$ is smooth and linearly reductive. Then the followings hold.
	\begin{enumerate}[(i)]
		\item There exists a slice for $H_n$. More precisely, there exists a locally closed Aut$(C,\mt E)$-invariant subset $U$ of $H_n$, with $h\in U$, such that the natural morphism
		$$
		U\times_{Aut(C,\mt E)}GL(V_n)\rightarrow H_n
		$$
		is \'etale and affine and moreover the induced morphism of stacks
		$$
		[ U /Aut(C,\mt E)]\rightarrow \overline{\mt U}_n
		$$
		is affine and \'etale.
		\item The completed local ring $\hat{\oo}_{U,h}$ of $U$ at $h$ is a miniversal deformation ring for $\text{Def}_{(C,\mt E)}$.
	\end{enumerate}
\end{lem}

\begin{proof}The part (i) follows from \cite[Theorem 3]{Al10}.
	We will prove the second one following the strategy of \cite[Lemma 6.4]{CMKV}. We will set $F\subset Def_h$ as the functor pro-represented by $\hat{\oo}_{U,h}$, $G:=GL(V_n)$ and $N:=\text{Aut}(C,\mt E)$. Since $\text{Def}_h\rightarrow \text{Def}_{(C,\mt E)}$ is formally smooth, it is enough to prove that the restriction to $F(A)$ of $\text{Def}_h(A)\rightarrow \text{Def}_{(C,\mt E)}(A)$ is surjective for any local Artin $k$-algebra $A$ and bijective when $A=k[\epsilon]$.
	Let $\mathfrak{g}$ (resp. $\mathfrak{n}$) be the deformation functor pro-represented by the completed local ring of $G$ (resp. $N$) at the identity. There is a natural map $\mathfrak{g}/\mathfrak{n}\rightarrow\text{Def}_h$ given by the derivative of the orbit map. More precisely, for a local Artin $k$-algebra $A$:
	$$
	\begin{array}{ccc}
	\mathfrak{g}/\mathfrak{n}(A) & \rightarrow &\text{Def}_h(A)\\
	\left[g\right]&\mapsto & g.v^{triv}
	\end{array}
	$$
	where $v^{triv}$ is the trivial deformation over Spec$A$. First of all we will construct  a morphism $\text{Def}_h\rightarrow \mathfrak{g}/\mathfrak{n}$ such that the derivative of the orbit map defines a section. The construction is the following: up to \'etale base change, the morphism $U\times_N G\rightarrow H_n$ of part $(i)$, admits a section locally on $h$. The morphism, obtained composing this section with the morphism $U\times_NG\rightarrow G/N$, which sends a class $[(u,g]]$ to $[g]$, induces a morphism of Artin functors
	$$
	\text{Def}_h\rightarrow \mathfrak{g}/\mathfrak{h}
	$$
	with the desired property. By construction, if $A$ is a local Artin $k$-algebra then the inverse image of $0\in \mathfrak{g}/\mathfrak{n}(A)$ is $F(A)$. If $v\in \text{Def}_h(A)$ maps to some element $[g]\in\mathfrak{g}/\mathfrak{n}(A)$ then $g^{-1}v\in F(A)$. Because both $v$ and $g^{-1}v$ map to the same element of $\text{Def}_{(C,\mt E)}$, we can conclude that $F(A)\rightarrow \text{Def}_{(C,\mt E)}(A)$ is surjective.\\
	It remains to prove the injectivity of $F(k[\epsilon])\rightarrow \text{Def}_{(C,\mt E)}(k[\epsilon])$. We consider the following complex of $k$-vector spaces
	$$
	0\rightarrow \mathfrak{g}/\mathfrak{n}\rightarrow Def_h(k[\epsilon])\rightarrow Def_{(C,\mt E)}(k[\epsilon])\rightarrow 0
	$$
	where the first map is the derivative of the orbit map. We claim that this is an exact sequence, which would prove the injectivity of $F(k[\epsilon])\rightarrow \text{Def}_{(C,\mt E)}(k[\epsilon])$ by the definition of $F$. The only non obvious thing to check is the exactness in the middle. Suppose that $h_{k[\epsilon]}\in \text{Def}_h(k[\epsilon])$ is trivial in $\text{Def}_{(C,\mt E)}(k[\epsilon])$, i.e. if $q_{\epsilon}:V_n\otimes\oo_{C_{\epsilon}}\rightarrow\mt E_{\epsilon}$ represents the embedding $h_{k[\epsilon]}$, then there exists an isomorphism with the trivial deformation on $k[\epsilon]$: $\varphi:C_{\epsilon}\cong C[\epsilon]$ and $\psi:\varphi_*\mt E_{\epsilon}\cong \mt E[\epsilon]$. Consider the morphism
	$$
	g_{\epsilon}:=\psi\circ\varphi_*q_{\epsilon}:V_n\otimes\oo_{C[\epsilon]}\rightarrow\mt E[\epsilon]
	$$
	which represents the same class $h_{k[\epsilon]}$. By definition of $H_n$, the push-forward of $g_{\epsilon}$ on $k[\epsilon]$ is an isomorphism
	$$
	V_n\otimes k[\epsilon]\cong H^0(C,\mt E(n))\otimes k[\epsilon]
	$$
	and it defines uniquely the class $h_{k[\epsilon]}$.  We can choose basis for $V_n$ and $H^0(C,\mt E(n))$ such that $g_{\epsilon}$ differs from the trivial deformation of $\text{Def}_{h}(k[\epsilon])$ by an invertible matrix $g\equiv Id$ mod $\epsilon$, which concludes the proof.
\end{proof}

\subsection{The Schmitt compactification $\CU$.}\label{Sc}

In this section we will summarize how Schmitt in \cite{Sc}, generalizing a result of Nagaraj-Seshadri in \cite{NS}, constructs via GIT an irreducible normal projective variety $\CU$, which is a good moduli space (for the definition see Appendix \ref{App}) of an open substack of $\CVc$.\vspace{0.2cm}

First we recall the Seshadri's definition of slope-(semi)stable sheaf for a stable curve in the case of the canonical polarization.
\begin{defin}
Let $C$ be a stable curve and let $C_1,\ldots,C_s$ be its irreducible components. We will say that a sheaf $\mt E$ is \emph{P-(semi)stable} if it is torsion free of uniform rank $r$ and for any subsheaf $\mt F$ we have
$$
\frac{\chi(\mt F)}{\sum s_i\omega_{C_i}}\underset{(\leq)}<\frac{\chi(\mt E)}{r\omega_C},
$$
where $s_i$ is the rank of $\mt F$ at $C_i$. We will say that a P-semistable sheaf is \emph{strictly P-semistable} if it is not P-stable.

A P-semistable sheaf has a Jordan-Holder filtration with P-stable factors. Two P-semistable sheaves are \emph{equivalent} if they have the same Jordan-Holder factors. Two equivalence classes are said to be \emph{aut-equivalent} if they differ by an automorphism of the curve. 
\end{defin}
Consider the stack $\mt TF_{r,d,g}$ of torsion free sheaves of uniform rank $r$ and Euler characteristic $d+r(1-g)$ on stable curves of genus $g$. Pandharipande has proved in \cite{P96} that there exists an open substack $\mt TF^{ss}_{r,d,g}$, which admits a projective irreducible variety as moduli space. More precisely, this variety is a good moduli space for the aut-equivalence classes of P-semistable sheaves over stables curves (see \cite[Theorem 9.1.1]{P96}). This is the reason why we prefer the "P" instead of "slope" in the definition above.\vspace{0.2cm}

We will denote by $\CVc^{P(s)s}$ the substack of $\CVc$ of pairs $(C,\mt E)$ such that the sheaf $\pi_*\mt E$ over the stabilized curve $C^{st}$  is P-(semi)stable. Sometimes we will simply say that the pair $(C,\mt E)$ (or $(C^{st},\pi_*\mt E)$) is P-(semi)stable. As we will see in the next proposition, the set of these pairs is bounded.

\begin{prop}\label{qc}The substack $\CVc^{P(s)s}\subset \CVc$ is open and quasi-compact.
\end{prop}

\begin{proof}Let $(C\to S, \mt E)$ be an object in $\CVc$. Then $\pi_*(\mt E)$ is a torsion free sheaf over the family of stable curves $C^{st}\to S$ and it is flat over $S$, by Proposition \ref{NSlemma4}. Then, the locus of the points $s\in S$ such that the fibre $\pi_*(\mt E)_s$ is P-(semi)stable is open in $S$ by \cite[Proposition 2.3.1]{HL}. This implies that $\CVc^{P(s)s}$ is open in $\CVc$.

Now we show the quasi-compactness. By Theorem \ref{teo1}, it is sufficient to prove that there exists $n$ big enough such that $\CVc^{Pss}\subset \overline{\mt U}_n$. By \cite[Proposition 1.3.3]{Sc}, it is enough to show that there exists an integer $n_\star$ such that for every $n\geq n_\star$ and for every pair $(C,\mt E)\in\CVc^{Pss}$, the following conditions hold 
\begin{enumerate}[(i)]
\item $H^1\left(C,\mt E(n)\right)=0$.
\item For any maximal destabilizing chain $R\subset C$, we have 
$H^1\left(R,\mt I_{D}\mt E_{R^c}(n)\right)=0$, where $D:=R\cup R^c$.
\item For any maximal destabilizing chain $R\subset C$, the homomorphism
$
H^0\left(R,\mt I_{D}\mt E_{R^c}(n)\right)\lra \left(\mt I_{D}\mt E_{R^c}\right)\Big/\left( \mt I^2_{D}\mt E_{R^c}\right)
$
is surjective, where $D:=R\cup R^c$.
\item For any $x\in \widehat{C}:=C \backslash C_{\text{exc}}$, the homomorphism
$
H^0\left(C,\mt I_{C_{\text{exc}}}\mt E\right)\lra \mt E_{\widehat C}\Big/\left(\mt I^2_x\mt E_{\widehat C}\right)
$
is surjective.
\item For any $p\neq q\in\widehat{C}$, the evaluation homomorphism
$
H^0\left(C,\mt I_{C_{\text{exc}}}\mt E\right)\lra \mt E_{\{p\}}\oplus \mt E_{\{q\}}
$
is surjective.
\end{enumerate}
We will show that for each above condition, there exists an integer big enough such that the condition is satisfied. By taking the maximum among these five integers, we will have the assertion.

(i). By \cite{P96}, the set of P-semistable torsion free sheaves with Euler characteristic $d+r(1-g)$ and uniform rank $r$ on stable curves of genus $g$ is bounded. So, there exists an integer $n_1$ such that for every $n\geq n_1$, we have $H^1(C,\mt E(n))=H^1(C^{st},\pi_*\mt E(n))=0$, for every pair $(C,\mt E)$ in $\CVc^{P(s)s}$. In particular, for any pair $(C,\mt E)\in \CVc^{Pss}$ and $n\geq n_1$, the bundle $\mt E(n)$ satisfies (i).

(ii). Let $R\subset C$ be a subcurve obtained as the union of some maximal destabilizing chains. We set, as usual, $\widetilde C:=R^c$ and $D:=|\widetilde C\cap R|$, $D^{st}:=\pi(D)$ with their reduced structure. Consider the exact sequence
$$
0\lra\mt  I_{D^{st}}(\pi_*\mt E)\lra \pi_*\mt E\lra (\pi_*\mt E)_{D^{st}}\lra 0.
$$
The cokernel is a torsion sheaf and $\chi((\pi_*\mt E)_{D^{st}})=h^0((\pi_*\mt E)_{D^{st}})\leq 2rN\leq 6r(g-1)$, where $N$ is the number of nodes on $C^{st}$. The second inequality comes from the fact that a stable curve of genus $g$ can have at most $3g-3$ nodes. By boundness of P-(semi)stable shaves, using the theory of relative Quot schemes, we can construct a quasi-compact scheme, which is the fine moduli space for the pairs $(X,q:\mt P\to \mt F)$, where $X$ is a stable curve of genus $g$, $q$ is a surjective morphism of sheaves on $X$, $\mt P$ is a P-semistable torsion free of rank $r$ and Euler characteristic $d+r(1-g)$ and $\mt F$ is a sheaf with constant Hilbert polynomial less or equal than $6r(g-1)$.

In particular, there exists an integer $n_2$ such that for every $n\geq n_2$, for any pair $(C,\mt E)$ in $\CVc^{Pss}$ and any collection $R$ of maximal destabilizing chains in $C_{\text{exc}}$, the sheaf $\mt I_{D^{st}}(\pi_*\mt E)\otimes\omega_{C^{st}}^n=\mt I_{D^{st}}(\pi_*\mt E(n))$ is generated by global sections and $H^1(C^{st},\mt I_{D^{st}}(\pi_*\mt E(n)))=0$. As in the proof of Corollary \ref{NSrmk}, we have
$$
\mt I_{D^{st}}(\pi_*\mt E(n))\cong \pi_*(\mt I_{D}\mt E_{\widetilde C}(n))=\pi_*(\mt I_R\mt E(n)).
$$
Observe that $H^i(C^{st}, \pi_*(\mt I_{D}\mt E_{\widetilde C}(n)))=H^i(\widetilde C,\mt I_{D}\mt E_{\widetilde C}(n))$ for $i=0,1$. In particular, for any pair $(C,\mt E)\in \CVc^{Pss}$ and $n\geq n_2$, the bundle $\mt E(n)$ satisfies (ii).

(iii). Suppose that $R$ is a maximal destabilizing chain. By (ii), for $n\geq n_1$ the sheaf $\pi_*\left(\mt I_{D}\mt E_{\widetilde C}(n)\right)$ is generated by global sections. In particular, both the horizontal arrows
$$
\xymatrix@=0.5em{
H^0\left(C^{st},\pi_*\left(\mt I_{D}\mt E_{\widetilde C}(n)\right)\right)\ar[rr]\ar@{=}[d]&&\pi_*\left(\mt I_{D}\mt E_{\widetilde C}(n)\right)_{D^{st}}\ar@{=}[d]\\
H^0\left(C,\mt I_D\mt E_{\widetilde C}\mt E(n)\right)\ar[rr]&&\left(\mt I_{D}\mt E_{\widetilde C}(n)\right)_D\ar@{=}[r]&\left(\mt I_{D}\mt E_{\widetilde C}\right)\Big/\left( \mt I^2_{D}\mt E_{\widetilde C}\right)
}
$$
are surjective. Thus, for any pair $(C,\mt E)\in \CVc^{Pss}$ and $n\geq n_2$, the bundle $\mt E(n)$ satisfies (iii).

(iv), (v). For the rest of the proof $R$ will be the exceptional curve $C_{\text{exc}}$. Set $\mt G(n):=\mt I_{D}\mt E_{\widetilde C}(n)$. Observe that $\mt G(n)=\mt G\otimes\omega_C^n$. Let $p$ and $q$ be (not necessarily distinct) points on $\widehat C=C\backslash R$. Consider the exact sequence of sheaves on $\widetilde C$
$$
0\lra \mt I_{p,q}\mt G(n)\lra\mt G(n)\lra \mt G(n)/\mt I_{p,q}\mt G(n)\lra 0,
$$
where, when $p=q$, we denote by $\mt I_{p,p}\mt G$ the sheaf $\mt I^2_{p}\mt G$. If we show that there exists $n_2$ such that for every $n\geq n_2$ and any pairs $(C,\mt E)$ in $\CVc^{Pss}$, we have $$H^1(\widetilde C,\mt I_{p,q}\mt G(n))=H^1(C^{st},\pi_*(\mt I_{p,q}\mt G(n)))=0,$$ then for any pair $(C,\mt E)\in \CVc^{Pss}$ and $n\geq n_2$, the bundle $\mt E(n)$ satisfies both the conditions (iv) and (v). We have already shown in (ii) that the pairs $(C^{st},\pi_*\mt G(n))$ are bounded. The sheaf $G/\mt I_{p,q}\mt G(n)$ is torsion and its Euler characteristic is $2r$ (it does not depend on the choice of $p$ and $q$). Arguing as in (ii), we conclude that there exists $n_3$ such that for very pair $(C,\mt E)\in\CVc^{P(s)s}$ and any $n\geq n_3$, we have $H^1(C^{st},\pi_*(\mt I_{p,q}\mt G))=0$.

Thus, if we define $n_\star:=\max\{n_1,n_2,n_3\}$, then $\CVc^{P{s}s}\subset \overline{\mt U}_n$ for every $n\geq n_\star$.
\end{proof}

In \cite{Sc}, Schmitt proved that there exists an open substack of $\CVc^{Pss}$, admitting a a normal projective variety $\CU$ as good moduli space.  Before to introduce the points of this stack, we need to recall some facts from \emph{loc.cit.}.

Let $(C,\mt E)$ be a pair in $\CVc^{Pss}$, $\mt F_\bullet:=\{\mt F_1\subset\ldots\subset\mt F_l\}$  be a Jordan-Holder filtration of $\pi_*\mt E$ and $\alpha_\bullet:=\{\alpha_1,\ldots,\alpha_l\}$ be positive rational numbers. We will associate to this datum a polynomial.

As in the proof of Proposition \ref{qc}, let $n_\star$ be an integer such that $\CVc^{Pss}\subset\overline{\mt U}_n$ for every $n\geq n_\star$. Set $V_n:=H^0(C,\mt E(n))$ and let $B_n$ be a basis for $V_n$ compatible with the filtration $H^0(\mt F_\bullet):=\{H^0(\mt F_1(n))\subset\ldots\subset H^0(\mt F_k(n))\}$. We define a one-parameter subgroup $\lambda^{\alpha_\bullet,\mt F_\bullet}_n$ of $GL(V_n)$, given, with respect to the basis $B_n$, by the weight vector
$$
\sum_{i=1}^{\dim V_n-1}\alpha_i(\underbrace{h^0(\mt F_i(n))-\dim V_n,\ldots,h^0(\mt F_i(n))-\dim V_n}_{h^0(\mt F_i(n))},\underbrace{h^0(\mt F_i(n)), \ldots,h^0(\mt F_i(n))}_{h^0(\mt F_i(n))-\dim V_n}).
$$
Fix a surjective morphism $q:H^0(C,\mt E)\otimes\oo_C\to\mt E$ and set $\mt L_n:=\det\mt E(n)$. Consider the morphism induced by $q$
$$
p:S^m\bigwedge^rV_n\rightarrow H^0(C,\mt L_n^m),
$$
where $S^m$ denote the symmetric $m$-th product. It is surjective for $m$ big enough. The action of the the one-parameter subgroup $\lambda^{\alpha_\bullet,\mt F_\bullet}_n$ on $V_n$ induces an action on $S^m\bigwedge^rV_n$. Let $P_{\alpha_\bullet,\mt F_\bullet}(m,n)$ be the minimum among the sums of the weights of the elements of the basis $S^m\bigwedge^r B_n$ of $S^m\bigwedge^rV_n$ which induce, via $p$, a basis of $H^0(C,\mt L_n^m)$. When $n,m$ are big enough, $P_{\alpha_\bullet,\mt F_\bullet}(m,n)$ is a polynomial in the variable $m$ (see \cite[Recall 1.5]{T98}).

\begin{defin}\label{defhsemtrue}\cite[Definition 2.2.10]{Sc}. A pair $(C,\mt E)$ in $\CVc^{Pss}$ is called H-(semi)stable if for every Jordan-Holder filtration $\mt F_\bullet$ of $\pi_*\mt E$ and every vector $\alpha_\bullet$ of positive rational numbers, there exists an index $n^*$, such that for any $n\geq n^*$
$$
P_{\alpha_\bullet,\mt F_\bullet}(m,n)\underset{(\leq)}{<}0\text{ as polynomial in } m.
$$
We will say that a H-semistable pair is \emph{strictly H-semistable} if it is not H-stable. We denote by $\CVc^{H(s)s}$ the open substack in $\CVc^{Pss}$ of H-(semi)stable pairs.
\end{defin}
A priori the H-semistability depends on the choice of a basis for the global sections of the vector bundle. However, it can be shown that it depends only on the curve and on the vector bundle.

The main result in \cite{Sc} is the existence of a moduli space for these pairs.
\begin{teo}\cite[Theorem 2.3.1]{Sc}\label{schmitt} The moduli stack $\CVc^{Hss}$ of H-semistable pairs admits a normal projective variety $\CU$ of dimension $(r^2+3)(g-1)+1$ as good moduli space.
\end{teo}

\begin{rmk}\label{defhsem} An H-semistable point has the following properties: let, as usual, $\pi:C\rightarrow C^{st}$ be the stabilization morphism and $(C,\mt E)$ is a pair in $\CVc$.
\begin{enumerate}[(i)]
\item Suppose that $C$ is smooth. Then $(C,\mt E)$ is $H$-(semi)stable if and only if $(C,\mt E)$ is $P$-(semi)stable (see \cite{Sceq}). In this case we will say just $(C,\mt E)$ is \emph{(semi)-stable}.
\item We have a sequence of implications: $(C,\mt E)$ P-stable $\Rightarrow$ $(C,\mt E)$ H-stable $\Rightarrow$ $(C,\mt E)$ H-semistable $\Rightarrow$ $(C,\mt E)$ P-semistable.
\end{enumerate}
\end{rmk}

Our definition of H-semistable pairs is slightly different from the Schmitt's one. In his construction Schmitt just requires that the vector bundle must be admissible, but not necessarily balanced. The next lemma proves that the vector bundles appearing in his construction are indeed also properly balanced. So our definition agrees with the one of Schmitt.

\begin{lem}If $(C^{st},\pi_*\mt E)$ is P-semistable then $\mt E$ is properly balanced.
\end{lem}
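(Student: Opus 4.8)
The plan is to observe that admissibility is essentially free and that the real content of the statement is the balancing inequality, which I will read off from the slope inequality defining P-semistability by pushing forward to the stable model. Since we are in Schmitt's setting, $\mt E$ is admissible; alternatively, as $\pi_*\mt E$ is torsion free (being P-semistable) and $\mt E$ is strictly positive, Proposition \ref{prop5NS}(i) already yields admissibility. Thus it remains to prove that $\mt E$ is balanced, and by Remark \ref{balcon}(ii),(iv) it suffices to check, for every subcurve $Z\subset C$ with $Z$ and $Z^c$ connected, the inequality $\chi(\mt F)/\omega_Z\leq \chi(\mt E)/\omega_C$, where $\mt F$ is the subsheaf of $\mt E_Z$ of sections vanishing on $Z\cap Z^c$, regarded as the subsheaf $\mt I_{Z^c}\mt E\subset\mt E$.

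First I would dispose of the degenerate subcurves. A leaf-counting argument on the semistable curve $C$ shows that $\omega_Z\geq 0$ whenever $Z$ and $Z^c$ are connected, with equality exactly when $Z$ is a rational chain (genus $0$, $k_Z=2$). In that case the required inequality is just $|\deg\mt E_Z|\leq r=r\,k_Z/2$, which follows from admissibility: a strictly standard bundle on a chain decomposes as $\bigoplus_{i=1}^r\mt L_i$ with each $\mt L_i$ of total degree $0$ or $1$, so $0\leq\deg\mt E_Z\leq r$.

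For $\omega_Z>0$ I would translate everything to $C^{st}$. Applying $\pi_*$ to $\mt F\hookrightarrow\mt E$ produces, by left exactness, a subsheaf $\pi_*\mt F\hookrightarrow\pi_*\mt E$. This subsheaf has generic rank $r$ along the non-contracted components of $\pi(Z)$ and rank $0$ elsewhere, so in the notation of the P-semistability inequality $\sum_i s_i\omega_{C_i}=r\,\omega_{\pi(Z)}$. Using $\omega_C=\pi^*\omega_{C^{st}}$ together with the fact that $\omega_C$ has degree $0$ on every chain component, one obtains $\omega_{\pi(Z)}=\omega_Z$ and $\omega_{C^{st}}=\omega_C=2g-2$. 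Since $\mt E$ is positive, Proposition \ref{stabil} gives $R^i\pi_*\mt E=0$ for $i>0$, hence $\chi(\pi_*\mt E)=\chi(\mt E)$; and since the higher direct images $R^i\pi_*\mt F$ (for $i>0$) are torsion sheaves supported on the finitely many nodes coming from contracted chains, the Leray spectral sequence gives $\chi(\pi_*\mt F)=\chi(\mt F)+\chi(R^1\pi_*\mt F)\geq\chi(\mt F)$. Now P-semistability of $\pi_*\mt E$ applied to the subsheaf $\pi_*\mt F$ yields
\[
\frac{\chi(\mt F)}{r\,\omega_Z}\leq\frac{\chi(\pi_*\mt F)}{r\,\omega_{\pi(Z)}}\leq\frac{\chi(\pi_*\mt E)}{r\,\omega_{C^{st}}}=\frac{\chi(\mt E)}{r\,\omega_C},
\]
which after cancelling $r$ is exactly the inequality of Remark \ref{balcon}(iv). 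Hence $\mt E$ is balanced, and therefore properly balanced.

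The main obstacle is the numerical dictionary between $C$ and $C^{st}$: one must verify carefully that $\pi_*\mt F$ really has uniform rank $r$ on its support (so that the P-semistability slope is computed with $s_i=r$) and that passage to the stable model preserves the relevant $\omega$-degrees, i.e. $\omega_{\pi(Z)}=\omega_Z$ and $\omega_{C^{st}}=\omega_C$. The delicate case is that of maximal chains straddling $Z$ and $Z^c$, so that $Z\cap Z^c$ meets the interior of a chain; for these I would use Corollary \ref{NSrmk} and the local description of $\pi_*\mt E$ at the contracted points from Proposition \ref{prop5NS}(ii) to identify $\pi_*\mt F$ and compute its rank and Euler characteristic. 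Everything else is the formal slope manipulation displayed above.
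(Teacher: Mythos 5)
Your argument is correct and follows essentially the same route as the paper's proof: push the subsheaf $\mt F$ of sections of $\mt E_Z$ vanishing on $Z\cap Z^c$ forward to $C^{st}$, apply the P-semistability inequality to $\pi_*\mt F\subset\pi_*\mt E$, and compare Euler characteristics via Leray using $\chi(\mt F)\leq\chi(\pi_*\mt F)$ and $\chi(\pi_*\mt E)=\chi(\mt E)$. The only addition is your explicit treatment of the degenerate case $\omega_Z=0$ (rational chains) via admissibility, which the paper leaves implicit but which is indeed needed to interpret the inequality of Remark \ref{balcon}(iv) for such subcurves.
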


\begin{proof}By considerations above, we must prove that $\mt E$ is balanced. By Lemma \ref{balcon}(ii), we have to prove that for any connected subcurve $Z\subset C$ such that $Z^c$ is connected, we have
$$
\frac{\chi(\mt F_Z)}{\omega_Z}\leq \frac{\chi(\mt E)}{\omega_C},
$$
where $\mt F_Z$ is the subsheaf of $\mt E_Z$ of sections that vanishes on $Z\cap Z^c$. Observe that $\mt F_Z$ is also a subsheaf of $\mt E$. The hypothesis and the fact that the push-forward is left exact imply
$$
\frac{\chi(\pi_*\mt F_Z)}{\omega_{Z^{st}}}\leq \frac{\chi(\pi_*\mt E)}{\omega_{C^{st}}}=\frac{\chi(\mt E)}{\omega_C},
$$
where $Z^{st}$ is the reduced subcurve $\pi(Z)\subset C^{st}$. It is clear that $\omega_{Z^{st}}=\omega_Z$. We have an inclusion of vector spaces $H^1\left(Z^{st},\pi_*\mt F\right)\subset H^1\left(Z,\mt F\right)$. This implies $\chi(\mt F_Z)\leq\chi(\pi_*\mt F_Z)$, concluding the proof.
\end{proof}

\section{Preliminaries about line bundles on stacks.}\label{linechow}

\subsection{Picard group and Chow groups of a stack.}\label{boh}

We will recall the definitions and some properties of the Picard group and the Chow group of an Artin stack. Some parts contain overlaps with \cite[Section 2.9]{MV}. Let $\mathcal X$ be an Artin stack locally of finite type over $k$.
\begin{defin}\cite[p.64]{Mum65} A \emph{line bundle} $\mathcal L$ on $\mathcal X$ is the data consisting of a line bundle $\mathcal L(F_S)$ on $S$ for every scheme $S$ and morphism $F_S:S\rightarrow\mathcal X$ such that:
\begin{itemize}
\item For any commutative diagram
$$
\xymatrix{
S \ar[rd]_{F_S}\ar[rr]^{f} & & T\ar[ld]^{F_T}\\
& \mathcal X &
}
$$
there is an isomorphism $\phi(f):\mathcal L(F_S)\cong f^*\mathcal L(F_T)$.
\item For any commutative diagram
$$
\xymatrix{
S \ar[rd]_{F_S}\ar[r]^{f} & T\ar[d]^{F_T}\ar[r]^g & Z\ar[ld]^{F_Z}\\
& \mathcal X &
}
$$
we have the following commutative diagram of isomorphisms
$$
\xymatrix{
\mathcal L(F_S)\ar[r]^{\phi(f)}\ar[d]^{\phi(g\circ f)} & f^*\mathcal L(F_T)\ar[d]^{f^*\phi(g)}\\
(g\circ f)^*\mathcal L(F_Z)\ar[r]^{\cong} & f^* g^*\mathcal L(F_Z).
}
$$
\end{itemize}
The abelian group of isomorphism classes of line bundles on $\mathcal X$ is called the \emph{Picard group} of $\mathcal X$ and is denoted by $\text{Pic}(\mathcal X)$.
\end{defin}

\begin{rmk}The definition above is equivalent to have a locally free sheaf of rank $1$ for the site lisse-\'etale (\cite[Proposition 1.1.1.4.]{Br1}).
\end{rmk}

If $\mathcal X$ is a quotient stack $[X/G]$, where $X$ is a scheme of finite type over $k$ and $G$ a group scheme of finite type over $k$, then $\text{Pic}(\mathcal X)\cong \text{Pic}(X)^G$  (see \cite[Chap. XIII, Corollary 2.20]{ACG11}), where $\text{Pic}(X)^G$ is the group of isomorphism classes of $G$-linearized line bundles on $X$.\vspace{0.2cm}

In \cite[Section 5.3]{EG98} (see also \cite[Definition 3.5]{Edi12}), Edidin and Graham introduce the operational Chow groups of an Artin stack $\mathcal X$, as generalization of the operational Chow groups of a scheme.

\begin{defin}A \emph{Chow cohomology class} $c$ on $\mathcal X$ is the data consisting of an element $c(F_S)$ in the operational Chow group $A^*(S)=\oplus A^i(S)$ for every scheme $S$ and morphism $F_S:S\rightarrow\mathcal X$ such that for any commutative diagram
$$
\xymatrix{
S \ar[rd]_{F_S}\ar[rr]^{f} & & T\ar[ld]^{F_T}\\
& \mathcal X &
}
$$
we have $c(F_S)= f^*c(F_T)$. The abelian group consisting of all the $i$-th Chow cohomology classes on $\mathcal X$ together with the operation of sum is called the \emph{$i$-th Chow group} of $\mathcal X$ and is denoted by $A^i(\mathcal X)$.
\end{defin}

If $\mathcal X$ is a quotient stack $[X/G]$, where $X$ is a scheme of finite type over $k$ and $G$ a group scheme of finite type over $k$, then $A^i(\mathcal X)\cong A^i_G(X)$ (see \cite[Proposition 19]{EG98}), where $A^i_G(X)$ is the operational equivariant Chow group defined in \cite[Section 2.6]{EG98}. We have a homomorphism of groups $c_1:\text{Pic}(\mathcal X)\rightarrow A^1(\mathcal X)$ defined by the first Chern class.\vspace{0.2cm}

The next theorem summarizes some results on the Picard group of a smooth stack, which will be useful for our purposes.

\begin{teo}\label{picchow}Let $\mt X$ be a (not necessarily quasi-compact) smooth Artin stack over $k$. Let $\mt U\subset\mt X$ be an open substack.
\begin{itemize}
\item[\textit{(i)}] The restriction map $\Pic(\mt X)\rightarrow \Pic(\mt U)$ is surjective.
\item[\textit{(ii)}] If $\mt X\backslash\mt U$ has codimension $\geq 2$ in $\mt X$, then $\Pic(\mt X)=\Pic(\mt U)$.
\end{itemize}
Suppose that $\mt X=[X/G]$, where $G$ is an algebraic group and $X$ is a smooth quasi-projective variety with a $G$-linearized action.
\begin{itemize}
\item[\textit{(iii)}] The first Chern class map $c_1:\Pic(\mathcal X)\rightarrow A^1(\mathcal X)$ is an isomorphism.
\item[\textit{(iv)}] If $\mt X\backslash\mt U$ has codimension $1$ with irreducible components $\mt D_i$, then we have an exact sequence
$$
\bigoplus_i\mathbb{Z}\langle \oo_{\mathcal X}(\mt D_i)\rangle\lra \Pic(\mt X)\lra \Pic(\mt U)\lra 0.
$$
\end{itemize}
\end{teo}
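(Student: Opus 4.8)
The plan is to reduce each assertion to its classical scheme-theoretic counterpart, using the equivariant Chow theory recalled above together with the divisor-class description of the Picard group from \S\ref{boh}. Parts (i) and (ii) are soft consequences of regularity and are handled by hand with Weil divisors, while parts (iii) and (iv) rest on the equivariant first Chern class and the equivariant localization sequence.

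For parts (i) and (ii) I would work directly with codimension-one cycles, using that a smooth Artin stack is regular, hence locally factorial, so that (by \S\ref{boh}) the first Chern class identifies $\Pic(\mt X)$ with the group of codimension-one cycle classes on $\mt X$, and likewise for the open substack $\mt U$. For (i), given $\mt L\in\Pic(\mt U)$ I would write its class as a combination $\sum_i n_i[V_i]$ of integral codimension-one closed substacks $V_i\subset\mt U$, replace each $V_i$ by its closure $\overline{V_i}\subset\mt X$, and observe that $\sum_i n_i[\overline{V_i}]$ defines a line bundle on $\mt X$ whose restriction to $\mt U$ is $\mt L$ (because $\overline{V_i}\cap\mt U=V_i$); this exhibits a preimage and proves surjectivity. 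For (ii), the kernel of the restriction $\Pic(\mt X)\to\Pic(\mt U)$ is generated by classes of divisors supported on $\mt X\setminus\mt U$, and if the latter has codimension $\geq 2$ there are no nonzero such classes, so the restriction is injective as well, hence an isomorphism.

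For part (iii) I would pass to the presentation $\mt X=[X/G]$ and use the identifications $\Pic(\mt X)\cong\Pic(X)^G$ and $A^1(\mt X)\cong A^1_G(X)$ recalled above, reducing the claim to the statement that the equivariant first Chern class $\Pic(X)^G\to A^1_G(X)$ is an isomorphism for $X$ smooth and quasi-projective. This is proved in \cite{EG98}: one approximates the Borel construction by smooth quasi-projective schemes $X\times_G(V\setminus S)$, where $V$ is a representation of $G$ on which $G$ acts freely away from a closed subset $S$ of arbitrarily large codimension, applies the classical isomorphism $\Pic\cong A^1$ for smooth schemes to these approximations, and passes to the limit, using that both groups stabilise in the relevant range. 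For part (iv) I would invoke the equivariant localization (excision) sequence for the closed immersion $Z:=X\setminus U$ into $X$, with $n:=\dim X$,
$$
A^G_{n-1}(Z)\lra A^G_{n-1}(X)\lra A^G_{n-1}(U)\lra 0,
$$
where $A^G_\bullet$ denotes the dimension-indexed equivariant Chow groups. Since $X$ is smooth of pure dimension $n$, Poincar\'e duality gives $A^G_{n-1}(X)\cong A^1_G(X)$ and $A^G_{n-1}(U)\cong A^1_G(U)$, while $A^G_{n-1}(Z)$ is generated by the fundamental classes of the codimension-one components $\mt D_i$ of $\mt X\setminus\mt U$ (higher-codimension components contribute only in lower degrees). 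Translating through the isomorphism of part (iii) yields the asserted exact sequence $\bigoplus_i\mathbb Z\langle\oo_{\mt X}(\mt D_i)\rangle\to\Pic(\mt X)\to\Pic(\mt U)\to 0$, the rightmost surjectivity being exactly part (i).

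The main obstacle I anticipate is part (iii): the remaining assertions are bookkeeping once a clean dictionary between line bundles and codimension-one cycles is in place, but proving that $c_1$ is an isomorphism equivariantly genuinely requires the Borel construction (Totaro's approximation) and the verification that the finite-dimensional approximations are smooth of the expected dimension with complements of arbitrarily large codimension. A secondary point of care is that $\mt X$ may fail to be quasi-compact, so for (i) and (ii) I must carry out the divisor-closure argument with the cycle theory of \S\ref{boh}, which is developed without quasi-compactness hypotheses, rather than via the quotient presentation used for (iii) and (iv).
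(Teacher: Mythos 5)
Your parts (iii) and (iv) are essentially correct and reproduce the content of the references the paper itself invokes: (iii) is \cite[Corollary 1]{EG98}, proved exactly by the Totaro--Edidin--Graham approximation of the Borel construction that you describe, and (iv) is the equivariant excision sequence of \cite[Proposition 5]{EG98} combined with (iii). The gap is in (i) and (ii), for which the paper cites \cite[Lemma 7.3]{BH12}. Your argument there rests on the claim that, since a smooth Artin stack is locally factorial, $c_1$ identifies $\Pic(\mt X)$ with the group generated by integral codimension-one closed substacks of $\mt X$. That dictionary is false for Artin stacks with positive-dimensional stabilizers: local factoriality settles Cartier versus Weil on an atlas, but it does not produce enough divisors on the stack itself. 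The basic counterexample is $\mt X=B\mathbb{G}_m$, which has $\Pic(\mt X)=\mathbb Z$ and no codimension-one closed substacks whatsoever; and the failure is not exotic in the present context, since on $\mathcal Vec_{r,d,g}$ every line bundle of the form $\oo(D)$ has $\mathbb{G}_m$-weight zero while $\Lambda(0,0,1)$ has weight $d+r(1-g)$, so it is not of that form. Consequently your closure construction in (i) only extends the subgroup of $\Pic(\mt U)$ generated by classes of closed substacks, not all of $\Pic(\mt U)$, and your injectivity argument in (ii), which asserts that the kernel of restriction is generated by divisors supported on $\mt X\setminus\mt U$, breaks for the same reason. (Note also that the operational group $A^1(\mt X)$ of \S\ref{boh} is \emph{not} the naive cycle group you are using: $A^1(B\mathbb{G}_m)=\mathbb Z$.)

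The repair is to argue on a smooth presentation, which is what \cite[Lemma 7.3]{BH12} does. Choose a smooth atlas $p:X\rightarrow\mt X$ with $X$ a smooth scheme, and identify line bundles on $\mt X$ with line bundles on $X$ equipped with descent data on $R:=X\times_{\mt X}X$. A line bundle on $\mt U$ gives a line bundle on $p^{-1}(\mt U)$ with descent data over the preimage of $\mt U$ in $R$; the underlying line bundle extends to $X$ by the scheme-level statement (here your divisor-closure argument is valid), and the whole point of the lemma is to extend the descent isomorphism as well. When $\mt X\setminus\mt U$ has codimension $\geq 2$, its preimages in $X$ and in $R$ also have codimension $\geq 2$ because the projections are smooth, so both the line bundle and the descent isomorphism extend uniquely by normality, giving (ii); for (i) one must in addition correct the chosen extension by a divisor supported on $X\setminus p^{-1}(\mt U)$ before the descent datum extends. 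With (i) and (ii) established this way, the rest of your write-up stands.
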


\begin{proof}The first two points are proved in \cite[Lemma 7.3]{BH12}. The third one follows from \cite[Corollary 1]{EG98}. Let $U$ be the open subscheme of $X$ such that $\mt U=[U/G]$. Set $n:=\dim\mt X$. By \cite[Theorem 1]{EG98}, we can identify $\Pic(\mt X)$ and $\Pic(\mt U)$ with the equivariant Chow groups $A^G_{n-1}(X)$ and $A^G_{n-1}(U)$, respectively. By this identification and by \cite[Proposition 5]{EG98}, we have an exact sequence:
$$
A^G_{n-1}(X\backslash U)\xrightarrow{f} \Pic(\mt X)\to \Pic(\mt U)\to 0
$$
Since $\mt X\backslash \mt U$ has dimension $n-1$, $A^G_{n-1}(X\backslash U)$ is freely generated by the divisors $\mt D_i$. So, we have that $\text{Im}f$ is generated by $\bigoplus_i\mathbb{Z}\langle \oo_{\mathcal X}(\mt D_i)\rangle$, concluding the proof.
\end{proof}

\subsection{Determinant of cohomology and Deligne pairing.}\label{dcdp}

There exist two methods to produce line bundles on a stack parametrizing nodal curves with some extra-structure (as our stacks): the determinant of cohomology and the Deligne pairing. We refer to \cite{KM76}, \cite{Del87} for a complete treatment of the determinant of cohomology and Deligne pairing, respectively. Here, we will recall the main properties of these construction, following the presentation given in \cite[Chap. XIII, Sections 4 and 5]{ACG11} and the resume in \cite[Section 2.13]{MV}. 

Let $p:C\rightarrow S$ be a family of nodal curves. Given a coherent sheaf $\mt F$ on $C$ flat over $S$, the \emph{determinant of cohomology} of $\mathcal F$ is a line bundle $d_{p}(\mt F)\in \Pic(S)$ defined as it follows. Locally on $S$, there exists a relatively ample divisor $D$ for the family $C\to S$, which gives us an exact sequence
$$
0\to p_*\mt F\to p_*\mt F(D)\to p_*\mt F(D)_D\to R^1p_*\mt F\to 0,
$$
such that $(V_D^0\rightarrow V_D^1):=(p_*\mt F(D)\to p_*\mt F(D)_{D})$ is a complex of vector bundles. We set 
$$
d_{p}(\mt F):=\det V_D^0\otimes (\det V_D^1)^{-1}.
$$
The definition does not depend on the choice of the divisor $D$ (see \cite[pp. 356-357]{ACG11}), in particular, this defines a line bundle globally on $S$. The proof of the next theorem can be found in \cite[Chap. XIII, Section 4]{ACG11}.
\begin{teo}\label{detcoh}Let $p:C\rightarrow S$ be a family of nodal curves and let $\mathcal F$ be a coherent sheaf on $C$ flat on $S$.
\begin{enumerate}[(i)]
\item If $S$ is smooth, the first Chern class of $d_p(\mt F)$ is equal to
$$
c_1(d_{p}(\mt F))=c_1(p_!(\mt F)):=c_1(p_*(\mt F))-c_1(R^1p_*(\mt F)).
$$
\item Given a cartesian diagram
$$
\xymatrix{
C\times_S T\ar[r]^g\ar[d]_q & C\ar[d]^p\\
T\ar[r]^f & S
}
$$
we have a canonical isomorphism
$$
f^*d_p(\mt F)\cong d_q(g^*\mt F).
$$
\end{enumerate}
\end{teo}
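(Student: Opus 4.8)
The plan is to derive both statements directly from the local description of $d_p(\mt F)$ as $\det V_0\otimes(\det V_1)^{-1}$, where $f:V_0\to V_1$ is a two-term complex of vector bundles chosen locally on $S$ by Proposition \ref{lasvolta}, satisfying $\ker f=p_*(\mt F)$ and $\text{coker}\,f=R^1p_*(\mt F)$. The two structural inputs are the four-term exact sequence attached to such a complex and the compatibility of the complex with arbitrary base change: part (i) will rest on the former, part (ii) on the latter. Throughout I will use that $d_p(\mt F)$ is already known to be independent of the choice of $V_\bullet$, so that the various local constructions below glue to something defined globally on $S$.

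For part (i), I would start from the exact sequence
$$0\lra p_*(\mt F)\lra V_0\xrightarrow{f} V_1\lra R^1p_*(\mt F)\lra 0,$$
which simply re-encodes the defining properties $\ker f=p_*(\mt F)$ and $\text{coker}\,f=R^1p_*(\mt F)$. Since the first Chern class is additive along exact sequences of coherent sheaves (splitting the four-term sequence into two short exact sequences through the image of $f$), taking the alternating sum gives $c_1(V_0)-c_1(V_1)=c_1(p_*(\mt F))-c_1(R^1p_*(\mt F))$. Combining this with $c_1(\det V_i)=c_1(V_i)$ for a vector bundle yields $c_1(d_p(\mt F))=c_1(\det V_0)-c_1(\det V_1)=c_1(p_!(\mt F))$, as required. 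This part is essentially formal once the exact sequence is in hand.

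For part (ii), the substance lies in cohomology and base change: the complex $V_\bullet$ furnished by Proposition \ref{lasvolta} is constructed so that its formation commutes with base change, i.e. for the pullback $f:T\to S$ the complex $f^*V_0\to f^*V_1$ again has kernel $q_*(g^*\mt F)$ and cokernel $R^1q_*(g^*\mt F)$, and hence may be used to compute $d_q(g^*\mt F)$. Granting this, one uses that taking determinants commutes with pullback of vector bundles to obtain $f^*(\det V_0\otimes(\det V_1)^{-1})\cong\det(f^*V_0)\otimes(\det f^*V_1)^{-1}=d_q(g^*\mt F)$. The main obstacle is the globalization and canonicity: since $V_\bullet$ exists only locally on $S$, these local isomorphisms must be shown to be independent of the chosen complex and to glue over $S$. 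I would handle this exactly as in the proof that $d_p(\mt F)$ is well-defined, by checking that on overlaps two choices of $V_\bullet$ differ by a canonical isomorphism of determinants and that the base-change isomorphisms are compatible with these transition data, so that they descend to a single canonical isomorphism $f^*d_p(\mt F)\cong d_q(g^*\mt F)$, completing the proof.
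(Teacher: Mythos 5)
The paper offers no proof of this statement at all: it is quoted from the literature with the citation \cite[Chap.\ XIII, Section 4]{ACG11}, so there is no in-text argument to compare yours against. What you write is the standard proof from that reference: part (ii) is exactly the base-change functoriality built into the complex of Proposition \ref{lasvolta} plus compatibility of determinants with pullback, together with the gluing/canonicity check you correctly identify as the real content; part (i) is the Whitney-sum computation on the four-term sequence. Both parts are essentially right.

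Two points in part (i) deserve more care than "essentially formal". First, $p_*(\mt F)$ and $R^1p_*(\mt F)$ are merely coherent, not locally free, so invoking additivity of $c_1$ along exact sequences of coherent sheaves presupposes that these sheaves have first Chern classes at all; this is fine when they admit finite locally free resolutions (e.g.\ over a smooth base, which covers the applications in this paper) or if one reads the identity in the Grothendieck group, but it should be said. Second, and more substantively, the four-term sequence exists only locally on $S$, and an identity of classes in $A^1(S)$ cannot be verified on an open cover — a nontrivial line bundle is locally trivial. So part (i) requires the same canonicity-and-gluing discussion you carry out for part (ii): one upgrades the local computation to a canonical isomorphism of line bundles $d_p(\mt F)\cong \det(p_*\mt F)\otimes\det(R^1p_*\mt F)^{-1}$ (determinants taken via resolutions), checks independence of choices so that it globalizes, and only then takes $c_1$. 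With that adjustment the argument is complete and matches the cited source.
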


\begin{proof} Locally on $S$, $c_1(d_{p}(\mt F))=c_1(\det V_D^0)-c_1 ((\det V_D^1))=c_1(p_!(\mt F))$. The latter equality comes from the fact that $c_1$ takes values in the Grothendieck group of bounded complexes of coherent sheaves on $S$. Then (i) holds. The second assertion can be deduced from the independence of the choice of the divisor $D$ in \cite[pp. 356-357]{ACG11} and the fact that $g^{-1}(D)$ is relatively ample with respect to $q$.
\end{proof}

Given two line bundles $\mt M$ and $\mt L$ over a family of nodal curves $p:C\rightarrow S$, the \emph{Deligne pairing} of $\mt M$ and $\mt L$ is a line bundle $\langle\mt M,\mt L\rangle_p\in Pic(S)$ which can be defined as
$$
\langle\mt M,\mt L\rangle_p:=d_p(\mt M\otimes\mt L)\otimes d_p(\mt M)^{-1}\otimes d_p(\mt L)^{-1}\otimes d_p(\oo_{C}).
$$
The proof of the next theorem can be found in \cite[Chap. XIII, Section 5]{ACG11}.
\begin{teo}\label{delpair}Let $p:C\rightarrow S$ be a family of nodal curves.
\begin{enumerate}[(i)]
\item If $S$ is smooth, the first Chern class of $\langle\mt M,\mt L\rangle_p$ is equal to
$$
c_1(\langle\mt M,\mt L\rangle_p)=p_*(c_1(\mt M)\cdot c_1(\mt L)).
$$
\item Given a cartesian diagram
$$
\xymatrix{
C\times_S T\ar[r]^g\ar[d]_q & C\ar[d]^p\\
T\ar[r]^f & S
}
$$
we have a canonical isomorphism
$$
f^*\langle\mt M,\mt L\rangle_p\cong \langle g^*\mt M,g^*\mt L\rangle_q
$$
\end{enumerate}
\end{teo}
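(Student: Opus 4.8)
The plan is to derive both assertions directly from the defining formula
$$\langle\mt M,\mt L\rangle_p=d_p(\mt M\otimes\mt L)\otimes d_p(\mt M)^{-1}\otimes d_p(\mt L)^{-1}\otimes d_p(\oo_C)$$
together with the two properties of the determinant of cohomology recorded in Theorem \ref{detcoh}. Assertion (ii) is purely formal, while assertion (i) reduces to a Grothendieck--Riemann--Roch computation in which all the ``diagonal'' contributions cancel, leaving exactly the bilinear cross term. The conceptual point throughout is that the Deligne pairing is the symmetric bilinear form obtained by polarizing the quadratic function $\mt F\mapsto d_p(\mt F)$.

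For (ii) I would apply $f^*$ to the defining formula. Since pullback along $f$ is a homomorphism of Picard groups, it commutes with tensor products and inverses, so $f^*\langle\mt M,\mt L\rangle_p$ is the tensor product of the four factors $f^*d_p(-)$. By the base-change property Theorem \ref{detcoh}(ii) each such factor is canonically isomorphic to $d_q(g^*(-))$. Using that $g^*$ is monoidal, i.e. $g^*(\mt M\otimes\mt L)\cong g^*\mt M\otimes g^*\mt L$ and $g^*\oo_C\cong\oo_{C\times_S T}$, I would then recognize the resulting expression as $\langle g^*\mt M,g^*\mt L\rangle_q$. The only thing to check is that these canonical isomorphisms are compatible (satisfy the evident cocycle condition), which is routine.

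For (i) I would take first Chern classes of the defining formula, obtaining
$$c_1(\langle\mt M,\mt L\rangle_p)=c_1(d_p(\mt M\otimes\mt L))-c_1(d_p(\mt M))-c_1(d_p(\mt L))+c_1(d_p(\oo_C)).$$
By Theorem \ref{detcoh}(i) each term equals $c_1(p_!(-))$, and I would evaluate $c_1(p_!\mt F)$ by Grothendieck--Riemann--Roch applied to $p$; this is legitimate because a family of nodal curves is a local complete intersection morphism of relative dimension one with \emph{invertible} relative dualizing sheaf $\omega_{C/S}$, so $\ch(p_!\mt F)=p_*\big(\ch(\mt F)\cdot\Td(\omega_{C/S}^{-1})\big)$. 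Writing $m:=c_1(\mt M)$, $l:=c_1(\mt L)$, $x:=c_1(\mt F)$, $\kappa:=c_1(\omega_{C/S})$ and extracting the codimension-one part, one finds $c_1(p_!\mt F)=p_*\big(\tfrac12 x^2-\tfrac12 x\kappa+\tfrac1{12}\kappa^2\big)$. In the alternating combination above the argument $x$ runs over $m+l,\,m,\,l,\,0$: the $\mt F$-independent term cancels with coefficient $1-1-1+1=0$, the linear term cancels because $(m+l)-m-l+0=0$, and only the quadratic term survives, contributing $\tfrac12\big((m+l)^2-m^2-l^2\big)=m\cdot l$. Pushing forward gives $c_1(\langle\mt M,\mt L\rangle_p)=p_*(m\cdot l)=p_*(c_1(\mt M)\cdot c_1(\mt L))$, as claimed.

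The main obstacle is the Grothendieck--Riemann--Roch step in (i): one must ensure the theorem is available in the singular (nodal, hence only l.c.i.) relative setting and that $\Td(\omega_{C/S}^{-1})$ is the correct replacement for the Todd class of a relative tangent bundle, and then track carefully which terms survive the alternating sum. By contrast, the base-change statement (ii) and, once GRR is in hand, the final cancellation in (i) are straightforward, since they merely express that the self-intersection and $\omega_{C/S}$-linear contributions are killed by polarization.
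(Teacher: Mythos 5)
Your proposal is correct, and it is essentially the argument the paper relies on: the paper does not prove Theorem \ref{delpair} itself but refers to \cite[Chap. XIII, Section 5]{ACG11}, and since the Deligne pairing is \emph{defined} here by the four-fold alternating product of determinants of cohomology, your derivation of (ii) from Theorem \ref{detcoh}(ii) and of (i) by polarizing the Grothendieck--Riemann--Roch formula is exactly the intended route (compare the paper's own GRR computation in the proof of Theorem \ref{relations}). One small imprecision: for a family of nodal curves the correct correction factor in GRR is the Todd class of the virtual relative tangent bundle, i.e.\ what the paper writes as $\Td(\Omega_{C/S})^{-1}$, not $\Td(\omega_{C/S}^{-1})$; its degree-two part is $\tfrac{1}{12}\left(c_1(\omega_{C/S})^2+c_2(\Omega_{C/S})\right)$, with an extra term supported at the nodes. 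Since that term is independent of the sheaf being pushed forward, it cancels in the alternating sum with coefficients $1-1-1+1=0$, exactly like your $\tfrac{1}{12}\kappa^2$, so the conclusion $c_1(\langle\mt M,\mt L\rangle_p)=p_*(c_1(\mt M)\cdot c_1(\mt L))$ is unaffected.
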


\begin{proof}For the first equality see \cite[p. 376]{ACG11}. The second one is a consequence of Theorem \ref{detcoh}(ii).
\end{proof}

\begin{rmk}\label{rmk2}By the functoriality of the determinant of cohomology and of the Deligne pairing, we can extend their definitions to the case when we have a representable, proper and flat morphism of Artin stacks such that the geometric fibers are nodal curves.
\end{rmk}

\subsection{Picard group of $\CMg$.}\label{cmg}

The universal family $\overline{\pi}:\overline{\mt M}_{g,1}\rightarrow \CMg$ is a representable, proper, flat morphism with stable curves as geometric fibers. In particular we can define the relative dualizing sheaf $\omega_{\overline{\pi}}$ on $\overline{\mathcal M}_{g,1}$ and taking the determinant of cohomology $d_{\overline{\pi}}(\omega^n_{\overline{\pi}})$ we obtain line bundles on $\CMg$. The line bundle $\Lambda:=d_{\overline{\pi}}(\omega_{\overline{\pi}})$ is called the \emph{Hodge line bundle}. 

Let $C$ be a stable curve and for every node $x$ of $C$, consider the partial normalization $C'$ at $x$. If $C'$ is connected then we say $x$ is a node of type $0$, if $C'$ is the union of two connected curves of genus $i$ and $g-i$, with $i\leq g-i$ (for some $i$), then we say that $x$ is a node of type $i$.
The boundary $\CMg\slash \Mg$ decomposes as union of irreducible divisors $\delta_i$ for $i=0,\ldots,\lfloor g/2\rfloor$, where $\delta_i$ parametrizes (as stack) the stable curves with a node of type $i$. The generic point of $\delta_0$ is an irreducible curve of genus $g$ with exactly one node, the generic point of $\delta_i$ for $i=1,\ldots,\lfloor g/2\rfloor$ is a stable curve formed by two irreducible smooth curves of genus $i$ and $g-i$ meeting in exactly one point. We set $\delta:=\sum\delta_i$. By Theorem \ref{picchow} we can associate to any $\delta_i$ a unique (up to isomorphism) line bundle $\oo(\delta_i)$. We set $\oo(\delta)=\bigotimes_i\oo(\delta_i)$.

The proof of the next results for $g\geq 3$ can be found in \cite[Theorem. 1]{AC87} based upon a result of \cite{Har83}. If $g=2$ see \cite{Vis98} for $\Pic(\mt M_2)$ and \cite[Proposition 1]{Cor07} for $\Pic(\overline{\mt M}_2)$.

\begin{teo}\label{picmg}Assume $g\geq 2$. Then
\begin{enumerate}[(i)]
\item $\Pic(\Mg)$ is freely generated by the Hodge line bundle, except for $g=2$ in which case we add the relation $\Lambda^{10}=\oo_{\mt M_2}$.
\item $\Pic(\CMg)$ is freely generated by the Hodge line bundle and the boundary divisors, except for $g=2$ in which case we add the relation $\Lambda^{10}=\oo\left(\delta_0+2\delta_1\right)$.
\end{enumerate}
\end{teo}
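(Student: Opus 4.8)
The plan is to reduce the integral computation to a single deep topological input and then pin down the generators by intersection theory. Throughout I work over $\mathbb{C}$, as the standing hypotheses allow, and I regard $\Mg$ and $\CMg$ as smooth Deligne--Mumford stacks, so that the first Chern class $c_1\colon\Pic\lra H^2(-;\mathbb Z)$ is defined and the analytic and algebraic Picard groups agree. First I would identify $\Pic(\Mg)$ with a group cohomology: since Teichm\"uller space $\mathcal T_g$ is contractible and Stein, every holomorphic line bundle on it is trivial, so a line bundle on $\Mg=[\mathcal T_g/\Gamma_g]$ (where $\Gamma_g$ is the mapping class group) is determined by its equivariant structure, giving a natural $c_1$-compatible isomorphism $\Pic(\Mg)\cong H^2(\Gamma_g;\mathbb Z)\cong H^2(\Mg;\mathbb Z)$. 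The essential input is then Harer's theorem (\cite{Har83}): for $g\geq 3$ one has $H^1(\Gamma_g;\mathbb Z)=0$ and $H^2(\Gamma_g;\mathbb Z)\cong\mathbb Z$, freely generated by the class of the Hodge bundle $\Lambda$. This yields part (i) for $g\geq 3$, namely $\Pic(\Mg)=\mathbb Z\cdot\Lambda$.

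For part (ii), I would first obtain generation for free from the localization sequence of Theorem \ref{picchow}(iv) applied to the open immersion $\Mg\hookrightarrow\CMg$ with complement $\bigcup_{i=0}^{\lfloor g/2\rfloor}\delta_i$:
$$\bigoplus_{i=0}^{\lfloor g/2\rfloor}\mathbb Z\langle\oo(\delta_i)\rangle\lra\Pic(\CMg)\lra\Pic(\Mg)\lra 0.$$
Since $\Lambda$ extends over the boundary, as the determinant of cohomology of the relative dualizing sheaf on all of $\CMg$, and restricts to the generator of $\Pic(\Mg)$, this shows that $\Pic(\CMg)$ is generated by $\Lambda$ together with the $\oo(\delta_i)$. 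It remains to prove that these $\lfloor g/2\rfloor+2$ classes are linearly independent. For this I would use the test-curve method of Arbarello--Cornalba: construct complete one-parameter families $f_j\colon B_j\to\CMg$ of stable curves meeting the boundary transversally in controlled ways, compute the intersection numbers $\deg_{B_j}f_j^*\Lambda$ and $\deg_{B_j}f_j^*\oo(\delta_i)$, and exhibit a non-degenerate intersection matrix; non-degeneracy forces the absence of both relations and torsion, establishing part (ii) for $g\geq 3$.

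The genus-two case is genuinely different, since the groups acquire torsion, and I would treat it by the quotient descriptions rather than by Harer's theorem: every genus-two curve is hyperelliptic, so $\mathcal M_2$ is presented as a quotient stack of (an open part of) the space of binary sextics, from which Vistoli's computation (\cite{Vis98}) reads off the relation $\Lambda^{10}=\oo_{\mathcal M_2}$. The compactified statement $\Lambda^{10}=\oo(\delta_0+2\delta_1)$ then follows by the analogous boundary analysis of Cornalba (\cite{Cor07}), using the same localization sequence as above.

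The main obstacle is the integral rank-one computation $H^2(\Gamma_g;\mathbb Z)\cong\mathbb Z\cdot\Lambda$: this is Harer's deep result, which I would invoke rather than reprove, and the delicate points inside it are that the generator is exactly $\Lambda$ (not a proper multiple) and that there is no torsion for $g\geq 3$. After that input the remaining work is the hands-on construction of enough test curves in part (ii) to certify non-degeneracy of the intersection matrix; and the genus-two torsion, which reflects the fact that the generic hyperelliptic involution prevents $\mathcal M_2$ from being torsion-free, must be handled by the separate quotient-theoretic argument above.
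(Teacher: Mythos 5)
Your proposal is correct and coincides with the paper's treatment: the paper gives no proof of Theorem \ref{picmg} but simply cites \cite[Theorem 1]{AC87} (resting on Harer's computation in \cite{Har83}) for $g\geq 3$ and \cite{Vis98}, \cite[Proposition 1]{Cor07} for $g=2$, which are exactly the arguments you reconstruct — the identification $\Pic(\Mg)\cong H^2(\Gamma_g;\mathbb Z)$ via Teichm\"uller theory, the localization sequence plus Arbarello--Cornalba test curves for the boundary classes, and the quotient-stack analysis of binary sextics in genus two. The only caveat worth recording is that Harer's original paper covers $g\geq 5$ and the extension of the integral statement to $g=3,4$ is part of what \cite{AC87} supplies, so the citation should be to their Theorem 1 rather than to \cite{Har83} alone.
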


\subsection{Picard Group of $\Jc$.}\label{jc}

The universal family $\pi:\mt Jac_{d,g,1}\rightarrow \Jc$ is a representable, proper, flat morphism with smooth curves as geometric fibers. In particular, we can define the relative dualizing sheaf $\omega_{\pi}$ and the universal line bundle $\mt L$ on $\mt Jac_{d,g,1}$. Taking the determinant of cohomology $\Lambda(n,m):=d_{\pi}(\omega^n_{\pi}\otimes\mt L^m)$, we obtain several line bundles on $\Jc$. 

The proof of next theorem can be found in \cite[Theorem A(i) and Notation 1.5]{MV}, based upon a result of \cite{Kou91}.

\begin{teo}\label{picjac}Assume $g\geq 2$. Then $\Pic(\Jc)$ is freely generated by $\Lambda(1,0)$, $\Lambda(1,1)$ and $\Lambda(0,1)$, except for $g=2$ in which case we add the relation $\Lambda(1,0)^{10}=\oo_{\Jc}$.
\end{teo}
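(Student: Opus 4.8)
The plan is to compute $\Pic(\Jc)$ via the forgetful morphism $u:\Jc\to\Mg$ sending $(C,\mt L)$ to $C$, reducing to the already-known Picard group of $\Mg$ (Theorem \ref{picmg}) together with the Picard group of a single fibre. The morphism $u$ is smooth with geometrically connected fibres, and its fibre over a geometric point $[C]\in\Mg$ is the Picard stack $\mathcal{P}ic^{d}_{C}$ of degree-$d$ line bundles on the fixed smooth curve $C$. First I would produce a left-exact sequence
$$
0\lra \Pic(\Mg)\overset{u^*}{\lra}\Pic(\Jc)\overset{\mathrm{res}}{\lra}\Pic\left(\mathcal{P}ic^{d}_{C}\right),
$$
where $\mathrm{res}$ is restriction to the chosen fibre. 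Injectivity of $u^*$ is immediate, since $u^*\Lambda=\Lambda(1,0)$ is nontrivial; left-exactness and the identification $\ker(\mathrm{res})=u^*\Pic(\Mg)$ follow from a seesaw principle for stacks with connected fibres (a line bundle trivial on every fibre descends to the base). The image of $\mathrm{res}$ will be pinned down in the next two steps.

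Second, I would compute the target and the image of $\mathrm{res}$. Because $C$ is a fixed smooth curve the Poincar\'e bundle exists on $C\times\mathrm{Pic}^d(C)$, so the $\mathbb G_m$-gerbe $\mathcal{P}ic^{d}_{C}\to\mathrm{Pic}^d(C)$ is neutral and $\Pic(\mathcal{P}ic^{d}_{C})\cong\Pic(\mathrm{Pic}^d(C))\oplus\mathbb Z$, the $\mathbb Z$-factor recording the $\mathbb G_m$-weight along the gerbe. A class coming from $\Jc$ restricts into the subgroup $\mathrm{NS}(\mathrm{Pic}^d(C))\oplus\mathbb Z$, since a nonzero $\mathrm{Pic}^0$-component would produce a non-torsion section of the dual relative Jacobian over $\Mg$, which does not exist. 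The essential input is then Kouvidakis' theorem: for a very general curve $C$ one has $\mathrm{NS}(\mathrm{Pic}^d(C))=\mathbb Z\langle\theta\rangle$, generated by the theta divisor. Hence the image of $\mathrm{res}$ is contained in the free rank-two group $\mathbb Z\langle\theta\rangle\oplus\mathbb Z$.

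Third, I would match generators with the tautological classes $\Lambda(1,0)=d_\pi(\omega_\pi)$, $\Lambda(0,1)=d_\pi(\mt L)$ and $\Lambda(1,1)=d_\pi(\omega_\pi\otimes\mt L)$. Using Theorems \ref{detcoh} and \ref{delpair} and applying Grothendieck--Riemann--Roch to $\pi$, one sees that $\Lambda(1,0)$ is the pullback of the Hodge bundle and generates $u^*\Pic(\Mg)$, while the restrictions of $\Lambda(0,1)$ and $\Lambda(1,1)$ are two explicit integral combinations of $\theta$ and the weight whose change-of-basis determinant is $\pm1$. Thus $\Lambda(0,1)$ and $\Lambda(1,1)$ restrict to a basis of $\mathbb Z\langle\theta\rangle\oplus\mathbb Z$, forcing the image of $\mathrm{res}$ to be exactly this rank-two group and making the sequence short exact; together with $\Lambda(1,0)$ they freely generate $\Pic(\Jc)$ for $g\geq 3$, since $\Pic(\Mg)$ is then free of rank one and the fibre term is free. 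In genus two the computation is identical except that $\Pic(\mt M_2)$ carries the relation $\Lambda^{10}=\oo$, which pulls back to $\Lambda(1,0)^{10}=\oo_{\Jc}$, the fibre contribution being unchanged.

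The hard part will be the second step. One must verify that classes restricted from $\Jc$ land inside the N\'eron--Severi group of the fibre and account precisely for the $\mathbb G_m$-weight, so that exactly three independent generators emerge; and one must invoke Kouvidakis' rank-one N\'eron--Severi computation, which is genuinely transcendental (proved over $\mathbb C$) and is the reason the characteristic-zero hypothesis cannot be avoided.
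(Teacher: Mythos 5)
The paper itself offers no proof of this statement: it is quoted from \cite[Theorem A(i)]{MV}, which in turn rests on \cite{Kou91}. Your overall architecture --- push down along $u:\Jc\to\Mg$, identify the fibre over a very general $[C]$ with $\mathrm{Pic}^d(C)\times \mt B\mathbb{G}_m$, kill the $\mathrm{Pic}^0$-contribution, invoke $\mathrm{NS}(\mathrm{Pic}^d(C))=\mathbb Z\langle\theta\rangle$ for very general $C$, and then match tautological classes --- is indeed the architecture of the Kouvidakis/Melo--Viviani proof, and steps one and two are in the right spirit.

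However, your third step contains a genuine error that leaves the main difficulty unaddressed. The restrictions of $\Lambda(0,1)$ and $\Lambda(1,1)$ to the fibre do \emph{not} form a basis of $\mathbb Z\langle\theta\rangle\oplus\mathbb Z$: their ratio is the Deligne pairing $\langle\omega_{\pi},\mt L\rangle$, whose restriction to a fibre lies in $\mathrm{Pic}^0(\mathrm{Pic}^d(C))$ (hence has trivial N\'eron--Severi class) and has $\mathbb G_m$-weight $2g-2$. Concretely, both classes restrict with $\theta$-component $-1$, and their weights are $d+1-g$ and $d+g-1$ (as computed in the rank-one case of the Lemma in \S\ref{comparing}), so the change-of-basis determinant is $\pm(2g-2)$, not $\pm 1$; they generate a sublattice of index $2g-2$ in $\mathbb Z\langle\theta\rangle\oplus\mathbb Z$. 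Consequently the map $\mathrm{res}$ is \emph{not} surjective onto $\mathbb Z\langle\theta\rangle\oplus\mathbb Z$, and your argument only shows that $\Lambda(1,0)$, $\Lambda(0,1)$, $\Lambda(1,1)$ generate a finite-index subgroup of $\Pic(\Jc)$. Closing the gap requires an \emph{upper} bound on the image of $\mathrm{res}$, i.e. showing that any line bundle on $\Jc$ restricts into the index-$(2g-2)$ sublattice spanned by these two classes; equivalently, in weight zero, that only multiples of $k_{1,d,g}\,\theta=\tfrac{2g-2}{(2g-2,\,d+1-g)}\,\theta$ extend over the whole family. This divisibility is precisely the content of Kouvidakis' theorem (proved by degeneration/monodromy, and related to the Mestrano--Ramanan non-existence of Poincar\'e bundles); it is the ``hard part'' you correctly flag in step two, but your step three replaces it with a false determinant computation rather than proving it.
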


\subsection{Picard Groups of the fibers.}\label{fibre}

Fix now a smooth curve $C$ with a line bundle $\mathcal L$ of degree $d$. Let $\Vl$ be the stack whose objects over a scheme $S$ are the pairs $(\mt E,\varphi)$ where $\mt E$ is a vector bundle of rank $r$ on $C\times S$ and $\varphi$ is an isomorphism between the line bundles $\det\mt E$ and $\mt L\boxtimes \oo_S$. A morphism between two objects over $S$ is an isomorphism of vector bundles compatible with the isomorphism of determinants. 

Now we recall some properties of this stack. For more details, we refer to \cite{H} and references therein. It is known that $\Vl$ is an irreducible smooth Artin stack of dimension $(r^2+1)(g-1)$. We denote by $\Vl^{ss}$ the open substack of (semi)stable vector bundles. Since the set of isomorphism classes of semistable vector bundles on $C$ is bounded, the stack $\Vl^{ss}$ is quasi-compact. Consider the set of equivalence classes (defined as in Section \ref{Sc}) of semistable vector bundles over the curve $C$  with determinant isomorphic to $\mt L$. There exists a normal projective variety $U_{\mt L,C}$, which is a moduli space for this set. Observe that the stack $\Vl$ is the fiber of the determinant morphism $det:\Vc\to\Jc$ with respect to the $k$-point $(C,\mt L)$.

\begin{teo}\label{fibers}Let $C$ be a smooth curve with a line bundle $\mathcal L$. Let $\mathcal E$ be the universal vector bundle over $\pi:\Vl\times C\to\Vl$ of rank $r$ and degree $d$. Then:
\begin{enumerate}[(i)]
\item We have natural isomorphisms induced by the restriction
	$$\mathbb Z=\mathbb Z\langle d_{\pi}(\mathcal E )\rangle\cong Pic(\Vl)\cong Pic(\Vl^{ss}).$$
\item $U_{\mathcal L,C}$ is a good moduli space for $\Vl^{ss}$.
\item The good moduli morphism $\Vl^{ss}\rightarrow U_{\mt L, C}$ induces an exact sequence of groups
	$$
	0\rightarrow Pic(U_{\mathcal L,C}) \rightarrow Pic(\Vl^{ss})\rightarrow\mathbb Z/\tfrac{r}{n_{r,d}}\mathbb Z\rightarrow 0
	$$
where the second map sends $d_{\pi}(\mt E)^k$ to $k$.
\end{enumerate}
\end{teo}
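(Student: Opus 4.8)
The plan is to handle the three assertions in turn: part (i) reduces to a citation plus a codimension estimate, part (ii) is read off from the GIT construction, and part (iii) is the substantive descent computation.

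For part (i), the first isomorphism $\langle d_\pi(\mathcal E)\rangle\cong\Pic(\mathcal Vec_{=\mt L,C})$ is exactly Hoffmann's theorem \cite{H}: the Picard group of the fixed-determinant stack is infinite cyclic, freely generated by the determinant of cohomology of the universal bundle. For the second isomorphism I would invoke Theorem \ref{picchow}(ii): the restriction map $\Pic(\mathcal Vec_{=\mt L,C})\to\Pic(\mathcal Vec^{ss}_{=\mt L,C})$ is an isomorphism once the unstable locus has codimension $\geq 2$. The only real content here is that codimension estimate, which holds for $g\geq 2$ by a standard Shatz/Harder--Narasimhan stratification count: the locus of bundles admitting a fixed destabilizing subsheaf has codimension growing with $g$, so the whole unstable locus has codimension at least $2$. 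For part (ii), $U_{\mt L,C}$ is produced as a GIT good quotient $X^{ss}/\!/G$ of a parameter (Quot) scheme by a reductive group, in the fixed-determinant variant of the construction recalled in \S\ref{Sc}; since a GIT good quotient $[X^{ss}/G]\to X^{ss}/\!/G$ is a good moduli space in the sense of Appendix \ref{App}, and $\mathcal Vec^{ss}_{=\mt L,C}\cong[X^{ss}/G]$, the morphism $\phi:\mathcal Vec^{ss}_{=\mt L,C}\to U_{\mt L,C}$ has the required universal property.

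For part (iii), injectivity of $\phi^*$ is formal: a good moduli morphism satisfies $\phi_*\oo=\oo$, so $\phi^*$ is injective on Picard groups by the projection formula. To identify the image I would apply the descent lemma for good moduli morphisms (Kempf's descent, in the form valid in Appendix \ref{App}): a line bundle $L$ descends from $U_{\mt L,C}$ exactly when, at every closed point of the stack (a polystable bundle), the stabilizer acts trivially on the fibre of $L$. By part (i) every line bundle is a power $d_\pi(\mathcal E)^k$, so this collapses to a single weight computation per power, and the kernel of the resulting "weight map" is precisely $\phi^*\Pic(U_{\mt L,C})$.

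The core calculation, and the step I expect to be the main obstacle, is determining exactly which powers $d_\pi(\mathcal E)^k$ descend. The generic stabilizer of a stable bundle with fixed determinant is $\mu_r$ (scalars $\lambda$ with $\lambda^r=1$, since $\lambda\cdot\mathrm{id}$ multiplies $\det$ by $\lambda^r$), and such a $\lambda$ scales $d_\pi(\mathcal E)$ by $\lambda^{\chi(\mathcal E)}$ with $\chi(\mathcal E)=d+r(1-g)\equiv d\pmod r$. Hence on the stable locus $d_\pi(\mathcal E)^k$ has $\mu_r$-weight $kd\bmod r$, which vanishes precisely when $\tfrac{r}{n_{r,d}}\mid k$; this forces the cokernel to be a quotient of $\mathbb Z/\tfrac{r}{n_{r,d}}\mathbb Z$ and sends $d_\pi(\mathcal E)^k\mapsto k$. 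The delicate point is to check that the strictly polystable points impose no stronger divisibility: for $E=\bigoplus_i E_i\otimes W_i$ with the $E_i$ distinct stable of equal slope and $r_i=\mathrm{rk}\,E_i$, one must evaluate the weights of the larger stabilizer $\{(g_i)\in\prod_i GL(W_i):\prod_i\det(g_i)^{r_i}=1\}$ on $d_\pi(\mathcal E)$ and verify that triviality there is still equivalent to $\tfrac{r}{n_{r,d}}\mid k$. This Dr\'ezet--Narasimhan weight bookkeeping is the heart of the proof; once it is settled, the kernel of the weight map is exactly $\phi^*\Pic(U_{\mt L,C})$ and the asserted exact sequence follows.
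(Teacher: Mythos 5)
The paper's own ``proof'' of this theorem is nothing but a citation: part (i) is \cite[Theorem 3.1 and Corollary 3.2]{H}, part (ii) is \cite[Section 2]{H}, and part (iii) is \cite[Theorem 3.7]{H}. Your outline is a correct reconstruction of exactly those arguments --- Hoffmann's generator for the fixed-determinant stack plus the codimension-$\geq 2$ estimate for the unstable locus, the GIT construction of $U_{\mt L,C}$ as a good quotient, and the descent criterion with the weight computation $\lambda\mapsto\lambda^{\chi(\mt E)}$, $\chi(\mt E)\equiv d \pmod r$, including the necessary check at strictly polystable points --- so there is nothing to flag beyond the fact that the paper delegates all of this to \cite{H}.
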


\begin{proof} Part (i) is proved in \cite[Theorem 3.1 and Corollary 3.2]{H}. Part (ii) follows from \cite[Section 2]{H}. Part (iii) is proved in \cite[Theorem 3.7]{H}.
\end{proof}

\begin{rmk}\label{caso2schifo}By \cite[Corollary 3.8]{H}, the variety $U_{\mt L,C}$ is locally factorial. Moreover, except the cases when $g=r=2$ and $\deg\mt L$ is even, the closed locus of strictly semistable vector bundles has codimension $\geq 2$
	. So, by Theorem \ref{picchow}, when $(r,g,d)\neq (2,2,0)\in \mathbb Z\times\mathbb Z\times(\mathbb Z/2\mathbb Z)$ we have that $\Pic(\Vl)\cong \Pic(\mathcal Vec^s_{=\mathcal L,C})$ and, since $U_{\mt L,C}$ is locally factorial, $\Pic(U_{\mathcal L,C})\cong\Pic(U^s_{\mathcal L,C})$.
\end{rmk}

\subsection{Boundary divisors.}\label{boundiv}

The aim of this section is to study the boundary $\widetilde\delta:=\CVc\backslash\Vc$. By definition, it parameterizes properly balanced vector bundles on singular curves. Our first result is that it has codimension one in $\CVc$.

\begin{prop}\label{ncross}The boundary $\widetilde\delta\subset\CVc$ is a normal crossing divisor.
\end{prop}

\begin{proof}We have to show that the pull-back of $\widetilde\delta$ to $H_n$ is a normal crossing divisor for any $n$. Here $H_n$ are the schemes defined at page \pageref{Hn}, which are smooth by Proposition \ref{finteo2}. With abuse of notation, we denote the pull-back along $H_n$ by $\widetilde{\delta}$.
	
By the results of \S\ref{locstr}, we have that morphism of deformation functors $\text{Def}_h\to\text{Def}_C$ is formally smooth. In particular, locally at a point $h:=[C\hookrightarrow Gr(V_n,r)]$, the scheme $H_n$ looks like
$$
\Spf\, k\llbracket x_1,\ldots,x_{3g-3},y_1,\ldots,y_{\dim H_n+3-3g}\rrbracket,
$$
where the first $3g-3$ coordinates control the deformations of the curve $C$. Furthermore, if $N$ is the number of the nodes of $C$, we can choose local coordinates such that $x_1,\ldots,x_N$ correspond to smoothing the nodes. For such a choice of the coordinates, we have that the ideal defining $\widetilde\delta$, locally at $h$, is $(\prod_{i=1}^Nx_i)$, proving the assertion.
\end{proof}

We want to describe the irreducible components of $\widetilde{\delta}$. Before to introduce them, we need to do some observations. Set-theoretically, we have $$\widetilde{\delta}=\overline{\phi}_{r,d}^{-1}(\delta)=\bigcup_{i=1}^{\lfloor g/2\rfloor}\overline{\phi}_{r,d}^{-1}(\delta_i),$$
where $\overline{\phi}_{r,d}:\CVc\to\CMg$ is the forgetful morphism. In particular:
\begin{enumerate}[(a)]
	\item the generic point of an irreducible component of $\widetilde{\delta}$ over $\delta_0$ consists of an irreducible nodal curve with just one node and a properly balanced vector bundle of degree $d$ and rank $r$. 
	\item the generic point of an irreducible component over $\delta_i$, with $i\neq 0$, is a curve composed by two irreducible smooth curves of genus $i$ and $g-i$ meeting in one point and a properly balance vector bundle of degree $d$ and rank $r$.
\end{enumerate}

In the first case, any vector bundle is properly balance because there are no subcurves. In the second case, since the curve is stable any vector bundle is admissible, so we need just to check that the multidegree satisfies the basic inequality. 

Moreover, the multidegree does not change if we do not change the number of nodes of the curve. More precisely, let $(C\to S,\mt E)$ be a family of pairs in $\CVc$, with $S$ connected. Assume that $C\to S$ is obtained by gluing nodally two families of smooth curves $C_1\to S$, $C_2\to S$ of lower genus, such that each fibre defines a generic pair (as in (b)) in $\delta_i$ for some $i\neq 0$. Then, the degree of the restriction of $\mt E$ to $C_1$, $C_2$ must be constant on each fibre.

In other words, once we fix the dual graph of the curve (in the sense of \cite[p. 88]{ACG11}), distinct choices of the multidegree defines distinct irreducible components of $\widetilde\delta$. With these observations, the next definition makes sense.

\begin{defin}\label{boundef}The \emph{boundary divisors} of $\CVc$ are:
	\begin{itemize}
		\item $\widetilde\delta_0:=\widetilde\delta_0^0$ is the divisor whose generic point is an irreducible curve $C$ with just one node and $\mt E$ is a vector bundle of degree $d$,
		\item if $k_{r,d,g}|2i-1$ and $0< i < g/2$:\\
		$\widetilde\delta_i^{j}$ for $0\leq j\leq r$ is the divisor whose generic point is a curve $C$ composed by two irreducible smooth curves $C_1$ and $C_2$ of genus $i$ and $g-i$ meeting in one point and $\mt E$ a vector bundle over $C$ with multidegree
		$$
		(\deg\mt E_{C_1},\deg\mt E_{C_2})=\left(d\frac{2i-1}{2g-2}-\frac{r}{2}+j,d\frac{2(g-i)-1}{2g-2}+\frac{r}{2}-j\right),
		$$
		\item if $k_{r,d,g}\nmid 2i-1$ and $0< i < g/2$:\\
		$\widetilde\delta_i^{j}$ for $0\leq j\leq r-1$ is the divisor whose generic point is a curve $C$ composed by two irreducible smooth curves $C_1$ and $C_2$ of genus $i$ and $g-i$ meeting in one point and $\mt E$ a vector bundle over $C$ with multidegree
		$$
		(\deg\mt E_{C_1},\deg\mt E_{C_2})=\left(\left\lceil d\frac{2i-1}{2g-2}-\frac{r}{2}\right\rceil +j,\left\lfloor d\frac{2(g-i)-1}{2g-2}+\frac{r}{2}\right\rfloor-j\right),
		$$
		\item if $g$ is even and $d+r$ is even:\\
		$\widetilde\delta_{\frac{g}{2}}^{j}$ for $0\leq j\leq \lfloor \frac{r}{2}\rfloor$ is the divisor whose generic point is a curve $C$ composed by two irreducible smooth curves $C_1$ and $C_2$ of genus $g/2$ meeting in one point and $\mt E$ a vector bundle over $C$ with multidegree
		$$
		(\deg\mt E_{C_1},\deg\mt E_{C_2})=\left(\frac{d-r}{2}+j,\frac{d+r}{2}-j\right),
		$$
		\item if $g$ is even and $d+r$ is odd:\\
		$\widetilde\delta_{\frac{g}{2}}^{j}$ for $0\leq j\leq \lfloor \frac{r-1}{2}\rfloor$ is the divisor whose generic point is a curve $C$ composed by two irreducible smooth curves $C_1$ and $C_2$ of genus $g/2$ meeting in one point and $\mt E$ a vector bundle over $C$ with multidegree
		$$
		(\deg\mt E_{C_1},\deg\mt E_{C_2})=\left(\left\lceil \frac{d-r}{2}\right\rceil +j,\left\lfloor \frac{d+r}{2}\right\rfloor-j\right).
		$$
	\end{itemize}
	If $0<i<g/2$ and $k_{r,d,g}|2i-1$ (resp. $g$ and $d+r$ even) we will call $\widetilde\delta_i^0$ and $\widetilde\delta_i^r$ (resp. $\widetilde\delta_{\frac{g}{2}}^0)$ \emph{extremal boundary divisors}. We will call \emph{non-extremal boundary divisors} the boundary divisors which are not extremal.
	
	By Theorem \ref{picchow}, we can associate to $\widetilde\delta_i^j$ a line bundle on $\overline{\mt U}_n$ for any $n$, which glue together to a line bundle $\oo(\widetilde\delta_i^j)$ on $\CVc$, we will call them \emph{boundary line bundles}. Moreover, if $\widetilde\delta_i^j$ is a (non)-extremal divisor, we will call $\oo(\widetilde\delta_i^j)$ \emph{(non)-extremal boundary line bundle}.
\end{defin}

It turns out that the boundary of $\CVc$ is the union of the above boundary divisors.
\begin{prop}\label{boundary}\noindent
\begin{enumerate}[(i)]
\item The irreducible components of $\widetilde\delta$ are $\widetilde\delta_i^j$ for $0\leq i\leq g/2$ and $j\in J_i$ where
$$J_i=
\begin{cases}
\{0\} &\mbox{if } i=0,\\
\{0,\ldots,r \} & \mbox{if } k_{r,d,g}|2i-1 \mbox{ and } 0<i<g/2,\\
\{0,\ldots,r-1\} & \mbox{if } k_{r,d,g}\nmid 2i-1 \mbox{ and }  0<i<g/2,\\
\{0,\ldots,\lfloor r/2\rfloor \}& \mbox{if g even, d+r even and } i=g/2.\\
\{0,\ldots,\lfloor (r-1)/2\rfloor \}& \mbox{if g even, d+r odd and } i=g/2.
\end{cases}
$$
\item Let $\overline{\phi}_{r,d}:\CVc\rightarrow\CMg$ be the forgetful map. For $0\leq i\leq g/2$, we have
$$
\overline{\phi}_{r,d}^*\oo(\delta_i)=\oo\left(\sum_{j\in J_i}\widetilde\delta_i^j\right).
$$
\end{enumerate}
\end{prop}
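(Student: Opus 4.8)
The plan is to reduce the entire statement to a local analysis over the boundary of $\CMg$. Since $\Vc=\overline{\phi}_{r,d}^{-1}(\Mg)$ we have $\widetilde\delta=\overline{\phi}_{r,d}^{-1}(\delta)$ and $\widetilde\delta_i=\overline{\phi}_{r,d}^{-1}(\delta_i)$, so both assertions amount to describing the fibres of $\overline{\phi}_{r,d}$ over the generic points of the $\delta_i$ together with a multiplicity count in the versal deformation. I would work throughout on the smooth quotient charts $\overline{\mt U}_n=[H_n/GL(V_n)]$ and use the deformation theory of \S\ref{locstr}.

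To classify the components, fix a generic stable curve $C=C_1\cup_x C_2$ in $\delta_i$ with $0<i<g/2$. Its generic preimage carries a vector bundle on $C$ itself (no exceptional chain), and being properly balanced is then equivalent to being balanced, i.e. to the single basic inequality for $Z=C_1$, namely $\left|\deg\mt E_{C_1}-d\frac{2i-1}{2g-2}\right|\leq \frac r2$ (here $\omega_{C_1}=2i-1$, $\omega_C=2g-2$, $k_{C_1}=1$), while $\deg\mt E_{C_2}=d-\deg\mt E_{C_1}$ is then forced. This cuts out an interval of length $r$ for $\deg\mt E_{C_1}$. The key elementary input is the congruence
$$
d(2i-1)-r(g-1)\equiv (2i-1)\bigl(d+r(1-g)\bigr)\pmod{2g-2},
$$
which follows from $2r(g-1)(i-1)\equiv 0\pmod{2g-2}$ and shows that the endpoints $d\frac{2i-1}{2g-2}\pm\frac r2$ are integral exactly when $k_{r,d,g}\mid 2i-1$. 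Hence the interval contains $r+1$ integers in that case and exactly $r$ otherwise, reproducing the index sets $\{0,\dots,r\}$ and $\{0,\dots,r-1\}$. For $i=g/2$ the midpoint is $d/2$ and the involution exchanging the two genus-$g/2$ components identifies $(a,d-a)$ with $(d-a,a)$, halving the count to $\{0,\dots,\lfloor r/2\rfloor\}$; for $i=0$ the generic curve is irreducible, the degree does not split, and one obtains the single divisor $\widetilde\delta_0$.

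Each $\widetilde\delta_i^j$ is then shown irreducible by the fibration argument of Lemma \ref{finteo1}: it dominates the irreducible boundary stratum of $\CMg$, and its fibre is the stack of vector bundles of the fixed multidegree on $C_1\cup_x C_2$, which is irreducible because it is assembled from the irreducible stacks of bundles on $C_1$ and $C_2$ glued along the fibre at $x$. This identifies the irreducible components of $\widetilde\delta$ with the listed $\widetilde\delta_i^j$. For the multiplicities in part (ii), note that at a generic point $(C,\mt E)$ of any $\widetilde\delta_i^j$ the curve equals its own stable model, so on deformation functors $\overline{\phi}_{r,d}$ is the forgetful map $\mathrm{Def}_{(C,\mt E)}\to\mathrm{Def}_C$, which is formally smooth by \S\ref{locstr}. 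Since the boundary of $\CMg$ is normal crossing with the node-smoothing parameter $t$ cutting out $\delta_i$, its reduced pullback cuts out $\widetilde\delta_i^j$ with multiplicity one; thus $\overline{\phi}_{r,d}^*\delta_i=\sum_{j\in J_i}\widetilde\delta_i^j$ and $\overline{\phi}_{r,d}^*\oo(\delta_i)=\oo\bigl(\sum_{j\in J_i}\widetilde\delta_i^j\bigr)$, and simultaneously $\widetilde\delta$ is normal crossing at its generic points.

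The delicate point is the global normal-crossing statement, which requires controlling where distinct components meet. Two divisors $\widetilde\delta_i^j$ and $\widetilde\delta_i^{j+1}$ with the same $i$ cannot meet over stable curves, and come together only along the loci where a rational $\mathbb P^1$-bridge is inserted at the node and the degree is redistributed across it. Verifying that these bridge strata are precisely the pairwise intersections, that they contribute no further divisorial component of $\overline{\phi}_{r,d}^{-1}(\delta_i)$, and that the crossings are transverse requires the local model of a properly balanced bundle on a chain of length $\geq 1$ from \S\ref{pbvb} together with the slice description of \S\ref{locstr}. This deformation-theoretic bookkeeping at the bridge loci is the technical heart of the argument.
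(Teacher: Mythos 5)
Your proposal follows essentially the same route as the paper: the index sets $J_i$ come from counting integers in the basic-inequality interval (your endpoint-integrality congruence is exactly why $k_{r,d,g}\mid 2i-1$ is the dividing case), irreducibility of each $\widetilde\delta_i^j$ is proved by fibering over $\delta_i^*$ and using the gluing construction with irreducible source, and the multiplicity-one statement in (ii) comes from the formal smoothness of $\mathrm{Def}_{(C,\mt E)}\to\mathrm{Def}_C$ and the node-smoothing coordinate. The "bridge-locus bookkeeping" you defer at the end is precisely what the paper discharges by combining the formal smoothness of $\mathrm{Def}_{(C,\mt E)}\to\prod_{x\in\Sigma}\mathrm{Def}_{\mathrm{Spec}\,\hat{\oo}_{C,x}}$ from \S\ref{locstr} with the argument of \cite[Corollary 1.9]{DM69}, which gives the normal-crossing local model $x_1\cdots x_k=0$ at once and also shows the one-node locus $\widetilde\delta^*$ is dense, so no further work at the chains is needed.
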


\begin{proof}(i). Let $\widetilde\delta^*$ be the locus of $\widetilde\delta$ of curves with exactly one node. As in \cite[Corollary 1.9]{DM69} we can prove that $\widetilde\delta^*$ is a dense smooth open substack in $\widetilde\delta$. The irreducible components of $\widetilde\delta$ are in bijection with the connected components of $\widetilde\delta^*$.

A direct computation of the basic inequalities for bundles on curves with just one node shows that given a pair $(C,\mt E)$ in $\widetilde\delta^*$, then the it must be contained in the generic locus of a unique divisor $\widetilde{\delta}_i^j$. In other words, we have a set-theoretical equality
$$
\overline{\phi}_{r,d}^{-1}(\delta_i)=\bigcup_{j\in J_i}\widetilde\delta_i^j,
$$
and $\delta_i^j=\delta_t^k$ if and only if $j=k$ and $i=t$. Now we are going to prove that they are irreducible. Setting $\widetilde\delta_i^{*j}:=\widetilde\delta^*\cap\widetilde\delta_i^j$, we see that $\widetilde\delta_i^j$ is irreducible if and only if $\widetilde\delta_i^{*j}$ is irreducible. It can be shown also that they are disjoint, i.e. $\widetilde\delta_i^{*j}\cap\widetilde\delta_t^{*k}\neq\emptyset$ if and only if $j=k$ and $i=t$.\\
Consider the forgetful map $\phi:\widetilde\delta_i^{*j}\rightarrow \delta_i^*$, where $\delta_i^*$ is the open substack of $\delta_i\subset\CMg$ of curves with exactly one node. In \S\ref{locstr}, we have seen that the morphism of Artin functors $\text{Def}_{(C,\mt E)}\to\text{Def}_C$ is formally smooth for any nodal curve. This implies that the map $\phi$ is smooth, in particular is open. Since $\delta_i^*$ is irreducible (see \cite[pag. 94]{DM69}), it is enough to show that the geometric fibers of $\phi$ are irreducible.\\
Assume $i\neq 0$. Let $C$ be a nodal curve with two irreducible components $C_1$ and $C_2$, of genus $i$ and $g-i$, meeting at a point $x$, this defines a geometric point $[C]\in\delta^*_i$. The fibre $\phi^*([C])$ is the moduli stack of vector bundles on $C$ of multidegree
$$
(d_1,d_2):=(\deg\mt E_{C_1},\deg\mt E_{C_2})=\left(\left\lceil d\frac{2i-1}{2g-2}-\frac{r}{2}\right\rceil+j,\left\lfloor d\frac{2(g-i)-1}{2g-2}+\frac{r}{2}\right\rfloor-j\right).
$$
The restriction over $C_1$ and $C_2$ induces a surjective smooth morphism of stacks
$$\phi^*([C])\to \mt Vec_{r,d_1,C_1}\times\mt Vec_{r,d_2,C_2},
$$
Where $Vec_{r,d_i,C_i}$ is the moduli stack of vector bundles of degree $d_i$ and rank $r$ over $C_i$. The geometric fibre isomorphic to the group $GL_r$.  The target is irreducible by \cite[Corollary A.5]{Ho10}) and so the fibre, then the same holds for $\phi^*([C])$. The same argument works for $\widetilde\delta_0$.

(ii). By part (i), for $0\leq i\leq g/2$ we have
$$
\overline{\phi}_{r,d}^*\oo(\delta_i)=\oo\left(\sum_{j\in J_i}a_i^j\widetilde\delta_i^j\right)
$$
where $a_i^j$ are integers. We have to prove that the coefficients are $1$. We can reduce to prove it locally on $\widetilde\delta$. The generic element of $\widetilde\delta$ is a pair $(C,\mt E)$ such that $C$ is stable with exactly one node and $\Aut(C,\mt E)=\mathbb{G}_m$. By Lemma \ref{locstrc}, locally at such $(C,\mt E)$, $\overline{\phi}_{r,d}$ looks like
$$
\left[\Spf\, k\llbracket x_1,\ldots,x_{3g-3},y_1,\ldots,y_{r^2(g-1)+1}\rrbracket/\mathbb{G}_m\right]\rightarrow \left[\Spf\, k\llbracket x_1,\ldots,x_{3g-3}\rrbracket/\Aut(C)\right].
$$
We can choose local coordinates such that $x_1$ corresponds to smoothing the unique node of $C$. For such a choice of the coordinates, we have that the equation of $\delta_i$ locally on $C$ is given by $(x_1=0)$ and the equation of $\widetilde\delta_i^j$ locally on $(C,\mt E)$ is given by $(x_1=0)$. Since $\overline{\phi}_{r,d}^*(x_1)=x_1$, the theorem follows.
\end{proof}

With an abuse of notation, we set $\widetilde\delta_i^j:=\nu_{r,d}(\widetilde\delta_i^j)$ for $0\leq i\leq g/2$ and $j\in J_i$, where $\nu_{r,d}:\CVc\rightarrow \CVr$ is the rigidification map. From the above proposition, we deduce the following
\begin{cor}\label{boundarycor} The following hold:
\begin{enumerate}
\item The boundary $\widetilde\delta:=\CVr\backslash\Vc$ of $\CVr$ is a normal crossing divisor, and its irreducible components are $\widetilde\delta_i^j$ for $0\leq i\leq g/2$ and $j\in J_i$.
\item For $0\leq i\leq g/2$, $j\in J_i$ we have
$
\nu_{r,d}^*\oo(\widetilde\delta_i^j)=\oo(\widetilde\delta_i^j)
$.
\end{enumerate}
\end{cor}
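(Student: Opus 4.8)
The plan is to derive both assertions from Proposition \ref{boundary} by exploiting that the rigidification morphism $\nu_{r,d}\colon\CVc\to\CVr$ introduced in \S\ref{prop} is smooth and surjective, being a $\mathbb{G}_m$-gerbe, with irreducible fibres. The essential observation to record is that every boundary divisor of $\CVc$ is saturated for $\nu_{r,d}$: rigidifying by $\mathbb{G}_m$ only removes the scalar automorphisms acting on the vector bundle and alters neither the underlying curve nor the isomorphism class of $\mt E$, so the locus of pairs $(C,\mt E)$ of a prescribed node type and multidegree coincides with the full preimage of its image. Thus $\nu_{r,d}^{-1}\bigl(\nu_{r,d}(\widetilde\delta_i^j)\bigr)=\widetilde\delta_i^j$ for all $i$ and $j\in J_i$, and consequently $\CVc\setminus\Vc=\nu_{r,d}^{-1}(\CVr\setminus\Vr)$.

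For part (1), I would invoke that a smooth surjective morphism sets up a codimension-preserving bijection between the saturated irreducible closed substacks of the source and the irreducible closed substacks of the target. Since the $\widetilde\delta_i^j\subset\CVc$ are pairwise distinct irreducible divisors by Proposition \ref{boundary}(i), their images $\widetilde\delta_i^j=\nu_{r,d}(\widetilde\delta_i^j)\subset\CVr$ are again pairwise distinct irreducible divisors, and together they cover $\CVr\setminus\Vr$. The normal crossing property descends along the smooth morphism $\nu_{r,d}$, since it may be checked smooth-locally and its pullback $\CVc\setminus\Vc$ is normal crossing by Proposition \ref{boundary}(i); equivalently one reads this off the local model of Lemma \ref{locstrc}, where $\nu_{r,d}$ is the quotient by the $\mathbb{G}_m$-action on the deformation coordinates and the node-smoothing equation $(x_1=0)$ cutting out $\widetilde\delta_i^j$ is left untouched.

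For part (2), flatness of $\nu_{r,d}$ gives $\nu_{r,d}^{*}\oo(D)=\oo(\nu_{r,d}^{*}D)$ for any Cartier divisor $D$ on $\CVr$, so it suffices to verify $\nu_{r,d}^{*}\widetilde\delta_i^j=\widetilde\delta_i^j$ as divisors. This follows from the saturation identity of the first paragraph together with the fact that, $\nu_{r,d}$ being smooth and hence unramified transversally to its fibres, the scheme-theoretic preimage of the reduced divisor $\widetilde\delta_i^j\subset\CVr$ is the reduced divisor $\widetilde\delta_i^j\subset\CVc$ taken with multiplicity one. The only steps demanding genuine care are the saturation claim and the descent of the normal crossing property to the rigidification; both become transparent in the chart of Lemma \ref{locstrc}, in which $\nu_{r,d}$ and the boundary equations are completely explicit.
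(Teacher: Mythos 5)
Your proof is correct and follows exactly the route the paper intends: the paper states the corollary as an immediate consequence of Proposition \ref{boundary}, leaving implicit precisely the points you spell out (saturation of the boundary divisors under the $\mathbb{G}_m$-rigidification, and the fact that in the local chart of Lemma \ref{locstrc} the map $\nu_{r,d}$ only quotients the automorphism group while leaving the deformation coordinates, hence the boundary equation $(x_1=0)$, untouched). Your write-up is a faithful, correctly justified expansion of that deduction.
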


\subsection{Tautological line bundles.}\label{tautbun}

In this subsection, we will produce several line bundles on the stack $\CVc$ and we will study their relations in the rational Picard group of $\CVc$. Consider the universal curve $\overline{\pi}:\overline{\mt Vec}_{r,d,g,1}\rightarrow \CVc$. The stack $\overline{\mt Vec}_{r,d,g,1}$ has two natural sheaves, the dualizing sheaf $\omega_{\overline{\pi}}$ and the universal vector bundle $\mathcal E$. As explained in \S\ref{dcdp}, we can produce the following line bundles which will be called \emph{tautological line bundles}:
$$
\begin{array}{rcl}
K_{1,0,0} &:=&\langle \omega_{\overline{\pi}},\omega_{\overline{\pi}}\rangle,\\
K_{0,1,0}&:=&\langle \omega_{\overline{\pi}},\det\mt E\rangle,\\
K_{-1,2,0}&:=&\langle \det\mt E,\det\mt E\rangle,\\
\Lambda(m,n,l) & :=& d_{\overline{\pi}}(\omega_{\overline{\pi}}^m\otimes (\det \mt E)^n\otimes \mathcal E^l).
\end{array}
$$
With an abuse of notation, we will denote by the same symbols their restriction to any open substack of $\CVc$. By Theorems \ref{detcoh} and \ref{delpair}, we can compute the first Chern classes of the tautological line bundles:
$$
\begin{array}{rcccl}
k_{1,0,0}&:=&c_1(K_{1,0,0})&=&\overline{\pi}_*\left(c_1(\omega_{\overline{\pi}})^2\right),\\
k_{0,1,0}&:=&c_1(K_{0,1,0})&=&\overline{\pi}_*\left(c_1(\omega_{\overline{\pi}})\cdot c_1(\mt E)\right)\\
k_{-1,2,0}&:=&c_1(K_{-1,2,0})&=&\overline{\pi}_*\left(c_1(\mt E)^2\right)\\
\lambda(m,n,l)&:=&c_1(\Lambda(m,n,l))&=&c_1\left(\overline{\pi}_!\left(\omega_{\overline{\pi}}^m\otimes(\det\mt E)^n\otimes \mt E^l\right)\right)
\end{array}
$$
\begin{teo}\label{relations}The tautological line bundles on $\CVc$ satisfy the following relations in the rational Picard group $Pic(\CVc)\otimes \mathbb Q$.
\begin{enumerate}[(i)]
\item $K_{1,0,0}=\Lambda(1,0,0)^{12}\otimes \oo(-\widetilde\delta).$
\item $K_{0,1,0}=\Lambda(1,0,1)\otimes \Lambda(0,0,1)^{-1}=\Lambda(1,1,0)\otimes \Lambda(0,1,0)^{-1}.$
\item $K_{-1,2,0}=\Lambda(0,1,0)\otimes \Lambda(1,1,0)\otimes\Lambda(1,0,0)^{-2}.$
\item For $(m,n,l)$ integers we have:\begin{eqnarray*}
\Lambda(m,n,l) &=&\Lambda(1,0,0)^{r^l(6m^2-6m+1-n^2-l)-2r^{l-1}nl-r^{l-2}l(l-1)}\otimes\\
& & \otimes\Lambda(0,1,0)^{r^l\left(-mn+{n+1\choose 2}\right)+r^{l-1}l\left(n -m\right)+r^{l-2}{l\choose 2}}\otimes\\
& &\otimes\Lambda(1,1,0)^{r^l\left(mn+{n\choose 2}\right)+r^{l-1}l\left(m+n\right)+r^{l-2}{l\choose 2}}\otimes\\
&&\otimes\Lambda(0,0,1)^{r^{l-1}l}\otimes\oo\left(-r^l{m\choose 2}\widetilde\delta\right).
\end{eqnarray*}
\end{enumerate}
\end{teo}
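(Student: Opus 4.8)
The plan is to reduce every asserted identity of line bundles to an identity of first Chern classes and then to evaluate those via Grothendieck--Riemann--Roch applied to the universal curve $\overline{\pi}:\overline{\mt Vec}_{r,d,g,1}\to\CVc$, following Mumford \cite{Mum83} and Melo--Viviani \cite{MV}. Since $\CVc$ is covered by the quotient stacks $\overline{\mt U}_n\cong[H_n/GL(V_n)]$ with $H_n$ smooth and quasi-projective, Theorem \ref{picchow}(iii) gives that $c_1:\Pic(\overline{\mt U}_n)\otimes\mathbb Q\to A^1(\overline{\mt U}_n)\otimes\mathbb Q$ is an isomorphism; hence it suffices to prove each identity after restriction to every chart and passage to first Chern classes, i.e. to establish the corresponding relations among $k_{1,0,0},k_{0,1,0},k_{-1,2,0}$ and the classes $\lambda(m,n,l)$ in $A^1(\CVc)\otimes\mathbb Q$. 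Throughout I would write $\psi:=c_1(\omega_{\overline{\pi}})$ and use $c_1(\det\mt E)=c_1(\mt E)$.

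The computational engine is the formula $\ch(\overline{\pi}_!\mt F)=\overline{\pi}_*\big(\ch(\mt F)\cdot\Td(T_{\overline{\pi}})\big)$ for a sheaf $\mt F$ flat over $\CVc$, where for the nodal family $\overline{\pi}$ the Todd class carries Mumford's correction supported on the relative singular locus, $\Td(T_{\overline{\pi}})=1-\tfrac12\psi+\tfrac{1}{12}(\psi^2+[\mathrm{Sing}_{\overline{\pi}}])+\cdots$, with $\overline{\pi}_*[\mathrm{Sing}_{\overline{\pi}}]=\widetilde\delta$. Taking $\mt F=\omega_{\overline{\pi}}$ and extracting the codimension-one part of $\ch(\overline{\pi}_!\mt F)$ yields $12\,\lambda(1,0,0)=k_{1,0,0}+\widetilde\delta$, which is relation (i). Applying the same computation to $\det\mt E$, to $\omega_{\overline{\pi}}\otimes\det\mt E$, to $\mt E$ and to the twists in (ii) expresses $\lambda(0,1,0),\lambda(1,1,0),\lambda(1,0,1),\lambda(0,0,1)$ linearly in terms of $k_{1,0,0},k_{0,1,0},k_{-1,2,0}$, $\overline{\pi}_*([\ch(\mt E)]_2)$ and $\widetilde\delta$; comparing these expressions gives relations (ii) and (iii), and the case $(0,0,1)$ yields the auxiliary identity $\overline{\pi}_*([\ch(\mt E)]_2)=\lambda(0,0,1)-r\,\lambda(1,0,0)-\tfrac12\lambda(0,1,0)+\tfrac12\lambda(1,1,0)$, in which the boundary term cancels.

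For the general relation (iv) I would apply GRR to $\mt F=\omega_{\overline{\pi}}^{m}\otimes(\det\mt E)^{n}\otimes\mt E^{\otimes l}$. Writing $\ch(\mt F)=e^{m\psi+n\,c_1(\mt E)}\cdot\ch(\mt E)^{l}$ and expanding $\ch(\mt E)^{l}=(r+c_1(\mt E)+[\ch(\mt E)]_2+\cdots)^{l}$, the codimension-two component of $\ch(\mt F)\cdot\Td(T_{\overline{\pi}})$ is a polynomial in $\psi^2,\psi\,c_1(\mt E),c_1(\mt E)^2,[\ch(\mt E)]_2$ and $[\mathrm{Sing}_{\overline{\pi}}]$ whose coefficients are precisely the binomial and power expressions occurring in the statement; applying $\overline{\pi}_*$ replaces these cycles by $k_{1,0,0},k_{0,1,0},k_{-1,2,0},\overline{\pi}_*([\ch(\mt E)]_2)$ and $\widetilde\delta$. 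Substituting relations (i)--(iii) together with the auxiliary identity then rewrites $\lambda(m,n,l)$ purely in terms of $\lambda(1,0,0),\lambda(0,1,0),\lambda(1,1,0),\lambda(0,0,1)$ and $\widetilde\delta$, and a direct comparison of the five coefficients finishes the proof (one checks, for instance, that the $\widetilde\delta$-coefficient collapses to $-r^{l}\binom{m}{2}$ and the $\lambda(0,0,1)$-coefficient to $r^{l-1}l$, matching (iv)).

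The main obstacle is not the coefficient bookkeeping but getting the Todd class of the nodal family correct: one must justify the node-correction term $[\mathrm{Sing}_{\overline{\pi}}]$ and the identification $\overline{\pi}_*[\mathrm{Sing}_{\overline{\pi}}]=\widetilde\delta$, and one must make sense of GRR for the representable proper flat morphism of stacks $\overline{\pi}$. I would handle the latter by carrying out the whole computation on the quotient presentations $\overline{\mt U}_n=[H_n/GL(V_n)]$, where $\overline{\pi}$ is pulled back from a morphism of schemes and equivariant GRR applies, and then glueing the resulting classes over the cover $\{\overline{\mt U}_n\}_{n}$; the node correction is exactly Mumford's, computed locally from the versal smoothing coordinate of each node as in \S\ref{boundiv}. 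The remaining work is the purely formal comparison of coefficients, which over $\mathbb Q$ lifts back to the claimed isomorphisms of line bundles by the injectivity of $c_1$ on each chart.
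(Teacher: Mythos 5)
Your proposal is correct and follows essentially the same route as the paper: reduce to first Chern classes via the isomorphism $c_1:\Pic\otimes\mathbb Q\to A^1\otimes\mathbb Q$ on the quotient-stack charts, apply Grothendieck--Riemann--Roch to the universal curve with Mumford's node correction (your $\overline{\pi}_*[\mathrm{Sing}_{\overline{\pi}}]=\widetilde\delta$ is the paper's $\overline{\pi}_*(c_2(\Omega_{\overline{\pi}}))=\widetilde\delta$), extract (i)--(iii) and the auxiliary identity for $\overline{\pi}_*c_2(\mt E)$ from the special cases, and substitute back for (iv). The only cosmetic difference is that the paper first restricts to the quasi-compact substack $\CVc^{Pss}$ via Lemma \ref{redsemistable} rather than glueing over the cover $\{\overline{\mt U}_n\}$; your auxiliary identity agrees with the paper's formula for $\overline{\pi}_*c_2(\mt E)$ after the translation $[\ch(\mt E)]_2=\tfrac12 c_1(\mt E)^2-c_2(\mt E)$.
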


\begin{proof}As we will see in the Lemma \ref{redsemistable}, we can reduce to proving the equalities on the quasi-compact open substack $\CVc^{Pss}$. We follow the same strategy in the proof of \cite[Theorem 5.2]{MV}. The first Chern class map is an isomorphism by Theorem \ref{picchow}. Thus it is enough to prove the above relations in the rational Chow group $A^1\left( \CVc^{Pss}\right)\otimes\mathbb Q$. Applying the Grothendieck-Riemann-Roch Theorem to the universal curve $\overline{\pi}:\overline{\mt Vec}_{r,d,g,1}\rightarrow \CVc$, we get:
\begin{equation}\label{1}
\ch\left(\overline{\pi}_!\left(\omega_{\overline{\pi}}^m\otimes(\det\mt E)^n\otimes\mt E^l\right)\right)=\overline{\pi}_*\left(\ch\left(\omega_{\overline{\pi}}^m\otimes(\det\mt E)^n\otimes\mt E^l\right)\cdot \Td^{\vee}\left(\Omega_{\overline{\pi}}\right)\right)
\end{equation}
where $\ch$ is the Chern character, $\Td^{\vee}$ is the dual Todd class and $\Omega_{\overline{\pi}}$ is the sheaf of relative Kahler differentials. Using Theorem \ref{detcoh}, the degree one part of the left hand side becomes
\begin{equation}\label{2}
\ch\left(\overline{\pi}_!\left(\omega_{\overline{\pi}}^m\otimes(\det\mt E)^n\otimes\mt E^l\right)\right)_1=c_1\left(\overline{\pi}_!\left(\omega_{\overline{\pi}}^m\otimes(\det\mt E)^n\otimes\mt E^l\right)\right)=c_1\left(\Lambda(m,n,l)\right)=\lambda(m,n,l).
\end{equation}
In order to compute the right hand side, we will use the fact that $c_1\left(\Omega_{\overline{\pi}}\right)=c_1\left(\omega_{\overline{\pi}}\right)$ and $\overline{\pi}_*\left(c_2\left(\Omega_{\overline{\pi}}\right)\right)=\widetilde\delta$ (see \cite[p. 383]{ACG11}. Using this, the first three terms of the dual Todd class of $\Omega_{\overline{\pi}}$ are equal to
\begin{equation}\label{3}
\text{Td}^{\vee}\left(\Omega_{\overline{\pi}}\right)=1-\frac{c_1\left(\Omega_{\overline{\pi}}\right)}{2}+\frac{c_1\left(\Omega_{\overline{\pi}}\right)^2+c_2\left(\Omega_{\overline{\pi}}\right)}{12}+\ldots=1-\frac{c_1\left(\omega_{\overline{\pi}}\right)}{2}+\frac{c_1\left(\omega_{\overline{\pi}}\right)^2+c_2\left(\Omega_{\overline{\pi}}\right)}{12}+\ldots
\end{equation}
By the multiplicativity of the Chern character, we get
\begin{multline}\label{4}
\text{ch}\left(\omega_{\overline{\pi}}^m\otimes(	\det\mt E)^n\otimes\mt E^l\right)=\ch\left(\omega_{\overline{\pi}}\right)^m\ch(\det\mt E)^n\ch\left(\mt E\right)^l=\\
\shoveleft=\left(1+c_1\left(\omega_{\overline{\pi}}\right)+\frac{c_1\left(\omega_{\overline{\pi}}\right)^2}{2}+\ldots\right)^m\cdot\left(1+c_1\left(\mt E\right)+\frac{c_1\left(\mt E\right)^2}{2}+\ldots\right)^n\cdot\\
\shoveright{\cdot\left(r+c_1\left(\mt E\right)+\frac{c_1\left(\mt E\right)^2-2c_2\left(\mt E\right)}{2}+\ldots\right)^l=}\\
\shoveleft=\left(1+mc_1\left(\omega_{\overline{\pi}}\right)+\frac{m^2}{2}c_1\left(\omega_{\overline{\pi}}\right)^2+\ldots\right)\cdot\left(1+nc_1\left(\mt E\right)+\frac{n^2}{2}c_1\left(\mt E\right)^2+\ldots\right)\cdot\\
\shoveright{\cdot\left(r^l+lr^{l-1}c_1\left(\mt E\right)+\frac{lr^{l-2}}{2}\left((r+l-1)c_1\left(\mt E\right)^2-2rc_2\left(\mt E\right))\right)+\ldots\right)=}\\
\shoveleft{=r^l+\left[rmc_1\left(\omega_{\overline{\pi}}\right)+(rn+l)c_1\left(\mt E\right)\right]r^{l-1}+\left[r^l\frac{m^2}{2}c_1\left(\omega_{\overline{\pi}}\right)^2+r^{l-1}m\left(rn+l\right)c_1\left(\omega_{\overline{\pi}}\right)c_1\left(\mt E\right)+\right.}\\
\left.+ \frac{r^{l-2}}{2}\left(r^2n^2+lr(2n+1)+l(l-1)\right)c_1\left(\mt E\right)^2-lr^{l-1}c_2\left(\mt E\right)\right].
\end{multline}
Combining (\ref{3}) and (\ref{4}), we can compute the degree one part of the right hand side of (\ref{1}):
\begin{multline}
\left[\overline{\pi}_*\left(\text{ch}\left(\omega_{\overline{\pi}}^m\otimes(\det\mt E)^n\otimes\mt E^l\right)\cdot \text{Td}^{\vee}\left(\Omega_{\overline{\pi}}\right)\right)\right]_1=\overline{\pi}_*\left(\left[\text{ch}\left(\omega_{\overline{\pi}}^m\otimes(\det\mt E)^n\otimes\mt E^l\right)\cdot \text{Td}^{\vee}\left(\Omega_{\overline{\pi}}\right)\right]_2\right)=\\
\shoveleft{=\overline{\pi}_*\left(\frac{r^l}{12}(6m^2-6m+1)c_1(\omega_{\overline{\pi}})^2+\frac{r^{l-1}}{2}(rn+l)(2m-1)c_1\left(\omega_{\overline{\pi}}\right)c_1\left(\mt E\right)+\right.}\\
\shoveright{\left.+\frac{r^{l-2}}{2}\left(r^2n^2+lr(2n+1)+l(l-1)\right)c_1\left(\mt E\right)^2-lr^{l-1}c_2\left(\mt E\right)+\frac{r^l}{12}c_2\left(\Omega_{\overline{\pi}}\right)\right)=}\\
\shoveleft{=\frac{r^l}{12}(6m^2-6m+1)k_{1,0,0}+\frac{r^{l-1}}{2}(rn+l)(2m-1)k_{0,1,0}+}\\
+\frac{r^{l-2}}{2}\left(r^2n^2+lr(2n+1)+l(l-1)\right)k_{-1,2,0}-lr^{l-1}\overline{\pi}_*c_2\left(\mt E\right)+\frac{r^l}{12}\widetilde\delta.
\end{multline}
Combining with (\ref{2}), we have:
\begin{multline}\label{5}
\lambda(m,n,l)=\frac{r^l}{12}(6m^2-6m+1)k_{1,0,0}+\frac{r^{l-1}}{2}(rn+l)(2m-1)k_{0,1,0}+\\
+\frac{r^{l-2}}{2}\left(r^2n^2+lr(2n+1)+l(l-1)\right)k_{-1,2,0}-lr^{l-1}\overline{\pi}_*c_2\left(\mt E\right)+\frac{r^l}{12}\widetilde\delta.
\end{multline}
As a special case of the above relation, we get
\begin{equation}\label{6}
\lambda(1,0,0)=\frac{k_{1,0,0}}{12}+\frac{\widetilde\delta}{12}.
\end{equation}
If we replace (\ref{6}) in (\ref{5}), then we have
\begin{multline}\label{7}
\lambda(m,n,l)=r^l(6m^2-6m+1)\lambda(1,0,0)+\frac{r^{l-1}}{2}(rn+l)(2m-1)k_{0,1,0}+\\
+\frac{r^{l-2}}{2}\left(r^2n^2+lr(2n+1)+l(l-1)\right)k_{-1,2,0}-lr^{l-1}\overline{\pi}_*c_2\left(\mt E\right)-r^l{m\choose 2}\widetilde\delta.
\end{multline}
Moreover from (\ref{7}) we obtain:
\begin{equation}\label{8}
\begin{cases}
\lambda(0,1,0)=\lambda(1,0,0)-\frac{k_{0,1,0}}{2}+\frac{k_{-1,2,0}}{2}\\
\lambda(1,1,0)=\lambda(1,0,0)+\frac{k_{0,1,0}}{2}+\frac{k_{-1,2,0}}{2}\\
\lambda(0,0,1)=r\lambda(1,0,0)-\frac{k_{0,1,0}}{2}+\frac{k_{-1,2,0}}{2}-\overline{\pi}_*c_2\left(\mt E\right)\\
\lambda(1,0,1)=r\lambda(1,0,0)+\frac{k_{0,1,0}}{2}+\frac{k_{-1,2,0}}{2}-\overline{\pi}_*c_2\left(\mt E\right)
\end{cases}
\end{equation}
which gives
\begin{equation}\label{9}
\begin{cases}
k_{0,1,0}=\lambda(1,0,1)-\lambda(0,0,1)=\lambda(1,1,0)-\lambda(0,1,0)\\
k_{-1,2,0}=-2\lambda(1,0,0)+\lambda(0,1,0)+\lambda(1,1,0)\\
\overline{\pi}_*c_2\left(\mt E\right)=(r-1)\lambda(1,0,0)+\lambda(0,1,0)-\lambda(0,0,1).
\end{cases}
\end{equation}
Substituting in (\ref{7}), we finally obtain
\begin{eqnarray}
r^{2-l}\lambda(m,n,l)&= &\Big(r^2(6m^2-6m+1-n^2-l)-2rnl-l(l-1)\Big)\lambda(1,0,0)+\\
\nonumber & &+\left(r^2\left(-mn+{n+1\choose 2}\right)+rl\left(n -m\right)+{l\choose 2}\right)\lambda(0,1,0)+\\
\nonumber& &+\left(r^2\left(mn+{n\choose 2}\right)+rl\left(m+n\right)+{l\choose 2}\right)\lambda(1,1,0)+\\
\nonumber & &+\,rl\lambda(0,0,1)-r^2{m\choose 2}\widetilde\delta.
\end{eqnarray}
\end{proof}

\begin{rmk}As we will see in the next section the integral Picard group of $\Pic(\CVc)$ is torsion free for $g\geq 3$. In particular, the relations of Theorem \ref{relations} hold also for $\Pic(\CVc)$.

\end{rmk}
\section{The Picard groups of $\CVc$ and $\CVr$.}\label{robba}
The aim of this section is to prove the Theorems \ref{pic} and \ref{picred}. We will prove them in several steps. Since all the results in this section have been already proved in the rank one case in \cite{MV}, for the rest of the paper we will assume $r\geq 2$.

\subsection{Independence of the boundary divisors.}\label{indipendece}
The aim of this subsection is to prove the following
\begin{teo}\label{indbou}Assume that $g\geq 3$. We have an exact sequence of groups
$$
0\longrightarrow\bigoplus_{i=0,\ldots,\lfloor g/2\rfloor}\oplus_{j\in J_i}\langle\oo(\widetilde\delta_i^j)\rangle\longrightarrow Pic(\CVc)\longrightarrow Pic(\Vc)\longrightarrow 0
$$
where the right map is the natural restriction and the left map is the natural inclusion.
\end{teo}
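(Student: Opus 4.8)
The right-hand portion of the sequence is already in hand: each quasi-compact open $\overline{\mathcal U}_n=[H_n/GL(V_n)]$ is the quotient of a smooth scheme, and $\Vc\cap\overline{\mathcal U}_n$ is the complement in it of the boundary divisors $\widetilde\delta_i^j$, which form a normal crossing divisor by Proposition \ref{boundary}. Hence Theorem \ref{picchow}(iv) yields a right-exact sequence on each $\overline{\mathcal U}_n$, and letting $n$ grow gives the right-exact sequence (\ref{quasiex}). So the whole statement reduces to the injectivity of the left map, that is, to the claim that the boundary line bundles $\{\oo(\widetilde\delta_i^j)\}_{0\le i\le g/2,\, j\in J_i}$ are $\mathbb Z$-linearly independent in $\Pic(\CVc)$. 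As any integral relation is in particular rational, it is enough to produce a finite family of test curves whose intersection matrix against these classes is non-degenerate over $\mathbb Q$.

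Following the strategy of Arbarello--Cornalba for $\CMg$ in \cite{AC87} and its extension to $\CJc$ by Melo--Viviani in \cite{MV}, the plan is to construct, for every pair $(i,j)$, a morphism $f_{i,j}\colon B_{i,j}\to\CVc$ from a smooth projective curve whose generic point lies in $\Vc$ and which crosses the boundary transversally. To each such test curve one attaches the integer vector $\big(\deg_{B_{i,j}}f_{i,j}^*\,\oo(\widetilde\delta_{i'}^{j'})\big)_{(i',j')}$; linear independence follows once the resulting square matrix has nonzero determinant. Each $f_{i,j}$ is given concretely by a family of semistable curves $\mathcal C\to B_{i,j}$ carrying a \emph{properly balanced} vector bundle $\mathcal E$, and, because $\oo(\widetilde\delta_{i'}^{j'})$ is the line bundle of the divisor $\widetilde\delta_{i'}^{j'}$, the intersection number $\deg_{B_{i,j}}f_{i,j}^*\,\oo(\widetilde\delta_{i'}^{j'})$ counts, with the multiplicity dictated by the node-smoothing parameter, the fibres that are reducible of type $i'$ and carry a bundle of the multidegree prescribed by $j'$ in Definition \ref{boundef}.

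For the construction I would take, for $\widetilde\delta_0$, a general family with smooth total space acquiring irreducible one-nodal fibres and carrying an arbitrary properly balanced bundle; arranged so as to avoid reducible fibres, it meets $\widetilde\delta_0$ only. For $0<i\le g/2$ I would fix general smooth curves of genera $i$ and $g-i$ and degenerate a smooth genus $g$ curve to their one-point union, equipping it with a bundle of fixed admissible multidegree, so that the family meets only the $\widetilde\delta_i^{\,j}$ and possibly $\widetilde\delta_0$. The genuinely new point in rank $r\ge 2$ is to separate the indices $j\in J_i$ at a fixed $i$: these divisors differ only in the splitting type of $\mathcal E$ along the node. To reach all of them I would vary $\mathcal E$ by elementary (Hecke) modifications supported at the node, which move the multidegree through the chain $\widetilde\delta_i^{\,0},\widetilde\delta_i^{\,1},\dots$ one unit at a time. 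Ordering the classes by $i$, with $\widetilde\delta_0$ first, makes the intersection matrix block-triangular: the $\widetilde\delta_0$-test curve gives a single nonzero entry, and each $i$ contributes a square $j$-block; it remains to check that every such block is non-degenerate.

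The technical heart, and the step I expect to be hardest, is exactly the analysis of the $j$-blocks. Two issues demand care. First, the Hecke modifications must be carried out so that the sheaves stay admissible and keep satisfying the basic inequality (Remark \ref{balcon}) over the whole base, so that $f_{i,j}$ really factors through $\CVc$; moreover the cases $k_{r,d,g}\mid 2i-1$ (with the two extremal divisors $\widetilde\delta_i^{\,0},\widetilde\delta_i^{\,r}$) and $i=g/2$ must be handled on their own, since there $J_i$ has a different cardinality. Second, the individual intersection numbers must be read off from the local geometry of the smoothing: via Proposition \ref{boundary}(ii) and the description of the node-smoothing coordinate in Lemma \ref{locstrc}, each crossing contributes according to the multidegree of the limiting bundle, and one must verify that these contributions assemble into a nonsingular $j$-block. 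This is bookkeeping rather than a conceptual obstruction, but it is where essentially all the work lies.
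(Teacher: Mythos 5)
Your reduction of the statement to the linear independence of the boundary line bundles, and your plan to establish that independence by intersecting with test curves in the spirit of Arbarello--Cornalba \cite{AC87} and Melo--Viviani \cite{MV}, is exactly the strategy of the paper. The gap is in the step you dismiss as bookkeeping: the construction of the test curves themselves. For the degrees $\deg_{B}f^*\oo(\widetilde\delta_{i'}^{j'})$ to be defined at all, the base $B$ must be a smooth \emph{projective} curve, and a one-parameter "degeneration of a smooth genus $g$ curve to the one-point union of curves of genera $i$ and $g-i$" lives naturally over a disc or the spectrum of a DVR; completing it to a projective family while controlling which boundary divisors are met is precisely the hard point, and it is not addressed. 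The paper resolves it by lifting the explicit complete families $F$, $F'$, $F_h$ of \cite{AC87}: these are built from global surfaces (a pencil of conics glued to $C\times\mathbb P^1$, a blow-up of $\Gamma\times\Gamma$ glued to $C_1\times\Gamma$ and $C_2\times\Gamma$, etc.), and --- crucially --- for $i>0$ they lie \emph{entirely inside} a single boundary divisor rather than crossing it transversally. Consequently your model of the intersection numbers, as a positively weighted count of reducible fibres of type $i'$, does not apply: the degrees are computed via the normal bundles of the glued sections (Arbarello--Cornalba's self-intersection formula) and come out equal to $-1$ on the unique $\widetilde\delta_{i}^{j}$ the family meets, and $0$ on all others.

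Once the families are built this way, the non-degeneracy of your "$j$-blocks" is not a further verification but is automatic: for fixed $i$ the paper produces one family per $j\in J_i$ by simply gluing bundles of the prescribed degrees on the two components (choosing the twist $\oo_{Y_2}((d-j-k)\Delta+tE)$ appropriately), so each family meets exactly one $\widetilde\delta_i^{j}$ and the block is diagonal with entries $-1$; the only checks needed are that the chosen multidegrees keep the bundle properly balanced, which is the content of the balancing lemmas. Your proposed Hecke modifications at the node are therefore unnecessary machinery, and they introduce a genuine risk you flag but do not resolve, namely preserving admissibility and the basic inequality along the whole base. In short: the architecture of your argument is the right one, but the proof is not complete until you exhibit complete families realizing each $(i,j)$ --- and the correct realizations force you to replace "transverse crossing counts" by the normal-bundle computation of the degrees.
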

For the rest of this subsection, with the only exceptions of Proposition \ref{T} and Lemma \ref{sHs}, we will always assume that $g\geq 3$. We recall now a result from \cite{T95}.
\begin{prop}\label{T}\cite[Proposition 1.2]{T95}. Let $C$ be a nodal curve of genus greater than one without rational components and let $\mt E$ be a balanced vector bundle over $C$ with rank $r$ and degree $d$. Let $C_1,\ldots,C_s$ be its irreducible components. If $\mt E_{C_i}$ is semistable for any $i$ then $\mt E$ is P-semistable. Moreover if the basic inequalities are all strict and all the $\mt E_{C_i}$ are semistable and at least one is stable then $\mt E$ is P-stable.
\end{prop}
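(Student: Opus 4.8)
The plan is to verify the defining slope inequality of P-semistability directly for an arbitrary subsheaf $\mathcal{F}\subseteq\mathcal{E}$, writing $s_i$ for the generic rank of $\mathcal{F}$ on the component $C_i$, by comparing $\chi(\mathcal{F})$ with the contributions of the restrictions $\mathcal{E}_{C_i}$ and carefully controlling the correction terms supported at the nodes joining distinct components. First I would restrict to components: let $\mathcal{F}_i\subseteq\mathcal{E}_{C_i}$ be the image of $\mathcal{F}\otimes\mathcal{O}_{C_i}$, a torsion-free sheaf of rank $s_i$. Tensoring the normalization sequence of $C$ (separating the nodes between distinct components) with the locally free sheaf $\mathcal{E}$ realizes $\mathcal{F}$ as a subsheaf of $\bigoplus_i\mathcal{F}_i$ with torsion cokernel concentrated at those nodes; a local computation over the node ring $k[[x,y]]/(xy)$, using the classification of torsion-free modules recalled before Proposition \ref{prop5NS}, shows that at a node $p$ joining $C_i,C_j$ this cokernel has length $r_0(p)$, the common (``overlap'') rank of $\mathcal{F}$ on the two branches. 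This yields the exact identity $\chi(\mathcal{F})=\sum_i\chi(\mathcal{F}_i)-\sum_p r_0(p)$.

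The step I expect to be the main obstacle is to bound each $\chi(\mathcal{F}_i)$ sharply enough. The naive estimate $\chi(\mathcal{F}_i)\le\frac{s_i}{r}\chi(\mathcal{E}_{C_i})$ from semistability of $\mathcal{E}_{C_i}$ is too weak; instead one must apply semistability to the \emph{saturation} $\mathcal{F}_i^{\mathrm{sat}}$ of $\mathcal{F}_i$ inside $\mathcal{E}_{C_i}$. The same local analysis shows that at each node $p\in C_i$ exactly $s_i-r_0(p)$ of the local generators of $\mathcal{F}_i$ vanish, so that $\chi(\mathcal{F}_i^{\mathrm{sat}})-\chi(\mathcal{F}_i)\ge\sum_{p\in C_i}\bigl(s_i-r_0(p)\bigr)$. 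Feeding this back, together with $r_0(p)\le\min(s_i,s_j)$, gives
$$\chi(\mathcal{F})\ \le\ \sum_i\frac{s_i}{r}\chi(\mathcal{E}_{C_i})-\sum_p\bigl(s_i+s_j-r_0(p)\bigr)\ \le\ \sum_i\frac{s_i}{r}\chi(\mathcal{E}_{C_i})-\sum_p\max(s_i,s_j).$$
Using $\chi(\mathcal{E}_{C_i})=d_i+r(1-g_i)$, the identity $\sum_i\omega_{C_i}=\omega_C$, and $\tfrac{r(1-g)}{2g-2}=-\tfrac r2$, the target inequality $\chi(\mathcal{F})\le\frac{\chi(\mathcal{E})}{r\,\omega_C}\sum_i s_i\omega_{C_i}$ then reduces to the purely numerical statement
$$\sum_i s_i\Bigl(d_i-\tfrac{d}{\omega_C}\omega_{C_i}\Bigr)\ \le\ \frac r2\sum_p|s_i-s_j|.$$

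Finally I would prove this last inequality by a filtration-by-rank (``layer-cake'') argument: for $k=1,\dots,r$ set $Z_k=\bigcup\{C_i:s_i\ge k\}$, so that $\sum_i s_i(\,\cdot\,)=\sum_{k=1}^r\sum_{C_i\subseteq Z_k}(\,\cdot\,)$ and the right-hand side becomes $\frac r2\sum_k k_{Z_k}$. Since $\sum_{i\in Z}\omega_{C_i}=\omega_Z$ for a connected subcurve $Z$, decomposing each $Z_k$ into its connected components reduces the whole inequality, term by term, to the basic inequality $d_Z-\tfrac{d}{\omega_C}\omega_Z\le\tfrac r2 k_Z$ supplied by the balanced hypothesis (Remark \ref{balcon}). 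This establishes P-semistability.

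For the stable case I would track where equality can occur. Strictness of all basic inequalities forces a strict inequality on every proper nonempty connected piece of some $Z_k$, hence overall strictness unless all the $s_i$ equal a common value $s$; since $\mathcal{F}$ is a proper nonzero subsheaf we may assume $0<s<r$. In that remaining case the layer-cake contributes only the trivial equalities coming from $Z_k\in\{\emptyset,C\}$, so the strict inequality must instead come from the component where $\mathcal{E}_{C_i}$ is stable: there $\mathcal{F}_i^{\mathrm{sat}}$ is a proper nonzero subsheaf of $\mathcal{E}_{C_i}$ of rank $s$, whence $\chi(\mathcal{F}_i^{\mathrm{sat}})/s<\chi(\mathcal{E}_{C_i})/r$ strictly, which propagates to a strict P-inequality. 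This gives P-stability.
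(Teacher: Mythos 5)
Your argument is correct, but note that the paper does not actually prove this statement: it is quoted from Teixidor i Bigas \cite[Proposition 1.2]{T95}, so there is no internal proof to compare against. What you have written is a complete, self-contained proof, and it follows what is essentially the route of the cited source: restrict a subsheaf $\mathcal F$ to the components, control $\chi(\mathcal F)$ via the exact sequence $0\to\mathcal F\to\bigoplus_i\mathcal F_i\to Q\to 0$ with $Q$ of length $\sum_p r_0(p)$, sharpen the componentwise semistability bound by passing to the saturation, and convert the resulting numerical inequality into a sum of basic inequalities for the subcurves $Z_k=\bigcup\{C_i:s_i\ge k\}$ by the layer-cake decomposition. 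Your observation that the naive bound $\chi(\mathcal F_i)\le \tfrac{s_i}{r}\chi(\mathcal E_{C_i})$ is insufficient is exactly right: one sees this already on the subsheaf $\mathcal E_{C_j}(-\sum p_i)\subset\mathcal E$, where the correction term $\sum_p(s_i-r_0(p))$ is precisely what makes the estimate reduce to the basic inequality, cf.\ Remark \ref{balcon}(iv). I checked the local computation at a node ($\mathcal F_p\cong\mathcal O^a\oplus k[[x]]^b\oplus k[[y]]^c$, cokernel of $\mathcal F\to\mathcal F_i\oplus\mathcal F_j$ of length $a=r_0(p)$, and length of $(\mathcal F_i^{\mathrm{sat}}/\mathcal F_i)_p$ at least $s_i-r_0(p)$ because the $k[[x]]$-summands are annihilated by $y$ and hence land in $x\,k[[x]]^r$), as well as the bookkeeping identity $\sum_i s_i(1-g_i)=-\tfrac12\sum_i s_i\omega_{C_i}+\tfrac12\sum_p(s_i+s_j)$; everything matches, and $\sum_p\lvert s_i-s_j\rvert=\sum_k k_{Z_k}$ closes the reduction.

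The one point you should make explicit is the case where all the $s_i$ are equal to $r$ in the stability argument: a proper subsheaf may perfectly well have full rank on every component, so ``we may assume $0<s<r$'' is not automatic. The case is harmless --- a proper full-rank subsheaf has nonzero torsion quotient, hence $\chi(\mathcal F)<\chi(\mathcal E)$, while $\sum_i s_i\omega_{C_i}=r\omega_C$ --- but it needs a sentence. With that added, the proof is complete.
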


The next result is probably well-known to the experts. Since we did not find an exhaustive reference for all the cases, we included a sketchy proof.

\begin{lem}\label{gvb}The generic vector bundle of rank $r$ and degree $d$ over smooth curve of genus $g\geq 1$ is semistable. 
Furthermore, if $g>1$ the generic semistable vector bundle is stable. If $g=1$, the stable locus is not empty and coincides with the semistable locus if and only if $n_{r,d}=1$. In general, the generic semistable vector bundle over an elliptic curve of degree $d$ and rank $r$ is direct sum of $n_{r,d}$ distinct stable vector bundles of degree $d/n_{r,d}$ and rank $r/n_{r,d}$.
\end{lem}

\begin{proof}The assertions on $g\geq 2$ follows from the work of Seshadri \cite{Se82}. Let focus on $g=1$. The semistable locus is non-empty by \cite{Tu} and is open by \cite[Proposition 2.3.1]{HL}.  The sentence about the stable locus is proved in \cite[Appendix A]{Tu}. Consider the good moduli map $p:\mt Vec^{ss}_{r,d,C}\to U_{r,d,C}$, where the domain is the moduli stack of semistable vector bundles over an elliptic curve $C$ and the target is the associated moduli space, whose points are in bijection with polystable vector bundles. By \emph{loc. cit.}, a vector bundle is polystable if and only if it is direct sums of $n_{r,d}$ stable bundles of rank $r/n_{r,d}$ and degree $d/n_{r,d}$. We will show that the map $p$ induces a bijection between the set of $k$-points, along the open subset of $U_{r,d,C}$ of those bundles which are direct sums of distinct bundles.

Since the map $p$ sends a semistable vector bundle $\mt E$ to the S-equivalent polystable one $gr(\mt E)$, it is enough to show that
\begin{equation}\label{poly}
gr(\mt E)= \bigoplus_{i=1}^{n_{r,d}}\mt E_i, \text{ s.t. } \mt E_i\neq\mt E_j \text{ when }i\neq j\Longrightarrow gr(\mt E)=\mt E.
\end{equation}
We will give a proof by induction on the rank. If $r=1$, it is obvious. Assume that (\ref{poly}) is true for ranks smaller than $r$. Let $\mt E$ be a semistable vector bundle of rank $r$ and degree $d$ satisfying the hypothesis of (\ref{poly}). Up to permuting the factors of $gr(\mt E)$, we can assume that there exists an exact sequence
$$
0\to\mt F\to\mt E\to \mt E_1\to 0,
$$ 
with $F$ semistable of rank $r(n_{r,d}-1)/n_{r,d}$ and degree $d(n_{r,d}-1)/n_{r,d}$. By induction, 
$
\mt F=gr(\mt F)=\bigoplus_{i=2}^{n_{r,d}}\mt E_i.
$
If we show $\text{Ext}^1(\mt E_1,\mt F)=0$, the sequence will split and, by inductive assumptions, we have the equality $gr(\mt E)=\mt E$. Observe that 
$$
0=\chi(Hom(\mt E_1,\mt F))=\text{hom}(\mt E_1,\mt F)- \text{ext}^1(\mt E_1,\mt F).
$$
We will show that $\text{Hom}(\mt E_1,\mt F)=0$, concluding the proof. Let $\varphi:\mt E_1\to\mt F$ be a non zero morphism of vector bundles. First of all, $\varphi$ must be injective, because otherwise
$$
\frac{\chi(\text{Im}\varphi)}{\text{rank}(\text{Im}\varphi)}>\frac{\chi(\mt E_1)}{\text{rank}\mt E_1}=\frac{d}{r}= \frac{\chi(\mt F)}{\text{rank}\mt F}\geq \frac{\chi(\text{Im}\varphi)}{\text{rank}(\text{Im}\varphi)}.
$$
So, $\varphi$ is injective. In particular, $\mt E_1$ becomes a destabilizing stable sheaf also for $\mt F$, but this contradicts $\mt E_i\neq\mt E_j$ for $i\neq j$. Thus, $\varphi=0$.
\end{proof}

We deduce from this

\begin{lem}\label{T2}The generic point of $\widetilde\delta_i^j$ is a curve $C$ with exactly one node and a properly balanced vector bundle $\mt E$ such that
\begin{enumerate}[(i)]
 \item if $i=0$ the vector bundle $\mt E$ is P-stable, 
\item  if $i=1$ and we denote by $C_1$ and $C_2$ are the irreducible smooth components of genus $1$ and $g-1$, the restriction $\mt E_{C_1}$ is a direct sum of distinct stable vector bundles with same rank and degree and $\mt E_{C_2}$ is a stable vector bundle.
\item if $2\leq i\leq \lfloor g/2\rfloor$ and we denote by $C_1$ and $C_2$ are the irreducible smooth components of genus $i$ and $g-i$, the restrictions $\mt E_{C_1}$ and $\mt E_{C_2}$ are stable vector bundles.
\end{enumerate}
Furthermore, $\mt E$ is $P$-stable if $\widetilde\delta_i^j$ is a non-extremal divisor and $\mt E$ is strictly semistable if $\widetilde\delta_i^j$ is an extremal divisor.
\end{lem}

\begin{proof}When $i=0$, it follows from the work of Seshadri \cite{Se82}. We fix $i\in \{1,\ldots,\lfloor g/2\rfloor\}$ and $j\in J_i$. By definition the generic point of $\widetilde\delta_i^j$ is a curve with two irreducible components $C_1$ and $C_2$ of genus $i$ and $g-i$ meeting at one point and a vector bundle $\mt E$ with multidegree
$$
(\deg\mt E_{C_1},\deg\mt E_{C_2})=\left(\left\lceil d\frac{2i-1}{2g-2}-\frac{r}{2}\right\rceil+j,\left\lfloor d\frac{2(g-i)-1}{2g-2}+\frac{r}{2}\right\rfloor-j\right).
$$
Fix a such curve $C$. As in the proof of Proposition \ref{boundary}(i), we have a surjective smooth morphism of stacks
$$
\widetilde\delta_i^{*j}\cap \overline{\phi}_{r,d}^{-1}([C])\to\mt Vec_{r,\deg\mt E_{C_1},C_1}\times \mt Vec_{r,\deg\mt E_{C_2},C_2},
$$
where $\widetilde\delta_i^{*j}$ is the open locus in $\widetilde\delta_i^j$ of curves with just one node and $\overline{\phi}_{r,d}:\CVc\to\CMg$ is the natural morphism of stacks, which forgets the bundle and stabilizes the curve. By Lemma \ref{gvb}, we have that points (ii) and (iii) hold. 

By Proposition \ref{T}, the generic point of $\widetilde\delta_i^j$ is P-semistable. Moreover if $\widetilde\delta_i^j$ is a non-extremal divisor  the basic inequalities are strict. By the second assertion of \emph{loc. cit.}, if $\widetilde\delta_i^j$ is a non-extremal divisor the generic point of $\widetilde\delta_i^j$ is P-stable. It remains to prove the assertion for the extremal divisors. Suppose that $\widetilde\delta_i^0$ is an extremal divisor, the proof for the $\widetilde\delta_i^r$ is similar. As in the proof of Lemma \ref{balcon}, we can see that
$$
\deg_{C_1}\mt E=d\frac{2i-1}{2g-2}-\frac{r}{2}\iff
\frac{\chi\left(\mt E_{C_1}\right)}{\omega_{C_1}}=\frac{\chi\left(\mt E\right)}{\omega_{C}}.
$$
In other words, $\mt E_{C_1}$ is a destabilizing quotient for $\mt E$, concluding the proof.
\end{proof}
\begin{lem}\label{redsemistable}The Picard group of $\Vc$ is isomorphic to the Picard groups of $\Vc^{(s)s}$ and $\mt U_n$ for $n$ big enough. The Picard group of $\CVc$ is isomorphic to the Picard groups of $\CVc^{Pss}$ and $\overline{\mt U}_n$ for $n$ big enough. The same results hold for the rigifications.
\end{lem}

\begin{proof}Fix a $k$-point $(C,\mt L)$ in $\Jc$. Then, by the determinant map $det:\Vc\to \Jc$, we have that
\begin{eqnarray*}
\dim\Vc &=&\dim\Jc+\dim\Vl,\\
\dim\left(\Vc\backslash\Vc^{(s)s} \right)& \leq&\dim\Jc+\dim\left(\Vl\backslash\Vl^{(s)s}\right).
\end{eqnarray*}
Thus $\cod(\Vc\backslash\Vc^{(s)s},\Vc)\geq\cod(\Vl\backslash\Vl^{(s)s},\Vl)\geq 2$ (see proof of \cite[Corollary 3.2]{H}).
By Proposition \ref{qc}, there exists $n$ big enough such that $\CVc^{Pss}\subset \overline{\mt U}_n$. In particular, $\cod(\mt U_n\backslash\Vc^{ss},\mt U_n)\geq 2$. Suppose that $\cod(\CVc\backslash\CVc^{Pss},\CVc)=1$, so
 $\CVc\backslash\CVc^{Pss}$ contains a substack of codimension $1$. By the observations above this stack must be contained in the boundary divisor $\widetilde\delta$. The generic point of any divisor $\widetilde\delta_i^j$ is P-semistable by Lemma \ref{T2}, then we have a contradiction. So  $$\cod\left(\overline{\mt U}_n\backslash\CVc^{Pss},\overline{\mt U}_n\right)\geq \cod\left(\CVc\backslash\CVc^{Pss},\CVc\right)\geq2.$$The same argument works for the rigidifications. By Theorem \ref{picchow}, the lemma follows.
\end{proof}

By Lemma \ref{redsemistable}, Theorem \ref{indbou} is equivalent to proving that there exists $n_*\gg 0$ such that for $n\geq n_*$ we have an exact sequence of groups
$$
0\lra\bigoplus_{i=0,\ldots,\lfloor g/2\rfloor}\oplus_{j\in J_i}\langle\oo(\widetilde\delta_i^j)\rangle\lra \Pic(\overline{\mt U}_n)\lra \Pic(\mt U_n)\lra 0.
$$
By Theorem \ref{picchow}, the sequence is exact in the middle and at right. It remains to prove the left exactness.
The strategy that we will use is the same as the one of Arbarello-Cornalba for $\CMg$ in \cite{AC87} and the generalization for $\overline{\mt Jac}_{r,g}$ done by Melo-Viviani in \cite{MV}. More precisely, we will construct morphisms $B\to\overline{\mt U}_n$ from irreducible smooth projective curves $B$ and we compute the degree of the pull-backs of the boundary divisors of $\Pic(\overline{\mt U}_n)$ to $B$. We will construct liftings of the families $F_h$ (for $1 \leq h \leq (g - 2 ) /2$), $F$ and $F'$ used by Arbarello-Cornalba in \cite[pp. 156-159]{AC87}. Since $\CVc\cong\overline{\mathcal Vec}_{r,d',g}$ if $d\equiv d'$ mod $(r(2g-2))$, in this section we can assume that $0\leq d<r(2g-2)$.\\\\
\textbf{The Family $\widetilde F$}.

Consider a general pencil in the linear system $H^0(\mathbb{P}^2,\oo(2))$. It defines a rational map $\mathbb{P}^2\dashrightarrow\mathbb{P}^1$, which is regular outside of the four base points of the pencil. Blowing the base locus we get a conic bundle $\phi:X\rightarrow\mathbb{P}^1$. The four exceptional divisors $E_1$, $E_2$, $E_3$, $E_4\subset X$ are sections of $\phi$. It can be shown that the conic bundle has 3 singular fibers consisting of rational chains of length two. Fix a smooth curve $C$ of genus $g-3$ and $p_1,p_2,p_3,p_4$ points of $C$. Consider the following surface
$$
Y=\left(X\amalg(C\times\mathbb P^1)\right)/(E_i\sim \{p_i\}\times \mathbb P^1).
$$
We get a family $f:Y\rightarrow \mathbb P^1$ of stable curves of genus $g$. The general fiber of $f$ is as in Figure \ref{Figure1} where $Q$ is a smooth conic.
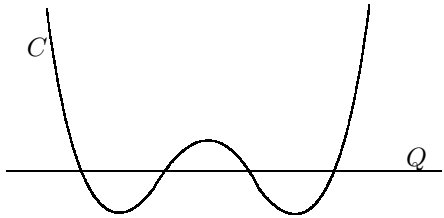
\begin{figure}[h!]
\begin{center}
\unitlength .65mm 
\linethickness{0.4pt}
\ifx\plotpoint\undefined\newsavebox{\plotpoint}\fi 
\begin{picture}(85.55,50.324)(0,110)
\qbezier(12.78,153.574)(19.905,97.574)(34.53,116.574)
\qbezier(34.53,116.574)(46.155,136.574)(56.28,116.574)
\qbezier(56.28,116.574)(71.53,96.699)(78.78,154.324)
\put(10.828,145.765){\makebox(0,0)[cc]{$C$}}
\put(4.625,120.248){\line(1,0){.21}}
\put(4.835,120.248){\line(1,0){88.715}}
\put(88.504,122.561){\makebox(0,0)[cc]{$Q$}}
\end{picture}
\end{center}
\caption{The general fiber of $f:Y\to \mathbb P^1$}\label{Figure1}
\end{figure}\\
While the 3 special components are as in Figure 2 where $R_1$ and $R_2$ are rational curves.
\begin{figure}[ht]
\begin{center}
\unitlength .65mm 
\linethickness{0.4pt}
\ifx\plotpoint\undefined\newsavebox{\plotpoint}\fi 
\begin{picture}(80.75,50.75)(0,150)
\put(10.25,182.25){\line(-1,0){.25}}
\put(10,182.25){\line(1,0){.25}}
\multiput(10.25,182.25)(.04485138004,-.03370488323){942}{\line(1,0){.04485138004}}
\multiput(35,150.75)(.04842799189,.03372210953){986}{\line(1,0){.04842799189}}
\qbezier(12.25,196)(19.375,140)(34,159)
\qbezier(34,159)(45.625,179)(55.75,159)
\qbezier(55.75,159)(71,139.125)(78.25,196.75)
\put(24.5,176.25){\makebox(0,0)[cc]{$R_1$}}
\put(62.75,175){\makebox(0,0)[cc]{$R_2$}}
\put(8,196.5){\makebox(0,0)[cc]{$C$}}
\end{picture}
\end{center}
\caption{The three special fibers of $f:Y\to \mathbb P^1$}\label{Figure2}
\end{figure}
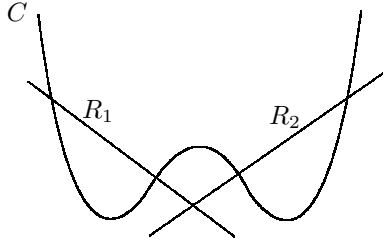\\
Choose a vector bundle of degree $d$ on $C$, pull it back to $C\times \mathbb P^1$ and call it $E$. Since $E$ is trivial on $\{p_i\}\times \mathbb P^1$, we can glue it with the trivial vector bundle of rank $r$ on $X$ obtaining a vector bundle $\mt E$ on $f:Y\rightarrow\mathbb P^1$ of rank $r$ and degree $d$.

\begin{lem}$\mt E$ is properly balanced.
\end{lem}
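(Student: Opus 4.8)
The plan is to check the two defining conditions of ``properly balanced'' separately, namely admissibility and balancedness, and I would dispose of admissibility first by observing that every geometric fiber of $f:Y\to\mathbb P^1$ is in fact a \emph{stable} curve. Indeed, the general fiber is $C\cup Q$ with $Q$ a smooth conic meeting $C$ in the four points $p_1,\dots,p_4$, so $Q$ meets the rest of the curve in $4\geq 3$ points; and each special fiber is $C\cup R_1\cup R_2$, where $R_1,R_2$ are rational, $R_1\cap R_2$ is a single node, and each $R_i$ is glued to $C$ along two of the $p_j$, so that $R_i$ meets the rest of the curve in $2+1=3$ points. Hence no fiber contains a contractible rational chain, the exceptional curve of each fiber is empty, and the stabilization morphism is the identity. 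Consequently $\mt E$ is admissible: by Proposition \ref{prop5NS}(i) it suffices that $\mt E$ be strictly positive (vacuously true, there being no rational chains) and that $\pi_*\mt E=\mt E$ be torsion free (true, as it is a vector bundle).

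It then remains to check that $\mt E$ is balanced. By construction $\mt E$ is glued from the pullback to $C\times\mathbb P^1$ of a degree-$d$ bundle on $C$ and the trivial rank-$r$ bundle on $X$, so on each fiber its restriction has degree $d$ on the component $C$ and degree $0$ on every rational component ($Q$, resp. $R_1$ and $R_2$). By Remark \ref{balcon}(ii)--(iii) it is enough to verify the one-sided basic inequality $\deg\mt E_Z-d\,\omega_Z/\omega_C\leq r\,k_Z/2$ for every connected subcurve $Z$ with connected complement $Z^c$, tested on the general fiber and on a special fiber.

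I would then enumerate these subcurves and compute, using $\omega_Z=2g_Z-2+k_Z$ and $\omega_C=2g-2$. On the general fiber the only cases are $Z=C$ and $Z=Q$, both with $k_Z=4$. On a special fiber the cases are $Z\in\{\,C,\ R_1,\ R_2,\ R_1\cup R_2,\ C\cup R_1,\ C\cup R_2\,\}$, with $k_Z=4$ for $C$ and $R_1\cup R_2$ and $k_Z=3$ for the remaining four. A direct evaluation shows that $|\deg\mt E_Z-d\,\omega_Z/\omega_C|$ equals $d/(g-1)$ when $k_Z=4$ and $d/(2(g-1))$ when $k_Z=3$, while the right-hand side $r\,k_Z/2$ equals $2r$ and $3r/2$ respectively. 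Every required inequality therefore follows from the single bound $d/(g-1)<2r$, which holds because we have reduced to the range $0\leq d<r(2g-2)=2r(g-1)$.

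I expect no genuine obstacle here: the one conceptual input is the observation that all fibers are stable, which makes admissibility automatic, and everything else is the bounded and routine case check above. The only care required is to make the enumeration of connected subcurves with connected complement on the reducible special fibers complete, and to read off $k_Z$ and $g_Z$ correctly for the three-component fiber.
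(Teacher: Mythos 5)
Your proof is correct and follows essentially the same route as the paper's: admissibility is immediate because every fiber of $f:Y\to\mathbb P^1$ is a stable curve, and balancedness reduces via Remark \ref{balcon} to a finite check of the basic inequality using the assumption $0\leq d<r(2g-2)$. The only cosmetic difference is that you verify the general fiber directly, whereas the paper invokes openness of the properly balanced condition to reduce to the three special fibers; your explicit enumeration of the subcurves and the resulting bounds $d/(g-1)<2r$ and $d/(2(g-1))\leq 3r/2$ are exactly the computation the paper leaves as ``easy to see''.
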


\begin{proof}$\mt E$ is obviously admissible because is defined over a family of stable curves. Since being properly balanced is an open condition, we can reduce to check that $\mt E$ is properly balanced on the three special fibers. By Lemma \ref{balcon}, it is enough to check the basic inequality for the subcurves $R_1\cup R_2$, $R_1$ and $R_2$. And by the assumption $0\leq d<r(2g-2)$, a direct computation shows that the inequalities hold.
\end{proof}

We call $\widetilde F$ the family $f:Y\rightarrow\mathbb P^1$ with the vector bundle $\mt E$. It is a lifting of the family $F$ defined in \cite[p. 158]{AC87}. So we can compute the degree of the pull-backs of the boundary bundles in $\Pic(\CVc)$ to the curve $\widetilde F$. Consider the commutative diagram
$$
\xymatrix{
\mathbb P^1\ar[rd]^{F}\ar[r]^{\widetilde F} & \CVc\ar[d]^{\overline{\phi}_{r,d}}\\
 &\CMg
}$$
By Proposition \ref{boundary}, we have $\deg_{\widetilde F}\oo(\widetilde\delta_0)=\deg_F\oo(\delta_0)$ and $\deg_F\oo(\delta_0)=-1$ by \cite[p. 158]{AC87}. Since $\widetilde F$ does not intersect the other boundary divisors, we have:
$$
\begin{cases}
\deg_{\widetilde F}\oo(\widetilde\delta_0)=-1, &\\
\deg_{\widetilde F}\oo(\widetilde\delta_i^j)=0 & \mbox{if } i\neq 0\mbox{ and } j\in J_i.
\end{cases}
$$\\
\textbf{The Families $\widetilde F_1'^j$ and $\widetilde F_2'^j$}(for $j\in J_1$).

We start with the same family of conics $\phi:X\rightarrow \mathbb P^1$ and the same smooth curve $C$ used for the family $\widetilde F$. Let $\Gamma$ be a smooth elliptic curve and take points $p_1\in \Gamma$ and $p_2,p_3,p_4\in C$. We construct a new surface
$$
Z=\left(X\amalg(C\times\mathbb P^1)\amalg(\Gamma\times\mathbb P^1)\right)/(E_i\sim \{p_i\}\times \mathbb P^1).
$$
We obtain a family $g:Z\rightarrow \mathbb P^1$ of stable curves of genus $g$. The general fiber is as in Figure \ref{Figure3} where $Q$ is a smooth conic. The three special fibers are as in Figure \ref{Figure4} where $R_1$ and $R_2$ are rational smooth curves.
\begin{figure}[ht]
\begin{center}
\unitlength .6mm 
\linethickness{0.4pt}
\ifx\plotpoint\undefined\newsavebox{\plotpoint}\fi 
\begin{picture}(131,70.125)(0,115)
\qbezier(18.5,176)(37.5,96.75)(53.5,158.5)
\qbezier(53.5,158.5)(72.75,214.125)(83,130.25)
\put(13,146.75){\line(1,0){1.75}}
\put(14.75,146.75){\line(1,0){112.5}}
\qbezier(123.5,176)(131,165.5)(103.5,155)
\qbezier(103.5,155)(93,147)(103.5,138)
\qbezier(103.5,138)(130.625,123.625)(121.25,114.75)
\put(127.25,173.75){\makebox(0,0)[cc]{$\Gamma$}}
\put(125,150){\makebox(0,0)[cc]{$Q$}}
\put(20.75,177){\makebox(0,0)[cc]{$C$}}
\end{picture}
\end{center}
\caption{The general fibers of $g:Z\to \mathbb P^1$.}\label{Figure3}
\end{figure}
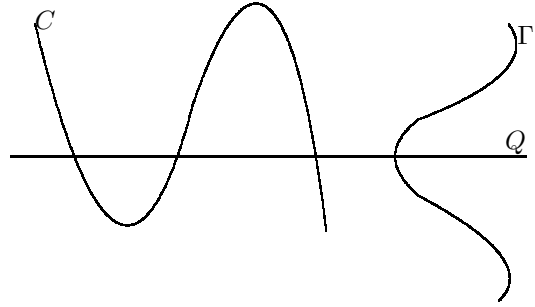
\begin{figure}[ht]
\begin{center}
\unitlength .6mm 
\linethickness{0.4pt}
\ifx\plotpoint\undefined\newsavebox{\plotpoint}\fi 
\begin{picture}(131,60.125)(0,120)
\qbezier(18.5,176)(37.5,96.75)(53.5,158.5)
\qbezier(53.5,158.5)(72.75,214.125)(83,130.25)
\qbezier(123.5,176)(131,165.5)(103.5,155)
\qbezier(103.5,155)(93,147)(103.5,138)
\qbezier(103.5,138)(130.625,123.625)(121.25,114.75)
\put(128.25,173.75){\makebox(0,0)[cc]{$\Gamma$}}
\put(20.75,177){\makebox(0,0)[cc]{$C$}}
\put(11,169.5){\line(1,0){.25}}
\multiput(11.25,169.5)(.05160550459,-.03371559633){1090}{\line(1,0){.05160550459}}
\put(51.5,133.5){\line(0,1){.25}}
\multiput(51.5,133.75)(.125,.0337370242){578}{\line(1,0){.125}}
\put(10.75,165){\makebox(0,0)[cc]{$R_1$}}
\put(122.5,147.5){\makebox(0,0)[cc]{$R_2$}}
\end{picture}
\end{center}
\caption{The three special fibers of $g:Z\to \mathbb P^1$.}\label{Figure4}
\end{figure}
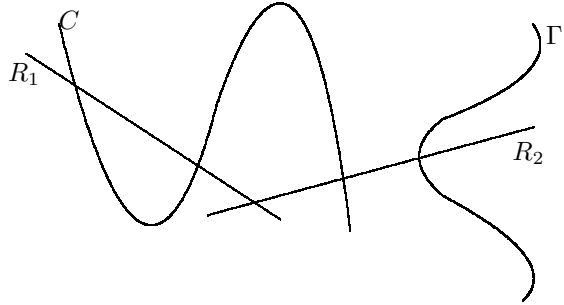\\
Let $j$ be an integer. We choose two vector bundles of degree $d-j$ and $d-3j$ on $C$, pull them back to $C\times \mathbb P^1$ and call them $G_1^j$ and $G_2^j$. We choose a vector bundle of degree $j$ on $\Gamma$, pull it back to $\Gamma\times\mathbb P^1$ and call it $M^j$. We glue the vector bundle $G_1^j$ (resp. $G_2^j$) on $C\times\mathbb P^1$, the vector bundle $M^j$ on $\Gamma\times\mathbb P^1$ and the vector bundle $\oo_X^r$ (resp. $\phi^*\oo_{\mathbb P^1}(j)\otimes\omega_{X/\mathbb P^1}^{-j}\oplus\oo_X^{r-1}$), obtaining a vector bundle $\mt G_1^j$ (resp. $\mt G_2^j$) on $Z$ of rank $r$ and degree $d$.

\begin{lem} Let $j$ be an integer such that $$\left|j-\frac{d}{2g-2}\right|\leq\frac{r}{2}.$$
Then $\mt G_1^j$ is properly balanced if $0\leq d\leq r(g-1)$ and $\mt G_2^j$ is properly balanced if $r(g-1)\leq d<r(2g-2)$.
\end{lem}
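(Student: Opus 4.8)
The plan is to verify the two defining conditions of \emph{properly balanced} separately. Admissibility is immediate: $g\colon Z\to\mathbb P^1$ is a family of \emph{stable} curves, so on every fibre each rational component meets the rest in at least three points, the exceptional subcurve $C_{exc}$ is empty, and admissibility holds vacuously (exactly as in the lemma for $\widetilde F$). The whole content is therefore the basic inequality defining \emph{balanced}, which by Remark \ref{balcon}(ii) need only be checked, fibrewise, on connected subcurves $Z$ whose complement $Z^c$ is also connected.

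Next I would reduce to the three special fibres. On a general fibre the components are $C$, the smooth conic $Q$ and $\Gamma$, with $Q$ meeting each of $C,\Gamma$; here the only subcurves with connected complement are $\{C\}$ and $\{\Gamma\}$ (the complement of $\{Q\}$ is the disjoint union $C\amalg\Gamma$), and these two, together with their degrees and their $\omega_Z,k_Z$, agree with the corresponding data on a special fibre. Hence it suffices to check the special fibres, where $Q$ degenerates to a chain $R_1\cup R_2$. Since the three singular conics of the pencil are the three partitions of the four base points into two pairs, and $E_1$ (glued to $\Gamma$) is always paired with exactly one of $E_2,E_3,E_4$ (glued to $C$), all three special fibres have, up to relabelling, the same combinatorial type: $R_1$ carries the node to $\Gamma$ and one node to $C$, while $R_2$ carries the two remaining nodes to $C$. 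I would then record, for the subcurves with connected complement, $k_C=k_{R_2}=3$, $k_\Gamma=1$, $k_{R_1\cup\Gamma}=2$ and $\omega_C=2g-5$, $\omega_{R_2}=1$, $\omega_\Gamma=1$, $\omega_{R_1\cup\Gamma}=2$, the whole curve having $\omega$-degree $2g-2$.

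The core computation is the multidegree. Writing $\mu:=\frac{d}{2g-2}$, so that the hypothesis reads $|j-\mu|\le\frac r2$: for $\mt G_1^j$ the restrictions have $\deg\mt E_C=d-j$, $\deg\mt E_\Gamma=j$ and $\deg\mt E_{R_1}=\deg\mt E_{R_2}=0$ (the $X$-part is $\oo_X^r$), while for $\mt G_2^j$ one has $\deg\mt E_C=d-3j$, $\deg\mt E_\Gamma=j$ and $\deg\mt E_{R_1}=\deg\mt E_{R_2}=j$. The key point for $\mt G_2^j$ is that $\phi^*\oo_{\mathbb P^1}(j)$ is trivial on fibres while $\omega_{X/\mathbb P^1}$ restricts to $\oo(-1)$ on each line $R_i$ of a singular conic (a smooth rational curve meeting the rest of the fibre of $X/\mathbb P^1$ only in the node $R_1\cap R_2$), so $\deg(\omega_{X/\mathbb P^1}^{-j}|_{R_i})=j$; in both cases the total degree is $d$.

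Finally I would substitute into $|\deg\mt E_Z-\mu\,\omega_Z|\le\frac r2 k_Z$. For $\mt G_2^j$ each of $\{\Gamma\}$, $\{C\}$, $\{R_1\cup\Gamma\}$ yields exactly $|j-\mu|\le\frac r2$ (using $\deg\mt E_C-\mu\omega_C=3(\mu-j)$ and $\deg\mt E_{R_1\cup\Gamma}-\mu\omega_{R_1\cup\Gamma}=2(j-\mu)$), while $\{R_2\}$ gives the weaker $|j-\mu|\le\frac{3r}2$, so the hypothesis alone forces balancedness, in particular on the range $r(g-1)\le d<r(2g-2)$. For $\mt G_1^j$ the subcurves $\{\Gamma\}$ and $\{R_2\}$ are immediate, $\{R_1\cup\Gamma\}$ gives $|j-2\mu|\le r$, and $\{C\}$ gives $|3\mu-j|\le\frac{3r}2$; the latter two are where $0\le d\le r(g-1)$ enters, since they follow from $|j-\mu|\le\frac r2$ together with $0\le\mu\le\frac r2$ (e.g.\ $3\mu-j\le 2\mu+\frac r2\le\frac{3r}2$ and $3\mu-j\ge 2\mu-\frac r2\ge-\frac r2$). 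The main obstacle is purely bookkeeping: pinning down the degrees of $\mt G_2^j$ on the two rational components of a degenerate conic through $\deg(\omega_{X/\mathbb P^1}|_{R_i})=-1$, and correctly singling out the subcurves with connected complement so that Remark \ref{balcon}(ii) applies.
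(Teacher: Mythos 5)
Your proof is correct and follows essentially the same route as the paper: reduce to the three special fibres, use Remark \ref{balcon}(ii) to cut the check down to the four subcurves $C$, $\Gamma$, the line meeting $C$ twice, and the chain joining $\Gamma$ to the rest, and then verify the basic inequality with the stated multidegrees (your computation $\deg(\omega_{X/\mathbb P^1}|_{R_i})=-1$ correctly supplies the degrees of $\mt G_2^j$ on the two lines, which the paper leaves implicit). The only discrepancy is notational: your labels $R_1,R_2$ are swapped relative to the paper's Figure \ref{Figure4}, so your subcurve $R_1\cup\Gamma$ is the paper's $R_2\cup\Gamma$.
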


\begin{proof}As before we can check the condition on the special fibers. By Lemma \ref{balcon} we can reduce to check the inequalities for the subcurves $\Gamma,C,R_1$ and $R_2\cup\Gamma$. Suppose that $0\leq d\leq r(g-1)$ and consider $\mt G_1^j$. The inequality on $\Gamma$ follows by hypothesis. The inequality on $C$ is
$$
\left|d-j-d\frac{2g-5}{2g-2}\right|\leq\frac{3}{2}r \iff \left|j-d\frac{3}{2g-2}\right|\leq\frac{3}{2}r,
$$
and this follows by these inequalities (true by hypothesis on $j$ and $d$)
$$
\left|j-d\frac{3}{2g-2}\right|\leq\left|j-\frac{d}{2g-2}\right|+\left|\frac{d}{g-1}\right|\leq \frac{r}{2}+r.
$$
The inequality on $R_1$ is
$$
\left|\frac{d}{2g-2}\right|\leq\frac{3}{2}r,
$$
and this follows by the hypothesis on $d$. Finally the inequality on $R_2\cup \Gamma$ is
$$
\left|j-\frac{d}{g-1}\right|\leq r,
$$
and this follows by the following inequalities (true by hypothesis on $j$ and $d$)
$$
\left|j-\frac{d}{g-1}\right|\leq\left|j-\frac{d}{2g-2}\right|+\left|\frac{d}{2g-2}\right|\leq \frac{r}{2}+\frac{r}{2}.
$$
Suppose next that $r(g-1)\leq d<r(2g-2)$ and consider $\mt G_2^j$. The inequality on $\Gamma$ follows by hypothesis. On $C$, the inequality gives
$$
\left|d-3j-d\frac{2g-5}{2g-2}\right|\leq\frac{3}{2}r \iff \left|j-\frac{d}{2g-2}\right|\leq\frac{r}{2},
$$
which follows by hypothesis on $j$. The inequality on $R_1$ is
$$
\left|j-\frac{d}{2g-2}\right|\leq\frac{3}{2}r,
$$
and this follows by hypothesis on $j$. The inequality on $R_2\cup \Gamma$ is
$$
\left|2j-\frac{d}{g-1}\right|\leq r,
$$
and this follows by the inequalities (true by hypothesis on $j$)
$$
\left|2j-\frac{d}{g-1}\right|\leq 2\left|j-\frac{d}{2g-2}\right|\leq r.
$$
\end{proof}

Let $k\in J_1$. If $0\leq d\leq r(g-1)$, we call $\widetilde {F'}_1^k$ the family $g:Z\rightarrow\mathbb P^1$ with the properly balanced vector bundle $\mt G_1^{\lceil\frac{d}{2g-2}-\frac{r}{2}\rceil+k}$. If $r(g-1)\leq d<r(2g-2)$ we call $\widetilde {F'}_2^k$ the family $g:Z\rightarrow\mathbb P^1$ with the properly balanced vector bundle $\mt G_2^{\lceil\frac{d}{2g-2}-\frac{r}{2}\rceil+k}$. As before we compute the degree of boundary line bundles to the curves $\widetilde {F'}_1^k$ and $\widetilde {F'}_2^k$ (in the range of degrees where they are defined) using the fact that they are liftings of the family $F'$ in \cite[p. 158]{AC87}.
If  $0\leq d\leq r(g-1)$ then we have
$$
\begin{cases}
\deg_{\widetilde{F'}_1^k}\oo(\widetilde\delta_1^k)=-1,  &\\
\deg_{\widetilde{F'}_1^k}\oo(\widetilde\delta_1^j)=0 & \mbox{if }j\neq k,\\
\deg_{\widetilde{F'}_1^k}\oo(\widetilde\delta_i^j)=0 & \mbox{if }i> 1,\mbox{ for any }j\in J_i.
\end{cases}
$$
Indeed the first two relations follow from
$$\deg_{\widetilde{F'}_1^k}\oo\left(\sum_{j\in J_i}\widetilde\delta_1^j\right)=\deg_{F'}\oo(\delta_1)=-1$$ (see \cite[p. 158]{AC87}) and the fact that $\widetilde{F'}_1^k$ does not meet $\widetilde\delta_1^j$ for $k\neq j$. The last one follows from the fact that $\widetilde{F'}_1^k$ does not meet $\widetilde\delta_i^j$ for $i>1$. Similarly for $\widetilde{F'}_2^k$ we can show that for $r(g-1)\leq d< r(2g-2)$, we have
$$
\begin{cases}
\deg_{\widetilde{F'}_2^k}\oo(\delta_1^k)=-1,  &\\
\deg_{\widetilde{F'}_2^k}\oo(\delta_1^j)=0 & \mbox{if }j\neq k,\\
\deg_{\widetilde{F'}_2^k}\oo(\delta_i^j)=0 & \mbox{if }i> 1.
\end{cases}
$$\\
\textbf{The Families $\widetilde F_{h}^j$} (for $1\leq h\leq \frac{g-2}{2}$ and $j\in J_h$).

Consider smooth curves $C_1$, $C_2$ and $\Gamma$ of genus $h$, $g-h-1$ and
$1$, respectively, and points $x_1\in C_1$, $x_2\in C_2$ and $\gamma\in \Gamma$. Consider the surface
$Y_2$ given by the blow-up of $\Gamma\times \Gamma$
at $(\gamma, \gamma)$. Let $p_2:Y_2\to \Gamma$ be the map given by composing the blow-down
$Y_2\to \Gamma\times \Gamma$ with the second projection, and $\pi_1: C_1\times\Gamma\to \Gamma$ and $\pi_3:C_2\times\Gamma\to \Gamma$ be the projections along the second factor. 
\begin{figure}[ht]
	\begin{center}
		\unitlength .65mm 
		\linethickness{0.4pt}
		\ifx\plotpoint\undefined\newsavebox{\plotpoint}\fi 
		\begin{picture}(150,60.75)(30,65)
		\put(88.5,66.5){\framebox(43.5,48.25)[cc]{}}
		\put(22.75,66.5){\framebox(43.5,48.25)[cc]{}}
		\put(156.25,66.25){\framebox(43.5,48.25)[cc]{}}
		\put(88.75,102){\line(1,0){28.25}}
		\put(23,102){\line(1,0){43.25}}
		\put(44,104){\makebox(0,0)[cc]{$A$}}
		\put(14,100.25){\vector(0,-1){26.5}}
		\put(147.5,100){\vector(0,-1){26.5}}
		\put(109.25,63.5){\makebox(0,0)[cc]{}}
		\put(43.5,63.5){\makebox(0,0)[cc]{}}
		\put(177,63.25){\makebox(0,0)[cc]{}}
		\put(109.5,63){\makebox(0,0)[cc]{$\Gamma$}}
		\put(43.75,63){\makebox(0,0)[cc]{$\Gamma$}}
		\put(177.25,62.75){\makebox(0,0)[cc]{$\Gamma$}}
		\put(18.25,86.5){\makebox(0,0)[cc]{$\pi_1$}}
		\put(151.75,86.25){\makebox(0,0)[cc]{$\pi_3$}}
		\put(61.75,86.5){\makebox(0,0)[cc]{}}
		\put(128.75,87.25){\makebox(0,0)[cc]{}}
		\put(63,87.25){\makebox(0,0)[cc]{}}
		\put(196.5,87){\makebox(0,0)[cc]{}}
		\put(63,86.5){\makebox(0,0)[cc]{$C_1$}}
		\multiput(88.5,66.5)(.03372093023,.03740310078){1290}{\line(0,1){.03740310078}}
		\put(123.75,98){\line(0,-1){31.5}}
		\qbezier(112.25,106.5)(112.75,97)(126.25,90.5)
		\put(156.25,78.25){\line(1,0){43.75}}
		\put(177,81.75){\makebox(0,0)[cc]{$B$}}
		\put(94.25,104.5){\makebox(0,0)[cc]{$S$}}
		\put(83.5,86.75){\makebox(0,0)[cc]{$\Gamma$}}
		\put(101.25,86.75){\makebox(0,0)[cc]{$\Delta$}}
		\put(119.5,78){\makebox(0,0)[cc]{$T$}}
		\put(113.5,108.75){\makebox(0,0)[cc]{$E$}}
		\put(196.75,87.25){\makebox(0,0)[cc]{$C_2$}}
		\end{picture}
	\end{center}
	\caption{The surface $(C_1\times \Gamma) \amalg Y_2\amalg(C_2\times \Gamma)$.}\label{Figure 5}
\end{figure}
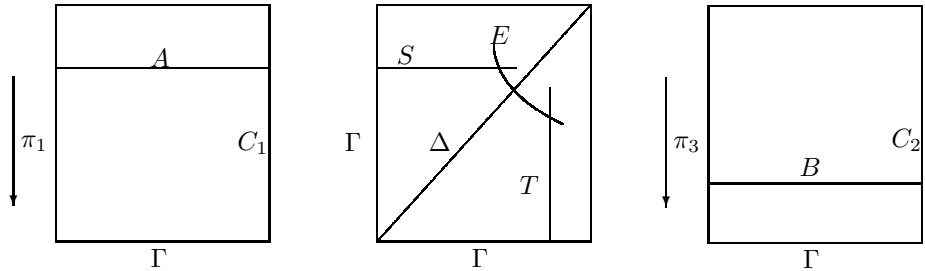

Where, as in \cite[p. 156]{AC87} (and \cite{MV}), we set:
$$\begin{aligned}
&A=\{x_1\}\times \Gamma, \\
&B=\{x_2\}\times \Gamma, \\
&E= \text{ exceptional divisor of the blow-up of } \Gamma\times \Gamma \text{ at } (\gamma,\gamma),\\
&\Delta= \text{ proper transform of the diagonal in } Y_2, \\
&S= \text{ proper transform of } \{\gamma\}\times \Gamma \text{ in } Y_2,\\
& T= \text{ proper transform of } \Gamma \times \{\gamma\}\text{ in } Y_2.
\end{aligned}
$$

Starting from $(C_1\times \Gamma) \amalg Y_2\amalg(C_2\times \Gamma)$ (see Figure \ref{Figure 5}), we construct a surface $X$, by identifying $S$ with $A$ and $\Delta$ with $B$. It comes
equipped with a projection $f:X\to \Gamma$. The fibers over all the points $\gamma'\neq \gamma$ are
shown in Figure \ref{Figure6}, while the fiber over the point $\gamma$ is shown in Figure \ref{Figure7}.
\begin{figure}[h!]
	\begin{center}
		\unitlength .65mm 
		\linethickness{0.4pt}
		\ifx\plotpoint\undefined\newsavebox{\plotpoint}\fi 
		\begin{picture}(122.75,50.5)(0,145)
		\qbezier(49.25,173.5)(76.875,158.125)(102,164.25)
		\qbezier(87,153.5)(95.125,175.625)(122.75,183.25)
		\qbezier(17.5,153.25)(52.125,160.125)(62.25,184.5)
		\put(17.75,157){\makebox(0,0)[cc]{$C_1$}}
		\put(30.75,153.75){\makebox(0,0)[cc]{$h$}}
		\put(72.5,167){\makebox(0,0)[cc]{$\Gamma$}}
		\put(64.25,162.5){\makebox(0,0)[cc]{$1$}}
		\put(113.25,183.75){\makebox(0,0)[cc]{$C_2$}}
		\put(115,173.75){\makebox(0,0)[cc]{$g-h-1$}}
		\end{picture}
	\end{center}
	\caption{The general fiber of $f:X\to \Gamma$.}\label{Figure6}
\end{figure}
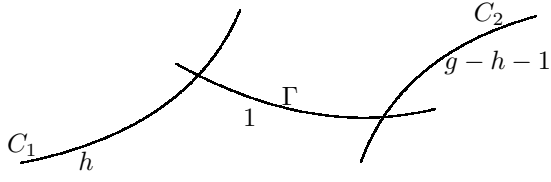
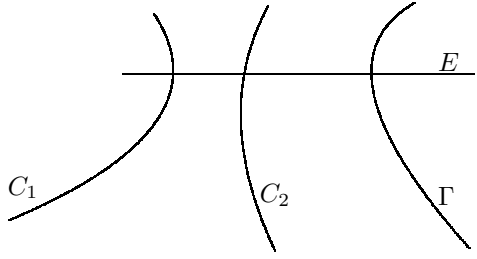
\begin{figure}[h!]
	\begin{center}
		
		\unitlength .65mm 
		\linethickness{0.4pt}
		\ifx\plotpoint\undefined\newsavebox{\plotpoint}\fi 
		\begin{picture}(93.5,55)(0,135)
		\put(21.5,170.5){\line(1,0){72}}
		\qbezier(-1.75,140.5)(43.375,159.375)(28,182.75)
		\qbezier(51.25,184.25)(39.5,162.25)(52.75,134.25)
		\qbezier(81.25,185)(59.125,172.125)(92.5,134.75)
		\put(88.25,173){\makebox(0,0)[cc]{$E$}}
		\put(1.25,147.25){\makebox(0,0)[cc]{$C_1$}}
		\put(52.75,145.75){\makebox(0,0)[cc]{$C_2$}}
		\put(87.75,145.5){\makebox(0,0)[cc]{$\Gamma$}}
		\end{picture}
	\end{center}
	\caption{The special fiber of $f:X\to \Gamma$.}\label{Figure7}
\end{figure}

Consider the line bundles $\oo_{Y_2}$, $\oo_{Y_2}(\Delta)$, $\oo_{Y_2}(E)$ over the surface $Y_2$. \begin{table}[ht]
\begin{center}
\begin{tabular}{|c|c|c|c|c|}
\hline
    & $deg_E$ & $deg_T$ & restriction to $\Delta$ & restriction to $S$ \\
\hline
  $\oo_{Y_2}$ & $0$ & $0$ & $\oo_{\Gamma}$ & $\oo_{\Gamma}$ \\
  $\oo_{Y_2}(\Delta)$ & $1$ & $0$ & $\oo_{\Gamma}(-\gamma)$ & $\oo_{\Gamma}$ \\
  $\oo_{Y_2}(E)$ & $-1$ & $1$ & $\oo_{\Gamma}(\gamma)$ & $\oo_{\Gamma}(\gamma)$ \\
  \hline
\end{tabular}\caption{}\label{table1}
\end{center}
\end{table}
Let $j$, $k$, $t$ be integers. Consider a vector bundle on $C_1$ of rank $r-1$ and degree $j$, we pull it back to $C_1\times \Gamma$ and call it $H^j$. Similarly take a vector bundle on $C_2$ of rank $r-1$ and degree $k$, we pull it back to $C_2\times \Gamma$ and call it $P^k$.
Consider the following vector bundles
$$
\begin{array}{lccclcl}
  M_{C_1\times \Gamma}^{j,k,t}&:=&H^j&\oplus&\pi_1^*\oo_{\Gamma}(t\gamma) & \text{on} & C_1\times \Gamma ,\\
  M_{C_2\times \Gamma}^{j,k,t}&:=&P^k&\oplus&\pi_3^*\oo_{\Gamma}\big((j+k+t-d)\gamma\big) &\text{on} & C_2\times \Gamma,\\
  M_{Y_2}^{j,k,t}&:=&\oo_{Y_2}^{r-1}&\oplus&\oo_{Y_2}\big((d-j-k)\Delta+ tE\big) &\text{on}& Y_2.
\end{array}
$$
By Table \ref{table1} (see \cite[p. 16-17]{MV}), we have $M_{C_1\times \Gamma}^{j,k,t}|_A\cong M^{j,k,t}_{Y_2}|_S \text{ and } M_{C_2\times \Gamma}^{j,k,t}|_B\cong M_{Y_2}^{j,k,t}|_{\Delta}$. So we can glue the vector bundles in a vector bundle $\mt M_{h}^{j,k,t}$ on the family $f:X\rightarrow \Gamma$. Moreover, by Table \ref{table1}, on the special fiber we have
$$\begin{array}{lclcl}
\deg_{C_1}(\mt M_{h}^{j,k,t}|_{f^{-1}(\gamma)})&=&\deg_{\pi_1^{-1}(\gamma)}(M^{j,k,t}_{C_1\times\Gamma})&=&j,\\
\deg_{C_2}(\mt M^{j,k,t}_{h}|_{f^{-1}(\gamma)})&=&\deg_{\pi_3^{-1}(\gamma)}(M^{j,k,t}_{C_2\times\Gamma})&=&k,\\
\deg_{\Gamma}(\mt M^{j,k,t}_{h}|_{f^{-1}(\gamma)})&=&\deg_{T}(M^{j,k,t}_{Y_2})&=&t,\\
\deg_E(\mt M^{j,k,t}_{h}|_{f^{-1}(\gamma)})&=&\deg_E(M^{j,k,t}_{Y_2})&=&d-j-k-t.
\end{array}$$
In particular $\mt M_{h}^{j,k,t}$ has degree $d$.

\begin{lem}\label{jkt} If $j$, $k$, $t$ satisfy:
$$\left| j-d\frac{2h-1}{2g-2}\right|\leq\frac{r}{2},\quad\left|k-d\frac{2g-2h-3}{2g-2}\right|\leq\frac{r}{2},\quad \left|t-\frac{d}{2g-2}\right|\leq\frac{r}{2},$$
then $\mt M_h^{j,k,t}$ is properly balanced.
\end{lem}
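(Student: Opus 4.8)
The plan is to follow verbatim the pattern already used for the families $\widetilde F$ and $\widetilde{F'}$: dispose of admissibility for free, reduce balancedness to a finite fibrewise check of basic inequalities, and then read off those inequalities as exactly the three stated bounds on $j$, $k$, $t$. First I would observe that $f:X\to\Gamma$ is a family of \emph{stable} curves. The general fibre is the chain $C_1-\Gamma-C_2$ of Figure \ref{Figure6}, whose components are $\Gamma$ (genus $1$, meeting the rest in two points) and the positive genus $C_1$, $C_2$, all stable; the special fibre (Figure \ref{Figure7}) is the star with central rational component $E$ meeting each of $C_1$, $C_2$, $\Gamma$ in one point, so $E$ meets the rest of the curve in exactly three points and is stable too. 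Hence $C_{exc}=\emptyset$ on every geometric fibre, the stabilization morphism is the identity, and $\mt M_h^{j,k,t}$ is admissible exactly as in the family $\widetilde F$ (``obviously admissible because defined over a family of stable curves''). It therefore remains to prove that $\mt M_h^{j,k,t}$ is balanced.

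Since being properly balanced is an open condition and, by Remark \ref{balcon}(ii), balancedness may be tested fibrewise on the connected subcurves $Z$ whose complement $Z^c$ is also connected, I would next enumerate those subcurves on the two fibre types. On the general fibre the chain $C_1-\Gamma-C_2$ allows only $Z\in\{C_1,\,C_2,\,C_1\cup\Gamma,\,\Gamma\cup C_2\}$, the subcurve $\Gamma$ alone having disconnected complement; and since the basic inequality for $Z$ and for $Z^c$ coincide (as $\deg_Z\mt M+\deg_{Z^c}\mt M=d$, $\omega_Z+\omega_{Z^c}=\omega_C$ and $k_Z=k_{Z^c}$), this leaves only $Z=C_1$ and $Z=C_2$. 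On the special fibre, whose dual graph is the star $K_{1,3}$ centred at $E$, the only connected subcurves with connected complement are, up to complementation, the three legs $C_1$, $C_2$, $\Gamma$: any subcurve containing $E$ together with one or two legs has disconnected complement, and $E$ alone has disconnected complement. Thus the entire check reduces to the three single-component subcurves $C_1$, $C_2$, $\Gamma$.

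Finally I would compute $\omega_Z=2g_Z-2+k_Z$ together with the degrees $\deg_{C_1}\mt M=j$, $\deg_{C_2}\mt M=k$, $\deg_\Gamma\mt M=t$ recorded above from Table \ref{table1}. For each leg $k_Z=1$, so the right-hand side of the basic inequality is $r/2$, while $\omega_{C_1}=2h-1$, $\omega_{C_2}=2g-2h-3$, $\omega_\Gamma=1$ and $\omega_C=2g-2$. The basic inequality $\bigl|\deg_Z\mt M-d\,\omega_Z/\omega_C\bigr|\le r k_Z/2$ then reads, respectively, $\bigl|j-d\frac{2h-1}{2g-2}\bigr|\le r/2$, $\bigl|k-d\frac{2g-2h-3}{2g-2}\bigr|\le r/2$ and $\bigl|t-\frac{d}{2g-2}\bigr|\le r/2$, which are precisely the three hypotheses (note that on the general fibre only the first two occur, the bound on $t$ being imposed solely by the special fibre). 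This would complete the proof.

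I expect the only genuinely delicate point to be the combinatorial analysis of the special fibre. One must verify, from the construction via $Y_2$ and the gluings $S\sim A$ and $\Delta\sim B$, that the central component $E$ meets the rest in exactly three points — so that the fibre is stable and admissibility is automatic — and that no subcurve other than $C_1$, $C_2$, $\Gamma$ contributes a new basic inequality. Once the star structure of Figure \ref{Figure7} is pinned down, the degree bookkeeping via Table \ref{table1} and the resulting inequality computations are entirely routine.
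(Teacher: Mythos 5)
Your proposal is correct and follows essentially the same route as the paper: the paper's proof likewise reduces (by openness of the properly balanced condition) to the special fibre, invokes Remark \ref{balcon} to restrict the basic inequality to the three subcurves $C_1$, $C_2$, $\Gamma$, and reads off the three numerical hypotheses, with admissibility being automatic over a family of stable curves exactly as for $\widetilde F$. Your version merely spells out the subcurve enumeration and the $\omega_Z$ bookkeeping that the paper leaves as "follows easily from the numerical assumptions."
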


\begin{proof}We can reduce to check the condition on the special fiber. By Lemma \ref{balcon}, it is enough to check the inequalities on $C_1$, $C_2$ and $\Gamma$; this follows from the numerical assumptions.
\end{proof}

For any $1\leq h\leq \frac{g-2}{2}$ choose $j(h)$, resp. $t(h)$, satisfying the first, resp. third, inequality of Lemma \ref{jkt} (observe that such numbers are not unique in general). For every $k\in J_{h+1}$ we call $\widetilde {F}_h^k$ the family $f:X\rightarrow\Gamma$ with the properly balanced vector bundle $$\mt M_{h}^{j(h),\lfloor d\frac{2g-2h-3}{2g-2} +\frac{r}{2}\rfloor-k, t(h)}.$$ As before, we compute the degree of the boundary line bundles to the curves $\widetilde {F}_{h}^k$ using the fact that they are liftings of families $F_h$ of \cite[p. 156]{AC87}. We get
$$
\begin{cases}
\deg_{\widetilde{F}_{h}^k}\oo(\widetilde\delta_{h+1}^k)=-1,\\
\deg_{\widetilde{F}_{h}^k}\oo(\widetilde\delta_{h+1}^j)=0 & \mbox{if }j\neq k,\\
\deg_{\widetilde{F}_{h}^k}\oo(\delta_i^j)=0 & \mbox{if } h+1<i,\mbox{ for any }j\in J_i.
\end{cases}
$$
Indeed, the first two relations follow by 
$$
\deg_{\widetilde{F}_{h}^k}\oo\left(\sum_{j\in J_{h+1}}\widetilde\delta_{h+1}^j\right)=\deg_{F_h}\oo(\delta_{h+1})=-1
$$
(see \cite[p. 157]{AC87}) and the fact that $\widetilde{F}_h^k$ does not meet $\widetilde\delta_{h+1}^j$ for $j\neq k$. The last follows from the fact that $\widetilde{F}_h^k$ does not meet $\widetilde\delta_i^j$ for $i>h+1$.\\\\
\begin{dimo} \emph{\ref{indbou}.} We know that there exists $n_*$ such that $\CVc$ and $\overline{\mt U}_{n}$ have the same Picard groups for $n\geq n_*$. We can suppose $n_*$ big enough such that the families constructed  before define curves in $\overline{\mathcal U}_n$ for $n\geq n_*$. Suppose that there exists a linear relation
$$
\oo\left(\sum_i\sum_{j\in J_i}a_i^j\widetilde\delta_i^j\right)\cong\oo\in \Pic(\overline{\mt U}_n),
$$
where $a_i^j$ are integers. Pulling back to the curve $\widetilde F\rightarrow \overline{\mt U}_n$ we deduce $a^0_0=0$. Pulling back to the curves $\widetilde{F'}_1^j\rightarrow \overline{\mt U}_n$ or $\widetilde{F'}_2^j\rightarrow \overline{\mt U}_n$ (in the range of degrees where they are defined) we deduce $a_1^j=0$ for any $j\in J_1$. Pulling back to the curve $\widetilde{F}_{h}^j\rightarrow \overline{\mt U}_n$ we deduce $a_{h+1}^j=0$ for any $j\in J_{h+1}$ and $1\leq h\leq \frac{g-2}{2}$. This concludes the proof.
\end{dimo}\\

We have a similar result for the rigidified stack $\CVr$.
\begin{cor}\label{boundrig}
We have an exact sequences of groups
$$
0\longrightarrow\bigoplus_{i=0,\ldots,\lfloor g/2\rfloor}\oplus_{j\in J_i}\langle\oo(\widetilde\delta_i^j)\rangle\longrightarrow \Pic(\CVr)\longrightarrow \Pic(\Vr)\longrightarrow 0,
$$
where the right map is the natural restriction and the left map is the natural inclusion.
\end{cor}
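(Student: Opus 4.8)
The plan is to deduce this from Theorem \ref{indbou} together with the compatibility of the boundary line bundles under rigidification recorded in Corollary \ref{boundarycor}. First I would observe that, exactly as in the proof of Theorem \ref{indbou}, the sequence is already exact in the middle and on the right: the stack $\CVr$ is smooth and admits the open cover by the quasi-compact substacks $\overline{\mt U}_n\fatslash\mathbb{G}_m$, and each of these is the quotient stack $[H_n/PGL(V_n)]$ (the central $\mathbb{G}_m\subset GL(V_n)$ of scalars acts trivially on $H_n$, since scalar multiplication on $V_n$ fixes every point of the Grassmannian). Hence Theorem \ref{picchow} applies and, since by Corollary \ref{boundarycor}(1) the boundary $\CVr\setminus\Vr$ is the union of the irreducible divisors $\widetilde\delta_i^j$, it yields the right-exact sequence with the claimed terms. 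It then remains to prove left-exactness, i.e. that the boundary line bundles $\{\oo(\widetilde\delta_i^j)\}_{0\le i\le\lfloor g/2\rfloor,\,j\in J_i}$ are $\mathbb{Z}$-linearly independent in $\Pic(\CVr)$.

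For this I would pull back along the rigidification morphism $\nu_{r,d}\colon\CVc\to\CVr$. Suppose a relation $\oo(\sum_{i,j}a_i^j\widetilde\delta_i^j)\cong\oo$ holds in $\Pic(\CVr)$ with $a_i^j\in\mathbb{Z}$. Applying the group homomorphism $\nu_{r,d}^*$ and invoking Corollary \ref{boundarycor}(2), which identifies $\nu_{r,d}^*\oo(\widetilde\delta_i^j)$ with the boundary line bundle $\oo(\widetilde\delta_i^j)$ on $\CVc$, we obtain the relation $\oo(\sum_{i,j}a_i^j\widetilde\delta_i^j)\cong\oo$ in $\Pic(\CVc)$. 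By Theorem \ref{indbou} the boundary line bundles of $\CVc$ are linearly independent, so all the $a_i^j$ vanish. This gives the injectivity of the left map and completes the proof.

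The main point to verify carefully is the right-exact part for the rigidified stack, namely that $\CVr$ is covered by quotient stacks of the form $[H_n/PGL(V_n)]$ so that Theorem \ref{picchow} is applicable, and that the $\widetilde\delta_i^j$ really are all the boundary components (Corollary \ref{boundarycor}). Once this is in place the essential content, the linear independence, is immediate from Theorem \ref{indbou} and the functoriality of pullback; notably one does not even need the injectivity of $\nu_{r,d}^*$ on Picard groups here, only that it is a homomorphism carrying boundary line bundles to boundary line bundles.
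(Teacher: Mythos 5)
Your proof is correct and rests on the same two ingredients as the paper's (Theorem \ref{indbou} and Corollary \ref{boundarycor}), but it is genuinely leaner at the key step. The paper's own proof says that, given those two results, one can ``reduce to prove the injectivity of $\nu_{r,d}^*:\Pic(\CVr)\to\Pic(\CVc)$'', and it establishes that injectivity via the Leray spectral sequence for $\mathbb{G}_m$ along the rigidification morphism, as carried out in \S\ref{comparing}. As you observe, this detour is not needed: since $\nu_{r,d}^*$ is a group homomorphism carrying each $\oo(\widetilde\delta_i^j)$ on $\CVr$ to the corresponding boundary line bundle on $\CVc$ by Corollary \ref{boundarycor}(2), any integral relation among the former pulls back to a relation among the latter, which Theorem \ref{indbou} forces to be trivial; injectivity of $\nu_{r,d}^*$ plays no role. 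Your treatment of the right-exact part is also sound and makes explicit something the paper leaves implicit: the central $\mathbb{G}_m\subset GL(V_n)$ acts trivially on $H_n$ (scalars act trivially on the Grassmannian), so $\overline{\mt U}_n\fatslash\mathbb{G}_m\cong[H_n/PGL(V_n)]$ is again a quotient of a smooth quasi-projective scheme by an algebraic group, and Theorem \ref{picchow}(iv) together with Corollary \ref{boundarycor}(1) and Lemma \ref{redsemistable} gives exactness in the middle and on the right. What the paper's route buys is the injectivity of $\nu_{r,d}^*$ itself, which is genuinely needed elsewhere (for diagram (\ref{compic}) and for Theorem \ref{picred} in \S\ref{comparing}); but for this corollary your shorter argument suffices.
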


\begin{proof}As before the only thing to prove is the independence of the boundary line bundles in $\Pic(\CVr)$. The same strategy (using the same families) used for $\CVc$ works for the rigidified case.
\end{proof}

\begin{rmk}\label{hstable}
As observed before, the boundary line bundles are independent on the Picard groups of $\CVc$, $\CVc^{Pss}$, $\CVr$, $\CVr^{Pss}$. A priori we do not know if $\widetilde\delta_i^j$ is a divisor of $\CVc^{Hss}$ for any $i$ and $j\in J_i$, because it can be difficult to check when a point $(C,\mt E)$ is H-semistable if $C$ is singular. But as explained in Remark \ref{defhsem}, if $(C,\mt E)$ is P-stable then it is also H-stable. By Proposition \ref{T}, we know that if $\widetilde\delta_i^j$ is a non-extremal boundary divisor the generic point of $\widetilde\delta_i^j$ is P-stable, in particular it is H-stable. 
\end{rmk}
The end of the section is devoted to prove that also the extremal boundary divisors are in $\CVc^{Hss}$. More precisely the generic points of the extremal divisors in $\CVc$ are strictly $H$-semistable. We will use the following criterion to prove strict H-semistability.

\begin{lem}\label{sHs}Assume that $g\geq 2$. Let $(C,\mt E)\in \CVc$ such that $C$ is the union of two irreducible smooth curves $C_1$ and $C_2$ of genus $1\leq g_{C_1}\leq g_{C_2}$ meeting at
$N$ points $p_1,\ldots,p_N$. Suppose that $\mt E_{C_1}$ is a direct sum of stable vector bundles with the same rank $q$ and same degree $e$ such that $e/q$ is equal to the slope of $\mt E_{C_1}$ and $\mt E_{C_2}$ is a stable vector bundle. If $\mt E$ has multidegree
$$
(deg_{C_1}\mt E,deg_{C_2}\mt E)=\left(d\frac{\omega_{C_1}}{\omega_C}-N\frac{r}{2},d\frac{\omega_{C_2}}{\omega_C}+N\frac{r}{2}\right)\in \mathbb Z^2
$$
then $(C,\mt E)$ is strictly P-semistable and strictly H-semistable.
\end{lem}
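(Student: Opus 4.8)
The plan is to establish strict $P$-semistability first and then bootstrap to strict $H$-semistability via the one-parameter subgroup criterion of Remark \ref{defhsem}(iii). For the $P$-part, note first that since both components have genus $\geq 1$ the curve $C$ has no rational chains, so it is already stable; thus $C=C^{st}$, the stabilization morphism is the identity and $\pi_*\mt E=\mt E$. As $(C,\mt E)\in\CVc$ the bundle $\mt E$ is balanced, and by hypothesis $\mt E_{C_1}$ (a direct sum of stable bundles of equal slope, hence polystable) and $\mt E_{C_2}$ (stable) are both semistable, so Proposition \ref{T} gives that $\mt E$ is $P$-semistable. To see that it is \emph{strictly} semistable I would exhibit a subsheaf of equal $P$-slope. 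Setting $D:=C_1\cap C_2$, let $\mt F:=\ker(\mt E\to\mt E_{C_1})\cong\mt E_{C_2}(-D)$, the subsheaf of sections vanishing on $C_1$, which has rank $(s_1,s_2)=(0,r)$. A Riemann--Roch computation using the extremal multidegree $\deg_{C_2}\mt E=d\,\omega_{C_2}/\omega_C+N r/2$ gives $\chi(\mt F)=d\,\omega_{C_2}/\omega_C-\tfrac{r}{2}\omega_{C_2}$, hence
\[
\frac{\chi(\mt F)}{r\,\omega_{C_2}}=\frac{d}{r\,\omega_C}-\frac12=\frac{\chi(\mt E)}{r\,\omega_C},
\]
so equality holds in the $P$-semistability inequality and $\mt E$ is strictly $P$-semistable. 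By contrast the companion subsheaf $\mt E_{C_1}(-D)$ has $P$-slope $\chi(\mt E)/(r\omega_C)-N/\omega_{C_1}$, strictly smaller because the multidegree on $C_1$ sits at the opposite extreme; this confirms that $\mt F$ is the subobject realizing strict semistability. The two-step filtration $0\subset\mt F\subset\mt E$ has sub $\mt F$ and quotient $\mt E_{C_1}$ of the same $P$-slope, which is the Jordan--Hölder structure I will exploit.

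For strict $H$-semistability I would invoke Remark \ref{defhsem}(iii): having shown $(C,\mt E)$ is strictly $P$-semistable, it is $H$-semistable precisely when it is Hilbert-semistable with respect to every one-parameter subgroup $\lambda$ of $SL(V_n)$ for which $(C,\mt E)$ is strictly $P$-semistable with respect to $\lambda$. The point is to identify these $\lambda$: they are exactly the weighted filtrations of $\mt E(n)$ realizing the equality above, namely those coming from $0\subset\mt F\subset\mt E$ together with the refinements induced by the polystable decomposition $\mt E_{C_1}=\bigoplus_i \mt E_{C_1,i}$ (all factors sharing the common $P$-slope). For each such $\lambda$ the Hilbert--Mumford weight $\mu\bigl(\lambda,[C\hookrightarrow Gr(V_n,r)]\bigr)$ must be computed and shown to be $\geq 0$; because the $P$-slope comparison governing the leading term is an \emph{equality}, this weight comes out to be exactly $0$, which simultaneously yields Hilbert-semistability (weight $\geq 0$) and the failure of Hilbert-stability (weight $=0$), i.e. strict $H$-semistability.

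The main obstacle is this last step, the explicit evaluation of the Hilbert--Mumford weight for the destabilizing one-parameter subgroups: this is the genuinely GIT-theoretic part, which I would carry out following Schmitt's analysis in \cite{Sc} (in the spirit of Nagaraj--Seshadri \cite{NS}), translating the weighted filtration of $H^0(C,\mt E(n))$ into the numerical data of the Hilbert point and matching it against the stability threshold. The semistability of $\mt E_{C_1}$ and $\mt E_{C_2}$ and the balancedness of $\mt E$ enter precisely to guarantee that no other filtration produces a negative weight, while the extremal multidegree forces the weight along $0\subset\mt F\subset\mt E$ to vanish, giving the desired strictness on the nose.
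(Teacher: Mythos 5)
Your overall strategy coincides with the paper's: Proposition \ref{T} for $P$-semistability, the subsheaf $\mt E_{C_2}(-\sum p_i)$ of sections vanishing on $C_1$ as the destabilizing subobject (your Riemann--Roch computation showing $\chi(\mt F)/(r\omega_{C_2})=\chi(\mt E)/(r\omega_C)$ is correct and is exactly the paper's observation that the multidegree condition is equivalent to $\omega_C\chi(\mt E_{C_1})=\omega_{C_1}\chi(\mt E)$), and then Schmitt's one-parameter-subgroup criterion for the $H$-part. Your identification of the destabilizing filtrations --- every destabilizing subsheaf must contain $\mt E_{C_2}(-\sum p_i)$ and the quotients are partial sums of the polystable factors of $\mt E_{C_1}$ --- also matches the paper, which proves this containment by splitting $\mt M$ along the sequence $0\to\mt E_{C_2}(-\sum p_i)\to\mt E\to\mt E_{C_1}\to 0$ and using stability of $\mt E_{C_2}$.

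The gap is that the step you flag as ``the main obstacle'' is in fact the entire content of the lemma, and you neither carry it out nor state its outcome correctly. The paper's proof devotes several pages to the explicit evaluation of the Hilbert weight $w(m)$: decomposing $\bigwedge^r V_n$ into the weight spaces $W_{\rho_0,\ldots,\rho_k}$, identifying the minimal-weight subspaces $W^1_{min}$ and $W^2_{min}$ generating sections on $C_1$ and on $C_2$ away from the nodes, and --- the delicate point --- accounting for the sections of $\mt L_{n|C_2}^m$ with vanishing order $<mr$ at each node $p_i$, which contribute the correction term $D_2(n,m)$. Only after assembling $P(n,m)=K_1+K_2+D_2-mNw_{1,min}$ does one see that $P(n,m)\leq 0$ for \emph{every} destabilizing $\lambda$ (this gives $H$-semistability), and that it is \emph{identically zero only for the particular} $\lambda$ built from the two-step filtration $0\subset\mt E_{C_2}(-\sum p_i)\subset\mt E$ (this gives failure of $H$-stability). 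Your claim that the weight ``comes out to be exactly $0$'' for each destabilizing $\lambda$ because the $P$-slope comparison is an equality is not justified and is not what happens: for the refined filtrations involving several $\alpha_i$ the inequality is in general not an equality, and the nodal correction term $D_2(n,m)$ is precisely what prevents one from reading the answer off the slope equality alone. Without this computation the proof is incomplete, and strictness in particular requires exhibiting the specific weight vector (supported on $H^0(C,\mt E(n)_{C_2}(-\sum p_i))$) for which $P(n,m)\equiv 0$.
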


\begin{proof}By Proposition \ref{T}, $\mt E$ is P-semistable. We observe that the multidegree condition is equivalent to $\omega_C\chi(\mt E_{C_1})=\omega_{C_1}\chi(\mt E)$, so $\mt E$ is strictly P-semistable. Suppose that $\mt M$ is a destabilizing subsheaf of $\mt E$ of multirank $(m_1,m_2)$. Consider the exact sequence 
$$
\xymatrix{
0\ar[r] & \mt E_{C_2}(-\sum_1^N p_i)\ar[r] &\mt E\ar[r] &\mt E_{C_1}\ar[r] & 0\\
0\ar[r] & \mt  M_2\ar[r]\ar@{^{(}->}[u] &\mt M\ar[r]\ar@{^{(}->}[u] &\mt M_1\ar[r]\ar@{^{(}->}[u] & 0
}
$$
From this we have
$$
\chi(\mt M)=\chi(\mt M_1)+\chi(\mt M_2)\leq \frac{m_1}{r}\chi (\mt E_{C_1})+\frac{m_2}{r}\chi (\mt E_{C_2}(-\sum p_i))=\frac{m_1\omega_{C_1}+m_2\omega_{C_2}}{r\omega_C}\chi (\mt E).
$$
By hypothesis, $\mt E_{C_2}$ stable. So we have two possibilities: $\mt M_2$ is $0$ or $\mt E_{C_2}(-\sum_1^N p_i)$, because otherwise the inequality above is strict. Suppose that $\mt M_2=0$. Then $\mt M=\mt M_1$ which implies that $\mt M\subset \mt E_{C_1}(-\sum p_i)$ so the inequality above is strict. Thus we have just one possibility: if $\mt M$ is destabilizing sheaf then $\mt E_{C_2}(-\sum_1^N p_i)\subset \mt M$.\vspace{0.1cm}

Now we show that $(C,\mt E)$ is strictly H-semistable. We remark that, in our case, $(C,\mt E)=(C^{st},\pi_*\mt E)$. By hypothesis, $\mt E_{C_1}=\bigoplus_{i=0}^k\mt G_i$, where $\mt G_0=0$ and $\mt G_i$ stable bundle of rank $q$ and same slope of $\mt E_{C_1}$. Let $\mt F_\bullet$ be a Jordan-Holder filtration of $\mt E$. Then $\mt F_j/\mt E_{C_2}(-\sum_i^Np_i)\cong\bigoplus_{i=0}^j\mt G_i$. By definition of H-(semi)stability (see Definition \ref{defhsemtrue}), for every vector $\alpha_\bullet$ of positive rational numbers, there exists an index $n^*$, such that for any $n\geq n^*$
$$
P_{\alpha_\bullet,\mt F_\bullet}(m,n)\underset{(\leq)}{<}0\text{ as polynomial in } m.
$$
We now explicitly compute this polynomial. Fix $\alpha_\bullet:=(\alpha_1,\ldots,\alpha_k)$ positive integers. Let $B_n$ be a basis of $V_n:=H^(C,\mt E(n))$ compatible with the filtration $0\subset H^0(\mt F_{0}(n))\subset \ldots h^0(\mt F_{k-1}(n))\subset h^0(\mt F_{k}(n))=V_n$. Without loss of generality, we can assume $h^0(\mt F_j(n))=\chi(\mt F_j(n))$ for any $j$.

Following the definition of the polynomial $P_{\alpha_\bullet,\mt F_\bullet}(m,n)$ (see the discussion before Definition \ref{defhsemtrue}), we define a one-parameter subgroup $\lambda$ of $GL(V_n)$, given, with respect to the basis $B_n$, by the weight vector
$$
\sum_{i=1}^{\dim V_n-1}\alpha_i(\underbrace{\chi(\mt F_i(n))-\chi(\mt E(n)),\ldots,\chi(\mt F_i(n))-\chi(\mt E(n))}_{\chi(\mt F_i(n))},\underbrace{\chi(\mt F_i(n)), \ldots,\chi(\mt F_i(n))}_{\chi(\mt F_i(n))-\chi(\mt E(n))}).
$$

If we set $\widetilde Z_j:=\langle v_{\chi(\mt F_i(n))+1},\ldots,v_{\chi(\mt F_i(n))}\rangle$ for $j=1,\ldots,k$ and $\widetilde Z_0:=H^0(\mt F_0(n))$ we have
$$
\bigwedge^rV_n=\bigoplus_{\rho_0,\ldots,\rho_{k}|\sum\rho_j=r}W_{\rho_0,\ldots,\rho_{k}}, \quad\text{           where   }\quad
W_{\rho_0,\ldots,\rho_{k}}:=\bigwedge^{\rho_0}\widetilde Z_0\otimes\ldots\otimes\bigwedge^{\rho_{k}}\widetilde Z_{k}.
$$
An element of the basis $\bigwedge^rB_n$ contained in $W_{\rho_1,\ldots,\rho_{k}}$ has weight $w_{\rho_0,\ldots,\rho_{k}}(n)=\rho_0\gamma_0(n)+\ldots+\rho_{k}\gamma_{k}(n)$. Where $\gamma_j(n)$ is the weight of an element of $B_n$ inside $\widetilde Z_j$, i.e.
$$
\gamma_j(n)=\sum_{i=0}^{k-1}\alpha_i\chi(\mt F_i(n))-\sum_{i=j}^{k}\alpha_i\chi (\mt E(n)),\quad \text{where}\quad\alpha_k=0.
$$
As in \cite[p. 186-187]{Sc} the space of minimal weights, which produces sections which do not vanish on $C_1$, is $W^1_{min}:=W_{0,q,\ldots,q}$. The associated weight is 
$$
w_{1,min}(n):=\sum_{i=0}^{k-1}\alpha_i\left(\chi(\mt F_i(n))r-\chi(\mt E(n))iq\right).
$$
Moreover a general section of $W^1_{min}$ does not vanish at $p_i$. By \cite[Corollary 2.2.5]{Sc}, the space $S^mW^1_{min}$ generates $
H^0\left(C_1,\left(\mt L_{n|C_1}\right)^m\right)
$, so that the elements of $S^m\bigwedge^rB_n$ inside $S^mW^1_{min}$ will contribute with weight
$$
K_1(n,m):=m(m(\deg\mt E_{C_1}+nr\omega_{C_1})+1-g_{C_1})w_{1,min}(n),
$$
to a basis of $H^0\left(C_1,\left(\mt L_{n|C_1}\right)^m\right)$. On the other hand the space of minimal weights which produces sections which do not vanish on $C_2$ is $W^2_{min}:=W_{r,0,\ldots,0}$. The associated weight is 
$$w_{2,min}(n):=\sum_{i=0}^{k-1}\alpha_i(\chi(\mt F_i(n))-\chi(\mt E(n)))r.$$
A general section of $W^2_{min}$ vanishes at $p_i$ with order $r$. By \cite[Corollary 2.2.5]{Sc}, the space $S^mW^2_{min}$ generates  $H^0\left(C_1,\left(\mt L_{n|C_2}\left(-r\sum p_i\right)\right)^m\right)$, in particular the elements of $S^m\bigwedge^rB_n$ inside $S^mW^2_{min}$ will contribute with weight
$$
K_2(n,m):=m(m(\deg\mt E_{C_2}-rN+nr\omega_{C_2})+1-g_{C_2})w_{2,min}(n),
$$
to a basis of $H^0\left(C_2,\left(\mt L_{n|C_2}\right)^m\right)$. It remains to find the elements in $S^m\bigwedge^rB_n$ which produce sections of minimal weight in $H^0\left(C_2,\left(\mt L_{n|C_2}\right)^m\right)$ vanishing with order less than $mr$ on $p_i$ for $i=1,\ldots,N$. By a direct computation, we can see that the space of minimal weights which gives us sections with vanishing order $r-s$ at $p_i$ such that $tq\leq s\leq (t+1)q$ (where $0\leq t\leq k-1$) is
$$
\mathbb O_{r-s}:=W_{r-s,\underbrace{q\ldots,q}_t,s-tq,0,\ldots,0}.
$$
The associated weight is
$$
w_{2,p_i}^{r-s}(n):=\sum_{i=0}^{k-1}\alpha_i\left(\chi(\mt F_i(n))-\chi(\mt E(n))\right)r+\sum_{i=0}^{t}(s-iq)\alpha_i\chi(\mt E(n)).
$$
For any $0\leq \nu\leq mr-1$ and $1\leq i\leq N$, we must find an element of minimal weight in $S^m\bigwedge^rB_n$ which produces a section in $H^0\left(C_2,\left(\mt L_{n|C_2}\right)^m\right)$ vanishing with order $\nu$ at $p_i$. Observe first that we can reduce to check it on the subspace
$$
S^m\mathbb O=\bigoplus_{m_0,\ldots,m_r|\sum m_i=m}S^{m_0}\mathbb O_0\otimes\ldots\otimes S^{m_r}\mathbb O_r.
$$
A section in $S^{m_0}\mathbb O_0\otimes\ldots\otimes S^{m_r}\mathbb O_r$ vanishes with order at least $\nu=m_1+2m_2+\ldots+rm_r$ at $p_i$ and we can find some with exactly that order. As explained in \cite[p. 191-192]{Sc}, an element of $S^m\bigwedge^rB_n$ of minimal weight, such that it produces a section of order $\nu$ at $p_i$, lies in
$$
S^j\mathbb O_{t-1}\otimes S^{m-j}\mathbb O_t,
$$
where $\nu=mt-j$ and $1\leq j\leq m$. So the mininum among the sums of the weights of the elements in $S^m\bigwedge^r B_n$ which give us a basis of 
$$
H^0\left(C_2,\left(\mt L_{n|C_2}\right)^m\right)/H^0\left(C_2,\left(\mt L_{n|C_2}(-r\sum p_i)\right)^m\right)
$$
is
\begin{eqnarray*}
D_2(n,m)&=&N\left(m^2(w_{2,p_i}^1(n)+\ldots+w_{2,p_i}^r(n))+\frac{m(m+1)}{2}(w_{2,p_i}^0(n)-w_{2,p_i}^r(n))\right)=\\
 &=&N\sum_{i=0}^{k-1}\alpha_i\left(m^2\left(\chi(\mt F_i(n))r^2-\chi(\mt E(n))\left(r^2-\frac{(r-iq-1)(r-iq)}{2}\right)  \right)\right. +\\
& & \left.+ \frac{m(m+1)}{2}(r-iq)\chi(\mt E(n))\right).
\end{eqnarray*}
Then a basis for $H^0\left(C_2,\left(\mt L_{n|C_2}\right)^m\right)$ will have minimal weight $K_2(n,m)+D_2(n,m)$.

As in \cite[p.192-194]{Sc}, we obtain that $(C,\mt E)$ is H-semistable if and only if for any choice of $\alpha_\bullet$, there exists $n^*$ such that for $n\geq n^*$
$$
P_{\alpha_\bullet,\mt F_\bullet}(m,n)=K_1(n,m)+K_2(n,m)+D_2(n,m)-mNw_{1,min}(n)\leq 0,
$$
as polynomial in $m$. A direct computation shows that $P_{\alpha_\bullet,\mt F_\bullet}(m,n)\leq 0$ as polynomial in $m$. So $(C,\mt E)$ is H-semistable. Furthermore, if we take $\alpha_\bullet=(1,0,\ldots,0)$, by the same computation we can check that $P_{\alpha_\bullet,\mt F_\bullet}(m,n)\equiv 0$, then $(C,\mt E)$ is not H-stable.
\end{proof}

\begin{prop}\label{hextr}The generic point of an extremal boundary divisor is strictly P-semistable and strictly H-semistable.
\end{prop}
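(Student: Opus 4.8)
The plan is to reduce the statement entirely to the semistability criterion of Lemma \ref{sHs}, applied at the generic point of each extremal boundary divisor; granting that lemma together with Lemma \ref{T2}, the proposition becomes essentially a matter of matching hypotheses. First I would recall from Definition \ref{boundef} that the generic point of an extremal divisor is a curve $C=C_1\cup C_2$ with two smooth irreducible components meeting transversally in a single point, so that $N=1$ and $\omega_{C_j}=2g_{C_j}-1$, $\omega_C=2g-2$. With this substitution, a direct check shows that the multidegrees prescribed for $\widetilde\delta_i^0$, $\widetilde\delta_i^r$ and $\widetilde\delta_{g/2}^0$ are, after relabelling the two components if necessary, exactly of the form $(\deg\mt E_{C_1},\deg\mt E_{C_2})=(d\tfrac{\omega_{C_1}}{\omega_C}-\tfrac{r}{2},\,d\tfrac{\omega_{C_2}}{\omega_C}+\tfrac{r}{2})$ demanded by Lemma \ref{sHs}; here the very condition defining an extremal divisor ($k_{r,d,g}\mid 2i-1$, respectively $d+r$ even) is precisely what guarantees that this multidegree lies in $\mathbb Z^2$.

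Next I would verify the stability hypotheses. Writing $C_1$ for the component carrying the lower degree $d\tfrac{\omega_{C_1}}{\omega_C}-\tfrac{r}{2}$, Lemma \ref{T2} describes the generic restrictions: $\mt E_{C_2}$ is a stable bundle, while $\mt E_{C_1}$ is stable when $g_{C_1}\geq 2$ and a direct sum of stable bundles of equal rank and degree when $g_{C_1}=1$. In every case $\mt E_{C_1}$ is a direct sum of stable bundles of equal rank and of slope equal to the slope of $\mt E_{C_1}$, so the hypotheses of Lemma \ref{sHs} hold with $N=1$. That lemma then asserts directly that $(C,\mt E)$ is strictly P-semistable and strictly H-semistable, which is exactly the claim. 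The case $\widetilde\delta_{g/2}^0$ is the symmetric situation $g_{C_1}=g_{C_2}=g/2\geq 2$ and needs no relabelling.

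The step demanding the most care, and the one I regard as the main obstacle, is the bookkeeping of which component plays the role of $C_1$ in Lemma \ref{sHs}, since that lemma is stated with the normalisation $1\leq g_{C_1}\leq g_{C_2}$ and with the possibly-decomposable factor placed on $C_1$. For $\widetilde\delta_i^0$ the destabilising (lower-degree) component is already the one of smaller genus, and the lemma applies verbatim. For $\widetilde\delta_i^r$ the lower-degree component is instead the one of larger genus, so one must exchange the two labels; when both components have genus at least two this is harmless, because both restrictions are stable and the genus inequality plays no essential role in the polynomial estimate carried out inside the proof of Lemma \ref{sHs}. The genuinely delicate subcase is $\widetilde\delta_1^r$, where a genus-one component now carries the higher degree and its generic restriction is only polystable, so a naive swap would put a non-stable bundle in the role reserved for the stable factor $\mt E_{C_2}$. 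Here I would either record a symmetric variant of Lemma \ref{sHs} allowing the decomposable factor on the higher-degree component, or treat this boundary type by a separate direct computation; I expect this genus-one bookkeeping to be the only nontrivial point, the substantive analysis having already been absorbed into the Hilbert-semistability estimate of Lemma \ref{sHs}.
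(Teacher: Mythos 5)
Your treatment of $\widetilde\delta_i^0$ (and of $\widetilde\delta_{g/2}^0$) coincides with the paper's: Lemma \ref{T2} supplies the generic restrictions and Lemma \ref{sHs} applies verbatim with $N=1$. The problem is $\widetilde\delta_i^r$, and you have correctly located where your reduction breaks but not repaired it. After swapping labels, the component carrying the destabilizing quotient (the lower degree $d\tfrac{\omega_Z}{\omega_C}-\tfrac{r}{2}$) is the one of genus $g-i$, and the component placed in the role of ``$C_2$'' of Lemma \ref{sHs} --- the one whose restriction must be \emph{stable} --- is the genus-$i$ component. For $i=1$ the generic restriction there is only a direct sum of stable bundles, so the hypotheses of Lemma \ref{sHs} genuinely fail, and neither of the two fixes you gesture at (a ``symmetric variant'' of the lemma, or a ``separate direct computation'') is actually carried out. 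Since the proposition for the extremal divisors $\widetilde\delta_1^r$ rests entirely on that computation, this is a real gap, not bookkeeping. Even for $i\geq 2$, your claim that the inequality $g_{C_1}\leq g_{C_2}$ ``plays no essential role'' in the estimate $P(n,m)\leq 0$ inside the proof of Lemma \ref{sHs} is asserted rather than checked.

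The paper takes a different route for $\widetilde\delta_i^r$ that avoids reproving any estimate. Starting from the generic point $(C,\mt E)$ of $\widetilde\delta_i^0$, already known to be strictly H-semistable, it takes the limit under the one-parameter subgroup attached to the destabilizing subsheaf $\mt E_{C_2}(-p)$; the limit is a pair $(C',\mt E')$ with a rational chain $R$ of degree $r$ inserted at the node, whose pushforward to the stable model is $\mt E_{C_2}(-p)\oplus\mt E_{C_1}$, and Corollary \ref{NSrmk}(iii) identifies $\mt E'_{C_1}\cong\mt E_{C_1}$ and $\mt E'_{C_2}\cong\mt E_{C_2}(-p)$. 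Smoothing all nodes of $R$ except the one on $C_2$ produces a generic point of $\widetilde\delta_i^r$; H-semistability then follows from openness of the semistable locus, and strictness is obtained by exhibiting one explicit one-parameter subgroup with $P(n,m)\equiv 0$, exactly as at the end of the proof of Lemma \ref{sHs}. To complete your argument you must either prove the symmetric version of Lemma \ref{sHs} (allowing the polystable factor on the higher-degree, possibly lower-genus component) or adopt this degeneration argument for the divisors $\widetilde\delta_i^r$.
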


\begin{proof}Fix $i\in\{1,\ldots,\lfloor g/2\rfloor\}$ such that $\widetilde\delta_i^0$ is an extremal divisor. By Lemma \ref{T2}, the generic point of the extremal boundary $\widetilde\delta_i^0$ is a curve $C$ with two irreducible smooth components $C_1$ and $C_2$ of genus $i$ and $g-i$ and a vector bundle $\mt E$ such that $\mt E_{C_1}$ is a stable vector bundle (or a direct sum of stable vector bundles with the same slope as $\mt E_{C_1}$ if $i=1$) and $\mt E_{C_2}$ is stable vector bundle. By  Lemma \ref{sHs}, the generic point of $\widetilde\delta_i^0$ is strictly P-semistable and stricly H-semistable.\\
Suppose now that $i\neq g/2$ and consider the extremal boundary divisor $\widetilde\delta_i^r$. Take a point $(C,\mt E)\in \widetilde\delta_i^0$ as above. Consider the destabilizing subsheaf $\mt E_{C_2}(-p)\subset \mt E$, where $p$ is the unique node of $C$. Fix a basis of $W_n:=H^0(C,\mt E(n)_{C_2}(-p))$ and complete to a basis $\mt V:=\{v_1,\ldots,v_{dim V_n}\}$ of $V_n=H^0(C,\mt E(n))$. We define the one-parameter subgroup $\lambda$ of $SL(V_n)$ given with respect to the basis $\mt V$ by the weight vector
$$
(\underbrace{\dim W_n-\dim V_n,\ldots,\dim W_n-\dim V_n}_{\dim W_n},\underbrace{\dim W_n, \ldots,\dim W_n}_{\dim W_n-\dim V_n}).
$$
We have seen in the proof of Lemma \ref{sHs} that the pair $(C,\mt E)$ is strictly H-semistable and the polynomial attached to $\lambda$ vanishes. In particular, the limit with respect to $\lambda$ is strictly H-semistable. The limit will be a pair $(C',\mt E')$ such that $C'$ is a semistable model for $C$ and $\mt E'$ a properly balanced vector bundles such that the push-forward in the stabilization is the P-semistable sheaf $$\mt E_{C_2}(-p)\oplus\mt E_{C_1}.$$ 
By Corollary \ref{NSrmk}, $\mt E'_{C_1}\cong \mt E_{C_1}$ and $\mt E'_{C_2}\cong \mt E_{C_2}(-p)$. In particular, $\mt E$ has multidegree
$$
\left(\deg\mt E'_{C_1},\deg\mt E'_R,\deg\mt E'_{C_2}\right)=\left(\deg_{C_1}\mt E,r,\deg_{C_2}\mt E-r\right)=\left(d\frac{2i-1}{2g-2}-\frac{r}{2}, r,d\frac{2(g-i)-1}{2g-2}-\frac{r}{2}\right).
$$
Smoothing all the nodal points on the rational chain $R$ except the meeting point $q$ between $R$ and $C_2$, we obtain a generic point $(C'',\mt E'')$ in $\widetilde\delta_i^r$. It is H-semistable by the openness of the semistable locus. Let $W''_n$ be a basis for $H^0\left(C'',\mt E''_{C_1}(n)(-q)\right)$ and complete to a basis $\mt V''_n$ of $H^0(C'',\mt E''(n))$. Let $\lambda''$ be the one parameter subgroup defined by the weight vector (with respect to the basis $B$)
$$
(\underbrace{\dim W''_n-\dim V''_n,\ldots,\dim W''_n-\dim V''_n}_{\dim W''_n},\underbrace{\dim W''_n, \ldots,\dim W''_n}_{\dim W''_n-\dim V''_n}).
$$
As in the proof of Lemma \ref{sHs}, a direct computation shows that the polynomial attached to $\lambda''$ vanishes, so $(C'',\mt E'')$ is strictly H-semistable, then it is also strictly P-semistable, concluding the proof.
\end{proof}
 
Using this, we obtain

\begin{cor}\label{boundrigH}The followings hold:
\begin{enumerate}[(i)]
\item the Picard group of $\CVc$ is isomorphic to the Picard group of $\CVc^{Hss}$,
\item the kernels of the restriction to the stable loci 
$$
Pic(\CVc^{Hss})\longrightarrow Pic(\CVc^{Hs})\longrightarrow 0,\quad Pic(\CVc^{Pss})\longrightarrow Pic(\CVc^{Ps})\longrightarrow 0
$$
are freely generated by the extremal boundary divisors.
\end{enumerate}
The same facts hold for the rigidifications.
\end{cor}

\begin{proof}(i). It is enough to show $\text{cod}:=\text{cod}(\CVc\backslash\CVc^{Hss},\CVc)\geq 2$. As in the proof of Lemma \ref{redsemistable}, if we have $\text{cod}=1,$ then the intersection of $\CVc\backslash\CVc^{Hss}$ with the generic point of the boundary $\widetilde\delta$ is not-empty. By Remark \ref{hstable} and Proposition \ref{hextr}, the generic point of the boundary is H-semistable, then $\text{cod}$ must be $\geq 2$.

(ii). By Remark \ref{hstable} and Proposition \ref{hextr}, we know that the 1-codimensional loci of strictly H-semistable (resp. P-semistable) pairs are the extremal boundary divisors. So, the second assertion follows from Theorems \ref{picchow}(iv) and \ref{indbou}. 

The same arguments give the proof for the rigidifications.
\end{proof}

\subsection{Picard group of $\Vc$.}\label{liscio}
In this section we will prove Theorem \ref{pic}. First of all, we will prove the following

\begin{teo}\label{picsmooth}Assume that $g\geq 2$. For any smooth curve $C$ with a line bundle $\mathcal L$ of degree $d$, we have an exact sequence.
$$
0\lra Pic(\Jc)\lra Pic(\Vc^{ss})\lra Pic(\Vl^{ss})\lra 0,
$$
where the left map is the pull-back via the determinant map $det:\Vc^{ss}\to \Jc$ and the right map is the pull-back on the geometric fiber of $det$ over $(C,\mt L)\in\Jc$.
\end{teo}

Let $\jc$ (resp. $\jr$) be the substack of $\Jc$ (resp. $\Jr$) which parameterizes the pairs $(C,\mt L)$ such that $\Aut(C,\mt L)=\mathbb G_m$.

\begin{lem}\label{no-aut}The stack $\jc$ (resp. $\jr$) is an open substack of $\Jc$ (resp. $\Jr$), such that the complementary closed substack in $\Jc$ (resp. $\Jr$) has codimension at least two.

Furthermore, $\jr$ is a smooth irreducible variety, which parameterizes isomorphism classes of line bundles of degree $d$ over a curve $C$, such that $\Aut(C,\mt L)=\mathbb G_m$.

\end{lem}

\begin{proof}We will prove the lemma for $\jr$, the assertion for $\jc$ will follow directly. Consider the cartesian diagram
$$
\xymatrix{
\mt I_{\Jr}\ar[r]^\Xi\ar[d]^\Xi&\Jr\ar[d]^\Delta\\
\Jr\ar[r]^\Delta&\Jr\times\Jr
}
$$
Where $\Delta$ is the diagonal morphism. The stack $\mt I_{\Jr}$ is called inertia stack. A fibre of the morphism $\Xi$ over the a point $(C,\mt L)$ corresponds to the group $\text{Aut}(C,\mt L)/\mathbb G_m$. In particular, the locus in $\Jc$ where $\Xi$ is an isomorphism is exactly $\jr$ and so it is an open.

It remains to show the part about the codimension. First we recall some facts about curves with non-trivial automorphisms. The closed locus $\Jr^{Aut}$ in $\Jr$ of curves with non-trivial automorphisms has codimension $g-2$ and it has a unique irreducible component $\mt J\mt H_{d,g}$ of maximal dimension  corresponding to the hyperelliptic curves (see \cite[Remark 2.4]{GV}). Moreover, the closed locus $\mt{JH}_{d,g}^{extra}\subset\mt{JH}_{d,g}$ of hyperelliptic curves with extra-automorphisms has codimension $2g-3$ in $\Jr$ and it has a unique irreducible component of maximal dimension corresponding to the curves with an extra-involution (for details see \cite[Proposition 2.1]{GV}). By definition, $\Jr\backslash\jr\subset \Jr^{Aut}$. By the above facts, it is enough to check the dimension of $\Jr^*:=(\Jr\backslash\jr)\cap\mt J\mt H_g\subset\mt J\mt H_g$. We remark that $\Jr^*$ is the locus of pairs $(C,\mt L)$ such that $C$ is hyperelliptic and $\Aut(C,\mt L)\neq\mathbb G_m$.

If $g\geq 4$, $\mt{JH}_{d,g}$ has codimension $\geq2$, then the lemma follows. 

If $g=3$, $\mt J\mt H_{d,3}$ is an irreducible divisor. It is enough to show that $\mt J_{d,3}^*\neq\mt{JH}_{d,3}$. Let $C$ be an hyperelliptic curve with no extra-automorphisms and $i$ be the hyperelliptic involution. Assume that $\text{Aut}(C,\mt L)\neq \mathbb G_m$ ($\Leftrightarrow i^*\mt L\cong\mt L$). Then for any other $(C,\mt M)$ such that $i^*\mt M\cong \mt M$, then $\mt M=\mt L\otimes T$ with $\mt T^2\cong \oo_C$. In particular, $\mt J_{d,3}^*\neq\mt{JH}_{d,3}$, because the latter contains the entire Jacobian $J^d(C)$.

If $g=2$, then all the curves are hyperelliptic, $\dim\mt J_{d,2}=5$ and $\mt{JH}_{d,2}^{extra}$ has codimension $1$. Consider the forgetful morphism $\mt J_{d,2}^*\rightarrow\mt M_2$.
The fiber at $C$, when is not empty, is the closed subscheme of the Jacobian $J^d(C)$ where the action of $\Aut(C)$ is not free. If $C$ is a curve without extra-automorphisms then the fiber has dimension $0$: it is the finite group of $2$-torsion points in the Jacobian. In particular, if the open locus of these curves is dense in $\mt J_{d,2}^*$, then $\dim\mt J_{d,2}^*\leq\dim \mt M_2=3$ and the lemma follows. Otherwise, $\mt J_{d,2}^*$ can have an irreducible component of maximal dimension which maps in the divisor $\mt {H}^{extra}_{d,2}\subset\mt M_2$ of curves with an extra-involution. In this case $\dim\mt J_{d,2}^*<\dim \mt {H}^{extra}_{d,2}+\dim J^d(C)=4$, which concludes the proof.

The last assertion of the lemma comes from the fact that $\Jr$ admits an irreducible variety $J_{d,g}$ as good moduli space. They are isomorphic along the locus of objects where the only automorphisms are the multiplications by a scalar, i.e. $\jr$. The smoothness of $\jr$ follows from the smoothness of $\Jr$.
\end{proof}

We denote by $\vc$ (resp. $\vr$) the open substack of $\Vc^{ss}$ (resp. $\Vr^{ss}$) of pairs $(C,\mt E)$ such that $\Aut(C,\det\mt E)=\mathbb G_m$. 

\begin{cor}\label{iso-rest}We have an isomorphisms of Picard groups $$\Pic(\Jc)\cong\Pic(\jc),\quad\Pic(\Vc)\cong\Pic(\vc).$$ The same holds for the rigidifications.
\end{cor}

\begin{proof}Observe that $\vc$ is the inverse image of $\jc$ along the determinant morphism $det:\Vc\to\Jc$. In particular, we have
$$
\dim\left(\Vc\backslash\vc\right)\leq\dim\left(\Jc\backslash\jc\right)+\dim\Vl^{ss},
$$
for some pair $(C,\mt L)\in \Jc\backslash\jc$. By Lemma \ref{no-aut},
$$
\begin{array}{lll}
 \text{cod}\left(\Vc\backslash\vc,\Vc\right)&=&\dim\Jc+\dim\Vl^{ss}-\dim\left(\Vc\backslash\vc\right) \geq\\
 &\geq &\text{cod}\left(\Jc\backslash\jc,\Jc\right)\geq 2.
\end{array}
$$
By Theorem \ref{picchow}(ii), we have the isomorphism between $\Vc$ and $\vc$.
\end{proof}

By the above corollary, Theorem \ref{picsmooth} is equivalent to prove the exactness of
\begin{equation}\label{seq0}
0\lra  \Pic(\jc)\lra \Pic(\vc)\lra \Pic(\mt V ec^{ss}_{=\mt L,C})\lra 0.
\end{equation}
The first step is to prove the following

\begin{lem}The sequence (\ref{seq0}) is left exact.
\end{lem}

\begin{proof}Observe that $det:\Vc\to\Jc$ admits sections. For instance, $(C,\mt L)\to (C,\mt L\oplus\oo_C^r)$. In particular, the homomorphism of groups $det^*:\Pic(\Jc)\to\Pic(\Vc)$ is injective. Then the lemma follows from Corollary \ref{iso-rest}.
\end{proof}

The morphism $det:\vc\longrightarrow \jc$ is a morphism of smooth Artin stacks locally of finite type, with smooth fibre of the same dimension (see \S\ref{fibre}). By miraculous flatness, the map $det$ is a flat. Let $\Lambda$ be a line bundle over $\vc$, which is obviously flat over $\jc$ by the flatness of the map $det$.

\begin{lem}Suppose that $\Lambda$ is trivial over any geometric fiber. Then $det_*\Lambda$ is a line bundle on $\jc$ and the natural map $det^*det_*\Lambda\longrightarrow \Lambda$ is an isomorphism.
\end{lem}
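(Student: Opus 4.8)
The plan is to prove this by a seesaw-type argument, relying on the base change cohomology theory for stacks admitting a proper good moduli space developed in Appendix \ref{App}. The morphism $det:\vc\to\jc$ is smooth, and its geometric fiber over a point $(C,\mt L)$ is the stack $\mathcal Vec^{ss}_{=\mt L,C}$, which admits the integral projective variety $U_{\mt L,C}$ as good moduli space (Theorem \ref{fibers}(ii)). Since the good moduli morphism $\pi:\mathcal Vec^{ss}_{=\mt L,C}\to U_{\mt L,C}$ satisfies $\pi_*\oo=\oo_{U_{\mt L,C}}$ and $U_{\mt L,C}$ is integral and projective, one gets $H^0(\mathcal Vec^{ss}_{=\mt L,C},\oo)=H^0(U_{\mt L,C},\oo)=k$. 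By hypothesis the restriction $\Lambda|_{\mathcal Vec^{ss}_{=\mt L,C}}$ is trivial on every geometric fiber, so $H^0(\mathcal Vec^{ss}_{=\mt L,C},\Lambda|_{\mathcal Vec^{ss}_{=\mt L,C}})\cong H^0(\mathcal Vec^{ss}_{=\mt L,C},\oo)=k$ has dimension $1$, independently of the chosen point of $\jc$.

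First I would invoke a Grauert-type local freeness and base change statement from Appendix \ref{App} (testing everything, if convenient, on a smooth presentation of $\jc$ by a scheme). Since $\Lambda$ is flat over $\jc$ by flatness of $det$, the base $\jc$ is reduced (being smooth), the geometric fibers admit the proper good moduli spaces $U_{\mt L,C}$, and the fiber dimension $h^0(\text{fiber},\Lambda|_{\text{fiber}})\equiv 1$ is constant, this machinery yields that $det_*\Lambda$ is locally free of rank $1$ and that its formation commutes with arbitrary base change. This already shows $det_*\Lambda$ is a line bundle on $\jc$, and it identifies the fiber $(det_*\Lambda)\otimes k(C,\mt L)$ with $H^0(\mathcal Vec^{ss}_{=\mt L,C},\Lambda|_{\mathcal Vec^{ss}_{=\mt L,C}})$.

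Next I would examine the adjunction counit $det^*det_*\Lambda\to\Lambda$. Both sides are now line bundles on $\vc$, so it suffices to check the map is an isomorphism on every geometric fiber; a morphism of line bundles that is nowhere zero fiberwise is an isomorphism by Nakayama. By the base change isomorphism of the previous step, the restriction of the counit to the fiber over $(C,\mt L)$ is the evaluation morphism $H^0(\mathcal Vec^{ss}_{=\mt L,C},\Lambda|_{\mathcal Vec^{ss}_{=\mt L,C}})\otimes\oo\to\Lambda|_{\mathcal Vec^{ss}_{=\mt L,C}}$. Since $\Lambda|_{\mathcal Vec^{ss}_{=\mt L,C}}\cong\oo$ and its one-dimensional space of sections is spanned by a nowhere-vanishing constant section, this evaluation is an isomorphism, and hence so is the counit.

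The main obstacle I anticipate is not the fiberwise evaluation (which is formal once base change is available), but verifying that $det:\vc\to\jc$ genuinely fits the hypotheses of Appendix \ref{App}: one must ensure that it admits a good moduli space relatively over $\jc$ which is proper over $\jc$, so that the Brochard-style cohomology-and-base-change formalism, together with the validity of Grauert's theorem in degree $0$ over a reduced base, carries over to this stacky relative setting. Securing this relative properness and good-moduli-space structure is the delicate technical point; once it is in place, the constancy of $h^0$ and the triviality on fibers make the remainder routine.
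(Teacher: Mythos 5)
Your overall strategy is the same as the paper's: reduce the lemma to a seesaw principle for stacks admitting a proper good moduli space (Corollary \ref{seesaw}), using that each geometric fiber $\mathcal Vec^{ss}_{=\mt L,C}$ has the integral projective variety $U_{\mt L,C}$ as good moduli space and that $H^0(\mathcal Vec^{ss}_{=\mt L,C},\oo)=H^0(U_{\mt L,C},\oo)=k$, so that $h^0$ of the (fiberwise trivial) line bundle is constantly $1$ and the counit $det^*det_*\Lambda\to\Lambda$ can be checked fiberwise. Once the relative good moduli space is available, your remaining steps are exactly the content of Corollary \ref{seesaw} and the cohomology-and-base-change results of Appendix \ref{App}, so that part is fine.

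The problem is the step you explicitly set aside as ``the delicate technical point'': verifying that $det:\vc\to\jc$, after pulling back to a suitable atlas $H\to\jc$, admits a good moduli space $U_H$ proper over $H$ with integral geometric fibers. This is not a routine verification, and it is where the paper's proof does essentially all of its work. The difficulty is that the natural candidate $U_H$, obtained by pulling back $U^o_{r,d,g}\to\jr$ along $H\to\jr$, is a priori a good moduli space only for $[Q\times_{\jr}H/G]$ (good moduli morphisms are stable under base change), whereas the stack one needs to control is $V_H=[Q\times_{\jc}H/G]$; the two fiber products differ by the $Z(\Gamma)\cong\mathbb G_m$-torsor $\pi:Q\times_{\jc}H\to Q\times_{\jr}H$ recording the choice of isomorphism $\det\mt E\cong\mt L$. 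The paper closes this gap by comparing actions of the centers: a scalar $g\in Z(G)$ sends a triple $(\mt E,B,\phi)$ to $(\mt E,B,g^{-r}\phi)$ while $\gamma\in Z(\Gamma)$ sends it to $(\mt E,B,\gamma\phi)$, so $Z(G)/\mu_r$ and $Z(\Gamma)$ induce the same action on $Q\times_{\jc}H$; hence any $G$-invariant morphism from $Q\times_{\jc}H$ to a scheme factors uniquely through $Q\times_{\jr}H$, making $U_H$ a categorical, and (since $\pi$ is affine) a good, $G$-quotient of $Q\times_{\jc}H$ as well. Without this argument, or an equivalent construction of the relative good moduli space, your proof has nothing to which the Appendix \ref{App} machinery can be applied, so the gap is genuine rather than merely technical bookkeeping.
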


\begin{proof}
Consider the cartesian diagram
$$
\xymatrix{
\vc\ar[d] &V_H\ar[l] \ar[d]\\
\jc & H\ar[l] }
$$
where the bottom row is an atlas for $\jc$. We can reduce to control the isomorphism locally on $V_H\to H$. Suppose that the following conditions hold
\begin{enumerate}[(i)]
\item $H$ is an integral scheme,
\item the stack $V_H$ has a good moduli scheme $U_H$,
\item $U_H$ is proper over $H$ with geometrically integral fibers.
\end{enumerate}
Then, by Seesaw Principle (see Corollary \ref{seesaw}), we have the assertion. So it is enough to find an atlas $H$ such that the conditions (i), (ii) and (iii) are satisfied.

We fix some notations: since the stack $\vc$ is quasi-compact, there exists $n$ big enough such that $\vc\subset \overline{\mt U}_n=[H_n/GL(V_n)]$. The twist by a power of the canonical bundle $\omega^n$, induces an isomorphism of stacks $\Vc\cong \mt Vec_{r,d+rn(2g-2),g}$. In particular, if $\vc\subset \overline{\mt U}_n$, then $\mt Vec^o_{r,d+rn(2g-2),g}\subset\overline{\mt U}_0$. So, we can assume $d$ big enough such that $\Vc^o\subset\overline{\mt U}_0$.\vspace{0.2cm}

For $r\geq 2$, we denote by $Q$ the $G$-stable open subset of $H_0$ such that $\vc=[Q/G]\subset \overline{\mt U}_0=[H_0/G]$, where $G:=GL(V_0)$. We will use a different notation when $r=1$, we replace the symbol $Q$ with $H$ and the symbol $G$ with $\Gamma$. Denote by $Z(\Gamma)$ (resp. $Z(G)$) the center of $\Gamma$ (resp. of $G$) and set $\widetilde{G}=G/Z(G)$, $\widetilde{\Gamma}=\Gamma /Z(\Gamma)$. Note that $Z(G)\cong Z(\Gamma)\cong\mathbb G_m$. As usual we set $\mt BZ(\Gamma):=[\Spec\, k/Z(\Gamma)]$. Since $\jc$ is integral then $H$ is integral, satisfying the condition (i). 
We have the following cartesian diagrams
$$
\xymatrix@!0@R=3pc@C=5pc{
 & Q\ar[d] & & Q\times_{\jr} H\ar[ll]\ar[d] & & Q\times_{\jc} H\ar[ll]_{\pi}\ar[d]\\
& \vc\ar[dl]\ar@{-}[d] & & \left[\left(Q\times_{\jr}H\right)\Big/G\right]\ar[ll]\ar[dl]^q\ar[ddd] & & \left[\left(Q\times_{\jc}H\right)\Big/G\right]\cong V_H\ar[ddd]\ar[ll]_>>>>{p}\\
\vr\ar[d]&\ar@{-}[d] & \left[\left(Q\times_{\jr}H\right)\Big/\widetilde G\right]\ar[ll]\ar[d]\\
U^o_{r,d,g}\ar[dd] &\ar[d] & U_H\ar[dd]\ar[ll]\\
& \jc\ar[dl]&\ar[l] & H\times \mt BZ(\Gamma)\ar[dl]\ar@{-}[l] & &H\ar[ll]\\
\jr & & H\ar[ll]\ar@{=}[urrr]
}
$$
where $U^o_{r,d,g}$ is the open subscheme in the Schmitt compactification $\overline{U}_{r,d,g}$ (see Theorem \ref{schmitt}) of pairs $(C,\mt E)$ such that $C$ is smooth, $\mt E$ is polystable and $\Aut(C,\det\mt E)=\mathbb G_m$. Note that $U_H$ is proper over $H$, because $U_{r,d,g}^o\rightarrow \jr$ is proper. In particular, the geometric fiber over a $k$-point of $H$ which maps to $(C,\mt L)$ in $ \jr$ is the irreducible projective moduli variety $U_{\mt L,C}$ of semistable vector bundles on $C$ with determinant isomorphic to $\mt L$ (see \S\ref{fibre}). So, $U_H\to H$ satisfies the property (ii).

It remains to prove that $U_H$ is a good moduli space for $V_H$. Since $V_H$ is a quotient stack, it is enough to show that $U_H$ is a good $G$-quotient of $Q\times_{\jc}H$. Consider the commutative diagram
$$
\xymatrix{
Q\times_{\jc}H\ar[r]^\pi\ar[rd]^\beta & Q\times_{\jr} H\ar[r]\ar[d]^\alpha & U_H\ar[ld]\\
& H &}
$$
\underline{Claim:} the horizontal maps make $U_H$ a categorical $G$-quotient of $Q\times_{\Jc^0}H$.\\ Suppose that the claim holds. Then $U_H$ is a good $G$-quotient also for $Q\times_{\jc}H$, because the horizontal maps are affine (see \cite[1.12]{Mum94}), and we have done.

We conclude the proof by proving the claim. The idea for this part comes from \cite[Section 2]{H}. Since the map $Q\rightarrow \jr$ is $G$-invariant then $Q\times_{\jr}H\rightarrow H$ is $G$-invariant. In particular, we can study the action of $Z(G)$ over the fibers of $\alpha$. Fix a geometric point $h$ on $H$ and suppose that its image in $\jc$ is the pair $(C,\mathcal L)$. Then  the fiber of $\beta$ (resp. of $\alpha$) over $h$ is the fine moduli space of the triples $(\mathcal E,B,\phi)$ (resp. of the pairs $(\mathcal E,B)$), where $\mathcal E$ is a semistable vector bundle on $C$, $B$ a basis of $H^0(C,\mathcal E)$ and $\phi$ is an isomorphism between the line bundles $\det\mt E$ and $\mathcal{L}$. If $g\in Z(G)$ we have $g.(\mathcal E,B,\phi)=(\mathcal E,gB,\phi)$ and $g.(\mathcal E,B)=(\mathcal E,gB)$. Observe that the isomorphism $g.Id_{\mathcal E}$ gives us an isomorphism between the pairs $(\mathcal E, B)$ and $(\mathcal E, gB)$ and between the triples $(\mathcal E, B,\phi)$ and $(\mathcal E, gB, g^r\phi)$. So $g.(\mathcal E,B,\phi)=(\mathcal E,B,g^{-r}\phi)$ and $g.(\mathcal E,B)=(\mathcal E,B)$. On the other hand, $\pi:Q\times_{\jc}H\rightarrow Q\times_{\jr}H$ is a principal $Z(\Gamma)$-bundle and the group $Z(\Gamma)$ acts as it follows: if $\gamma\in Z(\Gamma)$ we have $\gamma.(\mathcal E,B,\phi)=(\mathcal E,B,\gamma\phi)$ and $\gamma.(\mathcal E,B)=(\mathcal E,B)$.\\
This implies that the groups $Z(G)/\mu_r$ (where $\mu_r$ is the finite group of the $r$-roots of unity) and $Z(\Gamma)$ induce the same action on $Q\times_{\jc}H$. Since $\pi:Q\times_{\jc}H\rightarrow Q\times_{\jr}H$ is a principal $Z(\Gamma)$-bundle, any $G$-invariant morphism from $Q\times_{\jc}H$ to a scheme factorizes uniquely through $Q\times_{\jr}H$ and so uniquely through $U_H$ concluding the proof of the claim.
\end{proof}

The next lemma conclude the proof of Theorem \ref{picsmooth}.

\begin{lem}Let $\Lambda$ be a line bundle on $\vc$. Then 
$\Lambda$ is trivial on one geometric fiber of $det$ if and only if $\Lambda$ is trivial on any geometric fiber of $det$.
\end{lem}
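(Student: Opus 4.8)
The plan is to reduce the statement to the constancy of a single integer-valued function on the base and then to control it by cohomological semicontinuity. The ``only if'' direction is vacuous, so the content is: if $\Lambda$ is trivial on one geometric fiber of $det$, it is trivial on all of them. The fiber of $det:\vc\to\jc$ over a geometric point $b=(C,\mt L)$ is $\mt Vec^{ss}_{=\mt L,C}$, whose Picard group is freely generated by $G_b:=\Lambda(0,0,1)|_{F_b}$ by Theorem \ref{fibers}(i). Hence for every $b$ there is a unique integer $m(b)$ with $\Lambda|_{F_b}\cong G_b^{\otimes m(b)}$, and triviality on $F_b$ means precisely $m(b)=0$. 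Since $\jc$ is open in the irreducible stack $\Jc$, it is connected; therefore it suffices to prove that $b\mapsto m(b)$ is locally constant, for then it is constant and its vanishing at one point forces $m\equiv 0$.

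To show local constancy I would realize each level set $\{m=c\}$ as an open and closed substack. First replace $\Lambda$ by $P_c:=\Lambda\otimes\Lambda(0,0,1)^{\otimes(-c)}$, so that $P_c|_{F_b}\cong G_b^{\otimes(m(b)-c)}$ and $\{m=c\}=\{b:P_c|_{F_b}\cong\oo_{F_b}\}$. By Theorem \ref{fibers}(iii) a positive power of $G_b$, namely $G_b^{\otimes r/n_{r,d}}$, is the pull-back along the good moduli morphism $F_b\to U_{\mt L,C}$ of an ample generator $A_b$ of $\Pic(U_{\mt L,C})$, the proper good moduli space of $F_b$. Consequently $H^0\bigl(F_b,P_c^{\otimes r/n_{r,d}}|_{F_b}\bigr)=H^0\bigl(U_{\mt L,C},A_b^{\otimes(m(b)-c)}\bigr)$, which is nonzero exactly when $m(b)-c\geq 0$, and likewise $H^0\bigl(F_b,(P_c^{\vee})^{\otimes r/n_{r,d}}|_{F_b}\bigr)\neq 0$ exactly when $m(b)-c\leq 0$ (this is why I raise $P_c$ to the power $r/n_{r,d}$, so that the relevant bundle descends to the projective variety $U_{\mt L,C}$, where $H^0$ detects the sign of the exponent). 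Thus $\{m\geq c\}$ and $\{m\leq c\}$ are the loci where these two functions $h^0$ are at least $1$.

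The decisive input is then upper semicontinuity of these $h^0$'s along the family $det:\vc\to\jc$. Since the fibers are stacks rather than schemes, I would deduce it from the base-change and semicontinuity formalism for morphisms admitting a proper good moduli space developed in Appendix \ref{App}, applied to $det$; equivalently, after pulling back along the atlas $H\to\jc$ one may use the proper relative good moduli space $U_H\to H$ constructed in the previous lemma, whose geometric fibers are the $U_{\mt L,C}$, and invoke ordinary semicontinuity on the scheme $U_H$. Granting this, $\{m\geq c\}$ and $\{m\leq c\}$ are closed for every integer $c$; hence $\{m=c\}=\{m\geq c\}\cap\{m\leq c\}$ is closed, while its complement $\{m\geq c+1\}\cup\{m\leq c-1\}$ is also closed, so $\{m=c\}$ is open as well. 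Each level set being open and closed in the connected stack $\jc$, the function $m$ is constant, and the lemma follows. The main obstacle is exactly this semicontinuity step: one must know that cohomology and base change behave well for the stacky fibration $det$, which is precisely what the good-moduli-space theory of Appendix \ref{App} is designed to provide; the secondary point to verify is the ampleness bookkeeping, i.e.\ that a positive power of $\Lambda(0,0,1)$ restricts fiberwise to the pull-back of an ample bundle with $H^0$ detecting the sign of the exponent, and this is supplied by Theorem \ref{fibers}(iii) together with the projectivity of $U_{\mt L,C}$.
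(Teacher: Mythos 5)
Your overall strategy is close in spirit to the paper's, but there is a genuine gap at the step you yourself flag as ``ampleness bookkeeping'': you claim that $H^0\bigl(U_{\mt L,C},A_b^{\otimes k}\bigr)\neq 0$ exactly when $k\geq 0$, where $A_b$ is the ample generator of $\Pic(U_{\mt L,C})$. The implication $k<0\Rightarrow h^0=0$ is fine, but $k\geq 1\Rightarrow h^0\geq 1$ does not follow from ampleness and projectivity: an ample line bundle on a projective variety need not have any nonzero global sections (only its sufficiently high powers do, and ``sufficiently high'' is not controlled uniformly over the base here). Consequently $\{m\geq c\}$ is not identified with the semicontinuity locus $\{h^0\geq 1\}$ --- you only get that the latter is a closed set squeezed between $\{m=c\}$ and $\{m\geq c\}$ --- and the openness of the level sets $\{m=c\}$, which is the heart of your argument, is not established. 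The closedness of $\{m=c\}$ does survive (it is the locus where both $h^0\bigl(P_c^{\otimes r/n_{r,d}}\bigr)$ and $h^0\bigl(P_c^{\otimes -r/n_{r,d}}\bigr)$ are positive, exactly as in the Seesaw Corollary of Appendix \ref{App}), but a connected base partitioned into countably many closed pieces need not consist of a single piece, so closedness alone does not finish the proof.

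The paper circumvents precisely this difficulty by replacing $h^0$ with the Euler characteristic: by flatness of $\Lambda$ over $\jc$ and Proposition \ref{lasvolta}, each function $h\mapsto\chi(\Lambda_h^{\otimes n})$ is \emph{locally constant} (not merely semicontinuous) on the connected atlas $H\to\jc$, hence constant; and since any nontrivial line bundle on the projective variety $U_{\mt L,C}$ is ample or anti-ample, $n\mapsto\chi(\Lambda_h^{\otimes n})$ is a constant polynomial in $n$ if and only if $\Lambda_h$ is trivial. Thus the condition ``$\chi_n=\chi_m$ for all $n,m$'' holds either everywhere or nowhere on $\jc$. Your proof can be repaired by making the same substitution: detect the integer $m(b)$ (or just the triviality of $P_c$ on the fibers) through the constancy in $n$ of $\chi\bigl(P_c^{\otimes n}|_{F_b}\bigr)$ rather than through nonvanishing of $h^0$; everything else in your reduction (identification of the fibers, freeness of their Picard groups, descent of the $(r/n_{r,d})$-th power to $U_{\mt L,C}$, connectedness of $\jc$) is correct and matches the paper.
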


\begin{proof} Consider the determinant map $det:\vc\rightarrow \jc$. Let $T$ be the set of points $h$ (in the sense of \cite[Chap. 5]{LMB}) in $\jc$ such that the restriction $\Lambda_h:=\Lambda_{det^*h}$ is the trivial line bundle. 
By Theorem \ref{fibers}(iii), the inclusion
$$
\Pic(U_{\mt L,C})\hookrightarrow\Pic(\Vl^{ss})\cong\mathbb Z
$$
is of finite index. The variety $U_{\mt L,C}$ is projective, in particular any non-trivial line bundle on it is ample or anti-ample. This implies that $\chi(\Lambda_h^n)$, as polynomial in the variable $n$, is constant if and only if $\Lambda_h$ is trivial. So $T$ is equal to the set of points $h$ such that the polynomial $\chi(\Lambda^n_h)$ is constant. Consider the atlas $H\rightarrow \jc$ defined in the proof of the previous lemma. The line bundle $\Lambda$ is flat over $\jc$ so the function
$$
\chi_n:H\rightarrow \mathbb{Z}:\,h=(C,\mathcal L,B)\mapsto\chi(\Lambda^n_h)
$$
is locally constant for any $n$, then constant because $H$ is connected. Therefore, the condition $\chi_n=\chi_m$ for any $n,\, m\in\mathbb Z$ is either always satisfied or never satisfied, which concludes the proof.
\end{proof}

Before to present the proof of Theorem \ref{pic}, we need to compute the homomorphism of Picard groups induced by the determinant $det:\Vc\to\Jc$. Following the notations of \S\ref{jc}, over the stack $\Jc$ we have the line bundles $\Lambda(n,m):=d_{\pi_1}(\omega_{\pi_1}^n\otimes\mt L^n)$, where $\pi_1:\mt Jac_{r,d,g,1}\to\Jc$ is the universal curve. 

On the other hand, when $r>1$, on $\CVc$ we have the tautological line bundles $\Lambda(n,m,l):=d_{\pi_r}(\omega_{\pi_r}^n\otimes(\det\mt E)^m\otimes\mt E^l)$ (see \S\ref{tautbun}), where $\pi_r:\mt Vec_{r,d,g,1}\to\Vc$ is the universal curve. The next lemma shows that the two notations are compatible.

\begin{lem}\label{computations}We have 
\begin{enumerate}[(i)]
\item $det^*\Lambda(n,m)=\Lambda(n,m,0)$,
\item the pull-back of $\Lambda(0,0,1)$ to $\Vl$, for $(C,\mt L)\in\Jc$, generates $\Pic(\Vl)$.
\end{enumerate}
\end{lem}

\begin{proof}We remark that both the diagrams
$$
\xymatrix{
\mt Vec_{r,d,g,1}\ar[r]^{\widetilde{det}}\ar[d]^{\pi_r} & \mt Jac_{r,d,g,1}\ar[d]^{\pi_1}\\
\Vc\ar[r]^{det} &\Jc
}\quad
\xymatrix{
\Vl\times C\ar[r]^{\widetilde{i}}\ar[d]^{\pi} & \mt Vec_{r,d,g,1}\ar[d]^{\pi_r}\\
\Vl\ar[r]^{i} &\Vc
}
$$
are cartesian. Since $det$ sends $(X,\mt E)$ to $(X,\det\mt E)$, we have $
\widetilde{det}^*\left(\omega_{\pi_1}^n\otimes\mt L^n\right)=(\omega_{\pi_r}^n\otimes(\det\mt E)^n)
$. Then (i) follows by Theorem \ref{detcoh}(ii). In a similar way, since $i$ sends $(\mt E,\varphi)$ to $(C,\mt E)$, we have that $\widetilde{i}^*\mt E$ is the universal vector bundle for the stack $\Vl$. Then by Theorem \ref{detcoh}(ii), we see that $i^*d_{\pi_r}(\mt E)$ is exactly the generator described in Theorem \ref{fibers}.
\end{proof}

We are finally ready for\vspace{0.1cm}\\
\begin{dimo} \emph{\ref{pic}}. \label{proofpic}The proof consists in combining some facts proved through the paper.
	
\ref{pic}(i). By Corollary \ref{redsemistable}, it is enough to show that the (i) holds for $\Vc^{ss}$. By Theorem \ref{picjac} and Lemma \ref{computations}, the exact sequence of Theorem \ref{picsmooth} can be rewritten as it follows:
$$
0\to\mathbb Z\langle \Lambda(1,0,0)\rangle\oplus \mathbb Z\langle \Lambda(0,1,0)\rangle
\oplus \mathbb Z\langle \Lambda(1,1,0)\rangle\to\Pic(\Vc^{ss})\to \mathbb Z\langle \Lambda(0,0,1)\rangle\to 0.
$$
So the point holds also for $\Vc^{ss}$.

\ref{pic}(ii). The assertion for $\CVc$ follows from Theorem \ref{pic}(i) and Theorem \ref{indbou}. Then the assertion for the P-semistable, resp. H-semistable, locus follows from Corollary, \ref{redsemistable}, resp. \ref{boundrigH}(i).

\ref{pic}(iii). The assertion follows from the previous points and Corollary \ref{boundrigH}(ii).
\end{dimo}

\subsection{Comparing the Picard groups of $\CVc$ and $\CVr$.}\label{comparing}
Assume that $g\geq 2$. Consider the rigidification map $\nu_{r,d}:\Vc\rightarrow\Vr$ and the sheaf of abelian groups $\mathbb G_m$. The Leray spectral sequence 
\begin{equation}\label{leray}
H^p(\Vr,R^q\nu_{r,d*}\mathbb{G}_m) \Rightarrow H^{p+q}(\Vc,\mathbb G_m)
\end{equation}
induces an exact sequence in low degrees
$$
0\rightarrow H^1(\Vr,\nu_{r,d*}\mathbb G_m)\rightarrow  H^1(\Vc,\mathbb G_m)\rightarrow H^0(\Vr,R^1\nu_{r,d*}\mathbb G_m)\rightarrow  H^2(\Vr,\nu_{r,d*}\mathbb G_m).
$$
Since the morphism $\nu_{r,d}$ is a $\mathbb G_m$-gerbe, locally \'etale it looks like $\mt B\mathbb G_m\times U\to U$. Using this, it can be checked that $\nu_{r,d*}\mathbb{G}_m=\mathbb G _m$ and that the sheaf $R^1\nu_{r,d*}\mathbb G_m$ is the constant sheaf $H^1(\mt B\mathbb G_m,\mathbb G_m)\cong \Pic(\mt B\mathbb G_m)\cong \mathbb Z$. Via standard cocycle computation we see that the exact sequence becomes
\begin{equation}\label{lss}
0\longrightarrow \Pic(\Vr)\longrightarrow \Pic(\Vc)\xrightarrow{res} \mathbb{Z}\xrightarrow{obs} H^2(\Vr,\mathbb G_m),
\end{equation}
where $res$ is the restriction on the fibers (it coincides with the weight map defined in \cite[Def. 4.1]{H07}), $obs$ is the map which sends the identity to the $\mathbb G_m$-gerbe class $[\nu_{r,d}]\in H^2(\Vr,\mathbb G_m)$  associated to $\nu_{r,d}:\Vc\rightarrow\Vr$  (see \cite[IV, $\S$3.4-5]{Gi}).

\begin{lem}We have that:
$$
\begin{cases}
res(\Lambda(1,0,0)) =0,\\
res(\Lambda(0,0,1))=d+r(1-g),\\
res(\Lambda(0,1,0))=r(d+1-g),\\
res(\Lambda(1,1,0))=r(d-1+g).
\end{cases}
$$
\end{lem}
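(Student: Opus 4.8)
The plan is to identify $res$ with the weight of the central $\mathbb G_m$-action and then compute that weight termwise. Recall that $res$ restricts a line bundle on $\Vc$ to a geometric fibre of $\nu_{r,d}$, which is $\mt B\mathbb G_m$, and under $\Pic(\mt B\mathbb G_m)\cong\mathbb Z$ this records the weight of the resulting character. Concretely, fixing a geometric point $(C,\mt E)$ of $\Vc$, the scalar $t\in\mathbb G_m\subset\Aut(C,\mt E)$ acts by $t\cdot\mathrm{id}_{\mt E}$, and $res(\Lambda)$ is the weight with which $t$ acts on the fibre $\Lambda|_{(C,\mt E)}$. (This is exactly the weight map of \cite[Def. 4.1]{H07}, as already noted after (\ref{lss}).)

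First I would determine the weight of the $\mathbb G_m$-action on the sheaf $\mt F:=\omega_{\overline\pi}^m\otimes(\det\mt E)^n\otimes\mt E^{\otimes l}$ restricted to $C$. The scalar $t$ acts trivially on $\omega_C$, by $t^{r}$ on $\det\mt E=\bigwedge^{r}\mt E$, and by $t^{l}$ on $\mt E^{\otimes l}$; hence it acts on $\mt F$ by the global scalar $t^{rn+l}$, independently of the point of $C$.

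Next, since $\Lambda(m,n,l)|_{(C,\mt E)}=\det R\Gamma(C,\mt F)=\det H^0(C,\mt F)\otimes\left(\det H^1(C,\mt F)\right)^{-1}$ and the global scalar $t^{rn+l}$ acts on each $H^i(C,\mt F)$ by $t^{rn+l}$, it acts on the determinant of cohomology by $t^{(rn+l)\chi(\mt F)}$. Therefore
$$res(\Lambda(m,n,l))=(rn+l)\,\chi\!\left(\omega_C^m\otimes(\det\mt E)^n\otimes\mt E^{\otimes l}\right).$$
The only point requiring care here is that the determinant-of-cohomology construction is $\mathbb G_m$-equivariant, so that the weight passes through as $(rn+l)\chi$; this is immediate because the $\mathbb G_m$-action on $\mt F$ is by a global scalar commuting with all the cohomological operations defining $d_{\overline\pi}$.

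Finally I would evaluate the displayed formula in the four cases by Riemann--Roch on the smooth fibre $C$ of genus $g$, where $\deg\mt E=\deg\det\mt E=d$ and $\deg\omega_C=2g-2$. For $\Lambda(1,0,0)$ the factor $rn+l=0$ forces $res=0$. For $\Lambda(0,0,1)$ one has $rn+l=1$ and $\chi(\mt E)=d+r(1-g)$, giving $d+r(1-g)$. For $\Lambda(0,1,0)$, $rn+l=r$ and $\chi(\det\mt E)=d+1-g$, giving $r(d+1-g)$. For $\Lambda(1,1,0)$, $rn+l=r$ and $\chi(\omega_C\otimes\det\mt E)=(2g-2+d)+1-g=d+g-1$, giving $r(d-1+g)$, as claimed. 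Since each step is a direct weight bookkeeping, I do not expect a genuine obstacle beyond the equivariance remark above.
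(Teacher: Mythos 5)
Your proposal is correct and follows essentially the same route as the paper: identify $res$ with the weight of the central $\mathbb G_m$-action on the fibre of the determinant of cohomology, observe that the scalar $t$ acts with weight $0$, $1$, $r$, $r$ on $\omega_C$, $\mt E$, $\det\mt E$, $\omega_C\otimes\det\mt E$ respectively, and conclude by Riemann--Roch. The only cosmetic difference is that you first record the general formula $res(\Lambda(m,n,l))=(rn+l)\chi$ and then specialize, whereas the paper treats the four cases one by one.
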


\begin{proof}Using the functoriality of the determinant of cohomology, we get that the fiber of $\Lambda(1,0,0)=d_{\pi}(\omega_{\pi})$ over a point $(C,\mt E)$ is canonically isomorphic to
$\det H^0(C,\omega_C)\otimes \det^{-1}  H^1(C,\omega_C)$. Since $\mathbb G_m$ acts trivially on $H^0(C,\omega_C)$ and on
$H^1(C, \omega_C)$, we get that $res(\Lambda(1,0,0))=0$.\\
Similarly, the fiber of $\Lambda(0,0,1)$ over a point $(C,\mt E)$ is canonically isomorphic to
$\det H^0(C,\mt E)\otimes \det^{-1}  H^1(C,\mt E)$. Since $\mathbb G_m$ acts with weight one on the vector spaces
$H^0(C, \mt E)$ and $H^1(C,\mt E)$, Riemann-Roch gives that
$$res(\Lambda(0,0,1))=h^0(C,\mt E)-h^1(C,\mt E)=\chi(\mt E)=d+r(1-g).$$
The fiber of $\Lambda(0,1,0)$ over a point $(C,\mt E)$ is canonically isomorphic to
$\det H^0(C,\det\mt E)\otimes \det^{-1}  H^1(C,\det\mt E)$. Now $\mathbb G_m$ acts with weight $r$ on the vector spaces
$H^0(C, \det\mt E)$ and $H^1(C,\det\mt E)$, so that Riemann-Roch gives
$$res(\Lambda(0,1,0))=r\cdot h^0(C,\det\mt E)-r\cdot h^1(C,\det\mt E)=r\cdot\chi(\det\mt E)=r(d+1-g).$$
Finally, the fiber of $\Lambda(1,1,0)$ over a point $(C,\mt E)$ is canonically isomorphic
to $\det H^0(C,\omega_C\otimes\det\mt E)\otimes \det^{-1}  H^1(C,\omega_C\otimes\det\mt E)$. Since $\mathbb G_m$ acts with weight $r$ on the vector spaces $H^0(C, \omega_C\otimes\det\mt E)$ and $H^1(C,\omega_C\otimes\det\mt E)$, Riemann-Roch gives that
$$res(\Lambda(1,1,0))=r\cdot h^0(C,\omega_C\otimes\det\mt E)-r\cdot h^1(C,\omega_C\otimes\det\mt E)=
r\cdot\chi(\omega_C\otimes\det\mt E)=r(d-1+g). $$
\end{proof}

Combining the Lemma above with Theorem \ref{pic}(i) and the exact sequence (\ref{lss}), we obtain 

\begin{cor}\label{eccocequasi}
\noindent
\begin{enumerate}[(i)]
\item The image of $\Pic(\Vc)$ via the morphism $res$ of (\ref{lss}) is the subgroup of $\mathbb Z$ generated by $$n_{r,d}\cdot v_{r,d,g}=\big(d+r(1-g), r(d+1-g), r(d-1+g)\big).$$
\item The Picard group of $\Vr$ is (freely) generated by the line bundles $\Lambda(1,0,0)$, $\Xi$ and $\Theta$ (when $g\geq 3$).
\end{enumerate}
\end{cor}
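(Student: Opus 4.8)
Both assertions will flow from the exact sequence (\ref{lss}), which identifies $\Pic(\Vr)$ with the kernel of $res\colon\Pic(\Vc)\to\mathbb Z$, together with Theorem \ref{pic}(i) and the values of $res$ on the four tautological generators computed in the preceding Lemma. So the whole proof is really a computation inside the free group $\Pic(\Vc)\cong\mathbb Z^4$ (with basis $\Lambda(1,0,0),\Lambda(1,1,0),\Lambda(0,1,0),\Lambda(0,0,1)$), along which $res$ is the linear form sending these to $0,\ r(d+g-1),\ r(d+1-g),\ d+r(1-g)$ respectively.

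For part (i), the image of $res$ is the subgroup of $\mathbb Z$ generated by these four values, that is by $c:=\gcd\big(r(d+g-1),\,r(d+1-g),\,d+r(1-g)\big)$. The plan is to establish the arithmetic identity $c=n_{r,d}\,v_{r,d,g}$ by elementary gcd manipulations. First I would reduce the two multiples of $r$ using $\gcd(d+g-1,d+1-g)=\gcd(d+1-g,2g-2)$. Then, writing $r=n_{r,d}r'$ and $d=n_{r,d}d'$ with $(r',d')=1$, I would factor out $n_{r,d}$ (note $d+r(1-g)=n_{r,d}(d'-r'(g-1))$), reducing the claim to $\gcd(r'\gcd(d+1-g,2g-2),\,d'-r'(g-1))=\gcd(d'-r'(g-1),\,d+1-g,\,2g-2)$. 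The factor $r'$ then drops out because $\gcd(r',\,d'-r'(g-1))=\gcd(r',d')=1$, which can be checked prime-by-prime, and what remains is precisely the defining expression for $v_{r,d,g}$.

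For part (ii), I would identify $\Pic(\Vr)$ with $\ker(res)$. Since $res(\Lambda(1,0,0))=0$ and $\Lambda(1,0,0)$ spans a direct summand, the kernel splits as $\mathbb Z\langle\Lambda(1,0,0)\rangle\oplus\ker(\psi)$, where $\psi$ is the restriction of $res$ to the rank-$3$ summand $\langle\Lambda(1,1,0),\Lambda(0,1,0),\Lambda(0,0,1)\rangle$; explicitly $\psi(x,y,z)=a_1x+a_2y+a_3z$ with $a=(r(d+g-1),r(d+1-g),d+r(1-g))$. Writing $a=c\,b$ with $b$ primitive (so $c=n_{r,d}v_{r,d,g}$ by part (i)), the lattice $\ker(\psi)=b^{\perp}\cap\mathbb Z^3$ has rank $2$, and two of its vectors $u,v$ form a basis if and only if $u\times v=\pm b$. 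I would first record that the exponents defining $\Xi,\Theta$ are integers (using $v_{1,d,g}\mid d\pm(g-1)$, $v_{r,d,g}\mid v_{1,d,g}$ and $n_{r,d}\mid r$), so that they are genuine line bundles, and then verify by direct substitution that $\Xi$ and $\Theta$ lie in $\ker(\psi)$, the latter using the defining relation for $\alpha,\beta$.

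The crux, and the step I expect to carry the argument, is the cross-product computation $\Xi\times\Theta=b$ on the nose: the first two components simplify to $r(d+g-1)/c$ and $r(d+1-g)/c$ once the factor $v_{1,d,g}$ cancels, while the third equals $-\tfrac{1}{v_{1,d,g}}\big(\alpha(d+1-g)+\beta(d+g-1)\big)$, which by the defining relation for $\alpha,\beta$ is exactly $(d+r(1-g))/c$. This forces the index $[\ker\psi:\langle\Xi,\Theta\rangle]$ to be $1$, so $\Xi,\Theta$ is a basis of $\ker\psi$; adjoining $\Lambda(1,0,0)$ then freely generates $\Pic(\Vr)$. The freeness is precisely where $g\geq 3$ is used, since it enters through the freeness of $\Pic(\Vc)$ in Theorem \ref{pic}(i). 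The main obstacle is thus not conceptual but bookkeeping: confirming that the specific exponents chosen in $\Xi$ and $\Theta$ make them \emph{generate} the kernel rather than merely lie in it, which is exactly what the identity $\Xi\times\Theta=b$ certifies.
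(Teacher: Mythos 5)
Your proposal is correct and follows the same route as the paper, which simply states that the corollary is obtained by combining the exact sequence (\ref{lss}), Theorem \ref{pic}(i), and the lemma computing $res$ on the four tautological generators, leaving the arithmetic to the reader. Your gcd manipulation for part (i) and the cross-product/index certificate showing $\Xi,\Theta$ generate the rank-two kernel lattice for part (ii) are exactly the computations needed to make that one-line deduction rigorous, and both check out.
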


With all these facts, we are now ready for\vspace{0.1cm}\\
\begin{dimo} \emph{\ref{picred}}.\label{proofpicred} The proof consists in combining some facts proved through the paper.
	
\ref{picred}(i). By Corollary \ref{eccocequasi}(ii), the point is true for the stack $\Vr$. The assertion about the (semi)stable locus comes from the rigidified version of Corollary \ref{redsemistable}.

\ref{picred}(ii). The assertion for $\CVr$ follows from the previous point and Corollary \ref{boundrig}. Then the assertion for the P-semistable, resp. H-semistable, locus follows from the rigidified version of Corollary \ref{redsemistable}, resp. \ref{boundrigH}(i).

\ref{picred}(iii). The assertion follows from the previous points and the rigidified version of Corollary \ref{boundrigH}(ii).
\end{dimo}

\begin{rmk}\label{kou}Let $U_{r,d,g}$ be the coarse moduli space of aut-equivalence classes of semistable vector bundles on smooth curves. Suppose that $g\geq 3$. Kouvidakis in \cite{Kou93} gives a description of the Picard group of the open subset $U^\star_{r,d,g}$ of curves without non-trivial automorphisms. As observed in Section $3$ of \emph{loc. cit.}, such locus is locally factorial. Since the locus of strictly semistable vector bundles has codimension at least $2$, we can restrict to study the open subset $U^{\star}_{r,d,g}\subset U_{r,d,g}$ of stable vector bundles on curves without non-trivial automorphisms. The good moduli morphism $\Psi_{r,d}:\Vr^{ss}\lra U_{r,d,g}$ is an isomorphism over $U^\star_{r,d,g}$. In other words, we have an isomorphism $\Vr^{\star}:=\Psi_{r,d}^{-1}(U^\star_{r,d,g})\cong U^{\star}_{r,d,g}$. Therefore, we get a natural surjective homomorphism
$$
\psi:\Pic(\Vr^{ss})\cong\Pic(\Vr^{s})\twoheadrightarrow \Pic(\Vr^\star)\cong\Pic(U^\star_{r,d,g}),
$$
where the first two homomorphisms are the restriction maps. When $g\geq 4$ the codimension of $\Vr^s\backslash\Vr^\star$ is at least two (see \cite[Remark 2.4]{GV}). Then the map $\psi$ is an isomorphism by Theorem \ref{picchow}. If $g=3$, the locus $\mt V^{(s)s}_{r,d,3}\backslash\mt V^\star_{r,d,3}$ is a divisor in $\mt V^{(s)s}_{r,d,3}$ (see \cite[Remark 2.4]{GV}). More precisely, it is the pull-back of the hyperelliptic (irreducible) divisor in $\mt M_3$. As line bundle, it is isomorphic to $\Lambda^9$ in the Picard group of $\mt M_3$ (see \cite[Chap. 3, Sec. E]{HM98}). Therefore, by Theorem \ref{picred}(i), we get that $\Pic(U^\star_{r,d,3})$ is the quotient of $\Pic(\mt V^{(s)s}_{r,d,3})$ by the relation $\Lambda(1,0,0)^9$. In particular, (when $g\geq 3$) the line bundle $\Theta^s\otimes\Xi^{t}\otimes \Lambda (1,0,0)^u$, where $(s,t,u)\in\mathbb Z^3$, on $U^\star_{r,d,g}$ has the same properties of the canonical line bundle $\mathscr L_{m,a}$ in \cite[Theorem 1]{Kou93}, where $m=s\cdot \frac{v_{1,d,g}}{v_{r,d,g}}$ and $a=-s(\alpha+\beta)-t\cdot k_{1,d,g}$.
\end{rmk}

By Theorem \ref{schmitt}, $\CVc^{Hss}$ admits a projective variety as good moduli space. This means, in particular, that the stacks $\CVc^{Hss}$ and $\CVr^{Hss}$ are of finite type and universally closed. Since any vector bundle contains the multiplication by scalars as automorphisms, $\CVc^{Hss}$ is not separated. The next Proposition tells us exactly when the rigidification $\CVr^{Hss}$ is separated.

\begin{prop}\label{poincare}The following conditions are equivalent:
\begin{enumerate}[(i)]
\item $n_{r,d}\cdot v_{r,d,g}=1$, i.e. $n_{r,d}=1$ and $v_{r,d,g}=1$.
\item There exists a universal vector bundle on the universal curve of an open substack of $\CVr$.
\item There exists a universal vector bundle on the universal curve of $\CVr$.
\item The stack $\CVr^{Hss}$ is proper.
\item All H-semistable points are H-stable.
\item $\CVr^{Hss}$ is a Deligne-Mumford stack. 
\item The stack $\CVr^{Pss}$ is proper.
\item All P-semistable points are P-stable.
\item $\CVr^{Pss}$ is a Deligne-Mumford stack.
\end{enumerate}
\end{prop}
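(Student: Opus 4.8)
The plan is to split the nine conditions into three packets --- the ``universal bundle'' conditions (i)--(iii), the $H$-theoretic conditions (iv)--(vi), and the $P$-theoretic conditions (vii)--(ix) --- prove the equivalences inside each packet, and then bridge the packets. The arithmetic invariant $n_{r,d}v_{r,d,g}$ will mediate the bridge.

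For (i)$\Leftrightarrow$(ii)$\Leftrightarrow$(iii) I would argue that the existence of a universal vector bundle on the rigidified universal curve amounts to descending the weight-one universal bundle $\mt E$ through $\nu_{r,d}$, which is possible exactly when there is a line bundle of weight one on $\CVc$, i.e. when the $\mathbb G_m$-gerbe $\nu_{r,d}:\CVc\to\CVr$ is neutral. By the low-degree exact sequence (\ref{lss}) the gerbe class $obs(1)$ vanishes iff $1\in\mathrm{Im}(res)$, and by Corollary \ref{eccocequasi}(i) one has $\mathrm{Im}(res)=n_{r,d}\,v_{r,d,g}\,\mathbb Z$; hence all three are equivalent to $n_{r,d}v_{r,d,g}=1$. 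Since by Theorem \ref{picchow}(i) restriction of Picard groups along open immersions is surjective and preserves weights, and any nonempty open substack contains a geometric fibre of $\nu_{r,d}$ on which the weight is detected, it is harmless to test this on the open locus $\Vr$; this is why (ii) and (iii) collapse to the same numerical condition.

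For the $H$- and $P$-packets I would argue through stability. A point $(C,\mt E)$ of $\CVr^{Hss}$ has finite automorphism group after rigidification iff $\mt E$ is stable (a stable object has only the scalar automorphisms, which are rigidified away), so $\CVr^{Hss}$ is Deligne--Mumford iff every $H$-semistable point is $H$-stable, giving (v)$\Leftrightarrow$(vi). For (iv)$\Leftrightarrow$(v): if some point is strictly semistable it is $S$-equivalent but not isomorphic to its associated graded, yielding two distinct limits of a one-parameter degeneration and hence violating the valuative criterion of separatedness, so $\CVr^{Hss}$ is not proper; conversely if all points are stable the stack is Deligne--Mumford and its good moduli space $\overline U_{r,d,g}$ is a projective coarse space, so $\CVr^{Hss}$ is proper. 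The identical reasoning on $\CVr^{Pss}$ (quasi-compact by Proposition \ref{qc} and admitting a projective good moduli space) gives (vii)$\Leftrightarrow$(viii)$\Leftrightarrow$(ix). The two packets are then matched using Remark \ref{defhsem} and Proposition \ref{hextr}: on smooth curves $H$- and $P$-(semi)stability coincide, and any strictly $P$-semistable object forces an extremal boundary divisor whose generic point is strictly $H$-semistable; thus strictly $H$-semistable points exist iff strictly $P$-semistable points do, i.e. (v)$\Leftrightarrow$(viii).

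It remains to connect the two worlds, i.e. to prove (i)$\Leftrightarrow$(viii), and this is the main obstacle. The plan is to show that strictly $P$-semistable objects exist precisely when $n_{r,d}v_{r,d,g}>1$, by separating the smooth and the boundary contributions. Over smooth curves a strictly semistable bundle exists iff it admits a proper subbundle of the same slope, which by an elementary rank--degree count is possible iff $n_{r,d}=(r,d)>1$. Over the boundary, Lemma \ref{T2} and Proposition \ref{hextr} identify the codimension-one strictly semistable locus with the extremal boundary divisors, so that (once $n_{r,d}=1$) a strictly semistable object supported on a reducible curve exists iff some extremal divisor exists. The remaining, purely arithmetic, step is to check that --- when $n_{r,d}=1$ --- the integrality and parity conditions of Definition \ref{boundef} governing the extremal divisors are satisfiable iff $v_{r,d,g}>1$; here the identity $n_{r,d}v_{r,d,g}=\gcd\!\big(d+r(1-g),\,r(d+1-g),\,r(d+g-1)\big)$ from Corollary \ref{eccocequasi}(i) is the bridge between the gcd defining $v_{r,d,g}$ and the divisibilities governing $k_{r,d,g}$. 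Combining the two cases, no strictly semistable object exists iff $n_{r,d}=1$ and $v_{r,d,g}=1$, i.e. iff $n_{r,d}v_{r,d,g}=1$, closing the cycle. I expect the delicate point to be exactly this last bookkeeping, together with verifying that the separatedness argument behaves well in the present non-quasi-compact stacky setting.
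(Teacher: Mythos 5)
Most of your outline tracks the paper's proof: the equivalence of (i)--(iii) via the weight sequence (\ref{lss}) and Corollary \ref{eccocequasi}(i) is exactly the argument used there, and the identification of properness and of the Deligne--Mumford property with the absence of strictly semistable points is also the paper's route (though for (v)$\Rightarrow$(iv) the paper gets separatedness from properness of the $GL(V_n)$-action on the stable locus of $H_n$, via \cite[Corollary 2.5]{Mum94} and a cartesian diagram, rather than from ``DM with projective coarse space''; your version quietly presupposes the separatedness it is trying to prove). The genuine gap is in your bridge (i)$\Leftrightarrow$(viii). You propose to reduce the existence of strictly P-semistable objects on reducible curves to the existence of extremal boundary divisors, i.e.\ to the codimension-one strictly semistable locus. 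That equivalence is false. Extremal divisors $\widetilde\delta_i^0,\widetilde\delta_i^r$ with $i<g/2$ exist only when $k_{r,d,g}$ is odd (the condition $k_{r,d,g}\mid 2i-1$ of Definition \ref{boundef}), whereas strictly semistable objects typically live on two-component curves meeting in $N\geq 2$ points, hence in codimension $\geq 2$. Concretely, take $(r,d,g)=(3,4,3)$: then $n_{r,d}=1$, $k_{r,d,g}=2$, $v_{r,d,g}=\gcd(-2,2,4)=2>1$, so (i) fails; there is no extremal boundary divisor at all (no $i$ with $2\mid 2i-1$, and $g/2\notin\mathbb Z$), yet Lemma \ref{sHs} applied to two elliptic curves meeting in $N=2$ points with multidegree $(-1,5)$ produces a strictly P-semistable and strictly H-semistable point. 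Your bookkeeping would declare (viii) true here and the chain of equivalences would break.

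What the paper does instead is the following. For the implication from the failure of (i) to the failure of (viii), assuming $n_{r,d}=1$ and $v_{r,d,g}>1$ it constructs a two-component curve meeting in $N$ points with $\omega_{C_1}=k_{r,d,g}$, where $N$ is chosen (not forced to equal $1$) so that the critical multidegree $\left(d\tfrac{\omega_{C_1}}{\omega_C}-N\tfrac r2,\ d\tfrac{\omega_{C_2}}{\omega_C}+N\tfrac r2\right)$ is integral, and then invokes Lemma \ref{sHs}. For (i)$\Rightarrow$(viii) it does not pass through boundary divisors at all: if $\mt F\subset\pi_*\mt E$ is a destabilizing subsheaf achieving equality $\chi(\mt F)\,r\omega_C=\chi(\mt E)\sum s_i\omega_{C_i}$ with $0<\sum s_i\omega_{C_i}<r\omega_C$, then $\gcd\left(\chi(\mt E),r\omega_C\right)=\gcd\left(d+r(1-g),r(2g-2)\right)$ must exceed $1$, and this gcd equals $v_{r,d,g}$ once $(r,d)=1$. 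Replacing your extremal-divisor reduction by these two arguments repairs the proof; the remainder of your plan is sound.
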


\begin{proof}The strategy of the proof is the following$$$$
$$
\xymatrix{
(iii)\ar@{=>}[d]    &   (i)\ar@{<=>}[d]\ar@{=>}[l]   &(ix)\ar@{=>}[r] &(v)\ar@{=>}@/_2pc/[ll]\ar@{<=>}[r] \ar@{<=>}[d]&(iv)\\
(ii)\ar@{=>}[ru]&  (viii)\ar@{=>}[r]\ar@{=>}[ur] &(vii)\ar@{=>}[ur]& (vi)
}
$$

$(i) \Rightarrow (iii)$. By Corollary \ref{eccocequasi}(i) and the exact sequence (\ref{lss}), the weight of a line bundle on $\CVc$ must be a multiple of $ n_{r,d}\cdot v_{r,d,g}$. In particular the condition $(i)$ is equivalent to have a line bundle $\mt L$ of weight $1$ on $\CVc$. Let $\left(\overline{\pi}:\overline{\mt Vec}_{r,d,g,1}\rightarrow \CVc,\mt E\right)$ be the universal pair, we can see that $\mt E\otimes \overline{\pi}^*\mt L^{-1}$ descends to a vector bundle on $\CVr$ with the universal property.

$(iii) \Rightarrow (ii)$ Obvious.

$(ii)\Rightarrow (i)$ Suppose that there exists a universal pair $(\mt S_1\rightarrow \mt S,\mt F)$ on some open substack $\mt S$ of $\CVr$. We can suppose that all the points $(C,\mt E)$ in $\mt S$ are such that $\Aut(C,\mt E)=\mathbb G_m$. Let $\nu_{r,d}:\mt T:=\nu_{r,d}^{-1}\mt S\rightarrow \mt S$ be the restriction of the rigidification map and $(\overline{\pi}:\mt T_1\rightarrow \mt T,\mt E)$ the universal pair on $\mt T\subset \CVc$. Then $$\overline{\pi}_*\left(Hom\left(\nu_{r,d}^*\mt F,\mt E\right)\right)$$ is a line bundle of weight $1$ on $\mt T$ and, by smoothness of $\CVc$, we can extend it to a line bundle of weight $1$ on $\CVc$.

$(iv) \Leftrightarrow (v)$. If all H-semistable points are H-stable, then  by \cite[Corollary 2.5]{Mum94} the action of $GL(V_n)$ on the H-semistable locus of $H_n$ is proper, i.e. the morphism $PGL\times H_n^{Hss}\rightarrow H_n^{Hss}\times H_{n}^{Hss}:(A,h)\mapsto (h,A.h)$ is proper (for $n$ big enough). The cartesian diagram
$$
\xymatrix{
PGL\times H_n^{Hss}\ar[r]\ar[d] & H_n^{Hss}\times H_n^{Hss}\ar[d]\\
\CVr^{Hss}\ar[r] &\CVr^{Hss}\times \CVr^{Hss}
}
$$
implies that the diagonal is proper, i.e. the stack is separated. We have already seen that it is always universally closed and of finite type, so it is proper. Conversely, if the diagonal is proper the automorphism group of any point must be finite, in particular there are no strictly H-semistable points.

$(v)\Leftrightarrow (vi)$. By \cite[Theorem 8.1]{LMB}, $\CVr^{Hss}$ is Deligne-Mumford if and only if the diagonal is unramified, which is also equivalent to the fact that the automorphism group of any point is a finite group (because we are working in characteristic $0$). As before, this happens if and only if all semistable points are stable.

$(v),(viii) \Rightarrow (i)$. It is known that, on smooth curves, $n_{r,d}=1$ if and only if all semistable vector bundles are stable. So we can suppose that $n_{r,d}=1$, so that $v_{r,d,g}=(2g-2, d+1-g, d+r(1-g))=(2g-2,d+r(1-g))$. If $v_{r,d,g}\neq 1$ we have $k_{r,d,g}<2g-2$, we can construct a nodal curve $C$ of genus $g$, composed by two irreducible smooth curves $C_1$ and $C_2$ meeting at $N$ points, such that $\omega_{C_1}=k_{r,d,g}$. In particular $(d_1,d_2):=(d\frac{\omega_{C_1}}{\omega_C}-N\frac{r}{2},d\frac{\omega_{C_2}}{\omega_C}+N\frac{r}{2})$ are integers. So we can construct a vector bundle $\mt E$ on $C$ with multidegree $(d_1,d_2)$ and rank $r$ satisfying the hypothesis of Lemma \ref{sHs}. This implies that the pair $(C,\mt E)$ must be strictly P-semistable and strictly H-semistable.

$(i)\Rightarrow (viii)$. Suppose that there exists a point $(C,\mt E)$ in $\CVr$ such that $(C^{st},\pi_*\mt E)$ is strictly P-semistable. If $C$ is smooth then $n_{r,d}\neq 1$ and we have done. Suppose that $n_{r,d}=1$ and $C$ singular. By hypothesis there exists a destabilizing subsheaf $\mt F\subset \pi_*\mt E$, such that
$$
\frac{\chi(\mt F)}{\sum s_i\omega_{C_i}}=\frac{\chi(\mt E)}{r\omega_C}.
$$
The equality can exist if and only if $(\chi(\mt E),r\omega_C)=(d+r(1-g),r(2g-2))\neq 1$. We have supposed that $d$ and $r$ are coprime, so $(d+r(1-g),r(2g-2))=(d+r(1-g), 2g-2)=(2g-2,d+1-g,d+r(1-g))=v_{r,d,g}$, which concludes the proof.

$(viii)\Rightarrow (vii),(ix)$. By hypothesis $\CVr^{Pss}=\CVr^{Ps}=\CVr^{Hss}=\CVr^{Hs}$, so $(vi)$ and $(viii)$ hold by what proved above.

$(vii), (ix)\Rightarrow (v)$. Suppose that $(v)$ does not hold, then there exists a strictly H-semistable point with automorphism group of positive dimension. Thus $\CVr^{Hss}$, and in particular $\CVr^{Pss}$, cannot be neither proper nor Deligne-Mumford.
\end{proof}

\appendix
\renewcommand*{\thesection}{\Alph{section}}

\section{The genus two case.}\label{genus2}
\setcounter{equation}{0}
In this appendix we will extend the Theorems \ref{pic} and \ref{picred} to the genus two case. The main results are the following
\begin{teoa}\label{pic2}Suppose that $r\geq 2$.
\begin{enumerate}[(i)]
\item The Picard groups of $\Vtc$ and $\Vtc^{ss}$ are generated by $\Lambda(1,0,0)$, $\Lambda(1,1,0)$, $\Lambda(0,1,0)$ and $\Lambda(0,0,1)$ with the unique relation 
\begin{equation}\label{relsm}
\Lambda(1,0,0)^{10}=\oo.
\end{equation}
\item The Picard groups of $\CVtc$ and $\CVtc^{Pss}$ are generated by $\Lambda(1,0,0)$, $\Lambda(1,1,0)$, $\Lambda(0,1,0)$, $\Lambda(0,0,1)$ and the boundary line bundles with the unique relation 
\begin{equation}\label{relcp}
\Lambda(1,0,0)^{10}=\oo\left(\widetilde\delta_0+2\sum_{j\in J_1}\widetilde\delta^j_1\right).
\end{equation}
\end{enumerate}
\end{teoa}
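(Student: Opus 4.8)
The plan is to run the arguments of the main text essentially verbatim, feeding in at each step the genus-two versions of the input theorems, and to isolate the one genuinely new feature: a single relation inherited from the relation $\Lambda^{10}=\oo(\delta_0+2\delta_1)$ on $\overline{\mt M}_2$. First I would prove part (i). The exact sequence of Theorem \ref{picsmooth} is valid for every $g\geq 2$, so for a smooth genus-two curve $C$ with $\mathcal L$ of degree $d$ we have
\[
0\lra \Pic(\mathcal Jac_{d,2})\xrightarrow{det^*}\Pic(\Vtc^{ss})\xrightarrow{res}\Pic(\mathcal Vec^{ss}_{=\mathcal L,C})\lra 0.
\]
By Theorem \ref{fibers}(i) the quotient is $\mathbb Z$, freely generated by the restriction of $\Lambda(0,0,1)$, so the sequence splits with $\Lambda(0,0,1)$ as a splitting. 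By Theorem \ref{picjac} in the genus-two case the subgroup $\Pic(\mathcal Jac_{d,2})$ is generated by $\Lambda(1,0),\Lambda(1,1),\Lambda(0,1)$ subject only to $\Lambda(1,0)^{10}=\oo$; since $det^*\Lambda(n,m)=\Lambda(n,m,0)$ these pull back to $\Lambda(1,0,0),\Lambda(1,1,0),\Lambda(0,1,0)$ and the relation pulls back to $\Lambda(1,0,0)^{10}=\oo$, which is exactly (\ref{relsm}). Together with $\Pic(\Vtc)\cong\Pic(\Vtc^{ss})$ (Lemma \ref{redsemistable}, whose proof only needs the unstable locus to have codimension $\geq 2$ and so is unaffected by the $g=r=2$ subtlety of Remark \ref{caso2schifo}), this gives part (i).

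For part (ii) I would first pin down the relation (\ref{relcp}). Since the stabilization morphism contracts rational chains on which $\omega_{\overline{\pi}}$ has degree zero, Proposition \ref{stabil} gives vanishing of the higher direct image and hence $\Lambda(1,0,0)=d_{\overline{\pi}}(\omega_{\overline{\pi}})=\overline{\phi}_{r,d}^*\Lambda$, the pullback of the Hodge bundle. Applying $\overline{\phi}_{r,d}^*$ to the genus-two relation $\Lambda^{10}=\oo(\delta_0+2\delta_1)$ of Theorem \ref{picmg}(ii), and using $\overline{\phi}_{r,d}^*\oo(\delta_i)=\oo(\sum_{j\in J_i}\widetilde\delta_i^j)$ from Proposition \ref{boundary}(ii) (with $J_0=\{0\}$), yields precisely (\ref{relcp}). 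Generation of $\Pic(\CVtc)$ by the four tautological bundles and the boundary bundles then follows from the right-exact sequence of Theorem \ref{picchow}(iv) combined with part (i). For uniqueness I would argue as follows: given any relation $\sum_k a_k[\mathrm{taut}_k]+\sum_{i,j} b_{ij}[\widetilde\delta_i^j]=0$, restriction to $\Vtc$ kills the boundary terms, and by part (i) forces the tautological coefficient vector to be an integer multiple of the one defining $\Lambda(1,0,0)^{10}$. Substituting (\ref{relcp}) then converts the relation into one purely among the boundary bundles; provided these are independent all remaining coefficients are pinned down, so the original relation is an integer multiple of (\ref{relcp}). Solving (\ref{relcp}) for $\oo(\widetilde\delta_0)$ shows the quotient is torsion free, and $\Pic(\CVtc)\cong\Pic(\CVtc^{Pss})$ by Lemma \ref{redsemistable}.

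The hard part will be exactly the independence of the boundary line bundles $\oo(\widetilde\delta_0)$ and $\oo(\widetilde\delta_1^j)$, $j\in J_1=\{0,\ldots,\lfloor r/2\rfloor\}$, in $\Pic(\CVtc)$. In genus $\geq 3$ this is Theorem \ref{indbou}, but its proof is unavailable here: every test family used there contains a fixed smooth component of genus $g-3=-1$, and the families $\widetilde F_h$ exist only for $1\leq h\leq (g-2)/2$, an empty range when $g=2$. I would therefore construct genuinely genus-two replacements, all of them living \emph{inside} the boundary, in the spirit of Arbarello--Cornalba. To detect $\widetilde\delta_0$ I would use a one-parameter family of irreducible one-nodal genus-two curves (a fixed elliptic curve with two marked points glued, the node being smoothed along the base) carrying a degree-$d$ bundle. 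For each $k\in J_1$ I would use a family over an elliptic base of curves $C_1\cup_{pt} C_2$ with $C_1,C_2$ elliptic, equipped with a vector bundle of the multidegree defining $\widetilde\delta_1^k$, built by gluing bundles on $C_1\times B$ and $C_2\times B$ along the node section; varying the multidegree separates the different values of $k$.

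Proper balancedness of all these bundles is immediate from Remark \ref{balcon} and the defining multidegrees, while the relevant intersection numbers are the degrees of the boundary line bundles restricted to each family. Because these families lie in the boundary, those degrees are computed from the node-smoothing (self-intersection) data of the node sections, and because distinct components of the boundary are disjoint the off-diagonal entries vanish. The resulting intersection matrix is therefore diagonal, and I expect each diagonal entry to be $\pm 1$ exactly as in the higher genus computation; establishing the nonvanishing of these self-intersection numbers is the single new calculation, after which the matrix is nondegenerate, the boundary bundles are independent, and the presentation with the unique relation (\ref{relcp}) follows. The argument for $\CVtr$ and $\Vtr$ would then be obtained, as in \S\ref{comparing}, from the Leray spectral sequence for $\nu_{r,d}$.
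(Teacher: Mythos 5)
Your part (i) and the reduction of part (ii) to the independence of the boundary line bundles follow the paper's Appendix~\ref{genus2} essentially verbatim: the split exact sequence from Theorem~\ref{picsmooth} and Theorem~\ref{fibers}, the genus-two relation imported from $\Pic(\mathcal Jac_{d,2})$, the pullback of $\Lambda^{10}=\oo(\delta_0+2\delta_1)$ along $\overline{\phi}_{r,d}$, and the observation that any second relation would force a nontrivial relation purely among the boundary bundles. Up to that point the proposal is correct and matches the paper.

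The gap is in the test families. First, your claim that the intersection matrix is diagonal ``because distinct components of the boundary are disjoint'' is false: the boundary divisors meet in codimension two (curves carrying both a node of type $0$ and a node of type $1$), and indeed the paper's own $\delta_1$-test families $\widetilde G_1^j$ pass through $\widetilde\delta_0$ at the twelve fibers where the elliptic component degenerates. One only gets a \emph{triangular} matrix, by first killing the $\widetilde\delta_0$-coefficient and then the $\widetilde\delta_1^j$-coefficients. Second, and more seriously, the specific families you propose would have \emph{zero} diagonal entries. By \cite[Lemma 1]{AC87} the degree of $\oo(\widetilde\delta_i^j)$ on a family contained in $\widetilde\delta_i^j$ is the degree of the tensor product of the normal bundles of the two branches along the node section. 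For a fixed elliptic curve with two fixed points glued, and for constant products $C_1\times B$ and $C_2\times B$ glued along constant sections, this line bundle is trivial, so the self-intersection is $0$, not $\pm1$; even letting the gluing point move on an elliptic component does not help, since the tangent bundle of an elliptic curve is trivial. This is exactly why the paper does \emph{not} use constant families: for $\widetilde\delta_0$ it takes a Lefschetz pencil of hyperplane sections on a double cover of $\mathbb P^2$ branched over a sextic (giving degree $30$ by Riemann--Hurwitz on the ramification sextic), and for $\widetilde\delta_1^j$ it glues a fixed elliptic curve $\Gamma\times\mathbb P^1$ to a pencil of plane cubics along a $(-1)$-exceptional section, so that the node-smoothing bundle is $N_{E_1/X}\otimes N_{\{\gamma\}\times\mathbb P^1/\Gamma\times\mathbb P^1}$ of degree $-1$. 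Without some such variation of the curve or of the gluing section your matrix is degenerate and the independence argument collapses; this is a real missing ingredient, not merely an unverified computation.
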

Let $v_{r,d,2}$ and $n_{r,d}$ be the numbers defined in the Notations \ref{notations}. Let $\alpha$ and $\beta$ be (not necessarily unique) integers such that $\alpha(d-1)+\beta(d+1)=-\frac{1}{n_{r,d}}\cdot\frac{v_{1,d,2}}{v_{r,d,2}}(d-r)$. We set
$$\Xi:=\Lambda(0,1,0)^{\frac{d+1}{v_{1,d,2}}}\otimes\Lambda(1,1,0)^{-\frac{d-1}{v_{1,d,2}}},\quad
\Theta:=\Lambda(0,0,1)^{\frac{r}{n_{r,d}}\cdot \frac{v_{1,d,2}}{v_{r,d,2}}}\otimes \Lambda(0,1,0)^{\alpha}\otimes \Lambda (1,1,0)^{\beta}.$$

\begin{teoa}\label{pic2red}Suppose that $r\geq2$.
\begin{enumerate}[(i)]
\item The Picard groups of $\Vr$ and $\Vr^{ss}$ are generated by $\Lambda(1,0,0)$, $\Xi$ and $\Theta$, with the unique relation (\ref{relsm}).
\item The Picard groups of $\CVtr$ and $\CVtr^{Pss}$ are generated by $\Lambda(1,0,0)$, $\Xi$, $\Theta$ and the boundary line bundles with the unique relation (\ref{relcp}).
\end{enumerate}
\end{teoa}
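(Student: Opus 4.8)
The plan is to run the same machine used for Theorems \ref{pic} and \ref{picred}, carrying along the extra $10$-torsion relation that is characteristic of genus two. By the codimension estimates of Lemma \ref{redsemistable} (which are insensitive to the genus; note that $\CVtr^{Pss}$, $\Vtr^{ss}$ are compared with $\CVtr$, $\Vtr$, so only the semistable, not the stable, fibre locus enters and the exceptional case $(r,g,d)=(2,2,0)$ of Remark \ref{caso2schifo} never intervenes) it suffices to treat $\Vtr$ and $\CVtr$, and the $P$-semistability of the generic boundary point needed for these isomorphisms follows from Proposition \ref{T}, which is valid for $g>1$. As in \S\ref{comparing}, I will first prove Theorem \ref{pic2}, i.e. compute $\Pic(\Vtc)$ and $\Pic(\CVtc)$, and then descend along $\nu_{r,d}$ using the Leray spectral sequence (\ref{lss}).

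For $\Vtc$: the exact sequence (\ref{theoAi}) of Theorem \ref{picsmooth} is established under the sole hypothesis $g\ge 2$, hence applies verbatim. Feeding in the genus two computation of $\Pic(\mathcal Jac_{d,2})$ from Theorem \ref{picjac} (generated by $\Lambda(1,0)$, $\Lambda(1,1)$, $\Lambda(0,1)$ with $\Lambda(1,0)^{10}=\oo$) together with the generator $\Lambda(0,0,1)$ of the fibre from Theorem \ref{fibers}, I obtain that $\Pic(\Vtc)\cong\Pic(\Vtc^{ss})$ is generated by $\Lambda(1,0,0),\Lambda(1,1,0),\Lambda(0,1,0),\Lambda(0,0,1)$ subject to the single relation (\ref{relsm}), the torsion coming precisely from the relation $\Lambda(1,0)^{10}=\oo$ on the Jacobian. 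This is Theorem \ref{pic2}(i).

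For $\CVtc$ I need two ingredients. First, the relation (\ref{relcp}): it is the pullback along $\overline{\phi}_{r,d}$ of the genus two relation $\Lambda^{10}=\oo(\delta_0+2\delta_1)$ of Theorem \ref{picmg}(ii), using $\overline{\phi}_{r,d}^*\Lambda=\Lambda(1,0,0)$ (functoriality of the determinant of cohomology) and $\overline{\phi}_{r,d}^*\oo(\delta_i)=\oo(\sum_{j\in J_i}\widetilde\delta_i^j)$ from Proposition \ref{boundary}(ii). Second, I must re-establish the independence of the boundary divisors, i.e. the left exactness of
\[
0\to\bigoplus_{i,j}\langle\oo(\widetilde\delta_i^j)\rangle\to\Pic(\CVtc)\to\Pic(\Vtc)\to 0,
\]
the remaining arrows being supplied by Theorem \ref{picchow}. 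This is the main obstacle: the test families $\widetilde F$, $\widetilde{F'}_1^k$, $\widetilde{F'}_2^k$, $\widetilde F_h^k$ of \S\ref{indipendece} all contain a component of genus $g-3$ (and the $\widetilde F_h^k$ require $1\le h\le (g-2)/2$), so every one of them degenerates when $g=2$ and is unavailable. I therefore have to build genuinely new one-parameter families $B\to\CVtc$ lifting suitable test curves of $\overline{\mathcal M}_2$: a pencil on an elliptic surface whose general fibre is a smooth genus two curve and whose special fibres are irreducible one-nodal curves to detect $\widetilde\delta_0$, and, for each $j\in J_1$, a family obtained by gluing two elliptic curves along a varying attaching point, carrying a properly balanced bundle whose multidegree is prescribed so that the special fibre lands in $\widetilde\delta_1^j$ and in no other boundary stratum. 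For each I check proper balancedness on the special fibres via Remark \ref{balcon} and compute the degrees of the $\oo(\widetilde\delta_i^j)$; the resulting intersection matrix against $\{\widetilde\delta_0,\widetilde\delta_1^j\}_{j\in J_1}$ should be diagonal, hence non-degenerate. Granting independence, combining it with (\ref{relcp}) and Theorem \ref{pic2}(i) yields the presentation of Theorem \ref{pic2}(ii); here although $\Pic(\Vtc)$ carries $10$-torsion, the extension by the boundary group is non-split (the class $\Lambda(1,0,0)$ becomes non-torsion in $\CVtc$ because $\Lambda(1,0,0)^{10}$ equals the nonzero boundary combination), so $\Pic(\CVtc)$ is in fact torsion free with the single primitive relation (\ref{relcp}).

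Finally I descend to the rigidifications. The weight computation of \S\ref{comparing} (giving $res(\Lambda(1,0,0))=0$ and the values of $res$ on $\Lambda(0,0,1),\Lambda(0,1,0),\Lambda(1,1,0)$) holds for every $g\ge 2$, so specialising to $g=2$ and inserting it into (\ref{lss}) identifies $\Pic(\Vtr)$ with $\ker(res)$; by Theorem \ref{pic2}(i) this kernel is generated by $\Lambda(1,0,0)$, $\Xi$ and $\Theta$ with the relation (\ref{relsm}), proving Theorem \ref{pic2red}(i) for $\Vtr$, and $\Vtr^{ss}$ follows by Lemma \ref{redsemistable}. For $\CVtr$ I use that $\nu_{r,d}^*$ is injective and sends each $\oo(\widetilde\delta_i^j)$ to $\oo(\widetilde\delta_i^j)$ by Corollary \ref{boundarycor}, so the boundary divisors remain independent after rigidification, yielding as in Corollary \ref{boundrig} the exact sequence
\[
0\to\bigoplus_{i,j}\langle\oo(\widetilde\delta_i^j)\rangle\to\Pic(\CVtr)\to\Pic(\Vtr)\to 0.
\]
Combining this with the statement just proved for $\Vtr$ and with the relation (\ref{relcp}), which descends to $\CVtr$ since all the classes involved have weight zero, produces exactly the presentation of Theorem \ref{pic2red}(ii); the case $\CVtr^{Pss}$ is again handled by Lemma \ref{redsemistable}.
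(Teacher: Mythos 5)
Your proposal is correct and follows essentially the same route as the paper: reduce to the (P-)semistable loci via the genus-two analogue of Lemma \ref{redsemistable}, get part (i) from Theorem \ref{picsmooth} plus Theorems \ref{picjac} and \ref{fibers}, obtain the relation (\ref{relcp}) by pulling back $\Lambda^{10}=\oo(\delta_0+2\delta_1)$ along $\overline{\phi}_{r,d}$, prove independence of the boundary classes by new test curves, and descend to the rigidification via the sequence (\ref{lss}). The one step you leave as a sketch is precisely the new content of the appendix: the paper realises your ``pencil of genus two curves'' as a general pencil of hyperplane sections on a double cover of $\mathbb P^2$ branched along a smooth sextic (giving $\deg_{\widetilde G}\oo(\widetilde\delta_0)=30$ by Riemann--Hurwitz for the degree-six map from the sextic to $\mathbb P^1$), and your second family as a blown-up pencil of plane cubics glued along a section to a constant elliptic curve $\Gamma\times\mathbb P^1$, carrying $\oo_X((\lfloor\frac{d+r}{2}\rfloor-j)E_2)\oplus\oo_X^{r-1}$ glued to a degree $\lceil\frac{d-r}{2}\rceil+j$ bundle on $\Gamma$ (giving $\deg_{\widetilde G_1^j}\oo(\widetilde\delta_1^j)=-1$ from the normal bundle of the exceptional section); also note the resulting intersection matrix is triangular rather than diagonal (the $\widetilde G_1^j$ do meet $\widetilde\delta_0$ at the twelve nodal cubics), which still suffices for the elimination $a_0=0$ first, then $a_1^j=0$.
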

Unfortunately, at the moment we can not say if Theorems \ref{pic} and \ref{picred} are still true in genus two for the other open substacks in the assertions.

\begin{rmka}\label{g22}
Observe that, using Proposition \ref{T}, we can prove that Lemma \ref{redsemistable} holds also in genus two case. More precisely, we have that $\Pic(\CVtc)\cong\Pic(\CVtc^{Pss})\cong\Pic(\overline{\mt U}_n)$ and $\Pic(\Vtc)\cong\Pic(\Vtc^{ss})\cong\Pic(\mt U_n)$ for $n$ big enough.
\end{rmka}

We have analogous isomorphisms for the rigidified moduli stacks.\vspace{0.2cm}\\
\begin{dimo}\ref{pic2}(i) and \ref{pic2red}(i).
By the precedent observation, it is enough to prove the theorems for the semistable locus. Let $(C,\mt L)$ be a $k$-point of $\mt Jac_{d,2}$. We recall that Theorem \ref{picsmooth} says that the complex of groups
$$
0\lra \Pic(\mt Jac_{d,2})\lra \Pic(\Vtc^{ss})\lra \Pic(\Vl^{ss})\lra 0
$$
is exact. By Theorem \ref{fibers}, the cokernel is freely generated by the restriction of the line bundle $\Lambda(0,0,1)$ on the fiber $\Vl$. In particular the Picard groups of $\Vtc$ and $\Vtc^{ss}$ decomposes in the following way
$$
\Pic(\mt Jac_{d,2})\oplus\langle\Lambda(0,0,1)\rangle.
$$
By Theorem \ref{jc} and Lemma \ref{computations}, Theorem \ref{pic2}(i) follows. By Corollary \ref{eccocequasi}, Theorem \ref{pic2red}(i) also holds.
\end{dimo}\\

Now we are going to prove the Theorems \ref{pic2}(ii) and \ref{pic2red}(ii). First of all, by Theorems \ref{pic2}(i) and \ref{picchow}, we know that the Picard group of $\CVtc$ is generated by $\Lambda(1,0,0)$, $\Lambda(1,1,0)$, $\Lambda(0,1,0)$, $\Lambda(0,0,1)$ and the boundary line bundles. Consider the forgetful map $\overline{\phi}_{r,d}:\CVtc\to\overline{\mt M}_2$. By Theorem \ref{picmg}, the Picard group of $\overline{\mt M}_2$ is generated by the line bundles $\delta_0$, $\delta_1$ and the Hodge line bundle $\Lambda$, with the unique relation $\Lambda^{10}=\oo(\delta_0+2\delta_1)$. By pull-back along $\overline{\phi}_{r,d}$ we obtain the relation (\ref{relcp}). So for proving Theorem \ref{pic2}(ii), it remains to show that we do not have other relations on $\Pic(\CVtc)$.\vspace{0.2cm}

Suppose there exists another relation, i.e.
\begin{equation}\label{relmore}
\Lambda(1,0,0)^a\otimes\Lambda(1,1,0)^b\otimes\Lambda(0,1,0)^c\otimes\Lambda(0,0,1)^d\otimes\oo\left(e_0\widetilde\delta_0+\sum_{j\in J_1} e^j_1\widetilde\delta^j_1\right)=\oo
\end{equation}
where $a,b,c,d,e_o,e_1^j\in\mathbb Z$. By Theorem \ref{pic2}(i), the integers $b,c,d$ must be $0$ and $a$ must be a multiple of $10$. We set $a=10t$. Combining the equalities (\ref{relcp}) and (\ref{relmore}) we obtain:
\begin{equation}\label{rel}
\oo\left((e_0-t)\widetilde\delta_0+\sum_{j\in J_1}(e_1^j-2t)\widetilde\delta_1^j\right)=\oo
\end{equation}
where the integers $(e_0-t),(e_1^j-2t)$ cannot be all equal to $0$, because we have assumed that the two relations are independent. In other words the existence of two independent relations is equivalent to show that does not exist any relation among the boundary line bundles. We will show this arguing as in \S\ref{indipendece}. Observe that, arguing in the same way, we can arrive at same conclusions for the rigidified moduli stack $\CVtr$. \\\\
\textbf{The Family $\widetilde G$.}\\
Consider a double covering $Y'$ of $\mathbb P^2$ ramified along a smooth sextic $D$. Consider on it a general pencil of hyperplane sections. By blowing up $Y'$ at the base locus of the pencil we obtain a family $\varphi:Y\rightarrow \mathbb P^1$ of irreducible stable curves of genus two with at most one node. Moreover the two exceptional divisors $E_1$, $E_2\subset Y$ are sections of $\varphi$ through the smooth locus of $\varphi$. The vector bundle $\mt E:=\oo_Y\left(d E_1\right)\oplus\oo_Y^{r-1}$ is properly balanced of degree $d$. We call $G$ (resp. $\widetilde G$) the family of curves $\varphi:Y\rightarrow\mathbb P^1$ (resp. the family $\varphi$ with the vector bundle $\mt E$). We claim that
$$
\begin{cases}
\deg_{\widetilde G}\oo(\widetilde \delta_0)=30,\\
\deg_{\widetilde G}\oo(\widetilde \delta_1^j)=0 &\mbox{for any }j\in J_1.
\end{cases}
$$
The second result comes from the fact that all fibers of $\varphi$ are irreducible. We recall that, as \S\ref{indipendece}: $\deg_{\widetilde G}\oo(\widetilde \delta_0)=\deg_G\oo(\delta_0)$. So our problem is reduced to check the degree on $\overline{\mt M}_2$. Observe also that $Y$ is smooth and the generic fiber of $\varphi$ is a smooth curve. Since any fiber of $\varphi:Y\to\mathbb P^{-1}$ can have at most one node and the total space $Y$ is smooth, by \cite[Lemma 1]{AC87}, $\deg_{\widetilde G}\oo(\widetilde \delta_0)$ is equal to the number of singular fibers of $\varphi$. We can count them using the morphism $\varphi_D:D\rightarrow \mathbb P^1$, induced by the pencil restricted to the sextic $D$. By the generality of the pencil, we can assume that over any point of $\mathbb P^1$ there is at most one ramification point and that its ramification index at this point is $2$. So $\deg_{\widetilde G}\oo(\widetilde \delta_0)$ is equal to the degree of the ramification divisor in $D$. Using the Riemann-Hurwitz formula for the degree six morphism $\varphi_{D}$ we obtain the first equality.\\\\
\textbf{The Families $\widetilde G_1^j$.}\\
Consider a general pencil of cubics in $\mathbb P^2$. Blowing up the nine base points of the pencil, we obtain a family of irreducible stable elliptic curves $\phi:X\rightarrow\mathbb P^1$. The nine exceptional divisors $E_1,\ldots, E_9\subset X$ are sections of $\phi$ trough the smooth locus of $\phi$. The family will have twelve singular fibers consisting of irreducible nodal elliptic curves. Fix a smooth elliptic curve $\Gamma$ and a point $\gamma\in\Gamma$. We construct a surface $Y$ by setting
$$
Y=\left(X\coprod\left(\Gamma\times\mathbb P^1\right)\right)/\left(E_1\sim \{\gamma\}\times\Gamma\right)
$$
We get a family $f:X\rightarrow\mathbb P^1$ of stable curves of genus two. The general fiber is as in Figure \ref{Figure8} where $C$ is a smooth elliptic curve. While the twelve special fibers are as in Figure \ref{Figure9} where $C$ is a nodal irreducible elliptic curve.
\begin{figure}[h!]
\begin{center}
\unitlength .65mm 
\linethickness{0.4pt}
\ifx\plotpoint\undefined\newsavebox{\plotpoint}\fi 
\begin{picture}(122.75,50.5)(0,145)
\qbezier(47,153.5)(85.125,175.625)(122.75,183.25)
\qbezier(0,183)(25,185)(90,153)
\put(25,183.75){\makebox(0,0)[cc]{$C$}}
\put(113.25,183.75){\makebox(0,0)[cc]{$\Gamma$}}
\end{picture}
\end{center}
\caption{The general fiber of $f:X\to \mathbb P^1$.}\label{Figure8}
\end{figure}
\begin{figure}[h!]
\begin{center}
\unitlength .65mm 
\linethickness{0.4pt}
\ifx\plotpoint\undefined\newsavebox{\plotpoint}\fi 
\begin{picture}(122.75,50.5)(0,145)
\qbezier(77,153.5)(95.125,175.625)(122.75,183.25)
\qbezier(50,183)(25,180)(100,163)
\qbezier(0,153)(75,180)(50,183)
\put(5,160.75){\makebox(0,0)[cc]{$C$}}
\put(113.25,183.75){\makebox(0,0)[cc]{$\Gamma$}}
\end{picture}
\end{center}
\caption{The special fibers of $f:X\to \mathbb P^1$.}\label{Figure9}
\end{figure}
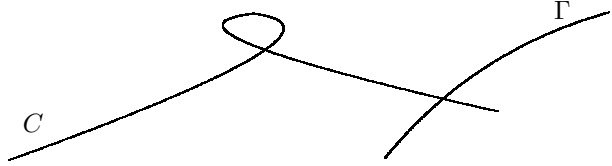
Choose a vector bundle $M^j$ of degree $\left\lceil \frac{d-r}{2}\right\rceil+j$ on $\Gamma$, pull it back to $\Gamma\times\mathbb P^1$ and call it again $M^j$. Since $M^j$ is trivial on $\{\gamma\}\times\mathbb P^1$, we can glue it with the vector bundle 
$$
\oo_X\left(\left(\left\lfloor\frac{d+r}{2}\right\rfloor-j\right)E_2\right)\oplus\oo_X^{r-1}
$$
on $X$ obtaining a vector bundle $\mt E^j$ on $f:X\to\mathbb P^1$ of rank $r$ and degree $d$. The next lemma follows easily
\begin{lema}The vector bundle $\mt E^j$ is a properly balanced for $j\in J_1=\{0,\ldots,\lfloor r/2\rfloor\}$.
\end{lema}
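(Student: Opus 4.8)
The plan is to reduce the statement "properly balanced" to the basic inequality and then dispatch it by an elementary numerical computation. First I would observe that admissibility is automatic here, exactly as in the analogous lemmas for the families $\widetilde F$, $\widetilde{F'}$ and $\widetilde F_h$ of Section \ref{indipendece}: the family $f:X\to\mathbb P^1$ is a family of \emph{stable} curves of genus two, and every fiber is the union of two arithmetic-genus-one components $C$ and $\Gamma$ meeting at a single point, hence contains no rational components. Thus $C_{exc}=\emptyset$ on each fiber, the stabilization morphism is the identity, and there is nothing to check for admissibility in the sense of Proposition \ref{prop5NS}. Consequently proving that $\mt E^j$ is properly balanced amounts to proving that it is balanced on every geometric fiber.

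Next I would invoke Remark \ref{balcon}: it suffices to verify the basic inequality for connected subcurves $Z$ with connected complement. On any fiber $X_t=C\cup\Gamma$ the only proper subcurves of this kind are the two components $C$ and $\Gamma$, and by Remark \ref{balcon}(ii),(iii) the inequality for $C$ is equivalent to the one for its complement $\Gamma$; so it is enough to treat $Z=\Gamma$. Here $g_\Gamma=1$ and $k_\Gamma=1$, so $\omega_\Gamma=2g_\Gamma-2+k_\Gamma=1$, while the total dualizing degree is $\omega_{X_t}=2g-2=2$, and by construction $\deg\mt E^j_\Gamma=\lceil(d-r)/2\rceil+j$. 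The basic inequality therefore reads
$$\left|\,\lceil(d-r)/2\rceil+j-\tfrac{d}{2}\,\right|\le \tfrac{r}{2}.$$

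The last step is to verify this inequality directly, distinguishing the parity of $d-r$. When $d-r$ is even the left-hand side equals $|j-r/2|$, which is $\le r/2$ precisely for $0\le j\le r$; when $d-r$ is odd it equals $|j-(r-1)/2|$, which is $\le r/2$ precisely for $0\le j\le r-1$. Since $r\ge 2$, the hypothesis $j\in J_1=\{0,\dots,\lfloor r/2\rfloor\}$ gives $0\le j\le\lfloor r/2\rfloor\le r-1$, so the bound holds in both cases. Because the multidegree and the topological type of $C$ and $\Gamma$ are constant across the family (the node appearing in the twelve special fibers does not change $g_C$ or $k_C$), this single computation applies simultaneously to every fiber; equivalently, one could invoke the openness of the properly balanced locus to reduce to the special fibers. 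I expect no genuine obstacle in this proof: the only thing to watch is the floor/ceiling and parity bookkeeping in the final inequality, which is entirely routine.
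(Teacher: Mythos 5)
Your proof is correct and is exactly the routine verification the paper leaves implicit when it says the lemma "follows easily": admissibility is automatic because the fibers are stable curves, and the balanced condition reduces via Remark \ref{balcon} to the single basic inequality on $\Gamma$, whose parity check you carry out correctly (note $\lfloor r/2\rfloor\le r-1$ for $r\ge 2$, so both parity cases are covered). The only cosmetic point is that on the twelve special fibers the component $C$ is an irreducible \emph{nodal} rational curve of arithmetic genus one; it is not a smooth rational component and is not a rational chain, so your conclusion that nothing is contracted by stabilization stands.
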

We call $G_1$ (resp. $\widetilde G_1^j$) the family of curves $f:X\to\mathbb P^1$ (resp. the family $f$ with the vector bundle $\mt E^j$). Moreover $\widetilde G_1^j$ does not intersect $\widetilde\delta_1^k$ for $j\neq k$. In particular $\deg_{\widetilde G_1^j}\oo(\widetilde \delta_1^j)=\deg_{G_1}\oo(\delta_1)$. By \cite[Lemma 1]{AC87}, the divisor $\oo(\delta_1)$ restricted to the family $G_1$ is isomorphic to the tensor product between the normal bundle of $E_1$ in $X$ and the normal bundle of $\gamma\times\mathbb P^1$ in $\Gamma\times\mathbb P^1$, i.e. $N_{E_1/X}\otimes N_{\{\gamma\}\times\mathbb P^1/\Gamma\times\mathbb P^1}$. The first factor has degree $-1$, while the second is trivial. Putting all together, we get
$$
\begin{cases}
\deg_{\widetilde G_1^k}\oo(\widetilde\delta_1^k)=-1,\\
\deg_{\widetilde G_1^k}\oo(\widetilde\delta_1^j)=0 &\mbox{if } j\neq k.
\end{cases}
$$

Now we can finally conclude the proof of Theorems \ref{pic2} and \ref{pic2red}.\vspace{0.2cm}\\
\begin{dimo} \ref{pic2}(ii) and \ref{pic2red}(ii). Suppose there exists a non-trivial relation $\oo(a_0\widetilde\delta_0+\sum a_1^j\widetilde\delta_1^j)=\oo$. If we restrict this equality on $\widetilde G$ we have $a_0=0$. Pulling back to $G_1^j$ we deduce $a_1^j=0$ for any $j\in J_1$. This concludes the proof of \ref{pic2}(ii). Repeating the same arguments for the rigidified moduli stack $\CVtr$ we prove Theorem \ref{pic2red}(ii).
\end{dimo}

\section{Base change cohomology for stacks admitting a good moduli space.}\label{App}
We will prove that the classical results of base change cohomology for proper schemes continue to hold again for (not necessarily proper) stacks, which admit a proper scheme as good moduli space (in the sense of Alper). The propositions and proofs are essentially equal to ones in \cite[Appendix A]{Br2}, but we rewrite them, because our hypotheses are different.\\
In this section, $\mathcal X$ will be an Artin stack of finite type over a scheme $S$, and a sheaf $\mathcal F$ will be a sheaf for the site lisse-\'etale defined in \cite[Sec. 12]{LMB} (see also \cite[Appendix A]{Br1}).
Recall first the definition of good moduli space.
\begin{defina}\cite[def 4.1]{Al} Let $S$ be a scheme, $\mathcal X$ be an Artin Stack over $S$ and $X$ an algebraic space over $S$ . We call an $S$-morphism $\pi:\mathcal X\rightarrow X$ a good moduli space if
\begin{itemize}
	\item $\pi$ is quasi-compact,
	\item $\pi_*$ is exact,
	\item The structural morphism $\oo_{ X}\rightarrow\pi_*\oo_{\mathcal X}$ is an isomorphism.
\end{itemize}
\end{defina}

\begin{rmka}\label{goodquotient} Let $\mt X$ be a quotient stack of a quasi-compact $k$-scheme $X$ by a smooth affine linearly reductive group scheme $G$. Suppose that $\mt L$ is a $G$-linearization on $X$. By\cite[Theorem 13.6 and Remark 13.7]{Al}, the GIT good quotient $X^{ss}_{\mt L}\sslash_{\mt L} G$ is a good moduli space for the open substack $\left[X^{ss}_{\mt L}/ G\right]$.
	
Conversely, suppose that there exists a $G$-stable open $U\subset X$ such that the open substack $\left[U/G\right]$ admits a good moduli space $Y$. By \cite[Theorem 11.14]{Al},  there exists a $G$-linearized line bundle $\mt L$ over $X$ such that $U$ is contained in $X^{ss}_{\mt L}$, $[U/G]$ is saturated with respect to the morphism $[X^{ss}_{\mt L}/G]\to X^{ss}_{\mt L}\sslash_{\mt L}G$ and $Y$ is the GIT good quotient $U\sslash_{\mt L}G$.
\end{rmka}

Before stating the main result of this Appendix, we need to recall the following
\begin{lema}$($\cite[Lemma 1, II]{Mum70}, see also \cite[Lemma A.1.3]{Br2}$)$.
\begin{enumerate}[(i)]
	\item Let $A$ be a ring and let $C^{\bullet}$ be a complex of $A$-modules such that $C^p\neq 0$ only if $0\leq p\leq n$. Then there exists a complex $K^{\bullet}$ of $A$-modules such that $K^p\neq 0$ only if $0\leq p\leq n$ and $K^p$ is free if $1\leq p\leq n$, and a quasi-isomorphism of complexes $K^{\bullet}\rightarrow C^{\bullet}$. Moreover, if the $C^p$ are flat, then $K^0$ will be $A$-flat too.
	\item If $A$ is noetherian and if the $H^i(C^\bullet)$ are finitely generated $A$-modules, then the $K^p$'s can be chosen to be finitely generated.
\end{enumerate}
\end{lema}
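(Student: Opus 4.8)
The plan is to prove this by pure homological algebra: it is the assertion that a bounded complex can be replaced, up to quasi-isomorphism, by one of the same amplitude whose terms are free except possibly in degree zero. I would treat (i) first and deduce the flatness and finiteness refinements afterwards.

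For the construction in (i) I would build the free modules $K^n, K^{n-1}, \dots, K^1$ together with a partial cochain map to $C^\bullet$ by \emph{descending} induction on the degree, setting $K^i = 0$ for $i>n$. At the step producing $K^p$ (for $p\ge 1$) I would take a free module $K^p$ equipped with maps $d^p_K\colon K^p\to K^{p+1}$ and $f^p\colon K^p\to C^p$ whose generators surject onto a suitable fibre product of the cocycles already constructed in $K^{p+1}$ with the cochains $C^p$ over the differential of $C^\bullet$; the generators are chosen both to realize the chain-map identity $f^{p+1}d^p_K = d^p_C f^p$ and to force the induced map on cohomology to be an isomorphism in degrees $>p$. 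I would then cap off in degree zero by setting
\[
K^0 := \{(c,k)\in C^0\oplus Z^1(K)\ :\ d^0_C(c)=f^1(k)\},\qquad Z^1(K):=\ker d^1_K,
\]
with $f^0$ and $d^0_K$ the two projections; this produces a genuine cochain map $\phi\colon K^\bullet\to C^\bullet$ concentrated in $[0,n]$, free in positive degrees, and a direct diagram chase shows $\phi$ is a quasi-isomorphism. The real work here is the bookkeeping keeping the cohomology isomorphism alive at every stage: at each step one must add generators both to hit the classes of $H^p(C^\bullet)$ and to annihilate the spurious cohomology the previous step created in degree $p+1$.

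For the flatness claim I would avoid analyzing $K^0$ directly and instead use the mapping cone of $\phi$. Since $\phi$ is a quasi-isomorphism, $\mathrm{Cone}(\phi)$ is acyclic, and because $\mathrm{Cone}(\phi)^i = K^{i+1}\oplus C^i$ vanishes below degree $-1$ and equals $K^0$ in degree $-1$, acyclicity yields an exact sequence
\[
0\to K^0\to K^1\oplus C^0\to K^2\oplus C^1\to\cdots\to C^n\to 0
\]
in which every term to the right of $K^0$ is flat ($K^{i+1}$ is free and $C^i$ is flat by hypothesis). Splitting this into short exact sequences and applying the elementary fact that the kernel of a surjection of flat modules with flat cokernel is again flat (an immediate long-exact-sequence-of-$\mathrm{Tor}$ argument), a descending induction on the syzygies shows each is flat, and in particular $K^0$ is flat.

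Finally, for (ii) I would re-run the construction while tracking finite generation. The subtlety is that only the cohomology $H^i(C^\bullet)$, and not the terms $C^p$, is assumed finitely generated; but at each inductive step the boundaries inside $K^\bullet$ are images of the already finitely generated free modules, while the new cocycle classes one must hit generate a finitely generated $A$-module, so finitely many generators suffice to achieve both the chain-map and cohomology conditions. Hence each $K^p$ may be taken finitely generated free, and $K^0$, being a submodule of the finitely generated module $C^0\oplus Z^1(K)$ over the noetherian ring $A$, is automatically finitely generated. The single genuine obstacle in the whole argument is the degree-by-degree bookkeeping of part (i); once the quasi-isomorphism with free positive-degree terms is in hand, both refinements are formal.
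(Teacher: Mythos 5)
The paper does not prove this lemma at all: it is quoted verbatim from \cite[Lemma 1, II]{Mum70} (see also \cite[Lemma 4.1.3]{Br2}), so there is no in-paper proof to compare against. Your reconstruction is essentially Mumford's original argument: descending induction producing free modules $K^n,\dots,K^1$ that simultaneously realize the chain-map identity, kill the spurious cohomology created one degree up, and hit generators of $H^p(C^\bullet)$; the fibre product $K^0=C^0\times_{C^1}Z^1(K)$ to close off the quasi-isomorphism; and the acyclic mapping cone $0\to K^0\to K^1\oplus C^0\to\cdots\to C^n\to 0$ together with the two-out-of-three property for flatness in short exact sequences to get flatness of $K^0$. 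All of that is correct.

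There is one genuine misstep, in part (ii): you justify finite generation of $K^0$ by saying it is a submodule of ``the finitely generated module $C^0\oplus Z^1(K)$''. But $C^0$ is \emph{not} assumed finitely generated --- only the cohomology modules $H^i(C^\bullet)$ are --- so that ambient module need not be finitely generated and the submodule argument does not apply. The conclusion is still true and the repair is short: the projection $K^0\to Z^1(K)$ has kernel $\{(c,0):d^0_C(c)=0\}=Z^0(C^\bullet)=H^0(C^\bullet)$ (the complex vanishes in negative degrees), which is finitely generated by hypothesis, and its image is a submodule of $Z^1(K)\subset K^1$, hence finitely generated since $A$ is noetherian and $K^1$ is finitely generated free; an extension of a finitely generated module by a finitely generated module is finitely generated. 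With that substitution your proof is complete.
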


\begin{propa}\label{lasvolta}Let $\mathcal X$ be a quasi-compact Artin stack over an affine scheme (resp. noetherian affine scheme) $S=Spec(A)$. Let $\pi:\mathcal X\rightarrow X$ be a good moduli space with $X$ a separated (resp. proper) scheme over $S$. Let $\mathcal F$ be a quasi-coherent (resp. coherent) sheaf on $\mathcal{X}$ that is flat over $S$. Then there is a complex of flat $A$-modules (resp. of finite type)
$$
0\longrightarrow M^0\longrightarrow M^1\longrightarrow\dots\longrightarrow M^n\longrightarrow 0
$$
with $M^i$ free over $A$ for $1\leq i\leq n$, and isomorphisms
$$
H^i(M^{\bullet}\otimes_A A')\longrightarrow H^i(\mathcal{X}\otimes_A A',\mathcal{F}\otimes_A A')
$$
functorial in the $A$-algebra $A'$.
\end{propa}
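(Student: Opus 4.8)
The plan is to reduce the statement to the classical theorem of cohomology and base change for the separated (resp.\ proper) scheme $X$, using the good moduli space morphism $\pi\colon\mathcal X\to X$ to transport everything from the stack down to $X$. The starting observation is that, since $\pi$ is a good moduli space, the functor $\pi_*$ is exact on quasi-coherent sheaves and $\oo_X\cong\pi_*\oo_{\mathcal X}$; hence $R^q\pi_*\mathcal F=0$ for $q>0$, and the Leray spectral sequence degenerates to give a canonical isomorphism $R\Gamma(\mathcal X,\mathcal F)\cong R\Gamma(X,\pi_*\mathcal F)$. The same will hold after any base change $A\to A'$, once we know that $\pi_{A'}\colon\mathcal X_{A'}\to X_{A'}$ is again a good moduli space (which follows from the stability of good moduli spaces under arbitrary base change, \cite{Al}) and that the formation of $\pi_*\mathcal F$ commutes with $A\to A'$.

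The technical heart of the argument is therefore an affine base-change lemma, which is where the flatness hypothesis on $\mathcal F$ enters. Let $\mathcal Y$ be a quasi-compact stack whose good moduli space $Y=\Spec B$ is affine, and let $\mathcal G$ be quasi-coherent on $\mathcal Y$ and flat over $A$. I would show that the natural map
$$
\Gamma(\mathcal Y,\mathcal G)\otimes_A N\longrightarrow \Gamma(\mathcal Y,\mathcal G\otimes_A N)
$$
is an isomorphism for every $A$-module $N$. For $N$ free this is clear, because $\Gamma(\mathcal Y,-)$ commutes with arbitrary direct sums on the quasi-compact stack $\mathcal Y$; the general case follows by choosing a presentation $A^{(J)}\to A^{(I)}\to N\to 0$, tensoring with $\mathcal G$ (exact by flatness) and applying $\Gamma(\mathcal Y,-)$ (exact, since $Y$ is affine and $\pi_*$ is exact), then comparing the two resulting right-exact sequences through the free cases. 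Taking $N=A'$ yields the base change statement, while exactness of the functor $N\mapsto\Gamma(\mathcal Y,\mathcal G\otimes_A N)=\Gamma(\mathcal Y,\mathcal G)\otimes_A N$ (a composite of exact functors) shows that $\Gamma(\mathcal Y,\mathcal G)$ is $A$-flat.

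Applying this lemma Zariski-locally on $X$ — on each affine open $U\subset X$ one has $\pi^{-1}(U)=\mathcal X\times_X U$ with affine good moduli space $U$ — shows simultaneously that $\pi_*\mathcal F$ is flat over $A$ and that its formation commutes with arbitrary base change. In the noetherian (resp.\ proper) case $\pi_*\mathcal F$ is moreover coherent, by the finiteness properties of good moduli spaces of locally noetherian stacks (\cite{Al}). Now $X/S$ is separated (resp.\ proper) and $\pi_*\mathcal F$ is quasi-coherent (resp.\ coherent), flat over $A$, with cohomology computing that of $\mathcal F$; so I would finish by the classical construction. Choosing a finite affine cover of $X$ and forming the \v{C}ech complex $C^\bullet$ of $\pi_*\mathcal F$, its terms are sections of $\pi_*\mathcal F$ over affine opens, that is, of the shape $\Gamma(\pi^{-1}(U),\mathcal F)$, hence flat over $A$ and base-change compatible by the lemma; $C^\bullet$ computes $H^i(X,\pi_*\mathcal F)=H^i(\mathcal X,\mathcal F)$ and $C^\bullet\otimes_A A'$ computes the cohomology over $A'$ (using that $\pi_{A'}$ is again a good moduli space). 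Finally I would invoke the Mumford-type lemma quoted above to replace $C^\bullet$ by a quasi-isomorphic complex $M^\bullet$ with $M^i$ free for $i\ge 1$ and $M^0$ flat (finitely generated in the noetherian case, where the $H^i(\mathcal X,\mathcal F)$ are finite over $A$); flatness of the $C^p$ guarantees that the quasi-isomorphism survives $-\otimes_A A'$, and naturality of the whole construction gives the functoriality in $A'$.

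The main obstacle is precisely the affine base-change lemma: one must check that exactness of $\Gamma(\mathcal Y,-)$ together with $A$-flatness of $\mathcal G$ really forces $\Gamma$ to commute with $-\otimes_A N$ for \emph{all} $N$, since this is the single point where the weakened hypothesis (a proper good moduli space in place of a proper stack) is used to recover the cohomological flatness needed to run the usual argument.
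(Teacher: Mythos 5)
Your proof is correct and follows essentially the same route as the paper: push everything down to the \v{C}ech complex of $\pi_*\mathcal F$ on a finite affine cover of the separated scheme $X$, use exactness of $\pi_*$ and the degenerate Leray spectral sequence to identify its cohomology (after any base change $A\to A'$) with that of $\mathcal F$, and then invoke the Mumford-type lemma to produce $M^\bullet$. The only divergence is that the paper simply cites Alper (Theorem 4.16(ix) for the $A$-flatness of $\pi_*\mathcal F$ and Proposition 4.5 for compatibility of $\pi_*$ with base change) where you prove the affine base-change lemma by hand; your argument for that lemma is sound.
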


\begin{proof} We consider the Cech complex $C^{\bullet}(\mathcal{U},\pi_*\mathcal F)$ associated to an affine covering $\mathcal U=(U_i)_{i\in I}$ of $X$. It is a finite complex of flat (by \cite[Theorem 4.16(ix)]{Al}) $A$-modules. Moreover, since $X$ is separated, then we have $H^i(C^{\bullet}(\mathcal{U},\pi_*\mathcal F))\cong H^i(X,\pi_* \mathcal F)$. If $A'$ is an $A$-algebra, then the covering $\mathcal U\otimes_A A'$ is still affine by $S$-separatedness of $X$. This implies that
$$
H^i(C^{\bullet}(\mathcal{U},\pi_*\mathcal F)\otimes_A A')\cong H^i(X\otimes_A A',(\pi_* \mathcal F)\otimes_A A').
$$
By \cite[Proposition 4.5]{Al}, we have
$$
H^i(X\otimes_A A',(\pi_* \mathcal F)\otimes_A A')\cong H^i( X\otimes_A A', \pi_*(\mathcal F\otimes_A A')).
$$
Since $\pi_*$ is exact, the Leray-spectral sequence $H^i(X\otimes_A A',R^j\pi_*(\mt F\otimes_A A'))\Rightarrow H^{i+j}(\mt X\otimes_A A',\mt (F\otimes_A A'))$ (see \cite[Theorem A.1.6.4]{Br1}) degenerates in the isomorphisms $H^i(X\otimes_A A',\pi_*(\mathcal F\otimes_A A'))\cong H^i(\mt X\otimes_A A',\mathcal F\otimes_A A')$. Putting all together:
$$
H^i(C^{\bullet}(\mathcal{U},\pi_*\mathcal F)\otimes_A A')\cong H^i(\mt X\otimes_A A',\mathcal F\otimes_A A').
$$
It can be check that such isomorphisms are functorial in the $A$-algebra $A'$. Observe that if $\mathcal F$ is coherent then also $\pi_*\mathcal F$ is coherent (see \cite[Theorem 4.16(x)]{Al}). So if $X$ is proper, then the modules $H^i(\mathcal{X},\mathcal{F})$ are finitely generated. In particular, the cohomology modules of the complex $C^{\bullet}(\mathcal{U},\pi_*\mathcal F)$ are finitely generated. We can use the previous lemma for conclude the proof.
\end{proof}

From the above results, we deduce several useful corollaries, whose proofs are the same of \cite[II.5]{Mum70}.



\begin{cora}Let $\mathcal X\rightarrow X$ be a good moduli space over a scheme $S$, with $X$ proper scheme over $S$ and $\mathcal F$ a coherent sheaf over $\mathcal X$ flat over $S$. Then we have:
\begin{enumerate}[(i)]
  \item for any $ p\geq 0$ the function $S\rightarrow \mathbb Z$ defined by $s\mapsto dim_{k(s)}H^i(\mathcal X_s,\mathcal F_s)$ is upper semicontinuous on $S$.
  \item The function $S\rightarrow \mathbb Z$  defined by $s\mapsto \chi (\mathcal{F}_s)$ is locally constant.
\end{enumerate}
\end{cora}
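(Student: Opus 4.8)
The plan is to transplant verbatim Mumford's classical argument in \cite[II.5]{Mum70}, using Proposition \ref{lasvolta} in place of the usual base-change complex for a proper scheme. Both assertions are local on the base and the properties in question (upper semicontinuity, local constancy) may be checked after restricting to an affine open of $S$; hence I may assume $S=\Spec A$ with $A$ noetherian. Applying Proposition \ref{lasvolta} to the coherent, $S$-flat sheaf $\mathcal F$ and the good moduli space $\pi:\mathcal X\to X$ with $X$ proper over $S$, I obtain a bounded complex
\[
0\longrightarrow M^0\longrightarrow M^1\longrightarrow\dots\longrightarrow M^n\longrightarrow 0
\]
of finitely generated $A$-modules, with $M^i$ free for $1\leq i\leq n$ and $M^0$ flat (hence, being finitely generated over the noetherian ring $A$, locally free), together with isomorphisms $H^i(M^\bullet\otimes_A A')\cong H^i(\mathcal X\otimes_A A',\mathcal F\otimes_A A')$ functorial in the $A$-algebra $A'$.

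Specializing to $A'=k(s)$ and noting $\mathcal X\otimes_A k(s)=\mathcal X_s$, $\mathcal F\otimes_A k(s)=\mathcal F_s$, these isomorphisms give $H^i(\mathcal X_s,\mathcal F_s)\cong H^i(M^\bullet\otimes_A k(s))$. Thus both functions in the statement are computed from the complex of $k(s)$-vector spaces $M^\bullet\otimes_A k(s)$, which reduces everything to elementary linear algebra over the residue fields. For part (i), write $d^i\colon M^i\to M^{i+1}$ for the differentials; then
\[
\dim_{k(s)} H^i(M^\bullet\otimes_A k(s))=\dim_{k(s)}(M^i\otimes_A k(s))-\operatorname{rank}(d^i\otimes_A k(s))-\operatorname{rank}(d^{i-1}\otimes_A k(s)).
\]
The terms $\dim_{k(s)}(M^i\otimes_A k(s))$ are locally constant, since each $M^i$ is locally free, while $\operatorname{rank}(d^i\otimes_A k(s))$ is lower semicontinuous in $s$ (the locus where a matrix of regular functions has rank $<\rho$ is closed, being cut out by the vanishing of the $\rho\times\rho$ minors). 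Hence $s\mapsto\dim_{k(s)}H^i(\mathcal X_s,\mathcal F_s)$ is upper semicontinuous.

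For part (ii), the Euler characteristic of a bounded complex of finite-dimensional vector spaces equals the alternating sum of the dimensions of its terms, so
\[
\chi(\mathcal F_s)=\sum_i(-1)^i\dim_{k(s)}H^i(M^\bullet\otimes_A k(s))=\sum_i(-1)^i\dim_{k(s)}(M^i\otimes_A k(s)).
\]
Each $M^i$ being locally free, every $\dim_{k(s)}(M^i\otimes_A k(s))$ is locally constant, whence $\chi(\mathcal F_s)$ is locally constant. I expect no genuine obstacle here: the only nontrivial input is the existence of the finite, functorial base-change complex $M^\bullet$ with free (and $M^0$ flat) terms computing fibre cohomology, and this is precisely Proposition \ref{lasvolta}, already established for stacks admitting a proper good moduli space. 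Once that is in hand, the remainder is the standard semicontinuity dévissage and carries over to the stacky setting without change.
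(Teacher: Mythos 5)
Your proof is correct and follows exactly the route the paper intends: the paper's own "proof" is just the remark that the arguments of \cite[II.5]{Mum70} apply verbatim once Proposition \ref{lasvolta} supplies the finite, functorial base-change complex $M^\bullet$ with (locally) free terms, and your write-up simply makes that transplanted argument explicit. No discrepancies to report.
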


\begin{cora}\label{TFAE}
Let $\mathcal X\rightarrow X$ be a good moduli space over an integral scheme $S$, with $X$ proper scheme over $S$ and $\mathcal F$ a coherent sheaf over $\mt X$ flat over $S$. The following conditions are equivalent
\begin{enumerate}[(i)]
  \item $s\mapsto dim_{k(s)}H^i(\mathcal X_s,\mathcal F_s)$ is a constant function,
  \item $R^iq_*(\mathcal F)$ is locally free sheaf on $S$ and for any $s\in S$ the map
      $$
      R^iq_*(\mathcal F)\otimes k(s)\rightarrow H^i(\mathcal X_s,\mathcal F_s)
      $$
      is an isomorphism.
\end{enumerate}
If these conditions are satisfied, then we have an isomorphism
$$
R^{i-1}q_*(\mathcal F)\otimes k(s)\rightarrow H^{i-1}(\mathcal X_s,\mathcal F_s).
$$
\end{cora}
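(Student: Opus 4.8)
The plan is to transcribe the classical cohomology-and-base-change argument of Mumford \cite[\S II.5]{Mum70}, using Proposition \ref{lasvolta} in the role played there by the Grothendieck complex. Since both assertions are local on the base and $S$ is integral, I would first reduce to the case $S=\Spec A$ with $A$ a noetherian domain. Proposition \ref{lasvolta} then furnishes a bounded complex $M^{\bullet}$ of finitely generated $A$-modules, with $M^j$ free for $j\geq 1$ and $M^0$ flat, together with isomorphisms $H^j(M^{\bullet}\otimes_A A')\cong H^j(\mathcal X\otimes_A A',\mathcal F\otimes_A A')$ functorial in $A'$. As $M^0$ is finitely generated and flat over the domain $A$, it is locally free, so after shrinking $\Spec A$ I may assume every $M^j$ is free. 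Taking $A'=A$ identifies $R^jq_*\mathcal F$ with the sheaf associated to $H^j(M^{\bullet})$, taking $A'=k(s)$ identifies $H^j(\mathcal X_s,\mathcal F_s)$ with $H^j(M^{\bullet}\otimes_A k(s))$, and under these identifications the natural map $R^jq_*\mathcal F\otimes k(s)\to H^j(\mathcal X_s,\mathcal F_s)$ becomes the base-change map of the complex.

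Next I would reduce everything to the linear algebra of the differentials $d^j\colon M^j\to M^{j+1}$. Writing $r_j=\mathrm{rk}\,M^j$, $\rho^j(s)=\mathrm{rk}(d^j\otimes k(s))$ and $h^i(s)=\dim_{k(s)}H^i(\mathcal X_s,\mathcal F_s)$, one has
$$h^i(s)=r_i-\rho^i(s)-\rho^{i-1}(s).$$
Each $\rho^j$ is lower semicontinuous, so this recovers the semicontinuity of the preceding corollary, and since the generic point $\eta$ lies in every nonempty open, $\rho^j(s)\leq\rho^j(\eta)$ for all $s$.

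The implication (ii)$\Rightarrow$(i) is immediate: if $R^iq_*\mathcal F$ is locally free and base change is an isomorphism, the fiber dimension equals its constant rank. For (i)$\Rightarrow$(ii) I would use that the constancy of $h^i$, combined with $\rho^i(s)+\rho^{i-1}(s)=\rho^i(\eta)+\rho^{i-1}(\eta)$ and the two inequalities $\rho^j(s)\leq\rho^j(\eta)$, forces both $\rho^{i-1}$ and $\rho^i$ to be constant on all of $S$. A homomorphism of finite free modules over the reduced ring $A$ whose fiberwise rank is constant has image, kernel and cokernel all locally free and compatible with base change (here reducedness of $A$ is used, via the fact that a finite module with constant fiber dimension over a reduced ring is locally free). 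Applying this to $d^{i-1}$ and $d^i$ shows that $\operatorname{im} d^{i-1}\subseteq\ker d^i$ are locally free direct summands of $M^i$, whence $H^i(M^{\bullet})=\ker d^i/\operatorname{im} d^{i-1}$ is locally free and its formation commutes with $-\otimes k(s)$; this is exactly (ii).

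Finally, for the concluding isomorphism in degree $i-1$, I would note that constancy of $\rho^{i-1}$ already makes $\ker d^{i-1}$ a locally free summand whose formation commutes with base change. Right-exactness of $-\otimes k(s)$ applied to $\operatorname{im} d^{i-2}\hookrightarrow\ker d^{i-1}\twoheadrightarrow H^{i-1}(M^{\bullet})$ then yields $H^{i-1}(M^{\bullet})\otimes k(s)\cong\ker(d^{i-1}\otimes k(s))/\operatorname{im}(d^{i-2}\otimes k(s))=H^{i-1}(\mathcal X_s,\mathcal F_s)$, without needing any control on $\rho^{i-2}$. The only genuinely nontrivial input is Proposition \ref{lasvolta}, which guarantees the complex $M^{\bullet}$ and its universal base-change property; once that is in hand the argument is pure commutative algebra, and the main point to be careful about is the use of reducedness of the integral base $S$ in passing from constant fiberwise rank to local freeness.
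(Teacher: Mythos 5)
Your proposal is correct and follows exactly the route the paper intends: the paper proves this corollary by citing Mumford \cite[II.5]{Mum70} verbatim, with Proposition \ref{lasvolta} supplying the bounded complex of finite free modules in place of the Grothendieck complex, and your transcription of that argument (semicontinuity of the ranks $\rho^j$, comparison with the generic point over the integral base, and the constant-rank criterion for local freeness) is the standard and complete one.
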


\begin{cora}Let $\mathcal X\rightarrow X$ be a good moduli space over a scheme $S$, with $X$ proper scheme over $S$ and $\mathcal F$ a coherent sheaf over $\mathcal X$ flat over $S$. Assume for some $i$ that $H^i(\mathcal X_s,\mathcal F_y)=(0)$ for any $s\in S$. Then the natural map
$$
R^{i-1}q_*(\mathcal F)\otimes_{\oo_S}k(s)\rightarrow H^{i-1}(\mathcal X_s,\mathcal F_y)
$$
is an isomorphism for any $s\in S$.
\end{cora}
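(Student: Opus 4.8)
The plan is to reduce the statement, via Proposition \ref{lasvolta}, to a purely commutative-algebraic assertion about a finite complex of flat modules, and then to run the classical Grothendieck exchange argument of \cite[II.5]{Mum70} essentially verbatim. First I would work locally on $S$, so that $S=\Spec A$ is affine and $X$ is proper over $A$. Proposition \ref{lasvolta} then furnishes a bounded complex of flat finitely generated $A$-modules
$$0\longrightarrow M^0\longrightarrow \cdots\longrightarrow M^n\longrightarrow 0,$$
with $M^p$ free for $p\geq 1$, together with isomorphisms $H^p(M^{\bullet}\otimes_A A')\cong H^p(\mathcal X\otimes_A A',\mathcal F\otimes_A A')$ functorial in the $A$-algebra $A'$. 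Taking $A'=A$ identifies the $A$-module $H^p(M^{\bullet})$ with $\Gamma(S,R^pq_*\mathcal F)$, and compatibility with localization shows $R^pq_*\mathcal F$ is the sheaf associated to $H^p(M^{\bullet})$; taking $A'=k(s)$ identifies $H^p(M^{\bullet}\otimes_A k(s))$ with $H^p(\mathcal X_s,\mathcal F_s)$. Under these identifications the canonical map of the statement becomes the base-change homomorphism $\varphi^{p}(s)\colon H^p(M^{\bullet})\otimes_A k(s)\to H^p(M^{\bullet}\otimes_A k(s))$, and the hypothesis reads $H^i(M^{\bullet}\otimes_A k(s))=0$ for all $s$.

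Next I would record the two standard properties of $\varphi^{p}(s)$ attached to such a complex, which is the technical heart and exactly where flatness of the $M^p$ and their freeness in positive degrees enter: \emph{(a)} if $\varphi^p(s)$ is surjective then it is an isomorphism; and \emph{(b)} if $\varphi^p(s)$ is surjective, then $\varphi^{p-1}(s)$ is surjective as soon as $H^p(M^{\bullet})$ is locally free near $s$. As in \emph{loc. cit.}, both are proved by splitting off free summands and replacing $M^{\bullet}$ near degree $p$ by a complex whose terms in degrees $p-1,p,p+1$ are finite free, so that the cokernels of the differentials govern base change; once the complex $M^{\bullet}$ is in hand no feature of the stacky setting intervenes.

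Finally I would chain these together. Since $H^i(M^{\bullet}\otimes k(s))=0$, the map $\varphi^i(s)$ is trivially surjective, hence bijective by \emph{(a)}, so $H^i(M^{\bullet})\otimes_A k(s)=0$ for every $s$. Because $R^iq_*\mathcal F$ is coherent — here the properness of $X$ over $S$ and the coherence of $\pi_*$ from \cite[Theorem 4.16]{Al}, already used in Proposition \ref{lasvolta}, are what make the argument go through in the stacky case — Nakayama's lemma forces $H^i(M^{\bullet})$ to vanish in a neighbourhood of each point, so it is in particular locally free there. Applying \emph{(b)} with $p=i$ gives surjectivity of $\varphi^{i-1}(s)$, and then \emph{(a)} with $p=i-1$ upgrades it to an isomorphism; this is precisely the asserted isomorphism $R^{i-1}q_*(\mathcal F)\otimes_{\oo_S}k(s)\xrightarrow{\ \sim\ }H^{i-1}(\mathcal X_s,\mathcal F_s)$. (When $S$ is integral one may shortcut this: the hypothesis makes $s\mapsto\dim_{k(s)}H^i(\mathcal X_s,\mathcal F_s)$ the constant function $0$, and the final assertion of Corollary \ref{TFAE} yields the claim at once.) Thus the only genuine obstacle is the construction of the well-behaved complex $M^{\bullet}$ and the coherence of the higher direct images, both of which are already secured; what remains is the standard linear algebra over $M^{\bullet}$.
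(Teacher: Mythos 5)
Your argument is correct and is exactly the route the paper intends: it cites the proofs of \cite[II.5]{Mum70} verbatim, which is precisely the exchange argument you run on the complex $M^{\bullet}$ furnished by Proposition \ref{lasvolta} (surjectivity of $\varphi^i(s)$ onto zero, Nakayama to get local vanishing hence local freeness of $R^iq_*\mathcal F$, then descent of surjectivity to $\varphi^{i-1}(s)$ and upgrade to an isomorphism). Your parenthetical is also a worthwhile observation — the shortcut through Corollary \ref{TFAE} needs $S$ integral, whereas the direct argument you give works over an arbitrary (noetherian) base, matching the hypotheses actually stated.
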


\begin{cora}Let $\mathcal X\rightarrow X$ be a good moduli space over a scheme $S$, with $X$ proper scheme  and $\mathcal F$ a coherent sheaf over $\mt X$ flat over $S$. If $R^i q_*(\mathcal F)=(0)$ for $i\geq i_0$ then $H^i(\mathcal X_s,\mathcal F_s)=(0)$ for any $s\in S$ and $i\geq i_0$.
\end{cora}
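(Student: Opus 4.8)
The plan is to follow the template set by the preceding corollaries and by Mumford's treatment in \cite[II.5]{Mum70}, reducing everything to a statement about a single finite complex of flat modules. First I would note that both the hypothesis $R^iq_*(\mathcal F)=0$ and the desired conclusion are local on $S$, so I may assume $S=\Spec A$ is affine. Proposition \ref{lasvolta} then furnishes a finite complex $0\to M^0\to\cdots\to M^n\to 0$ of flat $A$-modules, with $M^i$ free for $1\le i\le n$, together with isomorphisms $H^i(M^\bullet\otimes_A A')\cong H^i(\mathcal X\otimes_A A',\mathcal F\otimes_A A')$ functorial in the $A$-algebra $A'$.

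Two translations set up the problem. Taking $A'=A$, using that $\pi_*$ is exact (so the Leray sequence for $\mathcal X\to X\to S$ degenerates), that $\pi_*\mathcal F$ is coherent by \cite{Al}, and that $X$ is proper over the affine $S$, I would identify $R^iq_*(\mathcal F)$ with the quasi-coherent sheaf $\widetilde{H^i(M^\bullet)}$ associated to the $A$-module $H^i(M^\bullet)$; since localization $A\to A_f$ is flat, cohomology commutes with it, so this identification is compatible with restriction to basic opens and is therefore an identification of sheaves. Hence the hypothesis becomes $H^i(M^\bullet)=0$ for $i\ge i_0$. Taking $A'=k(s)$ gives $H^i(\mathcal X_s,\mathcal F_s)\cong H^i(M^\bullet\otimes_A k(s))$, so the conclusion is precisely that $H^i(M^\bullet\otimes_A k(s))=0$ for $i\ge i_0$.

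The heart of the argument is a downward induction on $i$ from $n$ to $i_0$. Write $Z^i:=\ker d^i$. Since $H^n(M^\bullet)=\operatorname{coker}(d^{n-1})=0$, the map $d^{n-1}\colon M^{n-1}\to M^n$ surjects onto a free, hence projective, module and therefore splits, exhibiting $Z^{n-1}$ as a direct summand of the free module $M^{n-1}$; in particular $Z^{n-1}$ is projective. Descending, for each $i\ge i_0$ the vanishing $H^i(M^\bullet)=0$ means $d^{i-1}$ maps onto $Z^i$, and since $Z^i$ is projective this surjection splits; inductively every $Z^i$ with $i\ge i_0$ is a projective direct summand of the free module $M^i$ (here one uses $i\ge i_0\ge 1$ so that $M^i$ is free). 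Because split exact sequences stay split after $\otimes_A k(s)$, for $i\ge i_0$ one gets $\ker(d^i\otimes k(s))=Z^i\otimes_A k(s)$ and $\operatorname{im}(d^{i-1}\otimes k(s))=Z^i\otimes_A k(s)$ inside $M^i\otimes_A k(s)$, whence $H^i(M^\bullet\otimes_A k(s))=0$, as desired.

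I expect the only genuinely delicate point to be the sheaf-theoretic identification $R^iq_*(\mathcal F)\cong\widetilde{H^i(M^\bullet)}$ together with its compatibility with base change, which rests on the exactness of $\pi_*$, the coherence of $\pi_*\mathcal F$, and the functoriality in Proposition \ref{lasvolta}; the splitting induction itself is formal once the $M^i$ with $i\ge1$ are known to be free. The case $i_0=0$, where $M^0$ is only flat, is harmless: exactness of the full complex forces $d^0$ to be injective with projective image, so $M^0$ is itself a projective summand and the same splitting argument applies from degree $0$ upward.
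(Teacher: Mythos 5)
Your proof is correct and is essentially the paper's own argument: the paper simply defers to \cite[II.5]{Mum70}, whose proof of this statement is exactly your reduction to the finite complex of flat modules supplied by Proposition \ref{lasvolta} (using exactness of $\pi_*$ and properness of $X$ to identify $R^iq_*\mathcal F$ with $\widetilde{H^i(M^\bullet)}$), followed by the descending splitting induction showing the kernels $Z^i$ for $i\geq i_0$ are projective direct summands, so that kernels and images are preserved under $\otimes_A k(s)$. No gaps; the handling of the degree-zero term, where $M^0$ is only flat, is also dealt with correctly.
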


\begin{cora}\label{seesaw}$[$The SeeSaw Principle$]$.\\
Let $\mathcal X\rightarrow X$ be a good moduli space over an integral scheme $S$ and $\mathcal L$ be a line bundle on $\mt X$. Suppose that $q:\mt X\to S$ is flat and that $X\rightarrow S$ is proper with integral geometric fibers. Then the locus
$$
S_1=\{s\in S|\mathcal L_s \cong \oo_{\mathscr X_s}\}
$$
is closed in $S$. Moreover if we call $q_1:\mathcal X\times_{S} S_1 \rightarrow S_1$ the restriction of $q$ on this locus, then $q_{1*}\mathcal L$ is a line bundle on $S$ and the natural morphism $q_1^*q_{1*}\mathcal{L}\cong \mathcal L$ is an isomorphism.
\end{cora}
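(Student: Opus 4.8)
The plan is to transcribe Mumford's proof of the see-saw principle (\cite[II.5]{Mum70}), feeding in the base-change theory for proper good moduli spaces established above in place of the classical proper base-change theorems. For $s\in S$ write $\mathcal{X}_s$ for the fibre of $q$ over $s$ and $X_s$ for its good moduli space; since good moduli morphisms are stable under base change (\cite[Proposition 3.9]{Al}), $\mathcal{X}_s\to X_s$ is again a good moduli space, and $X_s$ is proper and integral by hypothesis. The first ingredient is a fibrewise triviality criterion: a line bundle $\mathcal{M}$ on $\mathcal{X}_s$ is trivial if and only if $H^0(\mathcal{X}_s,\mathcal{M})\neq 0$ and $H^0(\mathcal{X}_s,\mathcal{M}^{-1})\neq 0$. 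Indeed, the defining isomorphism $\oo_{X_s}\cong\pi_{s*}\oo_{\mathcal{X}_s}$ together with properness and integrality of $X_s$ gives $H^0(\mathcal{X}_s,\oo_{\mathcal{X}_s})=H^0(X_s,\oo_{X_s})=k(s)$; two nonzero sections $\sigma\in H^0(\mathcal{M})$ and $\tau\in H^0(\mathcal{M}^{-1})$ then compose to a scalar $\sigma\tau\in k(s)$ which (after pushing forward to the integral space $X_s$) one checks is nonzero, so $\sigma$ has no zeros and trivialises $\mathcal{M}$.

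Applying this with $\mathcal{M}=\mathcal{L}_s$ identifies the locus as the intersection $S_1=\{s:H^0(\mathcal{X}_s,\mathcal{L}_s)\neq 0\}\cap\{s:H^0(\mathcal{X}_s,\mathcal{L}_s^{-1})\neq 0\}$. Both $\mathcal{L}$ and $\mathcal{L}^{-1}$ are line bundles, hence coherent and flat over $S$, so by the upper-semicontinuity of $s\mapsto\dim_{k(s)}H^0(\mathcal{X}_s,-)$ proved above (which uses that $X\to S$ is proper) each of these two sets is closed; therefore $S_1$ is closed, and I equip it with its reduced induced subscheme structure. Restricting now to $S_1$, where $\mathcal{L}_s\cong\oo_{\mathcal{X}_s}$ for every $s$, the function $s\mapsto\dim_{k(s)}H^0(\mathcal{X}_s,\mathcal{L}_s)=\dim_{k(s)}H^0(\mathcal{X}_s,\oo_{\mathcal{X}_s})=1$ is constant. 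By Corollary \ref{TFAE} (the Grauert-type statement) this constancy forces $q_{1*}\mathcal{L}$ to be locally free of rank one on $S_1$ and the base-change map $q_{1*}\mathcal{L}\otimes k(s)\to H^0(\mathcal{X}_s,\mathcal{L}_s)$ to be an isomorphism; as Corollary \ref{TFAE} is phrased over an integral base, I will apply it on each irreducible component of $S_1$, which is integral, this being harmless since local freeness of rank one and the base-change isomorphism may be checked componentwise.

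It remains to show that the counit $q_1^*q_{1*}\mathcal{L}\to\mathcal{L}$, a morphism of line bundles on $\mathcal{X}\times_S S_1$, is an isomorphism. By the base-change isomorphism its restriction to $\mathcal{X}_s$ is the evaluation map $H^0(\mathcal{X}_s,\mathcal{L}_s)\otimes_{k(s)}\oo_{\mathcal{X}_s}\to\mathcal{L}_s$, which under $\mathcal{L}_s\cong\oo_{\mathcal{X}_s}$ sends $c\mapsto c\sigma$ for a nowhere-vanishing section $\sigma$, hence is an isomorphism on every fibre. A morphism of line bundles that is a fibrewise isomorphism is an isomorphism, as one sees by pulling back to a smooth atlas of $\mathcal{X}\times_S S_1$ and applying Nakayama. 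This yields $q_1^*q_{1*}\mathcal{L}\cong\mathcal{L}$, completing the argument.

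I expect the main obstacle to be the fibrewise triviality criterion on the \emph{stack} $\mathcal{X}_s$: making the product-of-sections argument rigorous means controlling $H^0(\mathcal{X}_s,\oo_{\mathcal{X}_s})$ and the nonvanishing of $\sigma\tau$ through the good moduli morphism $\pi_s$ rather than on a scheme. A secondary technical point is that $S_1$ need not be integral, which is why I reduce the Grauert-type step to the integral irreducible components of $S_1$; everything else is a direct translation of the scheme-theoretic see-saw argument.
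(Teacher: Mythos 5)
Your proposal is correct and follows essentially the same route as the paper's own proof: the fibrewise triviality criterion via composing a section of $\mathcal L_s$ with one of $\mathcal L_s^{-1}$ and using $H^0(\mathcal X_s,\oo_{\mathcal X_s})=k(s)$ through the good moduli space, closedness of $S_1$ by upper semicontinuity, Corollary \ref{TFAE} for local freeness and base change, and Nakayama for the counit. Your extra care about $S_1$ not being integral (reducing to irreducible components before invoking Corollary \ref{TFAE}) is a point the paper silently glosses over, but otherwise the two arguments coincide.
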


\begin{proof}A line bundle $\mathcal M$ on a stack $\mathcal X$ with a proper integral good moduli space $X$ is trivial if and only if $h^0(\mathcal M)>0$ and $h^0(\mathcal M^{-1})>0$. The necessity is obvious. Conversely suppose that these conditions hold. Then we have two non-zero homomorphisms $s:\oo_{\mathcal X}\rightarrow \mathcal M$, $t:\oo_{\mathcal X}\rightarrow \mathcal M^{-1}$. If we dualize the second one and compose with the first one, we have a non-zero morphism $h:\oo_{\mathcal X}\rightarrow \oo_{\mathcal X}$. Now $X$ is an integral proper scheme then $H^0(X,\oo_X)=k$ so $H^0(\mathcal X,\oo_{\mathcal X})=k$. Hence $h$ is an isomorphism. This implies that also $s$ and $t$ are isomorphisms. As a consequence, we have
$$
S_1=\{s\in S|h^0(\mathcal X_s,\mathcal L_s)>0,\,h^0(\mathcal X_s,\mathcal L_s^{-1})>0\}.
$$
In particular, $S_1$ is closed by upper semicontinuity. Up to restriction we can assume $S=S_1$, so the function $s\mapsto h^0(\mathcal X_s,\mathcal L_s)=1$ is constant. By Corollary \ref{TFAE}, $q_*\mathcal L$ is a line bundle on $S$ and the natural map $q_*\mathcal L\otimes_{\oo_S} k(s)\rightarrow H^0(\mathcal X_s,\mathcal L_s)$ is an isomorphism. Consider the natural map $\pi:q^*q_*\mathcal L\rightarrow \mathcal L$. Its restriction on any fiber $\mathcal X_s$ 
$$
\oo_{\mathcal X_s}\otimes H^0(\mathcal X_s,\mathcal L_s)\rightarrow \mathcal L_s
$$
is an isomorphism. In particular $\pi$ is an isomorphism for any geometric point $x\in \mt X$. Since it is a map between line bundles, by Nakayama lemma, it is an isomorphism.
\end{proof}
\bibliographystyle{abbrv}
\bibliography{BiblioVec}

\begin{thebibliography}{10}

\bibitem{ACV}
D.~Abramovich, A.~Corti, and A.~Vistoli.
\newblock Twisted bundles and admissible covers.
\newblock {\em Comm. Algebra}, 31(8):3547--3618, 2003.
\newblock Special issue in honor of Steven L. Kleiman.

\bibitem{Al10}
J.~Alper.
\newblock On the local quotient structure of {A}rtin stacks.
\newblock {\em J. Pure Appl. Algebra}, 214(9):1576--1591, 2010.

\bibitem{Al}
J.~Alper.
\newblock Good moduli spaces for {A}rtin stacks.
\newblock {\em Ann. Inst. Fourier (Grenoble)}, 63(6):2349--2402, 2013.

\bibitem{AC87}
E.~Arbarello and M.~Cornalba.
\newblock The {P}icard groups of the moduli spaces of curves.
\newblock {\em Topology}, 26(2):153--171, 1987.

\bibitem{ACG11}
E.~Arbarello, M.~Cornalba, and P.~A. Griffiths.
\newblock {\em Geometry of algebraic curves. {V}olume {II}}, volume 268 of {\em
  Grundlehren der Mathematischen Wissenschaften [Fundamental Principles of
  Mathematical Sciences]}.
\newblock Springer, Heidelberg, 2011.
\newblock With a contribution by Joseph Daniel Harris.

\bibitem{BFMV}
G.~Bini, F.~Felici, M.~Melo, and F.~Viviani.
\newblock {\em Geometric invariant theory for polarized curves}, volume 2122 of
  {\em Lecture Notes in Mathematics}.
\newblock Springer, Cham, 2014.

\bibitem{BH12}
I.~Biswas and N.~Hoffmann.
\newblock Poincar\'e families of {$G$}-bundles on a curve.
\newblock {\em Math. Ann.}, 352(1):133--154, 2012.

\bibitem{Br1}
S.~Brochard.
\newblock Foncteur de {P}icard d'un champ alg\'ebrique.
\newblock {\em Math. Ann.}, 343(3):541--602, 2009.

\bibitem{Br2}
S.~Brochard.
\newblock Finiteness theorems for the {P}icard objects of an algebraic stack.
\newblock {\em Adv. Math.}, 229(3):1555--1585, 2012.

\bibitem{Cap94}
L.~Caporaso.
\newblock A compactification of the universal {P}icard variety over the moduli
  space of stable curves.
\newblock {\em J. Amer. Math. Soc.}, 7(3):589--660, 1994.

\bibitem{CMKV}
S.~Casalaina-Martin, J.~L. Kass, and F.~Viviani.
\newblock The local structure of compactified {J}acobians.
\newblock {\em Proc. Lond. Math. Soc. (3)}, 110(2):510--542, 2015.

\bibitem{Cor07}
M.~Cornalba.
\newblock The {P}icard group of the moduli stack of stable hyperelliptic
  curves.
\newblock {\em Atti Accad. Naz. Lincei Cl. Sci. Fis. Mat. Natur. Rend. Lincei
  (9) Mat. Appl.}, 18(1):109--115, 2007.

\bibitem{Del87}
P.~Deligne.
\newblock Le d\'eterminant de la cohomologie.
\newblock In {\em Current trends in arithmetical algebraic geometry ({A}rcata,
  {C}alif., 1985)}, volume~67 of {\em Contemp. Math.}, pages 93--177. Amer.
  Math. Soc., Providence, RI, 1987.

\bibitem{DM69}
P.~Deligne and D.~Mumford.
\newblock The irreducibility of the space of curves of given genus.
\newblock {\em Inst. Hautes \'Etudes Sci. Publ. Math.}, (36):75--109, 1969.

\bibitem{Edi12}
D.~Edidin.
\newblock Equivariant geometry and the cohomology of the moduli space of
  curves.
\newblock In {\em Handbook of moduli. {V}ol. {I}}, volume~24 of {\em Adv. Lect.
  Math. (ALM)}, pages 259--292. Int. Press, Somerville, MA, 2013.

\bibitem{EG98}
D.~Edidin and W.~Graham.
\newblock Equivariant intersection theory.
\newblock {\em Invent. Math.}, 131(3):595--634, 1998.

\bibitem{Fa96}
G.~Faltings.
\newblock Moduli-stacks for bundles on semistable curves.
\newblock {\em Math. Ann.}, 304(3):489--515, 1996.

\bibitem{FAG}
B.~Fantechi, L.~G{\"o}ttsche, L.~Illusie, S.~L. Kleiman, N.~Nitsure, and
  A.~Vistoli.
\newblock {\em Fundamental algebraic geometry}, volume 123 of {\em Mathematical
  Surveys and Monographs}.
\newblock American Mathematical Society, Providence, RI, 2005.
\newblock Grothendieck's FGA explained.

\bibitem{Gie84}
D.~Gieseker.
\newblock A degeneration of the moduli space of stable bundles.
\newblock {\em J. Differential Geom.}, 19(1):173--206, 1984.

\bibitem{Gi}
J.~Giraud.
\newblock {\em Cohomologie non ab\'elienne}.
\newblock Springer-Verlag, Berlin-New York, 1971.
\newblock Die Grundlehren der mathematischen Wissenschaften, Band 179.

\bibitem{GV}
S.~Gorchinskiy and F.~Viviani.
\newblock Picard group of moduli of hyperelliptic curves.
\newblock {\em Math. Z.}, 258(2):319--331, 2008.

\bibitem{Gr14}
M.~Grimes.
\newblock {\em Universal moduli spaces of vector bundles and the log-minimal
  model program on the moduli of curves}.
\newblock Available at http://arxiv.org/abs/1409.5734.

\bibitem{Har83}
J.~Harer.
\newblock The second homology group of the mapping class group of an orientable
  surface.
\newblock {\em Invent. Math.}, 72(2):221--239, 1983.

\bibitem{HM98}
J.~Harris and I.~Morrison.
\newblock {\em Moduli of curves}, volume 187 of {\em Graduate Texts in
  Mathematics}.
\newblock Springer-Verlag, New York, 1998.

\bibitem{H07}
N.~Hoffmann.
\newblock Rationality and {P}oincar\'e families for vector bundles with extra
  structure on a curve.
\newblock {\em Int. Math. Res. Not. IMRN}, (3):Art. ID rnm010, 30, 2007.

\bibitem{Ho10}
N.~Hoffmann.
\newblock Moduli stacks of vector bundles on curves and the {K}ing-{S}chofield
  rationality proof.
\newblock In {\em Cohomological and geometric approaches to rationality
  problems}, volume 282 of {\em Progr. Math.}, pages 133--148. Birkh\"auser
  Boston, Inc., Boston, MA, 2010.

\bibitem{H}
N.~Hoffmann.
\newblock The {P}icard group of a coarse moduli space of vector bundles in
  positive characteristic.
\newblock {\em Cent. Eur. J. Math.}, 10(4):1306--1313, 2012.

\bibitem{HL}
D.~Huybrechts and M.~Lehn.
\newblock {\em The geometry of moduli spaces of sheaves}.
\newblock Cambridge Mathematical Library. Cambridge University Press,
  Cambridge, second edition, 2010.

\bibitem{K}
I.~Kausz.
\newblock A {G}ieseker type degeneration of moduli stacks of vector bundles on
  curves.
\newblock {\em Trans. Amer. Math. Soc.}, 357(12):4897--4955 (electronic), 2005.

\bibitem{KM76}
F.~F. Knudsen and D.~Mumford.
\newblock The projectivity of the moduli space of stable curves. {I}.
  {P}reliminaries on ``det'' and ``{D}iv''.
\newblock {\em Math. Scand.}, 39(1):19--55, 1976.

\bibitem{Kou91}
A.~Kouvidakis.
\newblock The {P}icard group of the universal {P}icard varieties over the
  moduli space of curves.
\newblock {\em J. Differential Geom.}, 34(3):839--850, 1991.

\bibitem{Kou93}
A.~Kouvidakis.
\newblock On the moduli space of vector bundles on the fibers of the universal
  curve.
\newblock {\em J. Differential Geom.}, 37(3):505--522, 1993.

\bibitem{LMB}
G.~Laumon and L.~Moret-Bailly.
\newblock {\em Champs alg\'ebriques}, volume~39 of {\em Ergebnisse der
  Mathematik und ihrer Grenzgebiete. 3. Folge. A Series of Modern Surveys in
  Mathematics [Results in Mathematics and Related Areas. 3rd Series. A Series
  of Modern Surveys in Mathematics]}.
\newblock Springer-Verlag, Berlin, 2000.

\bibitem{Mel09}
M.~Melo.
\newblock Compactified {P}icard stacks over {$\overline{\mathcal M}_g$}.
\newblock {\em Math. Z.}, 263(4):939--957, 2009.

\bibitem{MV}
M.~Melo and F.~Viviani.
\newblock The {P}icard group of the compactified universal {J}acobian.
\newblock {\em Doc. Math.}, 19:457--507, 2014.

\bibitem{MR85}
N.~Mestrano and S.~Ramanan.
\newblock Poincar\'e bundles for families of curves.
\newblock {\em J. Reine Angew. Math.}, 362:169--178, 1985.

\bibitem{Mum65}
D.~Mumford.
\newblock Picard groups of moduli problems.
\newblock In {\em Arithmetical {A}lgebraic {G}eometry ({P}roc. {C}onf. {P}urdue
  {U}niv., 1963)}, pages 33--81. Harper \& Row, New York, 1965.

\bibitem{Mum70}
D.~Mumford.
\newblock {\em Abelian varieties}.
\newblock Tata Institute of Fundamental Research Studies in Mathematics, No. 5.
  Published for the Tata Institute of Fundamental Research, Bombay; Oxford
  University Press, London, 1970.

\bibitem{Mum83}
D.~Mumford.
\newblock Towards an enumerative geometry of the moduli space of curves.
\newblock In {\em Arithmetic and geometry, {V}ol. {II}}, volume~36 of {\em
  Progr. Math.}, pages 271--328. Birkh\"auser Boston, Boston, MA, 1983.

\bibitem{Mum94}
D.~Mumford, J.~Fogarty, and F.~Kirwan.
\newblock {\em Geometric invariant theory}, volume~34 of {\em Ergebnisse der
  Mathematik und ihrer Grenzgebiete (2) [Results in Mathematics and Related
  Areas (2)]}.
\newblock Springer-Verlag, Berlin, third edition, 1994.

\bibitem{NS}
D.~S. Nagaraj and C.~S. Seshadri.
\newblock Degenerations of the moduli spaces of vector bundles on curves. {II}.
  {G}eneralized {G}ieseker moduli spaces.
\newblock {\em Proc. Indian Acad. Sci. Math. Sci.}, 109(2):165--201, 1999.

\bibitem{P96}
R.~Pandharipande.
\newblock A compactification over {$\overline {M}\sb g$} of the universal
  moduli space of slope-semistable vector bundles.
\newblock {\em J. Amer. Math. Soc.}, 9(2):425--471, 1996.

\bibitem{Sceq}
A.~Schmitt.
\newblock The equivalence of {H}ilbert and {M}umford stability for vector
  bundles.
\newblock {\em Asian J. Math.}, 5(1):33--42, 2001.

\bibitem{Sc}
A.~Schmitt.
\newblock The {H}ilbert compactification of the universal moduli space of
  semistable vector bundles over smooth curves.
\newblock {\em J. Differential Geom.}, 66(2):169--209, 2004.

\bibitem{Ser06}
E.~Sernesi.
\newblock {\em Deformations of algebraic schemes}, volume 334 of {\em
  Grundlehren der Mathematischen Wissenschaften [Fundamental Principles of
  Mathematical Sciences]}.
\newblock Springer-Verlag, Berlin, 2006.

\bibitem{Se82}
C.~S. Seshadri.
\newblock {\em Fibr\'es vectoriels sur les courbes alg\'ebriques}, volume~96 of
  {\em Ast\'erisque}.
\newblock Soci\'et\'e Math\'ematique de France, Paris, 1982.
\newblock Notes written by J.-M. Drezet from a course at the {\'E}cole Normale
  Sup{\'e}rieure, June 1980.

\bibitem{stacks-project}
T.~{Stacks Project Authors}.
\newblock \textit{Stacks Project}.
\newblock \url{http://stacks.math.columbia.edu}, 2018.

\bibitem{T91b}
M.~Teixidor~i Bigas.
\newblock Brill-{N}oether theory for stable vector bundles.
\newblock {\em Duke Math. J.}, 62(2):385--400, 1991.

\bibitem{T95}
M.~Teixidor~i Bigas.
\newblock Moduli spaces of vector bundles on reducible curves.
\newblock {\em Amer. J. Math.}, 117(1):125--139, 1995.

\bibitem{T98}
M.~Teixidor~i Bigas.
\newblock Compactifications of moduli spaces of (semi)stable bundles on
  singular curves: two points of view.
\newblock {\em Collect. Math.}, 49(2-3):527--548, 1998.
\newblock Dedicated to the memory of Fernando Serrano.

\bibitem{Tu}
L.~W. Tu.
\newblock Semistable bundles over an elliptic curve.
\newblock {\em Adv. Math.}, 98(1):1--26, 1993.

\bibitem{Vis98}
A.~Vistoli.
\newblock The {C}how ring of {$\mathcal M_2$}. {A}ppendix to ''equivariant
  intersection theory''.
\newblock {\em Invent. Math.}, 131(3):595--634, 1998.

\bibitem{W}
J.~Wang.
\newblock The moduli stack of {G}-bundles.
\newblock Available at http://arxiv.org/abs/1104.4828.

\end{thebibliography}
\end{document}